\renewenvironment{proof}[1][\proofname]{%
   \par\pushQED{\qed}\normalfont%
   \topsep6\p@\@plus6\p@\relax
   \trivlist\item[\hskip\labelsep\bfseries#1\@addpunct{.}]%
   \ignorespaces
}{%
   \popQED\endtrivlist\@endpefalse
}
\newtheorem{theorem}{Theorem}
\newtheorem{proposition}[theorem]{Proposition}
 \numberwithin{theorem}{section}
\newtheorem{lemma}[theorem]{Lemma}
\newtheorem{remark}[theorem]{Remark}
\newtheorem{conjecture}[theorem]{Conjecture}
\numberwithin{equation}{section}
\newcommand{\la}{\langle}
\newcommand{\ra}{\rangle}
\renewcommand{\P}{\mathbb{P}}
\newcommand{\E}{\mathbb{E}}
\newcommand{\R}{\mathbb{R}}
\newcommand{\Z}{\mathbb{Z}}
\newcommand{\N}{\mathbb{N}}
\newcommand{\qN}{\mathbb{N}}
\newcommand{\cA}{\mathcal A}
\newcommand{\cR}{\mathcal{R}}
\newcommand{\cN}{\mathcal{N}}
\newcommand{\cH}{\mathcal{H}}
\newcommand{\cI}{\mathcal{I}}
\newcommand{\cL}{\mathcal{L}}
\newcommand{\cF}{\mathcal F}
\newcommand{\eps}{\varepsilon}
 \newcommand{\nn}{\nonumber}
 \newcommand{\no}{\noindent}
\def\keywords{\xdef\@thefnmark{}\@footnotetext}
\begin{document}

\keywords{AMS 2020 \emph{subject classification.} Primary: 60K35, 60G57; Secondary: 60F05, 60J80}%
\keywords{\emph{Key words and phrases.} SIR epidemic, super-Brownian motion, scaling limit}%

\author{
Jieliang Hong
}
\title{Rescaled SIR epidemic processes converge to super-Brownian motion in four or more dimensions}

\date{{\small  {\it  Department of Mathematics, Southern University of Science and Technology,\\
 Shenzhen, China\\
 E-mail:  {\tt hongjl@sustech.edu.cn} 
   }
  }
  }

\maketitle
\begin{abstract}
In dimensions $d\geq 4$, by choosing a suitable scaling parameter, we show that the rescaled spatial SIR epidemic process converges to a super-Brownian motion with drift, thus complementing the previous results by Lalley \cite{L09} and Lalley-Zheng \cite{LZ10} on the convergence of SIR epidemics in $d\leq 3$.  The scaling parameters we choose also agree with the corresponding asymptotics for the critical probability $p_c$ of the range-$R$ bond percolation on $\Z^d$ as $R\to \infty$.
\end{abstract}

\tableofcontents

\section{Introduction}

The celebrated works of Durrett-Perkins \cite{DP99} and Mueller-Tribe \cite{MT95} prove that the rescaled long range contact processes converge to a super-Brownian motion with drift in $d\geq 2$; to a super-Brownian motion with killing of the density in $d=1$. In the contact process setting, all individuals are either susceptible or infected, meaning that the infections will not give immunity upon recovery. By introducing a natural third type of individuals, recovered, to this model, we get the well-known susceptible/infected/recovered epidemic process (SIR), in which recovered individuals are immune to future infections. Lalley \cite{L09} and Lalley-Zheng \cite{LZ10} established the convergence of rescaled SIR epidemic processes for $d \leq 3$, but there are no such convergence results in higher dimensions $d\geq 4$. The results in the paper will then close the hole.

Define $\Z_u^d=\Z^d/u=\{x/u: x\in \Z^d\}$ for any $u>0$. Our SIR epidemic process takes place on $\Z^d_R$ where $R\in \N$ represents the infection range. Call $x,y\in \Z^d_R$ neighbors if $0<\| x-y\| _\infty\leq 1$ where $\| \cdot \| _\infty$ denotes the $l^\infty$ norm on $\R^d$. The state of the epidemic process at time $t$ can be characterized by a function $\psi_t: \Z_R^d\to \{-1,0,1\}$. For each vertex $x\in \Z^d_R$, we say $x$ at time $t$ is infected if $\psi_t(x)=1$; susceptible if $\psi_t(x)=0$; recovered if $\psi_t(x)=-1$.  Define the continuous time SIR epidemic process $(\psi_t, t\geq 0)$ as follows:\\
 (a) Each particle at infected sites dies at rate $1$ and gives birth to one new particle at rate $\beta$.\\
 (b) When a birth event occurs at some site $x$, the new particle is sent to a site $y$ chosen randomly from $\cN(x):=\{y\in \Z_R^d: 0<\| x-y\| _\infty\leq 1\}$, the set of neighbors of $x$. \\
 (c) If $y$ is susceptible, the birth is valid, the new particle establishes there and $y$ becomes infected. If $y$ is recovered or infected, such a birth will be suppressed.\\
  (d) When a death occurs at $x$, the site $x$ becomes recovered and refrains from establishing new particles.
 
One may think of the particles as the virus spreading over individuals positioned on $\Z_R^d$. The infected individual at $x$ recovers at rate $1$ (the virus is removed and the particle dies) while he transmits the disease (gives birth to a new particle)  with rate $\beta$ to his neighbors in $ \cN(x)$. If $x$ attempts to infect his neighbor $y$ who is susceptible, then $y$ will become infected and a new particle (the virus) establishes itself there; otherwise the attempt of infection is invalid. Once the individual is recovered, he will be immune to future infections. We note the multi-occupancy of particles is not allowed, i.e. there is at most one particle at each site. In fact, if the assumption on the long-lasting immunity upon recovery is removed, one obtains immediately the contact process model considered in \cite{DP99}.  Our $d\geq 4$ setting for the SIR epidemic here corresponds to their $d\geq 2$ setting for the contact process in \cite{DP99}. These dimension settings are essentially related to the fact that a super-Brownian motion has no fixed-time density iff $d\geq 2$ and no occupation density, or local time, iff $d\geq 4$ (see, e.g., Chp. III of \cite{Per02}). 

Notice that by ignoring rules $(c)$ and $(d)$, the SIR epidemic process is identical to a branching random walk (BRW) where each particle dies at rate $1$ and gives birth to one offspring to its neighboring positions at rate $\beta$. To obtain a nontrivial scaling limit, we set $\beta=1+\theta/N$ where $N>0$ satisfies 
\begin{align}\label{9e1.1}
R^d=
\begin{cases}
N,&d\geq 5\\
N\log N, &d=4\\
N^{3-d/2}, &d\leq 3.
\end{cases}
\end{align}
To justify such a choice of $N$, by following the contact process arguments in \cite{DP99}, we consider a typical particle in the branching random walk at time $N$ and count the number, denoted by $\chi_N$, of its neighboring sites that have been visited before time $N$. The expected value of $\chi_N$ is approximately 
\begin{align*}
 \E(\chi_N)\approx C\int_1^N \int_0^{t} (t+s)^{-d/2}ds dt=
\begin{cases}
C,&d\geq 5\\
C\log N, &d=4\\
CN^{2-d/2}, &d\leq 3.
\end{cases}
\end{align*}
Here we trace backward with time $t$ when the nearby relative branches off the family tree of the typical particle and $0< s< t$ is the extra time the relative takes before it reaches the neighboring site, the probability of which is at most $C(t+s)^{-d/2}$. To ensure that the epidemic survives, we require that the fraction of the available sites for birth in each neighborhood is at least $1-\theta/N$, so we need $\E(\chi_N)/R^d=O(1/N)$, giving
$N$ as in \eqref{9e1.1}.

{\bf Connection with the range-$R$ bond percolation}. The above asymptotics for $N$ as in \eqref{9e1.1} also appear in the corresponding asymptotics for the critical probability $p_c(R)$ of the range-$R$ bond percolation on $\Z^d$ as $R\to \infty$: In higher dimensions $d>6$, this asymptotics had been confirmed in Van der Hofstad and Sakai \cite{HS05} by using the lace expansion. When $d\leq 6$, the corresponding bounds for $p_c(R)$ were proved recently in \cite{FP16}, \cite{Hong21} and \cite{Hong23} where the authors used the connection between the bond percolation and the discrete-time SIR epidemics to study the critical probability $p_c(R)$. Such a connection is another motivation for us to work on the convergence of SIR epidemics in this paper.

Now that the scaling parameter is determined, we apply the usual Brownian scaling to scale time by $N$, space by $\sqrt{N}$, and define the rescaled SIR epidemic process $(\psi_t, t\geq 0)$ as follows:\\
 (a) Each particle at infected sites dies at rate $N$ and gives birth to one new particle at rate $N+\theta$.\\
 (b) When a birth occurs at $x$, the new particle is sent to a site $y$ chosen randomly from $y\in \Z^d_{N^{1/2} R}$ with $0<\| y-x\| _\infty\leq N^{-1/2}$. \\
 (c) If $y$ is susceptible, the birth is valid, the new particle establishes there and $y$ becomes infected. If $y$ is recovered or infected, the birth will be suppressed.\\
  (d) When a death occurs at $x$, the site $x$ becomes recovered and refrains from establishing new particles.

To describe the convergence result, we introduce some notation. Let $M_F(\R^d)$ be the space of finite measures on $(\R^d,\mathfrak{B}(\R^d))$ equipped with the topology of weak convergence of measures. Write $$\mu(\phi)=\int \phi(x) \mu(dx)$$ for any measure $\mu$ and any integrable function $\phi$. Set $\Omega_X=D([0,\infty), M_F(\R^d))$ to be the Skorohod space of c\`adl\`ag paths on $M_F(\R^d)$ with the Skorohod topology, the space where the convergence will occur. For each $n\geq 1$, let $C_b^n(\R^d)$ denote the space of bounded continuous functions whose partial derivatives of order less than $n+1$ are also bounded and continuous. 
A super-Brownian motion $X=(X_t,t\geq 0)$ is a continuous $M_F(\R^d)$-valued strong Markov process defined on some complete filtered probability space  $(\Omega, \cF, \cF_t, \P)$. We say $X$ starts at $X_0\in M_F(\R^d)$ with branching rate $\gamma_0>0$, diffusion coefficient $\sigma_0^2>0$ and drift $\theta_0\in \R$ if it satisfies the following martingale problem:
\begin{align}\label{9e10.25}
(MP)^{\gamma_0, \sigma_0^2, \theta_0}_{X_0}: &\text{ For any }  \phi \in C_b^2(\R^d),\nn\\
&M_t(\phi)=X_t(\phi)-X_0(\phi)-\int_0^t X_s\Big(\frac{\sigma_0^2}{2}  \Delta\phi\Big) ds-\theta_0\int_0^t X_s(\phi) ds\nn\\
&\text{ is an $(\cF_t)$-martingale with } \langle M(\phi)\rangle_t=\int_0^t X_s(\gamma_0 \phi^2)ds.
\end{align}
The above martingale problem uniquely characterizes the law of super-Brownian motion on $\Omega_{X, C}=C([0,\infty), M_F(\R^d))$, the space of continuous  $M_F(\R^d)$-valued paths furnished with the compact-open topology.

Now we are ready to present our main theorem. For simplicity, we assume that there are no recovered sites at the beginning of the epidemic. Let 
 \begin{align} \label{9e10.13}
\xi_t=\psi_t^{-1}(1):=\{x\in \Z^d_{N^{1/2} R}: \psi_t(x)=1\}
\end{align}
 denote the set of infected sites at time $t$. Then the set of recovered sites is given by  
 \begin{align*} 
 \psi_t^{-1}(-1):=\{x\in \Z^d_{N^{1/2} R}: \psi_t(x)=-1\}=\Big(\bigcup_{s\leq t} \xi_s\Big)-\xi_t.
\end{align*}
One may conclude from the above that the infected sets $\{\xi_t, t\geq 0\}$  uniquely determine the SIR epidemic process $(\psi_t, t\geq 0)$. By assigning mass $1/N$ to each infected individual, we define the measure-valued process by 
\begin{align}\label{9ec10.25}
X_t^N:=\frac{1}{N}\sum_{x\in\xi_t} \delta_x.
\end{align}
 It follows that $(X_t^N, t\geq 0)\in \Omega_X$.  When $d\geq 5$, we let $Y_1,Y_2, \cdots$ be i.i.d. random variables on $\R^d$ so that $Y_1=0$ or $Y_1$ is uniform on $[-1,1]^d$, each with probability $1/2$. Set $V_n=Y_1+\cdots+Y_n$ for $n\geq 1$ and $V_0=0$. In addition, we let $W_0$ be uniform on $[-1,1]^d$, independent of $\{Y_1, Y_2,\cdots\}$. Define
\begin{align}\label{9ec10.64}
b_d= 2^{-d} \sum_{l=0}^\infty  \P(V_{l+1}\in  [-1,1]^d \backslash \{0\}) +2^{-d} \sum_{l=0}^\infty \sum_{j=0}^\infty  \frac{1}{2^{l+j+1}} \frac{(l+j)!}{l!j!} \sum_{m=0}^j  \P(W_{0}+V_{l+m}\in [-1,1]^d).
\end{align}
 In $d=4$, we let $b_4={9}/(2\pi^2)$.
\begin{theorem}\label{9t0}
Let $d\geq 4$. If $X_0^N\to X_0$ in $M_F(\R^d)$ as $N\to \infty$ where $X_0$ has no point masses, then the rescaled SIR epidemic processes $(X_t^N, t\geq 0)$ defined in \eqref{9ec10.25} converge weakly to  $(X_t, t\geq 0)$ on $\Omega_X$ as $N\to \infty$ where $X$ is a super-Brownian motion satisfying $(MP)^{\gamma_0, \sigma_0^2, \theta_0}_{X_0}$ with branching rate $\gamma_0=2$, diffusion coefficient $\sigma_0^2=1/3$ and drift $\theta_0=\theta-b_d$.
\end{theorem}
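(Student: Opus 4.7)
My plan is to follow the classical route: establish tightness of $(X^N)_{N\geq 1}$ on $\Omega_X$, identify any subsequential weak limit as a solution of $(MP)^{2,1/3,\theta-b_d}_{X_0}$, and invoke uniqueness of that martingale problem to conclude. The starting point is the semimartingale decomposition
\begin{align*}
X_t^N(\phi)=X_0^N(\phi)+M_t^N(\phi)+A_t^N(\phi)-K_t^N(\phi),\qquad \phi\in C_b^2(\R^d),
\end{align*}
where $A_t^N(\phi)$ is the compensator one would obtain from the underlying unsuppressed branching random walk (BRW) with rates $N$ and $N+\theta$, and $K_t^N(\phi)$ accounts for those births that the SIR rule would have suppressed. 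A direct Taylor expansion, using that each birth displaces the offspring by a vector with coordinate variance $1/(3N)$, yields $A_t^N(\phi)=\int_0^t X_s^N(\tfrac{1}{6}\Delta\phi+\theta\phi)\,ds+o(1)$, identifying $\sigma_0^2=1/3$ and the ``naive'' drift $\theta$; since each particle generates jumps of size $\pm 1/N$ at total rate $\approx 2N$, one also finds $\langle M^N(\phi)\rangle_t=\int_0^t X_s^N(2\phi^2)\,ds+o(1)$, giving $\gamma_0=2$.

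\textbf{Tightness.} Tightness of $(X^N)$ on $\Omega_X$ follows from Jakubowski's criterion: compact containment of $X^N_t(1)$ is obtained from a standard first-moment bound via domination by the BRW, while tightness of the real-valued process $t\mapsto X^N_t(\phi)$ comes from the $L^2$-bound on $M^N(\phi)$ combined with uniform $L^1$-bounds on the densities of $A^N$ and $K^N$. Since all jumps have size $1/N\to 0$, any weak limit is continuous, so identification can be carried out on $\Omega_{X,C}$.

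\textbf{The core step: $K_t^N(\phi)\Rightarrow b_d\int_0^tX_s(\phi)\,ds$.} This is the main obstacle. Writing $\mathcal{V}_s:=\bigcup_{u\leq s}\xi_u$ for the non-susceptible set, the compensator rewrites as
\begin{align*}
K_t^N(\phi)\approx \int_0^t N\,X_s^N\!\left(\phi(\cdot)\cdot\frac{|\cN(\cdot)\cap\mathcal{V}_s|}{|\cN(\cdot)|}\right)\,ds,
\end{align*}
so it suffices to show that, on average over a typical infected particle $x\in\xi_s$,
\begin{align*}
N\cdot\frac{|\cN(x)\cap\mathcal{V}_s|}{|\cN(x)|}\longrightarrow b_d.
\end{align*}
Following the Durrett--Perkins strategy for the contact process, I would couple the SIR with a dominating BRW up to the first suppression event and, for a reference particle $x$ in the current generation, enumerate its relatives sitting inside $\cN(x)$: either (i) an ancestor that happens to lie within range $N^{-1/2}$ of $x$, whose contribution yields the first sum in \eqref{9ec10.64}, or (ii) a cousin that branched off the ancestral line $l$ generations ago and then migrated $m$ steps back into a neighborhood of $x$, whose contribution (after averaging over the topology of branchings) yields the second double sum. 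Reversing time along the genealogical tree turns these counts into probabilities of the form $\P(V_n\in[-1,1]^d)$ and $\P(W_0+V_n\in[-1,1]^d)$, and in $d\geq 5$ the resulting series converge absolutely by transience of the random walk, summing precisely to $b_d$. The technical heart is (a) controlling the second moment of the neighborhood count so that a law-of-large-numbers averaging over $\sim NX_s(1)$ particles is valid, and (b) showing that the true SIR suppression perturbs the BRW prediction only by $o(1)$ at leading order.

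\textbf{Dimension four and conclusion.} The same outline applies in $d=4$, but the genealogical sum is only logarithmically convergent: one truncates at depth $\asymp\log N$ and replaces the discrete step law by its Gaussian CLT approximation, and a four-dimensional Gaussian hitting computation then delivers the constant $b_4=9/(2\pi^2)$. Combining all three identifications, any weak subsequential limit $X$ satisfies
\begin{align*}
X_t(\phi)=X_0(\phi)+M_t(\phi)+\int_0^t X_s\Big(\tfrac{1}{6}\Delta\phi+(\theta-b_d)\phi\Big)\,ds,\quad \langle M(\phi)\rangle_t=\int_0^t X_s(2\phi^2)\,ds,
\end{align*}
which is exactly $(MP)^{2,1/3,\theta-b_d}_{X_0}$. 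Uniqueness of the super-Brownian martingale problem on $\Omega_{X,C}$ (Chp.~II of \cite{Per02}) identifies $X$ as the claimed SBM and completes the proof.
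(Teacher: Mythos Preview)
Your overall strategy---semimartingale decomposition, tightness, identification of the limiting martingale problem, uniqueness---is the same as the paper's. The substantive difference is that you work directly with the SIR process $X^N$, whereas the paper introduces auxiliary processes $X^{1,N}\leq X^N\leq X^{2,N}\leq X^{0,N}$ (where $X^{0}$ is the dominating BRW, $X^{1}$ is the BRW with births suppressed on the \emph{BRW} range, and $X^{2}$ is the BRW with births suppressed on the $X^1$ range), proves convergence of $X^{1,N}$ to the SBM, and separately shows $\E\sup_{s\leq t}|X^{2,N}_s(1)-X^{1,N}_s(1)|\to 0$.

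The reason this detour matters is exactly your point (b). For $X^N$ the collision term involves the SIR range $\mathcal V_s$, which is self-referential: which sites are in $\mathcal V_s$ depends on the entire history of suppressions. Your genealogical ``enumerate the relatives'' computation---which is precisely how the paper derives $b_d$---is carried out for $X^{1}$, where every particle present and every site in the relevant range comes from an \emph{unsuppressed} BRW, so the counts $\P(V_n\in[-1,1]^d)$ etc.\ are exact. Transferring this to $X^N$ is not a perturbative afterthought: ``coupling the SIR with a dominating BRW up to the first suppression event'' does not control the cumulative effect of many suppressions on $\mathcal V_s$ and on the population of reference particles. The paper's sandwich is the mechanism that does this job---it replaces a direct comparison of $K^N$ with its BRW analogue by the two statements $K^{1}\approx K^{2}\approx b_d\int X^1$ (Lemma~\ref{9l2.6}) and $X^{2}-X^{1}\to 0$ (Proposition~\ref{9p1.1}). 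Without an analogous device, your step (b) is a genuine gap rather than a routine estimate.
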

\begin{remark}
A careful reader may have noticed that the measure-valued epidemic process $\{X_t^N\}_{t\geq 0}$ is not Markovian--the suppressed births onto the recovered sites depend on the history of the process. However, its weak limit, the super-Brownian motion $X$, is a Markov process. This may not make sense at first thinking; we will show later in the proof that the suppressed births due to collisions between distant relatives can be ignored, making $\{X_t^N\}_{t\geq 0}$ ``almost'' Markovian for $N$ large. Therefore when passing to the limit, we get a Markov process. We refer the reader to Section \ref{9s4} for more details. 
\end{remark}
Turning to the lower dimensions $d\leq 3$, the distant relatives may no longer be ignored as the occupation density exists, for which the scaling limit of the (discrete-time) spatial SIR epidemics has been studied in Lalley \cite{L09} and Lalley-Zheng \cite{LZ10}. Instead of the finer lattice, their epidemic model takes place on the integer lattice $\Z^d$ where a large population of size $N$ is located at each site,  the so-called ``Village'' model. It is shown that after suitable scaling, which agrees with our argument in \eqref{9e1.1}, the SIR epidemic process will converge to a super-Brownian motion with killing of the local time. More specifically, the limiting process $X$ is the  solution to the martingale problem
\begin{align}\label{9ec4.38}
X_t(\phi)=X_0(\phi)+M_t(\phi)+\int_0^t X_s\Big(\frac{\Delta}{6}\phi+\theta \phi\Big) ds-\int_0^t X_s(L_s \phi) ds, \forall \phi\in C_b^2(\R^d),
\end{align}
where $X$ is a continuous $M_F(\R^d)$-valued process, $L_s$ is the local time of $X$, i.e. the density function of the occupation measure $\int_0^s X_u(\cdot) du$, and $M(\phi)$ is a continuous martingale with $\langle M(\phi)\rangle_t=\int_0^t X_s(2\phi^2)ds$. We refer the reader to Theorem 2.2 of Lalley-Perkins-Zheng \cite{LPZ14} for the existence and uniqueness of the above martingale problem. Although the settings are slightly different, it has been conjectured in Frei-Perkins \cite{FP16} that the same conclusion holds for the discrete-time long-range SIR epidemics (see Conjecture 1.2 therein). We also conjecture that the same conclusion holds under our continuous-time setting.
\begin{conjecture}\label{c1.2}
Let $d\leq 3$. If $X_0^N$ converges to some appropriate compactly supported $X_0$ in $M_F(\R^d)$ as $N\to \infty$, then the rescaled SIR epidemic processes $(X_t^N, t\geq 0)$ defined in \eqref{9ec10.25} converge  weakly to  $(X_t, t\geq 0)$ on $\Omega_X$ as $N\to \infty$ where $X$ is as in \eqref{9ec4.38}.
\end{conjecture}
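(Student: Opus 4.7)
The plan is to adapt the martingale-problem framework of Lalley--Zheng~\cite{LZ10} and Lalley--Perkins--Zheng~\cite{LPZ14}, developed for the discrete-time Village model, to our continuous-time long-range SIR. The starting point is a semimartingale decomposition of $X_t^N(\phi)$ for $\phi\in C_b^2(\R^d)$. Tracking the birth and death rates carefully yields
\begin{align*}
X_t^N(\phi) = X_0^N(\phi) + M_t^N(\phi) + \int_0^t X_s^N(A^N\phi)\,ds + \theta\int_0^t X_s^N(\phi)\,ds - K_t^N(\phi) + o(1),
\end{align*}
where $A^N\phi(x)=N\bigl[|\cN(x)|^{-1}\sum_{y\in\cN(x)}\phi(y)-\phi(x)\bigr]$ is the discrete generator of the rescaled random walk (converging pointwise to $(1/6)\Delta$), $M^N(\phi)$ is a martingale with predictable quadratic variation $\int_0^t X_s^N(2\phi^2)\,ds+o(1)$, and $K_t^N(\phi)$ records the weighted rate of suppressed births,
\begin{align*}
K_t^N(\phi)=\int_0^t \frac{N+\theta}{N}\sum_{x\in\xi_s}\frac{1}{|\cN(x)|}\sum_{y\in\cN(x)}\phi(y)\,\mathbf{1}\bigl(\psi_s(y)\neq 0\bigr)\,ds.
\end{align*}

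The heart of the proof is to show $K_t^N(\phi)\to\int_0^t X_s(L_s\phi)\,ds$ jointly with $X^N\to X$. For smooth $\phi$, the inner sum can be replaced by $\phi(x)\cdot|\cN(x)\cap\cR_s^N|/|\cN(x)|$ at negligible cost, where $\cR_s^N=\bigcup_{u\le s}\xi_u$. Under the scaling \eqref{9e1.1} for $d\le 3$, a unit count (comparing the mass $\sim N^{-2}|\cN(x)\cap\cR_s^N|$ that $\int_0^s X_u^N\,du$ places in the neighborhood of Euclidean volume $\sim N^{-d/2}$) gives $|\cN(x)\cap\cR_s^N|/|\cN(x)|\sim L_s^N(x)/N$, where $L_s^N(x)$ is a smoothed empirical density of $\int_0^s X_u^N\,du$ at $x$. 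Substituting and using $(N+\theta)/N\to 1$ gives $K_t^N(\phi)\approx\int_0^t X_s^N(L_s^N\phi)\,ds$, which should converge to $\int_0^t X_s(L_s\phi)\,ds$ under joint convergence $(X^N,L^N)\Rightarrow(X,L)$.

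Tightness of $\{X^N\}$ in $\Omega_X$ follows by monotone domination by the associated unsuppressed BRW (whose convergence to SBM is classical), combined with the second-moment bound on $M^N(\phi)$ and Aldous's criterion; continuity of every subsequential limit is automatic since jumps have size $1/N\to 0$. Joint tightness of $(X^N,L^N)$ and the convergence of $L^N$ to the local time $L$ of $X$ would be established along the lines of \cite{LPZ14}, via fourth-moment estimates on the spatial BRW together with Green's function asymptotics for the simple random walk on $\Z^d_R$ as $R\to\infty$. Once tightness is in hand, passing to the limit in the semimartingale decomposition shows that any subsequential weak limit $X$ solves \eqref{9ec4.38}, and uniqueness from Theorem 2.2 of \cite{LPZ14} promotes subsequential convergence to weak convergence of the full sequence.

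The principal obstacle is the joint convergence $(X^N,L^N)\Rightarrow(X,L)$ and the identification of the limiting killing term. In contrast with Theorem~\ref{9t0}, where distant-relative collisions are negligible and only close-relative effects survive as the constant drift correction $b_d$, in $d\le 3$ every pair of distant relatives contributes a non-negligible collision rate that aggregates into a genuine density-dependent killing; controlling this requires sharp estimates on the BRW genealogy uniform in the spatial scales set by \eqref{9e1.1}, as well as ruling out short re-infection cycles that are absent in the Village model. A further technical difficulty is that the limiting local time $L$ is only H\"older continuous of low order in $d=2,3$, so passing to the limit in $X_s^N(L_s^N\phi)$ demands careful regularization and uniform-in-$N$ moduli of continuity for $L^N$; handling a compactly supported but potentially singular $X_0$ further demands short-time estimates that keep $L^N$ well-defined from the outset.
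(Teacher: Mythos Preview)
The statement you are addressing is labelled a \emph{Conjecture} in the paper, and the paper does not prove it; it merely motivates it by analogy with Lalley~\cite{L09}, Lalley--Zheng~\cite{LZ10}, and the discrete-time conjecture in Frei--Perkins~\cite{FP16}. There is therefore no ``paper's own proof'' to compare your proposal against. Your write-up is not a proof but an outline of a programme, and indeed you explicitly flag the principal obstacles yourself (joint convergence $(X^N,L^N)\Rightarrow(X,L)$, identification of the killing term, regularity of $L^N$).

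That said, your sketch is broadly the right template: the semimartingale decomposition you write down is the natural analogue of the paper's decomposition \eqref{9ea3.67}, and the heuristic $|\cN(x)\cap\cR_s^N|/|\cN(x)|\sim L_s^N(x)/N$ is the correct reason the collision term becomes a local-time killing rather than a constant drift in $d\le 3$. The genuine gap is exactly what you identify: in $d\ge 4$ the paper shows (Lemma~\ref{9l4.1a}, Lemma~\ref{9l4.3a}) that collisions between distant relatives are asymptotically negligible, which is what makes the limit Markov and the drift constant; in $d\le 3$ those same estimates diverge, and one must instead prove that the aggregate of distant-relative collisions converges to the local-time functional. Neither this paper nor any cited reference carries out that step for the long-range model on the fine lattice, and your proposal does not supply the missing moment/Green-function estimates---it only names them. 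So the proposal is a reasonable roadmap, but it is not a proof, and the paper makes no claim to have one.
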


We briefly describe the idea for the proof of Theorem \ref{9t0} with $d\geq 4$, which was inspired by the contact process proof in \cite{DP99} along with some necessary adjustments: Define a sequence of approximating processes $X_t^{k, N}$ that gives the upper and lower bounds for the rescaled SIR epidemic process $X_t^N$. Recall from \eqref{9e10.13} that $\xi_t$ is the set of infected sites of the SIR epidemic process. First, we define $\xi_t^0$ to be a branching random walk by ignoring the collision rules $(c),(d)$, where the multi-occupancy of particles is allowed, and we may view $\xi_t^0$ as a set in which repetitions of elements are allowed. For $k\geq 1$,   let $\xi_t^k$ be the branching random walk $\xi_t^0$ that avoids births onto sites that have been visited by $\xi_t^{k-1}$. Since $\xi_t^0$ is an overestimate of the set of infected sites $\xi_t$, we get $\xi_t^1$ is an underestimate of $\xi_t$ as the set of forbidden sites is larger for $\xi_t^1$. On the other hand, $\xi_t^2$ will be an overestimate since it only removes particles that are born onto the smaller set visited by $\xi_t^1$. We may continue this procedure to define $\xi_t^k$ for $k\geq 3$ but we only need $k=0,1,2$ for our proof.  
 From these approximating branching random walks $\xi_t^k$ with $k=0,1,2$, we may define their rescaled versions by
 \begin{align*} 
X_t^{k,N}:=\frac{1}{N}\sum_{x\in \xi_t^k} \delta_x.
\end{align*}
 Write $\mu\geq \nu$ for measures $\mu, \nu$ if $\mu(A) \geq \nu(A)$ for any Borel set $A\subset \R^d$. 
 By construction we have 
  \begin{align}\label{9e1.7}
  X_t^{1,N}\leq X_t^N\leq X_t^{2,N}\leq X_t^{0,N}.
  \end{align}
 \begin{proposition}\label{9p1.1}
Let $d\geq 4$. Under the hypothesis of Theorem \ref{9t0},  we have for all $T>0$,
\begin{align*}
\E\Big(\sup_{t\leq T} \Big\vert X_t^{2,N}(1)-X_t^{1,N}(1)\Big\vert \Big)\to 0 \text{ as } N\to \infty.
\end{align*}
 \end{proposition}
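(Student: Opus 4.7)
The nonnegative quantity $D_t^N:=X_t^{2,N}(1)-X_t^{1,N}(1)=N^{-1}|\xi_t^2\setminus\xi_t^1|$ starts at zero, and the plan is to study its dynamics directly. Writing $\cH_s^k:=\bigcup_{u\le s}\xi_u^k$ for the visited-site history, a new particle enters $\xi^2\setminus\xi^1$ only via a \emph{source event}: a birth from a particle currently in $\xi_s^1$ whose target site lies in $\cH_s^0\setminus\cH_s^1$ --- valid under the $\xi^2$-rule but suppressed under the $\xi^1$-rule. Between source events, $\xi^2\setminus\xi^1$ evolves like a BRW with birth/death rates $(N+\theta,N)$, thinned by births into $\cH^1$; in particular its growth is stochastically dominated by the unrestricted BRW with rates $(N+\theta,N)$ started from a single particle.

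I would then write out the semimartingale decomposition $D_t^N=A_t^N+M_t^N$. The compensator splits into an immigration term $\int_0^t X_s^{1,N}(1)(N+\theta)\bar p_{01}(s)\,ds$, where $\bar p_{01}(s)$ denotes the empirical mean of $|\cN(x)\cap(\cH_s^0\setminus\cH_s^1)|/|\cN(x)|$ over $x\in\xi_s^1$, together with a coefficient on $D_s^N$ that converges to $\theta-b_d$. The predictable quadratic variation of $M^N$ is bounded by $C\int_0^t\bigl[D_s^N+(N+\theta)\bar p_{01}(s)X_s^{1,N}(1)/N\bigr]\,ds$. Combining Doob's $L^2$ inequality on $M^N$ with a Gr\"onwall estimate on the drift then reduces the proposition to proving
\begin{align*}
\sup_{s\le T}(N+\theta)\,\E[\bar p_{01}(s)]\to 0\quad\text{as }N\to\infty.
\end{align*}

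This final estimate is the main obstacle. Writing $\bar p_k(s)$ for the analogous empirical density with $\cH_s^k$ in place of $\cH_s^0\setminus\cH_s^1$, one has $\bar p_{01}\le \bar p_0-\bar p_1$ up to a negligible overlap in $d\ge 4$. By the heuristic justifying \eqref{9e1.1}, $(N+\theta)\E[\bar p_0(s)]\to b_d$ arises from a first-moment computation on the backward-in-time random-walk displacement of a relative of a typical $\xi^0$-particle that visited a neighboring site, with the $R^d$-denominator and the scaling in \eqref{9e1.1} absorbing the $\log N$ factor present in $d=4$. The analog $(N+\theta)\E[\bar p_1(s)]$ converges to the \emph{same} $b_d$ precisely because the killed-in-$\xi^1$ lineages in the backward ancestry contribute only a vanishing correction in $d\ge 4$: proving this requires a second-moment computation tracking not only a past relative who visited a near-site, but additionally a killing event on that relative's own lineage --- itself a rare $O(1/N)$-per-birth event --- so that $\E[\bar p_0(s)-\bar p_1(s)]=O(1/N^2)$. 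I would carry this out by an ancestor-of-ancestor decomposition on the genealogy of $\xi^0$, applying Green's-function bounds for the convolution of two random-walk transition densities on $\Z_R^d$ in the spirit of Durrett-Perkins \cite{DP99}; the $d=4$ case again uses the $\log N$ correction built into \eqref{9e1.1}, producing $(N+\theta)\E[\bar p_{01}(s)]=O(1/N)\to 0$ uniformly on $[0,T]$.
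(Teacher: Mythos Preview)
Your semimartingale-plus-Gr\"onwall reduction is exactly the paper's route: taking $\phi\equiv 1$ in the decomposition \eqref{9ea3.67} for $n=1,2$ and subtracting gives
\[
X_t^2(1)-X_t^1(1)=M_t^2(1)-M_t^1(1)+\theta\int_0^t\bigl(X_s^2(1)-X_s^1(1)\bigr)\,ds+\tilde E_t,
\]
where $\tilde E_t$ contains $K_t^1(1)-K_t^2(1)$, and the martingale difference has bracket controlled by $\int_0^t(X_s^2-X_s^1)(1)\,ds$ via Lemma~\ref{9l2.1}. Your ``immigration rate'' $(N+\theta)\bar p_{01}$ is precisely this collision-term difference in rate form, so the content of your key estimate is Lemma~\ref{9l2.6}.

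The gap is in your proof sketch of that estimate. The claim $\E[\bar p_0-\bar p_1]=O(1/N^2)$ rests on ``a killing event on that relative's own lineage is $O(1/N)$ per birth,'' but you have not accounted for the \emph{number} of births on the relative's lineage after it branched from $\beta$. If the branching occurred time $\tau$ ago, there are order $N\tau$ such births, each with killing probability $\sim b_d/N$, so the total killing probability along the lineage is $O(\tau)$, not $O(1/N)$. In particular, relatives that branched at macroscopic times $\tau=O(1)$ have killing probability of order one, and your ``ancestor-of-ancestor'' heuristic does not yield the claimed $O(1/N)$ bound for $(N+\theta)\E[\bar p_{01}]$.

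The paper's remedy is to introduce an intermediate time scale $\tau_N$ with $\tau_N\to 0$ and $N\tau_N\to\infty$ (see \eqref{9e3.34}), and to argue in two pieces: (i) relatives branching before time $T_\beta-\tau_N$ contribute negligibly to $\bar p_0$ itself (Lemma~\ref{9l4.3a}, using the Green's-function decay in $d\ge 4$), so a fortiori to $\bar p_0-\bar p_1$; (ii) for relatives branching within $\tau_N$, both $\beta$ and $\gamma$ share the same ancestor alive in $X^1$ at time $T_\beta-\tau_N$, and the subsequent killing probability is $O(\tau_N)\to 0$ (this is the content of Lemma~\ref{9l5.4} and the identity \eqref{9e3.7}). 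Neither piece gives a clean $O(1/N)$; the argument is genuinely two-scale, and carrying it out carefully (including the $d=4$ logarithms) occupies Sections~\ref{9s3}--\ref{9s8}.
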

Together with \eqref{9e1.7}, the above proposition implies that it suffices to find the limit of $X^1$ or $X^2$, so Theorem \ref{9t0} is immediate from the following result.
 \begin{proposition}\label{9p1.2}
 Let $d\geq 4$. Under the hypothesis of Theorem \ref{9t0},  we have $\{X_t^{1,N}\}_{t\geq 0}$ converges weakly to  $X$ on $\Omega_X$ as $N\to \infty$ where $X$ is a super-Brownian motion with branching rate $2$, diffusion coefficient $1/3$ and drift $\theta-b_d$.
 \end{proposition}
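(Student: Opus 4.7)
\textbf{Proof proposal for Proposition \ref{9p1.2}.}

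The plan is to identify every subsequential weak limit of $\{X^{1,N}\}$ as a solution of $(MP)^{2, 1/3, \theta - b_d}_{X_0}$ and then invoke the uniqueness of that martingale problem. For $\phi \in C_b^2(\R^d)$, I would use the pre-generator of $\xi^1$ to write the semimartingale decomposition
\begin{align*}
X^{1,N}_t(\phi) = X^{1,N}_0(\phi) + A^N_t(\phi) + \theta\int_0^t X^{1,N}_s(\phi)\,ds - S^N_t(\phi) + M^N_t(\phi),
\end{align*}
where $A^N_t(\phi)$ is the spatial drift generated by the BRW steps (rate $N+\theta$, size $N^{-1/2}$), $S^N_t(\phi)$ is the compensator of births suppressed because the target is already in the range of $\xi^0$, and $M^N_t(\phi)$ is a martingale.

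A second-order Taylor expansion in the uniform step of variance $\tfrac13 N^{-1}$ per coordinate yields $A^N_t(\phi) \to \int_0^t X_s(\tfrac16\Delta\phi)\,ds$, identifying $\sigma_0^2 = 1/3$. Since each death or successful birth shifts $X^{1,N}(\phi)$ by $\pm\phi(\cdot)/N$ and these occur at total rate $\approx 2N$ per particle, one gets $\langle M^N(\phi)\rangle_t \to 2\int_0^t X_s(\phi^2)\,ds$, giving $\gamma_0 = 2$. The only genuinely new piece is
\begin{align*}
S^N_t(\phi) = \int_0^t (N+\theta)\sum_{x \in \xi^1_s}\frac{\phi(x)}{N}\pi^N_s(x)\,ds,
\end{align*}
where $\pi^N_s(x)$ is the fraction of neighbors $y\in\cN(x)$ already visited by $\xi^0$ up to time $s$. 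I would show that $\E[\pi^N_s(x)] = b_d/N + o(1/N)$ uniformly in $x$ (for $s$ in a compact set bounded away from $0$) and that the $L^2$ fluctuations of $\pi^N_s(x)$ about its mean are $o(1/N)$. This yields $S^N_t(\phi) \to b_d\int_0^t X_s(\phi)\,ds$ and hence the drift $\theta_0 = \theta - b_d$.

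The main obstacle is the genealogical identification of $b_d$ and the uniform moment control of $\pi^N_s$. I would condition on the ancestral line of a typical particle at $x$ and, for each earlier time $u$, decompose the probability that a relative branching off at $u$ re-enters $\cN(x)$ before time $s$ according to the generation counts $l,j$ on the two sides of their common ancestor; the alternation between waiting periods and birth jumps reproduces precisely the mixed distribution of $Y_1$ (zero or uniform on $[-1,1]^d$) in \eqref{9ec10.64}, and the initial offset is carried by $W_0$. A local CLT applied to $V_{l+m}$ then sums to the explicit series defining $b_d$ when $d\ge 5$. When $d=4$ the analogous sum diverges logarithmically but is compensated exactly by the normalization $R^d = N\log N$, and the four-dimensional Gaussian heat-kernel at the origin with step-variance $1/3$ returns the closed form $b_4 = 9/(2\pi^2)$. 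Second-moment control of the coalescent of two BRW ancestral lines, parallel to the contact-process estimates in \cite{DP99}, handles the fluctuations. Tightness of $\{X^{1,N}\}$ on $\Omega_X$ follows from the domination $X^{1,N}\le X^{0,N}$ and a standard Aldous--Rebolledo/Jakubowski argument applied to the BRW $X^{0,N}$; every subsequential limit therefore solves $(MP)^{2,1/3,\theta-b_d}_{X_0}$, and uniqueness of the martingale problem closes the proof.
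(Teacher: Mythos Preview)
Your overall architecture matches the paper's: decompose $X^{1,N}_t(\phi)$ as in \eqref{9ea3.67}, identify the diffusion and martingale parts via Taylor expansion and quadratic variation (the paper's Lemmas \ref{9l2.1}--\ref{9l2.4}), and reduce everything to the convergence of the collision term (the paper's Lemma \ref{9l2.6}). Tightness and the passage to the limit are then, as you say, exactly the argument of Proposition~2 in \cite{DP99}.

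The gap is in your treatment of $S^N_t(\phi)$. Your plan is to show $\E[\pi^N_s(x)]=b_d/N+o(1/N)$ and that the per-particle $L^2$ fluctuation of $\pi^N_s(x)$ is $o(1/N)$, and then sum. But $\pi^N_s(x)$ counts neighbours visited by \emph{all} of $\xi^0$, including particles from other ancestors and distant relatives on the same tree; these contributions are not $o(1/N)$ pointwise but only after integration over $s$ and summation over particles (this is where the no-atom hypothesis on $X_0$ enters, via Lemma \ref{9l4.1a}, and where the distant-relative bound Lemma \ref{9l4.3a} is used). More seriously, even if the per-particle variance were $o(1/N^2)$, the $\pi^N_s(x)$ for different particles $x$ are strongly correlated through common ancestry, so summing $N$ such terms does not give $o(1)$ without further structure. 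The paper resolves this by introducing a cutoff time $\tau_N$ (with $N\tau_N\to\infty$, $\tau_N\to 0$), discarding all collisions with relatives that branched off before time $s-\tau_N$, and then conditioning on $\cF_{s-\tau_N}$: the subtrees rooted at the particles in $\cA(s-\tau_N)$ are conditionally i.i.d., and the relevant second moment is that of a \emph{single subtree} contribution $Z_1(\tau_N)$ (Lemma \ref{9l7.1}). Your sentence about ``coalescent of two BRW ancestral lines'' gestures at this, but the truncation step and the conditional-independence structure are the actual mechanism, not a straightforward per-particle variance bound.

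A smaller point you also skip: $\pi^N_s(x)$ counts visited \emph{sites}, whereas the genealogical computation that produces $b_d$ counts visiting \emph{particles}; passing between the two requires bounding double occupancy (the paper's Lemma \ref{9l5.3} and Lemma \ref{9l5.0}).
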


\section{Proof of the main result}\label{9s2}

Let $\theta\in \R$ and consider $N\in \qN$ such that $N>2\vert \theta\vert $.  For simplicity, we may set $\theta\geq 0$ as all the calculations below work similarly for $\theta<0$. Define the rescaled fine lattice where all our processes live on by
\begin{align}\label{9e1.3}
\cL_N=
\begin{cases}
N^{-1/2} \cdot N^{-1/d} \cdot \Z^d,&d\geq 5\\
N^{-1/2}\cdot  N^{-1/4}(\log N)^{-1/4} \cdot \Z^4, &d=4.
\end{cases}
\end{align}
The first scaling factor $N^{-1/2}$ is from the usual space-time scaling while the second scaling factor is from $\Z_R^d$ and the relation between $N$ and $R$ as in \eqref{9e1.1}.
We use the same labeling system from \cite{DP99} to study our SIR epidemic process. Define
\begin{align}\label{9e1.16}
\cI=\bigcup_{n=0}^\infty \qN \times \{0,1\}^n=\{(\beta_0, \beta_1, \cdots, \beta_n): \beta_0\in \qN, \beta_i \in \{0,1\}, 1\leq i\leq n\},
\end{align}
where $\beta_0$ labels the ancestor of a particle $\beta$. If $\beta=(\beta_0, \beta_1, \cdots, \beta_n)$ for some $n\geq 0$, then we let $\vert \beta\vert =n$ be the generation of $\beta$ and write
$\beta\vert i=(\beta_0, \cdots, \beta_i)$ for $0\leq i\leq n$. Let $\pi \beta=(\beta_0, \beta_1, \cdots, \beta_{n-1})$ be the parent of $\beta$ and set $\pi \beta=\emptyset$ if $\vert \beta\vert =0$. For $j=0$ or $1$,   denote by $\beta \vee j=(\beta_0, \beta_1, \cdots, \beta_n, j)$ the offspring of $\beta$. If $\gamma_0=\beta_0$,  use $\beta\wedge \gamma$ to denote the most recent common ancestor of $\beta$ and $\gamma$, that is, if we let $k_{max}=\max\{0\leq k\leq |\beta|\wedge |\gamma|: \beta|k=\gamma|k\}$, then $\gamma\wedge \beta=\beta|k_{max}=\gamma|k_{max}$. Set $\beta\wedge \gamma=\emptyset$ if $\gamma_0\neq \beta_0$. Write $ \beta\geq \gamma$ if $\beta$ is an offspring of $\gamma$ and $\beta>\gamma$ if it is strict.

The set of initially infected particles is given by $\{x_i: 1\leq i\leq M_N\} \subseteq \cL_N$. We further assume that $X_0^{N}=\frac{1}{N}\sum_{i=1}^{M_N} \delta_{x_i}$ converges to some $X_0\neq 0$ in $M_F(\R^d)$. For any $i>M_N$,   set $x_i$ to be the cemetery state $\Delta$. {\bf Unless otherwise noted, we only consider particles $\beta$ with $1\leq \beta_0\leq M_N$ below.} Let $K_0^N$ be a finite subset of $\cL_N$ disjoint from $\{x_i\}$, denoting the set of initial recovered sites at time $0$.

On some complete probability space $(\Omega, \cF, \P)$, we define the following independent collections of random variables:
\begin{itemize} 
\item $\{t_\beta: \beta \in \cI\}$ are i.i.d. with distribution $Exp(2N+\theta)$.
\item $\{\delta_\beta: \beta \in \cI\}$  are i.i.d. with  $\P(\delta_\beta=-1)= \frac{N}{2N+\theta}$, and $\P(\delta_\beta=1)= \frac{N+\theta}{2N+\theta}$.
\item $\{e_\beta: \beta \in \cI\}$  are i.i.d. with distribution  $\P(e_\beta=0)= \P(e_\beta=1)=\frac{1}{2}$.
\item $\{W^\beta: \beta \in \cI\}$ are i.i.d. that are uniform on $\cN_N=\{y\in \cL_N: 0<\| y\| _\infty\leq N^{-1/2}\}$.
\end{itemize}
Here we use $t_\beta$ to measure the time until a birth or death event occurs ($2N+\theta$ is the total rate). A death event occurs if $\delta_\beta=-1$ and a birth event if $\delta_\beta=1$.  When a birth event occurs, the new particle is labeled by $\beta \vee e_\beta$ and is displaced from its parent $\beta$ by an amount of $W^\beta$. Relabel the parent particle $\beta$ by $\beta \vee (1-e_\beta)$. We use $e_\beta$ to record the change of the family line of $\beta$. The lifetime of a particle $\beta$ is given by
\begin{align}\label{9ea6.95}
T_\beta=\sum_{m=0}^{\vert \beta\vert } t_{\beta\vert m}.
\end{align}
By convention, we let $T_\emptyset=-\infty$. 
The lifetime is not the death time of the particle--it might already be dead before $T_\beta$ if $\delta_{\beta\vert m}=-1$ for some $m<\vert \beta\vert $. We further define the death time of $\beta$ by
\begin{align}\label{e6.95}
\zeta_{\beta}^0=T_\beta \wedge \inf\{T_{\beta\vert m}: \delta_{\beta\vert m}=-1, m< \vert \beta\vert \},
\end{align}
where $\inf \emptyset =\infty$. Hence we get $\zeta_{\beta}^0=T_\beta$ if no death occurs along the family line of $\beta$.

Recall we use $\beta \vee e_\beta$ to label the new particle displaced from its parent $\beta$. The family line of $\beta$ is changing iff $e_{\beta\vert m}=\beta_{m+1}$, so the position of the family line of $\beta$ is given by 
\begin{align}\label{ea3.22}
B_t^\beta=
\begin{cases}
x_{\beta_0}+\sum_{m=0}^{\vert \beta\vert -1} W^{\beta\vert m} 1(e_{\beta\vert m}=\beta_{m+1}, T_{\beta\vert m}\leq t), &\text{ if } t<\zeta_\beta^0\\
\Delta, &\text{ if } t\geq \zeta_\beta^0.
\end{cases}
\end{align}
The current location, $B^\beta$,  of $\beta$ is given by
\begin{align*}
B^\beta=B_{T_\beta^{-}}^\beta=B_{T_{\pi \beta}}^\beta,
\end{align*}
where $T_\beta^{-}$ is the time infinitesimal close to and before $T_\beta$.  Define the $\sigma$-field by
\begin{align*}
\cF_t=\sigma\{1(T_\beta\leq t)(T_\beta, \delta_\beta, e_\beta, W_\beta): \beta\in \cI\}\vee \{\P-\text{ null sets}\}.
\end{align*} 
Write $\beta \sim t$ if $T_{\pi \beta}\leq t<T_\beta$, meaning the particle $\beta$ might be alive at time $t$. The particles that are dead will be identified by setting their location to be $\Delta$ and letting $\phi (\Delta)=0$ for any function $\phi$. Define our first measure-valued process, the branching random walk $X_t^0$, by
\begin{align}\label{9e9.01}
X_t^0(\phi)=X_t^{0,N}(\phi)=\frac{1}{N}\sum_{\beta \sim t} \phi(B_t^\beta).
\end{align}
In particular, when $t=0$, by assumption we get 
\begin{align*}
X_0^{0}=X_0^{0,N}=\frac{1}{N}\sum_{i=1}^{M_N} \delta_{x_i}\to X_0\in M_F(\R^d) \text{ as } N\to \infty,
\end{align*}
thus giving $X_0^0(1)$ is bounded for all $N$ (most of the time we will suppress the dependence of $X_t^{0, N}$ on $N$ and simply write $X_t^0$, but when necessary we will write $X_t^{0, N}$). The following is an elementary result on the first moment of the branching random walk from Lemma 2.9 of \cite{DP99}.

\begin{lemma}\label{9l2.0}
If $\phi: \R^d\to \R$ is a bounded Borel measurable function, then 
\begin{align*}
\E(X_t^0(\phi))=e^{\theta t} \int E_x^N(\phi(B_t^N)) X_0^0(dx),
\end{align*}
where $B_t^N$ is a continuous time random walk starting from $x$ under the law $P_x^N$ that takes a step uniformly over $\cN_N$ at rate $N+\theta$.
\end{lemma}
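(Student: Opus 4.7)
The plan is to exploit the fact that $X_t^0$ is a pure branching random walk (rules (c) and (d) are switched off) so that the descendant trees rooted at the $M_N$ initial particles evolve independently, and then to derive and solve a linear forward equation for the mean measure. By linearity in the initial condition, it suffices to establish for a single ancestor at $x$ that
\begin{equation*}
m(t,x) := \E\Bigl[\sum_{\beta \sim t}\phi(B_t^\beta)\Bigr] = e^{\theta t}\, E_x^N\bigl[\phi(B_t^N)\bigr];
\end{equation*}
summing this over the $M_N$ atoms of $N X_0^0$ and dividing by $N$ then reproduces the displayed formula.

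To derive an equation for $m$, I would condition on the first ancestor event, whose waiting time $t_\emptyset$ has rate $2N+\theta$. With probability $N/(2N+\theta)$ it is a death and that branch contributes zero thereafter; with probability $(N+\theta)/(2N+\theta)$ it is a birth, producing two independent subtrees rooted at $x$ and at $x+W$ (with $W$ uniform on $\cN_N$) each contributing $m(t-s,\cdot)$. Differentiating the resulting renewal equation in $t$ gives
\begin{equation*}
\partial_t m(t,x) = -N\, m(t,x) + (N+\theta)\, E_W\bigl[m(t, x+W)\bigr] = A\, m(t,x) + \theta\, m(t,x),
\end{equation*}
with initial datum $m(0,x) = \phi(x)$, where $A\psi(x) := (N+\theta)\bigl(E_W[\psi(x+W)] - \psi(x)\bigr)$ is exactly the generator of the rate-$(N+\theta)$ random walk $B_t^N$ of the lemma.

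Since $u(t,x) := E_x^N[\phi(B_t^N)]$ is the bounded solution of $\partial_t u = A u$ with $u(0,\cdot) = \phi$, the function $e^{\theta t} u(t,x)$ solves the same Cauchy problem as $m$; because $A$ is a bounded operator on $C_b(\R^d)$, bounded solutions are unique and we conclude $m(t,x) = e^{\theta t} E_x^N[\phi(B_t^N)]$. I expect no serious obstacle here — this is a standard many-to-one first moment computation, identical to the one behind Lemma~2.9 of \cite{DP99}. The only point demanding genuine care is the bookkeeping on the birth-minus-death cancellation $-(2N+\theta) + (N+\theta) = -N$ that isolates the correct generator $A$ of $B_t^N$ at rate $N+\theta$ (and not at rate $2N+\theta$ or $(N+\theta)/2$); a convenient sanity check is to verify instead that $e^{-\theta t} X_t^0(P_{T-t}^N\phi)$ is a mean-zero martingale for $t\le T$, where $P_t^N$ is the semigroup of $B_t^N$.
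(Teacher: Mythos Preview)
Your argument is correct and is exactly the standard many-to-one computation; the paper itself gives no proof but simply cites Lemma~2.9 of \cite{DP99}, and your forward-equation derivation is precisely the kind of argument behind that lemma. Your bookkeeping check $-(2N+\theta)+(N+\theta)=-N$ and the identification of the generator $A$ at rate $N+\theta$ are both right.
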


Now we turn to the SIR epidemic process. Denote by $\text{Supp}(\mu)$ the closed support of a measure $\mu$. Recall $K_0^N \subset \cL_N$ is the set of recovered sites at time $0$. If $t\to \mu_t$ is a cadlag measure-valued path, we define
\begin{align}\label{9e10.93}
\cR_t^\mu:=\bigcup_{s\leq t} \text{Supp}(\mu_s) \quad \text{ and }  \quad \overline{\cR}_t^\mu:=\cR_t^\mu\cup K_0^N.
\end{align}
Assume $\{K_0^N\}_{N\geq 1}$ satisfies
\begin{align}\label{9e0.93}
\lim_{N\to \infty} \frac{1}{N} \E\Big( \sum_{\beta} 1(T_\beta\leq t, B^\beta\neq \Delta)  1(B^\beta+W^\beta\in K_0^N)\Big)=0.
\end{align}
The above ensures that the suppressed births onto the initial recovered sites can be ignored, which of course holds if $K_0^N=\emptyset$ for all $N\geq 1$. We will show below that the above is also necessary to obtain the convergence result as in Theorem \ref{9t0}.
Define
\begin{align}\label{9ea3.01}
\zeta_\beta(\mu)=\zeta_\beta^0\wedge \inf \Big\{T_{\beta\vert m}: m<\vert \beta\vert , e_{\beta\vert m}=\beta_{m+1}, B_{T_{_{\beta\vert m}}}^\beta\in \overline{\cR}_{T_{\beta\vert m}^{-}}^\mu\Big\},
\end{align}
to characterize the first time the family line of $\beta$ hits a site that has already been occupied by $\mu$. Define our rescaled SIR epidemic by
\begin{align}\label{9ec3.01}
X_t(\phi)=X_t^N(\phi)=\frac{1}{N} \sum_{\beta \sim t} \phi(B_t^\beta) 1(\zeta_\beta(X)>t).
\end{align}
In the definition of $\zeta_\beta(X)$,  we use $\overline{\cR}_t^X$ to denote the set of the recovered and infected sites for each time $t$, which are exactly the forbidden locations to give births. Similar to (2.6) of \cite{DP99}, the existence and uniqueness of $X_t$ are trivial if the initial mass is finite. Following the discussion below Conjecture \ref{c1.2}, we define
  \begin{align}\label{9e0.03}
X_t^n(\phi)=X_t^{n,N}(\phi)=\frac{1}{N} \sum_{\beta \sim t} \phi(B_t^\beta) 1(\zeta_\beta^n>t)    \text{ where }  \zeta_\beta^n=\zeta_\beta({X^{n-1}})\text{ for } n=1, 2.
\end{align}
Recall $X_t^0$ from \eqref{9e9.01}. One may easily see that $X_t^0\geq X_t$ for any $t$ (recall $\mu\geq \nu$ for measures $\mu,\nu$ on $\R^d$ if $\mu(\phi)\geq \nu(\phi)$ for all functions $\phi\geq 0$). Thus it is immediate that $\overline{\cR}_t^X\subseteq \overline{\cR}_t^{X^0}$. It follows that $\zeta_\beta^1=\zeta_\beta({X^{0}})\leq \zeta_\beta(X)$ and hence $X_t^1\leq X_t$. Similar reasoning gives $X_t^2\geq X_t$. One may conclude
  \begin{align}\label{9e10.03}
X_t^1\leq X_t \leq X_t^2 \leq X_t^0, \quad \forall t\geq 0.
\end{align}
Let $n=1$ or $2$.
 Consider a particle $\beta$ born at time $T_{\pi \beta}$. In order that such a particle, $\beta$, is alive in $X^n$, we require $\zeta_\beta^n>T_{\pi\beta}$. At the end of its lifetime $T_\beta$,  there is either a death event or a birth event happening to $\beta$: on the event of a death, i.e. $\delta_\beta=-1$, the particle $\beta$ is lost; on the event of a birth, i.e. $\delta_\beta=1$, the particle at $B^\beta$ is relabeled and a new particle will be sent to its neighboring site with displacement $W^\beta$ as long as the site $B^\beta+W^\beta$ has never been occupied before by $X^{n-1}$. Collecting the information, we may write
\begin{align*}
&X_{T_\beta}^n(\phi)-X_{T_\beta^-}^n(\phi)\\
&= \frac{1}{N} 1_{\{\zeta_\beta^n>T_{\pi \beta}\}} \Big\{ -\phi(B^\beta)1_{\{\delta_\beta=-1\}} +  1_{\{\delta_\beta=1\}} \phi(B^\beta+W^\beta) 1(B^\beta+W^\beta\notin \overline{\cR}^{X^{n-1}}_{T_\beta^-})\Big\}\\
&=\frac{1}{N} 1_{\{\zeta_\beta^n>T_{\pi \beta}\}}\Bigg\{  \phi(B^\beta)\delta_\beta +1_{\{\delta_\beta=1\}} \times \\
&\quad \quad\quad\quad \quad\quad\Big[ \big(\phi(B^\beta+W^\beta) -\phi(B^\beta)\big)-\phi(B^\beta+W^\beta) 1(B^\beta+W^\beta\in \overline{\cR}^{X^{n-1}}_{T_\beta^-}) \Big]\Bigg\},
\end{align*}
where the second equality uses $\delta_\beta\in \{1,-1\}$. Summing the above over $\beta$ with $T_\beta\leq t$ and introducing
\begin{align}\label{9e0.94}
a_\beta^n(t)\equiv 1(T_\beta\leq t, \zeta_\beta^n>T_{\pi \beta})=1(T_\beta\leq t, \zeta_\beta^n\geq T_{\beta}),
\end{align}
 we get
\begin{align*}
X_{t}^n(\phi)=X_{0}^n(\phi)&+\frac{1}{N} \sum_{\beta} a_\beta^n(t)  \phi(B^\beta)\delta_\beta +\frac{1}{N} \sum_{\beta} a_\beta^n(t) 1_{\{\delta_\beta=1\}}\Big(\phi(B^\beta+W^\beta) -\phi(B^\beta)\Big) \\
&-\frac{1}{N} \sum_{\beta} a_\beta^n(t) 1_{\{\delta_\beta=1\}} \phi(B^\beta+W^\beta) 1(B^\beta+W^\beta\in \overline{\cR}^{X^{n-1}}_{T_\beta^-}).
 \end{align*}
Centering $\delta_\beta$ and $1_{\{\delta_\beta=1\}}$ by their expectations, we arrive at 
\begin{align}\label{9e2.1}
X_{t}^n(\phi)=X_{0}^n(\phi)&+\frac{1}{N} \sum_{\beta} a_\beta^n(t)  \phi(B^\beta) (\delta_\beta-\frac{\theta}{2N+\theta}) \\
&+\frac{\theta}{2N+\theta} \frac{1}{N} \sum_{\beta} a_\beta^n(t)  \phi(B^\beta)\nn\\
&+\frac{N+\theta}{2N+\theta} \frac{1}{N} \sum_{\beta} a_\beta^n(t)  \Big(\phi(B^\beta+W^\beta) -\phi(B^\beta)\Big)\nn \\
&-\frac{N+\theta}{2N+\theta} \frac{1}{N} \sum_{\beta} a_\beta^n(t)  \phi(B^\beta+W^\beta) 1(B^\beta+W^\beta\in \overline{\cR}^{X^{n-1}}_{T_\beta^-})\nn\\
 &+\frac{1}{N} \sum_{\beta} a_\beta^n(t)\Big(1_{\{\delta_\beta=1\}}-\frac{N+\theta}{2N+\theta}\Big)\times\nn\\
&\quad\quad\quad\Big(\phi(B^\beta+W^\beta)1(B^\beta+W^\beta\notin \overline{\cR}^{X^{n-1}}_{T_\beta^-}) -\phi(B^\beta) \Big).\nn
\end{align}
Rewrite the above by giving a notation for each term on the right-hand side to get
\begin{align}\label{9ea3.67}
X_{t}^n(\phi)&=X_{0}^n(\phi)+M_t^n(\phi)+D_t^{n,1}(\phi)+D_{t}^{n,2}(\phi)-K_{t}^{n}(\phi)+E^{t}_{n}(\phi),
\end{align}
where $M_t^n(\phi)$ represents the martingale term, $D_t^{n,1}(\phi)$ the drift term, $D_t^{n,2}(\phi)$ the diffusion term, $K_{t}^{n}(\phi)$ the collision term and $E_t^{n}(\phi)$ the error term. Set $\eps_N=\theta/(2N+\theta)$. Throughout the rest of the paper, we always pick 
\begin{align*}
\phi \in C_b^3(\R^d) \text{ and } \| \phi\| _\infty=\sup_{x\in \R^d} \vert \phi(x)\vert .
\end{align*}
\begin{lemma}\label{9l2.1}
Let $n=1$ or $2$. We have $M_t^n(\phi)$ is an $\cF_t$-martingale with 
\begin{align*}
\la M^n(\phi)\ra_t=\Big(2+\frac{\theta}{N}\Big)(1-\eps_N^2) \int_0^t X_r^n(\phi^2) dr.
\end{align*}
Moreover, 
\begin{align*}
\la M^2(\phi)-M^1(\phi)\ra_t=\Big(2+\frac{\theta}{N}\Big)(1-\eps_N^2) \int_0^t (X_r^2(\phi^2)-X_r^1(\phi^2)) dr.
\end{align*}
\end{lemma}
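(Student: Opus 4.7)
From the decomposition \eqref{9e2.1}, the martingale term is
\begin{align*}
M_t^n(\phi) = \frac{1}{N} \sum_{\beta} a_\beta^n(t)\, \phi(B^\beta)\, (\delta_\beta - \eps_N).
\end{align*}
To verify the martingale property, note that $M^n(\phi)$ is a pure-jump process whose only increments occur at the times $T_\beta$. Since $\zeta_\beta^n$ is determined by the family history strictly prior to $T_\beta$ (the infimum in \eqref{9ea3.01} ranges over $m < |\beta|$) and $B^\beta = B^\beta_{T_{\pi\beta}}$ depends only on earlier events, the coefficient $\frac{1}{N}\,1_{\{T_\beta \leq t,\, \zeta_\beta^n \geq T_\beta\}}\,\phi(B^\beta)$ is $\cF_{T_\beta^-}$-measurable. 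The sign $\delta_\beta$ is independent of $\cF_{T_\beta^-}$ with $\E[\delta_\beta] = \eps_N$, so each jump of $M^n(\phi)$ is conditionally mean zero. After a standard truncation in $|\beta|$ to ensure integrability (controlled through Lemma \ref{9l2.0}), summing the conditional expectations of these jumps over $(s,t]$ yields $\E[M_t^n(\phi) - M_s^n(\phi)\mid \cF_s] = 0$.

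Next I would compute $\langle M^n(\phi) \rangle_t$ by viewing $M^n(\phi)$ as a compensated stochastic integral against the point process of events $\{(T_\beta, \delta_\beta)\}_\beta$. Given the past, any active particle $\beta$ (one with $T_{\pi\beta} \leq s < T_\beta$ and $\zeta_\beta^n > s$) experiences its next event at rate $2N+\theta$, and the conditional second moment of the size coefficient is $\E[(\delta_\beta - \eps_N)^2] = 1 - \eps_N^2$ since $\delta_\beta \in \{-1,1\}$. The Doob--Meyer compensator is therefore
\begin{align*}
\langle M^n(\phi) \rangle_t
&= \int_0^t (2N+\theta)(1-\eps_N^2)\, \frac{1}{N^2} \sum_{\beta \sim s,\, \zeta_\beta^n > s} \phi(B_s^\beta)^2\, ds \\
&= \Big(2 + \frac{\theta}{N}\Big)(1-\eps_N^2) \int_0^t X_s^n(\phi^2)\, ds,
\end{align*}
which is the stated identity.

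For the difference $M^2(\phi) - M^1(\phi)$, observe that $X^0 \geq X^1$ (from \eqref{9e10.03}) implies $\overline{\cR}_t^{X^0} \supseteq \overline{\cR}_t^{X^1}$, hence $\zeta_\beta^1 \leq \zeta_\beta^2$ and $a_\beta^1(t) \leq a_\beta^2(t)$ for every $\beta$. Consequently,
\begin{align*}
M_t^2(\phi) - M_t^1(\phi) = \frac{1}{N} \sum_\beta (a_\beta^2(t) - a_\beta^1(t))\, \phi(B^\beta)\, (\delta_\beta - \eps_N),
\end{align*}
which has exactly the same jump structure as $M^n(\phi)$, with the nonnegative indicator $a_\beta^2 - a_\beta^1$ replacing $a_\beta^n$. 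The identical compensator computation applies, and since the corresponding ``active mass'' at time $s$ is precisely $X_s^2(\phi^2) - X_s^1(\phi^2)$, the stated cross-term formula follows. The main technical point throughout is the passage from a sum over random jump times to a time integral against the intensity $2N+\theta$, which is routine given the independent-exponential-clocks structure but requires either a Poisson random measure setup or an explicit Doob--Meyer argument; all integrability needed along the way is guaranteed by the branching random walk bound $X^n \leq X^0$ combined with Lemma \ref{9l2.0}.
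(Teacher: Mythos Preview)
Your proposal is correct and follows essentially the same approach as the paper. The paper organizes the computation slightly differently---it first writes the optional quadratic variation $[M^n(\phi)]_t$ as the sum of squared jumps, then splits off a martingale $Z_t$ (via Lemma~\ref{9l6.1}, centering $g_\beta^2$ by $1-\eps_N^2$) and a second martingale $N_t$ (via Lemma~\ref{9l3.2}, converting the remaining sum to a time integral)---whereas you go directly to the compensator; but the key ingredients (measurability of $\phi(B^\beta)\,1_{\{\zeta_\beta^n\ge T_\beta\}}$ in $\cF_{T_\beta^-}$, independence and variance of $\delta_\beta-\eps_N$, the rate-$(2N+\theta)$ clock structure encoded in Lemma~\ref{9l3.2}) are identical, as is the treatment of $M^2-M^1$ through $\zeta_\beta^1\le\zeta_\beta^2$.
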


\begin{lemma}\label{9l2.2}
Let $n=1$ or $2$.  For all $t>0$, 
\begin{align*}
\lim_{N\to \infty} \E\Big(\sup_{s\leq t} \Big\vert D_s^{n,1}(\phi)-\theta \int_0^s X_r^n(\phi) dr\Big\vert \Big)=0.
\end{align*}
\end{lemma}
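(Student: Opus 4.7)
The plan is to exhibit $D_t^{n,1}(\phi)-\theta\int_0^t X_r^n(\phi)\,dr$ as a compensated jump martingale divided by $N$ and then control it with Doob's inequality. From \eqref{9e0.94}, $a_\beta^n(t)=1(T_\beta\leq t)\,1(\zeta_\beta^n>T_{\pi\beta})$. The key structural observation, read off from the definition \eqref{9ea3.01}, is that every candidate time entering $\zeta_\beta^n=\zeta_\beta(X^{n-1})$ has the form $T_{\beta|m}$ for some $m<|\beta|$; hence $\zeta_\beta^n$ and the lifetime-constant position $B^\beta=B^\beta_{T_{\pi\beta}}$ are both $\cF_{T_{\pi\beta}}$-measurable. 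Moreover, on $\{\zeta_\beta^n>T_{\pi\beta}\}$ none of the ancestral obstructions fired, so $\zeta_\beta^n=T_\beta$; thus on $\{\beta\sim r\}$ the events $\{\zeta_\beta^n>r\}$ and $\{\zeta_\beta^n>T_{\pi\beta}\}$ coincide.

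Since $T_\beta-T_{\pi\beta}\sim\mathrm{Exp}(2N+\theta)$ is independent of $\cF_{T_{\pi\beta}}$, the process
\begin{equation*}
H_t^\beta:=\phi(B^\beta)\,1(\zeta_\beta^n>T_{\pi\beta})\bigl[1(T_\beta\leq t)-(2N+\theta)(T_\beta\wedge t-T_{\pi\beta})^+\bigr]
\end{equation*}
is an $\cF_t$-martingale. Because $B_r^\beta=B^\beta$ throughout the lifetime of $\beta$, summing the compensator piece reproduces $(2N+\theta)N\int_0^t X_r^n(\phi)\,dr$, and so
\begin{equation*}
D_t^{n,1}(\phi)-\theta\int_0^t X_r^n(\phi)\,dr=\frac{\theta}{(2N+\theta)N}\sum_\beta H_t^\beta.
\end{equation*}
Distinct particles jump at distinct times almost surely, hence $\langle H^\beta,H^\gamma\rangle\equiv 0$ for $\beta\neq\gamma$, giving $\bigl\langle\sum_\beta H^\beta\bigr\rangle_t=\sum_\beta\langle H^\beta\rangle_t=(2N+\theta)N\int_0^t X_r^n(\phi^2)\,dr$.

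Using $X_r^n\leq X_r^0$ from \eqref{9e10.03} together with Lemma \ref{9l2.0} applied to the constant $1$, one obtains $\E[X_r^n(\phi^2)]\leq\|\phi\|_\infty^2 e^{\theta r}X_0^{0,N}(1)$, which is bounded uniformly in $r\leq t$ and $N$. Doob's $L^2$ inequality then yields
\begin{equation*}
\E\Bigl[\sup_{s\leq t}\Bigl|D_s^{n,1}(\phi)-\theta\int_0^s X_r^n(\phi)\,dr\Bigr|^2\Bigr]\leq\frac{4\theta^2\|\phi\|_\infty^2}{(2N+\theta)N}\int_0^t e^{\theta r}X_0^{0,N}(1)\,dr=O(1/N),
\end{equation*}
and Jensen's inequality upgrades this to the required $L^1$ convergence. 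The only delicate point is the measurability reduction in the first paragraph: it is what makes $\theta\int_0^t X_r^n(\phi)\,dr$, rather than a perturbed integral involving a partial indicator on $\zeta_\beta^n$, the correct compensator. Once this is in place the remainder is a routine exponential-clock calculation.
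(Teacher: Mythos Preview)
Your proof is correct and follows essentially the same route as the paper: both identify $D_t^{n,1}(\phi)-\theta\int_0^t X_r^n(\phi)\,dr$ as a martingale with predictable quadratic variation $\frac{\theta^2}{(2N+\theta)N}\int_0^t X_r^n(\phi^2)\,dr$, then finish with Doob's $L^2$ inequality and Lemma~\ref{9l2.0}. The paper packages the compensator computation into Lemma~\ref{9l3.2} (taken from \cite{DP99}), whereas you rebuild that lemma by hand for this particular predictable integrand; the measurability reduction you spell out in your first paragraph is exactly what the paper uses implicitly when it rewrites $1(\zeta_\beta^n>T_{\pi\beta})$ as $1(\zeta_\beta^n\geq T_\beta)$ in \eqref{9ec6.07}. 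One small notational slip: for $|\beta|=0$ the convention $T_{\pi\beta}=T_\emptyset=-\infty$ makes $(T_\beta\wedge t-T_{\pi\beta})^+$ infinite; write the compensator as $\int_0^t 1(T_{\pi\beta}<r\leq T_\beta)\,dr$ (as in Lemma~\ref{9l3.2}) to handle the root generation cleanly.
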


\begin{lemma}\label{9l2.3}
Let $n=1$ or $2$. For all $t>0$, 
\begin{align*}
\lim_{N\to \infty} \E\Big(\sup_{s\leq t} \Big\vert D_s^{n,2}(\phi)-  \int_0^s X_r^n(\frac{\Delta}{6}\phi) dr\Big\vert \Big)=0.
\end{align*}
\end{lemma}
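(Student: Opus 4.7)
The plan is to Taylor expand each displacement and apply a two-stage compensation, paralleling the diffusion-term analysis in \cite{DP99}. Since $\phi\in C_b^3(\R^d)$, I would write
\begin{align*}
\phi(B^\beta+W^\beta)-\phi(B^\beta)=\nabla\phi(B^\beta)\cdot W^\beta+\tfrac{1}{2}\langle W^\beta,H_\phi(B^\beta)W^\beta\rangle+O(\|\phi\|_{C^3}|W^\beta|^3),
\end{align*}
and use that $\cN_N$ is invariant under $y\mapsto -y$ and under each coordinate sign flip to get $\E[W^\beta]=0$ and $\E[W^\beta_iW^\beta_j]=\delta_{ij}\alpha_N$ for some $\alpha_N>0$. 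A direct Riemann-sum calculation on $\cL_N$ in either regime of \eqref{9e1.3} then yields $N\alpha_N\to\int_{[-1,1]^d}y_1^2\,dy\big/2^d=1/3$ as $N\to\infty$, and the uniform bound $|W^\beta|\le d^{1/2}N^{-1/2}$ makes the Taylor remainder $O(\|\phi\|_{C^3}N^{-3/2})$.

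Next I would decompose $D_t^{n,2}(\phi)=\tilde M_t^n(\phi)+R_t^n(\phi)$ with
\begin{align*}
\tilde M_t^n(\phi):=\tfrac{N+\theta}{2N+\theta}\tfrac{1}{N}\sum_\beta a_\beta^n(t)\Big\{\phi(B^\beta+W^\beta)-\phi(B^\beta)-\E\big[\phi(B^\beta+W^\beta)-\phi(B^\beta)\,\big|\,\cF_{T_\beta^-}\big]\Big\}.
\end{align*}
Since $W^\beta$ is independent of $\cF_{T_\beta^-}$ while $B^\beta$ and $1(\zeta_\beta^n\ge T_\beta)$ are $\cF_{T_\beta^-}$-measurable (the latter because $\zeta_\beta^n$ depends only on events at times $T_{\beta|m}$ with $m<|\beta|$), each jump of $\tilde M^n(\phi)$ has zero conditional mean, so $\tilde M^n(\phi)$ is an $\cF_t$-martingale. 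Its predictable bracket satisfies $\la\tilde M^n(\phi)\ra_t\le CN^{-2}\|\nabla\phi\|_\infty^2\,\E[|W^\beta|^2](2N+\theta)\int_0^t X_r^n(1)\,dr=O(N^{-1})$, using the event-rate formula, the sandwich $X^n\le X^0$ from \eqref{9e10.03}, and the uniform bound $\E[X_t^0(1)]=e^{\theta t}X_0^0(1)$ from Lemma \ref{9l2.0}. Doob's $L^2$ maximal inequality therefore gives $\E[\sup_{s\le t}\tilde M_s^n(\phi)^2]=O(N^{-1})\to 0$.

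For the predictable remainder, Taylor expanding the conditional expectation yields
\begin{align*}
R_t^n(\phi)=\tfrac{N+\theta}{2N+\theta}\cdot\tfrac{\alpha_N}{2}\cdot\tfrac{1}{N}\sum_\beta a_\beta^n(t)\Delta\phi(B^\beta)+\mathrm{Err}_t^n(\phi),
\end{align*}
with $|\mathrm{Err}_t^n(\phi)|\le C\|\phi\|_{C^3}N^{-3/2}\cdot\tfrac{1}{N}\sum_\beta a_\beta^n(t)$, so $\E[\sup_{s\le t}|\mathrm{Err}_s^n(\phi)|]=O(N^{-1/2})$ by monotonicity in $s$ and the event-count estimate $\E[\sum_\beta a_\beta^n(t)]\le(2N+\theta)N\int_0^t\E[X_r^0(1)]\,dr=O(N^2)$. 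For the leading term I would run a second compensation: $S_t:=\tfrac{1}{N}\sum_\beta a_\beta^n(t)\Delta\phi(B^\beta)-(2N+\theta)\int_0^t X_r^n(\Delta\phi)\,dr$ is an $\cF_t$-martingale with $\E[\sup_{s\le t}S_s^2]=O(1)$ by Doob, and multiplying by the $O(N^{-1})$ prefactor $\tfrac{N+\theta}{2N+\theta}\cdot\tfrac{\alpha_N}{2}$ kills this contribution in $L^1$. What remains is $\tfrac{(N+\theta)\alpha_N}{2}\int_0^s X_r^n(\Delta\phi)\,dr$, which converges uniformly in $s\le t$ to $\int_0^s X_r^n(\tfrac{\Delta}{6}\phi)\,dr$ in $L^1$ because $\tfrac{(N+\theta)\alpha_N}{2}\to\tfrac{1}{6}$.

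The main technical obstacle is enforcing all of these approximations in $\sup_{s\le t}$-$L^1$ norm rather than at a single $s$; I handle this by applying Doob's $L^2$ maximal inequality at both martingale stages and by bounding the deterministic Taylor error via the monotone counting process $\sum_\beta a_\beta^n(t)$. A secondary point is verifying $N\alpha_N\to 1/3$ cleanly in both regimes of \eqref{9e1.3}---although the lattice spacings differ between $d\ge 5$ and $d=4$, in each case $\cN_N$ becomes dense in the cube $[-N^{-1/2},N^{-1/2}]^d$ after rescaling, so the Riemann sum converges to the same integral $1/3$.
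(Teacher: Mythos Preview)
Your proposal is correct and follows essentially the same approach as the paper's proof: Taylor expand the increment, center by the conditional expectation to extract a martingale controlled via Doob's $L^2$ inequality and the event-count bound $\E[\sum_\beta a_\beta^0(t)]=O(N^2)$, bound the $O(N^{-3/2})$ Taylor remainder by monotonicity, and then run a second compensation on the leading $\Delta\phi$ term. The only cosmetic difference is that where you write out the second compensation explicitly (your martingale $S_t$), the paper simply invokes Lemma~\ref{9l2.2} applied to $\Delta\phi$, which is the same argument.
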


\begin{lemma}\label{9l2.4}
Let $n=1$ or $2$. For all $t>0$, 
\begin{align*}
\lim_{N\to \infty} \E\Big(\sup_{s\leq t} \vert E_s^{n}(\phi)\vert \Big)=0.
\end{align*}
\end{lemma}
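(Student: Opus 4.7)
My plan is to recognize $E_t^n(\phi)$ as a pure-jump mean-zero $(\cF_t)$-martingale and to bound its $L^2$ norm by $O(1/N)$, after which Doob's maximal inequality combined with Jensen's inequality gives $\E[\sup_{s\leq t}|E_s^n(\phi)|]=O(N^{-1/2})\to 0$. The martingale property holds because at each jump time $T_\beta$ the factor $1_{\{\delta_\beta=1\}}-p_N$, where $p_N:=(N+\theta)/(2N+\theta)$, has mean zero and is independent of all the other factors appearing in the jump: $a_\beta^n(t)$, $B^\beta$ and $\overline{\cR}^{X^{n-1}}_{T_\beta^-}$ are $\cF_{T_\beta^-}$-measurable while $W^\beta$ is independent of $\delta_\beta$.

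By orthogonality of mean-zero martingale jumps and $(a_\beta^n(t))^2=a_\beta^n(t)$,
\begin{align*}
\E\bigl[|E_t^n(\phi)|^2\bigr]=\frac{1}{N^2}\sum_\beta \E\bigl[a_\beta^n(t)\,(1_{\{\delta_\beta=1\}}-p_N)^2\,Y_\beta^2\bigr]\leq\frac{1}{4N^2}\,\E\Bigl[\sum_\beta a_\beta^n(t)\,Y_\beta^2\Bigr],
\end{align*}
where $Y_\beta:=\phi(B^\beta+W^\beta)\,1(B^\beta+W^\beta\notin\overline{\cR}^{X^{n-1}}_{T_\beta^-})-\phi(B^\beta)$. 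I split $Y_\beta^2\leq 2(\phi(B^\beta+W^\beta)-\phi(B^\beta))^2+2\phi(B^\beta+W^\beta)^2\,1(B^\beta+W^\beta\in\overline{\cR}^{X^{n-1}}_{T_\beta^-})$ into a gradient piece and a collision piece. The gradient piece is easy: $|\phi(B^\beta+W^\beta)-\phi(B^\beta)|^2\leq\|\nabla\phi\|_\infty^2\,\|W^\beta\|_\infty^2\leq C/N$, and $\sum_\beta a_\beta^n(t)\leq\#\{\beta:T_\beta\leq t,\,\zeta_\beta^0\geq T_\beta\}$ counts events in the underlying BRW, whose expectation is $O(N^2)$ by Lemma \ref{9l2.0}; the contribution is $O(1/N^3)\cdot O(N^2)=O(1/N)$.

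For the collision piece, conditioning out the independent Bernoulli $\delta_\beta$ relates the sum to $K_t^n$:
\begin{align*}
\E\Bigl[\sum_\beta a_\beta^n(t)\,\phi(B^\beta+W^\beta)^2\,1(B^\beta+W^\beta\in\overline{\cR}^{X^{n-1}}_{T_\beta^-})\Bigr]=\frac{1}{p_N}\cdot\frac{N(2N+\theta)}{N+\theta}\,\E[K_t^n(\phi^2)]=O(N)\,\E[K_t^n(\phi^2)],
\end{align*}
so this piece contributes $O(1/N)\cdot\E[K_t^n(\phi^2)]$. The required a priori bound $\E[K_t^n(1)]=O(1)$ follows by setting $\phi\equiv 1$ in \eqref{9ea3.67}, noting $D_t^{n,2}(1)=0$, $\E[D_t^{n,1}(1)]=O(1)$ from the BRW event count, and $\E[M_t^n(1)]=\E[E_t^n(1)]=0$ by the martingale property; rearranging yields $\E[K_t^n(1)]\leq X_0^N(1)+\E[D_t^{n,1}(1)]$, uniformly bounded in $N$. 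Combining both pieces gives $\E[|E_t^n(\phi)|^2]=O(1/N)$, and Doob plus Jensen finishes.

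The main obstacle is the collision estimate: the naive pointwise bound $|Y_\beta|\leq 2\|\phi\|_\infty$ yields only $\E[|E_t^n(\phi)|^2]=O(1)$, so one must separate the gradient and collision parts of $Y_\beta$ and exploit that the total number of suppressed births is only $O(N)$ rather than $O(N^2)$---a quantitative manifestation of the critical scaling \eqref{9e1.1} in which the forbidden fraction of a typical neighborhood is of order $1/N$.
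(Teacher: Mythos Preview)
Your proof is correct and follows essentially the same route as the paper: recognize $E_t^n(\phi)$ as a mean-zero $(\cF_t)$-martingale (via Lemma~\ref{9l6.1}, since $h_\beta=1_{\{\delta_\beta=1\}}-p_N$ is centered and independent of both $\cF_{T_\beta^-}$ and $W^\beta$), apply Doob's $L^2$ maximal inequality, and split the squared jump into a gradient piece controlled by $\|\nabla\phi\|_\infty^2/N$ and a collision piece.

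The one genuine difference is how you bound the collision piece. The paper invokes Lemma~\ref{9l2.5} directly to get $\E\bigl[\frac{1}{N}\sum_\beta a_\beta^0(t)\,1(B^\beta+W^\beta\in\overline{\cR}^{X^0}_{T_\beta^-})\bigr]=O(1)$, which requires the nontrivial random-walk estimates of Section~\ref{9s3}. You instead obtain $\E[K_t^n(1)]=O(1)$ by the soft identity from \eqref{9ea3.67} with $\phi\equiv 1$: since $D_t^{n,2}(1)=0$, $\E[M_t^n(1)]=\E[E_t^n(1)]=0$, and $\E[X_t^n(1)]\ge 0$, one gets $\E[K_t^n(1)]\le X_0^n(1)+\E[D_t^{n,1}(1)]$ with $\E[D_t^{n,1}(1)]=O(1)$ by the BRW event-count $\E[\sum_\beta a_\beta^0(t)]=O(N^2)$. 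This is a clean and self-contained shortcut for the present lemma. Two minor remarks: the factor $1/p_N$ in your collision identity is spurious (there is no $\delta_\beta$ in the definition of $K_t^n$), though harmless; and the $O(N^2)$ event count is not literally Lemma~\ref{9l2.0} but follows from it combined with Lemma~\ref{9l3.2} (or \eqref{9ec7.42}). Note also that Lemma~\ref{9l2.5} is still needed elsewhere in the paper (e.g.\ Lemma~\ref{9l5.1}), so your shortcut does not eliminate that work globally.
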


\no ${\bf Convention\ on\ Constants.}$ Constants whose value is unimportant and may change from line to line are denoted $C$. All these constants may depend on the dimension $d$, the drift $\theta$, the time $t$, and the test function $\phi$. All these parameters, $d,\theta, t, \phi$, will be fixed before picking $C$. Our constants $C$ will never depend on $N$ or the initial condition $X_0^0$.

\begin{lemma}\label{9l2.5}
For any $t>0$, there is some constant $C>0$ such that for $N$ large,
\begin{align*}
\E\Big(\frac{1}{N} \sum_{\beta} a_{\beta}^0(t) 1(B^\beta+W^\beta \in \overline{\cR}_{T_\beta^-}^{X^0})\Big)\leq C (X_0^0(1)+X_0^0(1)^2).
\end{align*}
\end{lemma}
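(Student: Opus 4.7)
The plan is to decompose $\overline{\cR}^{X^0}_{T_\beta^-}=K_0^N\cup \cR^{X^0}_{T_\beta^-}$ and handle each piece separately. The contribution from $K_0^N$ is controlled directly by hypothesis \eqref{9e0.93}: since $a_\beta^0(t)\leq 1(T_\beta\leq t, B^\beta\neq\Delta)$, that assumption forces
\[
\frac{1}{N}\E\Big[\sum_\beta a_\beta^0(t)\,1(B^\beta+W^\beta\in K_0^N)\Big]\longrightarrow 0,
\]
so for $N$ large it is smaller than $1$ and can be absorbed into $C$. All of the work therefore lies in bounding the self-collision piece
\[
\E\Big[\frac{1}{N}\sum_\beta a_\beta^0(t)\,1(B^\beta+W^\beta\in \cR^{X^0}_{T_\beta^-})\Big].
\]

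For the self-collision piece, I would dominate the set-indicator by a union bound over candidate colliding particles: if $B^\beta+W^\beta\in \cR^{X^0}_{T_\beta^-}$, then some distinct $\gamma$ must have been a live particle at an earlier time with $B^\gamma=B^\beta+W^\beta$. Writing $b_\gamma(s):=1(T_{\pi\gamma}<s,\ \zeta_\gamma^0>T_{\pi\gamma})$ for the indicator that $\gamma$ was born (legitimately) by time $s$ in the BRW, the piece above is at most
\[
\E\Big[\frac{1}{N}\sum_{\beta\neq \gamma} a_\beta^0(t)\,b_\gamma(T_\beta^-)\,1(B^\gamma=B^\beta+W^\beta)\Big].
\]
Since $W^\beta$ is uniform on $\cN_N$ and independent of everything determining $B^\beta, B^\gamma, a_\beta^0(t)$ and $b_\gamma(T_\beta^-)$ (regardless of whether $\beta$ and $\gamma$ share a founder), conditioning on the remainder gives $\P(B^\gamma=B^\beta+W^\beta\mid\cdot)\leq |\cN_N|^{-1}1(|B^\beta-B^\gamma|_\infty\leq N^{-1/2})\leq CR^{-d}1(|B^\beta-B^\gamma|_\infty\leq N^{-1/2})$. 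I would then split the remaining double sum by the most recent common ancestor $\alpha=\beta\wedge \gamma$. When $\alpha=\emptyset$ (different founders), $B^\beta$ and $B^\gamma$ are independent continuous-time random walks with generator close to $\Delta/6$, and Lemma \ref{9l2.0} combined with the standard local-limit estimate for such walks reduces the expected number of close encounters to an integral of the form $\int_0^t\!\int_0^t P^N(|B_r^N-\tilde B_s^N|_\infty\leq N^{-1/2})\,dr\,ds$, which by the heuristic computation leading to \eqref{9e1.1} is precisely of order $R^d/N$; multiplied by the prefactor $R^{-d}/N$ and summed over the $O((NX_0^0(1))^2)$ founder pairs, this yields the $O(X_0^0(1)^2)$ contribution. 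When $\alpha\neq \emptyset$, conditioning on $(T_\alpha,B^\alpha)$ makes $B^\beta-B^\alpha$ and $B^\gamma-B^\alpha$ independent random walks after $T_\alpha$, and the analogous estimate, summed over $\alpha$ within one founder tree and then over founders, gives the linear $O(X_0^0(1))$ piece.

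The main obstacle I expect is the bookkeeping around the survival indicators $a_\beta^0(t)$ and $b_\gamma(T_\beta^-)$: they depend on the entire death history along the two (possibly intertwined) family lines and so do not factorize cleanly at the common ancestor $\alpha$. I plan to deal with this by relaxing each survival indicator to its unconditional ``ever-born'' analogue, at the cost of a multiplicative constant produced by the first-moment bound of Lemma \ref{9l2.0} (essentially the factor $e^{\theta t}$ controlling the expected BRW size on $[0,t]$). Once that reduction is done, the remaining computation is the two-random-walk Green's-function estimate alluded to above, and the scaling choice \eqref{9e1.1} is exactly what makes this estimate of the right order, with the $\log N$ in $R^d$ for $d=4$ absorbing the borderline logarithmic divergence of the transition density integral.
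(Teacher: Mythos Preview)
Your proposal is correct and follows essentially the same route as the paper: split off $K_0^N$ via \eqref{9e0.93}, dominate the set indicator by a union bound over $\gamma$, condition on $W^\beta$ to extract a factor $1/\psi(N)$ (the paper notes this requires $\gamma$ not to be a strict descendant of $\beta$, which is guaranteed by $T_{\pi\gamma}<T_\beta$ rather than by the founder relation you cite), and then split according to whether $\beta_0=\gamma_0$. The paper carries out your ``Green's-function estimate'' concretely by first using the compensator Lemma~\ref{9l3.2} to turn the sum over $\{T_\beta\le t\}$ into $(2N+\theta)\int_0^t\cdots dr$, and then summing generation-by-generation via the Poisson/Gamma identities of Lemma~\ref{9l4.3} together with the discrete local-limit bound Lemma~\ref{9l4.2}; this is exactly the bookkeeping you anticipate, and the survival indicators are handled by the $(\frac{N+\theta}{2N+\theta})^{l+m}$ factors rather than by a separate relaxation step.
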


\begin{lemma}\label{9l2.6}
Let $n=1$ or $2$. For all $0<t<\infty$, 
\begin{align*}
\lim_{N\to \infty} \E\Big(\sup_{s\leq t} \Big\vert K_s^{n}(\phi)-b_d \int_0^s X_r^1(\phi) dr\Big\vert \Big)=0.
\end{align*}
\end{lemma}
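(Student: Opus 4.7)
The plan is three-stage: compute the first moment of $K_s^n(\phi)$ and identify its limit, bound the variance to obtain pointwise convergence, and upgrade to convergence in $L^1$ of the supremum. First, I would expand the collision indicator as a sum over ``visitors''---particles $\gamma$ (distinct from $\beta$) whose family line passed through the attempted birth site $B^\beta+W^\beta$ before time $T_\beta$, so that
\[
1(B^\beta+W^\beta \in \overline{\cR}_{T_\beta^-}^{X^{n-1}}) = 1\big(\exists\, \gamma \ne \beta,\ s\leq T_\beta:\ \gamma \text{ alive in } X^{n-1} \text{ at } s,\ B_s^\gamma=B^\beta+W^\beta\big).
\]
Because $X^{n-1}\leq X^0$, the above is dominated by the corresponding quantity with $X^{n-1}$ replaced by $X^0$, and Lemma \ref{9l2.5} provides uniform $L^1$ control of the resulting sum. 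Proposition \ref{9p1.1} then allows me to replace $X^{n-1}$ by $X^1$ in the integrand at the cost of a vanishing error, which simultaneously explains why both $n=1$ and $n=2$ give the same limit $b_d\int_0^s X_r^1(\phi)\,dr$.

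Next, for each $\beta\sim s$ I would compute the conditional probability that such a visitor $\gamma$ exists, classifying $\gamma$ by its most recent common ancestor $\beta\wedge\gamma$. This gives the two cases matching the two sums in \eqref{9ec10.64}: $\gamma$ is a strict ancestor of $\beta$ (first sum), or $\gamma$ lies on a sibling branch stemming from $\beta\wedge\gamma$ strictly above both (second sum). The relative displacement of the two trajectories decomposes as a sum of jumps $W^{\cdot|m}$ at scale $N^{-1/2}$, each actually included only when the corresponding $e$-variable matches the family line---an event of probability $1/2$---which is exactly the source of the combinatorial weights $\frac{1}{2^{l+j+1}}\binom{l+j}{l}$ in the second sum, while the extra jump $W^\beta$ plays the role of $W_0$. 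After rescaling by $\sqrt{N}$, each included jump becomes uniform on $[-1,1]^d$, and a collision requires the rescaled walk to land in a fixed unit cube---exactly the events $\{V_{l+1}\in[-1,1]^d\setminus\{0\}\}$ and $\{W_0+V_{l+m}\in[-1,1]^d\}$. Summability over $(l,j)$ is controlled by the random-walk local CLT bound $\P(V_k\in\text{unit cube})=O(k^{-d/2})$, which is summable precisely because $d\geq 4$. Summing over $\beta$ and time-integrating via Lemma \ref{9l2.0} yields $\E(K_s^n(\phi))\to b_d\int_0^s \E(X_r^1(\phi))\,dr$.

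Finally, to upgrade to the uniform-in-$s$ $L^1$ statement, I would estimate the centered second moment $\E\big[\big(K_s^n(\phi) - b_d\int_0^s X_r^1(\phi)\,dr\big)^2\big]$ by a four-particle calculation indexed by quadruples $(\beta,\gamma,\beta',\gamma')$. The ``off-diagonal'' configurations, with the two pairs genealogically disjoint, factor into products whose $b_d^2$ leading term cancels the squared integral; the ``diagonal'' configurations, with overlapping genealogies, contribute $o(1)$ by the same random-walk estimates as in the first-moment step combined with Lemma \ref{9l2.5}. A Doob-type inequality on the jump-martingale part of $s\mapsto K_s^n(\phi)$, together with continuity of $s\mapsto b_d\int_0^s X_r^1(\phi)\,dr$, then converts the pointwise $L^2$ bound into the desired supremum bound. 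The main obstacle is this second-moment step: the combinatorics of overlapping four-particle genealogies is significantly more delicate than the two-particle analogue, and in $d=4$ one must track the $\log N$ factor from \eqref{9e1.1} carefully to verify that the variance is genuinely $o(1)$ and not merely $O(1)$.
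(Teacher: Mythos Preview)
Your first-moment heuristic for how the constant $b_d$ arises from the genealogical decomposition is essentially correct and matches the paper's Section~\ref{9s7} computation in $d\ge5$. But the overall strategy has two genuine gaps.

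First, invoking Proposition~\ref{9p1.1} to pass from $X^{n-1}$ to $X^1$ is circular: the proof of Proposition~\ref{9p1.1} \emph{uses} Lemma~\ref{9l2.6} (it needs both $K_t^1(\phi)$ and $K_t^2(\phi)$ to converge to the same limit in order to conclude $X^2-X^1\to0$). The paper avoids this by introducing a cutoff time $\tau_N$ with $N\tau_N\to\infty$, $\tau_N\to0$, and restricting to collisions between particles whose most recent common ancestor split within the last $\tau_N$ units of time. Under that restriction, the events $\{\zeta_\beta^n>T_\beta-\tau_N\}$, $\{\zeta_\gamma^{n-1}>T_\beta-\tau_N\}$ and $\{T_{\beta\wedge\gamma}>T_\beta-\tau_N\}$ collapse to the single $n$-independent event $\{\zeta_{\beta\wedge\gamma}^1>T_\beta-\tau_N\}$ (see~\eqref{9e3.7}); this is what makes the cases $n=1$ and $n=2$ coincide without appealing to Proposition~\ref{9p1.1}.

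Second, and more structurally, the cutoff $\tau_N$ is the engine of the whole argument, and your plan has no substitute for it. The paper proceeds through a chain of approximations $K^n\approx K^{n,0}\approx\cdots\approx K^{n,3}\approx\int G_r^\tau\,dr\approx b_d^\tau\int X_r^{1,\tau}\,dr$ (Lemmas~\ref{9l5.1}--\ref{9l5.8}), each step being either a monotone replacement (so the supremum is attained at $t$), a martingale increment (so Doob applies with an explicit quadratic variation), or a time integral. The second-moment bound is only established (Lemma~\ref{9l7.1}) for the post-cutoff quantity $Z_1(\tau_N)$, a functional of a single subtree of age $\tau_N$; even there the four-particle combinatorics (Section~\ref{9s7.2}) is already intricate. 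Without the cutoff your ``off-diagonal'' pairs do not factor, because the collision indicator for $\beta$ depends on the entire history of $X^{n-1}$ and hence is coupled to the subtree of $\beta'$; the claimed cancellation against the squared integral is not justified. A smaller point: your summability claim holds only for $d\ge5$; in $d=4$ the genealogical sums diverge logarithmically and are normalised by $\psi_0(N)\sim 2^4\log N$, which is why $b_4=9/(2\pi^2)$ comes from a separate local-CLT computation rather than from formula~\eqref{9ec10.64}.
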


The proofs of Lemmas \ref{9l2.1}-\ref{9l2.4} follow similarly to those in \cite{DP99}, where the two authors claim them to be ``The four easy convergences''. The only difference lies in the definition of $\zeta_\beta^n$, which, fortunately, does not play an important role in the proof, so we omit the details (for interested readers, we have included a version of the proofs in Appendix \ref{9a1}). The proof of Lemma \ref{9l2.5} is given in Section \ref{9s3}. Proving Lemma \ref{9l2.6} will be the main part of the paper, which is done in Section \ref{9s5}.

Let $\phi\equiv 1$. Apply Lemmas \ref{9l2.1}-\ref{9l2.6} in \eqref{9ea3.67} and collect all the error terms to get 
\begin{align*} 
&X_t^2(1)-X_t^1(1)= M_t^2(1)-M_t^1(1)+\theta \int_0^{t} (X_s^2(1)-X_s^1(1)) ds+ \tilde{E}_t,\nn\\
&\text{ where $M_t^1(1)$ and $M_t^2(1)$ are  as in Lemma \ref{9l2.1} and } \lim_{N\to \infty} \E(\sup_{s\leq t} \vert \tilde{E}_s\vert )=0.
\end{align*}
Having established the above, the proof of Proposition \ref{9p1.1} is quite straightforward and follows similarly to that of Proposition 1 in \cite{DP99}. We omit the details. 

Next, to prove Proposition \ref{9p1.2},  again we apply Lemmas \ref{9l2.1}-\ref{9l2.6} in \eqref{9ea3.67} and collect all the error terms to see that for any $\phi\in C_b^3(\R^d)$,
 \begin{align}\label{9ea7.80}
&X_t^1(\phi)=X_0^1(\phi)+M_t^1(\phi)+(\theta-b_d)\int_0^{t} X_s^1(\phi) ds+\int_0^t X_s^1(\frac{\Delta}{6}\phi) ds+\hat{E}_t^1(\phi),\nn\\
&\text{ where } M_t^1(\phi) \text{ is as in Lemma \ref{9l2.1} and } \lim_{N\to \infty} \E(\sup_{s\leq t} \vert \hat{E}_s^1(\phi)\vert )=0.
\end{align}
 Hence we arrive at the same conclusion as in (2.18) of \cite{DP99}. Using their proof for Proposition 2 in \cite{DP99}, we may conclude that Proposition \ref{9p1.2} holds.\\

It remains to prove Lemma \ref{9l2.5} and Lemma \ref{9l2.6}.

\section{Upper bounds for the collision term}\label{9s3}
In this section, we will give the proof of Lemma \ref{9l2.5}. To explain the section name, we recall  from \eqref{9e2.1}   the collision term $K_t^n(\phi)$ given by
\begin{align}\label{9ea5.69}
K_{t}^n(\phi)=&\frac{N+\theta}{2N+\theta} \frac{1}{N} \sum_{\beta} a_\beta^n(t)  \phi(B^\beta+W^\beta) 1(B^\beta+W^\beta\in \overline{\cR}^{X^{n-1}}_{T_\beta^-})\nn\\
\leq &\| \phi\| _\infty \frac{1}{N} \sum_{\beta} a_\beta^0(t)  1(B^\beta+W^\beta\in \overline{\cR}^{X^{0}}_{T_\beta^-}).
\end{align}
Therefore Lemma \ref{9l2.5} indeed gives an upper bound for $\E(K_t^n(\phi))$. 

Recall from \eqref{9e0.94} to see $ a_\beta^0(t)=1{(T_\beta\leq t, B^\beta\neq \Delta)}$  where we have replaced $\zeta_{\beta}^0>T_{\pi \beta}$ by $B^\beta\neq \Delta$ as $B^\beta=B^\beta_{T_{\pi\beta}}$. Recall \eqref{9e10.93} to see $\overline{\cR}^{X^{0}}_{T_\beta^-}:= {\cR}^{X^{0}}_{T_\beta^-}\cup K_0^N$. The assumption on $K_0^N$ from \eqref{9e0.93} implies
\begin{align*} 
 \| \phi\| _\infty \frac{1}{N}\E\Big( \sum_{\beta} 1{(T_\beta\leq t, B^\beta\neq \Delta)} 1(B^\beta+W^\beta\in K_0^N) \Big)\to 0 \text{ as } N\to \infty.
\end{align*} 
Define
\begin{align}\label{9e9.02}
J(t):=& \frac{1}{N}   \sum_{\beta}1{(T_\beta\leq t, B^\beta \neq \Delta)}   1(B^\beta+W^\beta\in \cR^{X^{0}}_{T_\beta^-}).
\end{align}
 It suffices to bound $\E(J(t))$ for the proof of Lemma  \ref{9l2.5}.

 For $n=0,1,2$, by \eqref{9e9.01} and \eqref{9e0.03} we have
  \begin{align*}
\text{Supp}(X_t^n)=\{B^\gamma_t: \gamma \sim t, \zeta_\gamma^n>t\}=\{B^\gamma:  T_{\pi \gamma}\leq t <T_\gamma, \zeta_\gamma^n>t\}.
\end{align*}
Using the definition of $\zeta_\gamma^n$, we claim that the above can be rewritten as
  \begin{align*}
\text{Supp}(X_t^n)=\{B^\gamma:  T_{\pi \gamma}\leq t <T_\gamma, \zeta_\gamma^n>T_{\pi \gamma}\}.
\end{align*}
 To see this, we note if $\zeta_\gamma^n>T_{\pi \gamma}$, then the particle $\gamma$ is alive in $X^n$. By the definition of $\zeta_\gamma^n$ from \eqref{9ea3.01}, we get $\zeta_\gamma^n=\zeta_{\gamma}^0$ and so $\zeta_{\gamma}^0>T_{\pi \gamma}$, in which case the definition of $\zeta_{\gamma}^0$ from \eqref{e6.95} implies $\zeta_{\gamma}^0=T_\gamma>t$, thus giving $\zeta_\gamma^n=\zeta_{\gamma}^0>t$. The other direction is immediate.   It follows that
   \begin{align}\label{9e3.1}
\cR_t^{X^n}=&\bigcup_{s\leq t}\{B^\gamma:  T_{\pi \gamma}\leq s <T_\gamma,\quad \zeta_\gamma^n>T_{\pi \gamma}\}\nn\\
=&\{B^\gamma:  T_{\pi \gamma}\leq t, \quad \zeta_\gamma^n>T_{\pi \gamma}\}.
\end{align}
Let $t=T_\beta^-$ and $n=0$ to get
\begin{align*}
\cR^{X^{0}}_{T_\beta^-}=\{B^\gamma:  T_{\pi \gamma}\leq T_\beta^-, \zeta_\gamma^0>T_{\pi \gamma}\}=\{B^\gamma: T_{\pi \gamma}<T_\beta, B^\gamma\neq \Delta\}.
\end{align*}
Apply the above in \eqref{9e9.02} to see 
\begin{align}\label{9e9.03}
\E(J(t)) \leq &\frac{1}{N} \sum_{\beta,\gamma} \E\Big( 1(T_\beta\leq t,T_{\pi \gamma}<T_\beta)  1(B^\beta+W^\beta=B^\gamma \neq \Delta)\Big).
\end{align}
For each particle $\alpha \in \cI$, we denote by $\cH_\alpha$ the $\sigma$-field of all the events in the family line of $\alpha$ strictly before $T_\alpha$, plus the value of $t_\alpha$, which is given by
\begin{align*}
\cH_\alpha=\sigma\{t_{\alpha\vert m}, \delta_{\alpha\vert m}, e_{\alpha\vert m}, W^{\alpha\vert m}, m<\vert \alpha\vert \} \vee \sigma(t_\alpha).
\end{align*}
Then $T_\beta\in \cH_\beta$ and $W^\beta$ is independent of $\cH_\beta$. We may also conclude from $T_{\pi \gamma}<T_\beta$ that  $\gamma$ is not a strict descendant of $\beta$, thus giving $W^\beta$ is independent of $\cH_\gamma$. Recall $W^\beta$ is a uniform random variable on $\cN_N=\{y\in \cL_N: 0<\| y\| _\infty\leq N^{-1/2}\}$. Define  
\begin{align}\label{9e9.04}
\psi(N)=\vert \cN_N\vert =
\begin{dcases}
 (2[N^{1/d}]+1)^d-1 \sim 2^d N,  &\text{ in } d\geq 5,\\
 (2[(N\log N)^{1/4}]+1)^4-1\sim 2^4 N\log N,  &\text{ in } d=4,
\end{dcases}
\end{align}
 where $f(N)\sim g(N)$ if $f(N)/g(N) \to 1$ as $N\to \infty$. 
Calculate the expectation in \eqref{9e9.03} by conditioning on $\cH_\beta \vee \cH_\gamma$ to see
\begin{align}\label{9e2.23}
\E(J(t)) \leq &\frac{1}{N}\frac{1}{\psi(N)} \sum_{\beta,\gamma} \E\Big(1(T_\beta\leq t,T_{\pi \gamma}<T_\beta )  1(B^\beta-B^\gamma \in \cN_N)\Big).
\end{align}
We shall note $B^\beta \neq \Delta$ and $B^\gamma \neq \Delta$ are implicitly included in the event $\{B^\beta-B^\gamma \in \cN_N\}$.

\begin{lemma}\label{9l3.2}
Let $\Psi: [0,\infty)\times \Omega\to \R$ be a bounded and $\cF_t$-predictable function and let $\beta\in \cI$. Then
\begin{align*}
\Psi(T_\beta,\omega) 1(T_\beta\leq t)-(2N+\theta)\int_0^t 1(T_{\pi\beta}<r\leq T_\beta) \Psi(r,\omega)dr
\end{align*}
is an $\cF_t$-martingale whose predictable quadratic variation is given by
\begin{align}\label{a2.23}
(2N+\theta) \int_0^t 1(T_{\pi\beta}<r\leq T_\beta) \Psi^2 (r,\omega) dr.
\end{align}
\end{lemma}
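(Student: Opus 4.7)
The plan is to recognize the quantity as the stochastic integral of a predictable process against a compensated single-jump counting process, and then invoke standard integration theory. Let $N_r := 1(T_\beta \leq r)$, a cadlag adapted process with a single upward jump of size one at the random time $T_\beta$. Since $\Psi$ is predictable and $N$ jumps only at $T_\beta$, one has $\Psi(T_\beta,\omega) 1(T_\beta \leq t) = \int_0^t \Psi(r,\omega)\, dN_r$. Setting $A_r := (2N+\theta)\int_0^r 1(T_{\pi\beta} < u \leq T_\beta)\, du$, the lemma reduces to two assertions: $N-A$ is an $\cF_t$-martingale with predictable quadratic variation $A$, and $\int \Psi\, d(N-A)$ inherits the analogous properties with $\Psi^2$ multiplying the compensator.

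The central step is showing $M_t := N_t - A_t$ is a martingale. The key probabilistic input is that $t_\beta := T_\beta - T_{\pi\beta}$ is $\mathrm{Exp}(2N+\theta)$-distributed and independent of $\cF_{T_{\pi\beta}}$. For $s < t$, I would verify $\E[M_t - M_s \mid \cF_s] = 0$ by splitting into three cases. On $\{T_\beta \leq s\}$ both increments vanish identically. On $\{T_{\pi\beta} \leq s < T_\beta\}$, the memoryless property gives $T_\beta - s \mid \cF_s \sim \mathrm{Exp}(2N+\theta)$, so
$$\E[N_t - N_s \mid \cF_s] = 1 - e^{-(2N+\theta)(t-s)} = (2N+\theta) \int_s^t e^{-(2N+\theta)(r-s)}\, dr = \E[A_t - A_s \mid \cF_s].$$
On $\{T_{\pi\beta} > s\}$, both $N_s$ and $A_s$ vanish; I would enlarge to $\cF_s \vee \sigma(T_{\pi\beta})$, at which point the exponential law of $t_\beta$ gives by direct integration
$$\E[N_t \mid \cF_s, T_{\pi\beta}] = 1_{\{T_{\pi\beta} \leq t\}}\bigl(1 - e^{-(2N+\theta)(t - T_{\pi\beta})}\bigr) = \E[A_t \mid \cF_s, T_{\pi\beta}],$$
and averaging over $T_{\pi\beta}$ recovers $\E[M_t - M_s \mid \cF_s] = 0$ on this event as well.

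With $M$ shown to be a martingale, I would extend to general bounded predictable $\Psi$ by the usual monotone class argument: first verify the claim for simple predictable processes $\Psi = H \cdot 1_{(u,v]}$ with $H \in \cF_u$ bounded (which is immediate from optional stopping of $M$ at the bounded times $u \wedge t, v \wedge t$), extend by linearity to simple predictable processes, and then pass to general bounded predictable $\Psi$ by dominated convergence. For the quadratic variation, since $N$ is a counting process with unit jumps and $A$ is continuous, $[M]_t = [N]_t = N_t$, whose compensator is $A_t$, so $\langle M\rangle_t = A_t$; the standard Ito isometry for stochastic integrals against a purely discontinuous martingale then yields $\langle \int \Psi\, dM\rangle_t = \int_0^t \Psi^2(r,\omega)\, dA_r$, which matches the displayed formula \eqref{a2.23}. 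The main technical obstacle is the $\{T_{\pi\beta} > s\}$ case in the martingale verification, since $T_{\pi\beta}$ itself is not $\cF_s$-measurable on that event and one must invoke the filtration enlargement together with the independence of $t_\beta$ from $\cF_{T_{\pi\beta}}$; once this is cleanly handled, everything else reduces to textbook stochastic calculus.
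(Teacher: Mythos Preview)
Your proposal is correct and takes essentially the same approach as the paper: the paper's proof simply cites Lemma~3.2 of \cite{DP99} for the martingale property and remarks that the predictable quadratic variation follows from the stochastic integral against the compensated Poisson process, which is exactly the framework you set up and flesh out in detail. Your case analysis for the martingale verification (including the filtration enlargement on $\{T_{\pi\beta}>s\}$) and the identification $\langle M\rangle = A$ via $[M]=N$ are all sound.
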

\begin{proof}
The martingale proof follows from Lemma 3.2 of \cite{DP99} while the predictable quadratic variation \eqref{a2.23} can be easily derived from the stochastic integral concerning the compensated Poisson process therein. Also, the restriction on $\vert \beta\vert >0$ in Lemma 3.2 of \cite{DP99} can be removed.
\end{proof}
Define
\begin{align}\label{9e2.24}
\text{nbr}_{\beta,\gamma}(r)=1(T_{\pi \beta}<r\leq T_\beta, T_{\pi \gamma}<r, B^\beta-B^\gamma \in \cN_N).
\end{align}
Apply Lemma \ref{9l3.2} in \eqref{9e2.23} to get
\begin{align}\label{9e2.31}
\E(J(t))\leq &\frac{1}{N\psi(N)} \sum_{\beta,\gamma} (2N+\theta) \E\Big(\int_0^t   \text{nbr}_{\beta,\gamma}(r) dr\Big)\nn\\
\leq&\frac{C}{\psi(N)} \int_0^t  \E\Big( \sum_{\beta,\gamma}\text{nbr}_{\beta,\gamma}(r)\Big) dr.
\end{align}
It suffices to get bounds for $\E(\sum_{\beta,\gamma} \text{nbr}_{\beta,\gamma}(r))$. Define
\begin{align*}
\psi_0(N)={\psi(N)}/{N} \text{ and } \ I(t)=1+\int_0^t (1+s)^{1-d/2} ds, \forall t\geq 0.
\end{align*}
It is easy to check by \eqref{9e9.04} that there exist constants $C>c>0$ so that
\begin{align}\label{a2.31}
c\psi_0 (N)\leq I(N)\leq C\psi_0(N), \quad \forall N\text{ large}.
\end{align}
Recall we only consider particles $\beta,\gamma$ with $1\leq \beta_0, \gamma_0\leq M_N=NX_0^0(1)$.
\begin{lemma}\label{9l4.1}
There is some constant $C>0$ so that for any $0<r<t$,
\begin{align*}
\text{(i)} \quad &\E\Big(\sum_{\beta,\gamma: \gamma_0\neq \beta_0} \text{nbr}_{\beta,\gamma}(r)\Big)\leq C(NX_0^0(1))^2   \frac{1}{(1+(2N+2\theta)r)^{d/2-1}}.\\
\text{(ii)}  \quad &\E\Big(\sum_{\beta,\gamma: \gamma_0= \beta_0} \text{nbr}_{\beta,\gamma}(r)\Big)\leq CNX_0^0(1)  I((2N+2\theta)r).
\end{align*}
\end{lemma}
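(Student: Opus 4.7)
The plan is to reduce both bounds to standard random-walk estimates via the many-to-one formula (Lemma \ref{9l2.0}) combined with the following uniform local-CLT-type bound on the rescaled walk $B_t^N$:
\begin{align*}
P_x^N(B_t^N\in y+\cN_N)\leq \frac{C}{(1+Nt)^{d/2}},\qquad x\in\cL_N,\ y\in\R^d,\ t>0.
\end{align*}
This holds because a single step of $B_t^N$ has variance of order $N^{-1}$ and steps occur at rate $N+\theta$, so $B_t^N$ spreads with variance $\sim t$, while the product $\psi(N)\cdot(\text{lattice spacing of }\cL_N)^d$ is of order $N^{-d/2}$ in both $d\geq 5$ and $d=4$ (the logarithmic factor in \eqref{9e1.3} for $d=4$ is designed precisely so that this cancellation still occurs).

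For part (i), the sub-trees rooted at distinct initial ancestors are independent. Applying Lemma \ref{9l2.0} to the sum over $\beta$ alive at time $r$, and its integrated version to the sum over $\gamma$ born before $r$ (whose first moment is $\int_0^r(N+\theta)e^{\theta s}P_{x_j}^N(B_s^N\in\cdot)\,ds$), yields for $i\neq j$
\begin{align*}
\E\Big(\sum_{\beta:\beta_0=i}\sum_{\gamma:\gamma_0=j}\text{nbr}_{\beta,\gamma}(r)\Big)=e^{\theta r}\int_0^r (N+\theta)e^{\theta s}\,\P\big(B_r^{N,i}-B_s^{N,j}\in\cN_N\big)\,ds,
\end{align*}
where $B_r^{N,i},B_s^{N,j}$ are independent copies of $B^N$ from $x_i,x_j$. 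The difference inherits the transition law of a single $B^N$ with total variance $r+s$, so the LCLT bound dominates the integrand by $C/(1+N(r+s))^{d/2}$. Summing over $i\neq j$ contributes a factor $\leq M_N^2=(NX_0^0(1))^2$, and substituting $v=N(r+s)$ bounds the remaining $s$-integral by $\int_{Nr}^{2Nr}C(1+v)^{-d/2}\,dv\leq C(1+Nr)^{1-d/2}$, which is equivalent (up to a constant) to the bound in (i).

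For part (ii), I would partition the sum by the most recent common ancestor $\alpha=\beta\wedge\gamma$. The subcase $\gamma>\beta$ contributes nothing because $T_{\pi\gamma}\geq T_\beta\geq r$ contradicts $T_{\pi\gamma}<r$, and $\beta=\gamma$ is excluded because then $B^\beta-B^\gamma=0\notin\cN_N$. The remaining cases are: (a) $\alpha$ is a strict ancestor of both $\beta$ and $\gamma$, so after $\alpha$'s birth event at time $u:=T_\alpha$, $\beta$ and $\gamma$ lie in the two \emph{independent} sub-trees rooted at $\alpha$'s two children; and (b) $\alpha=\gamma$ with $\beta>\gamma$, in which case $B^\beta-B^\gamma$ is a single random-walk increment over time $T_{\pi\beta}-T_{\pi\gamma}\leq r-u$. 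In case (a), conditioning on $(\alpha,u)$ and applying the part-(i) argument to the two single-ancestor sub-trees of ``age'' $r-u$ bounds the conditional expectation by $C(1+N(r-u))^{1-d/2}$; case (b) yields the even smaller bound $C/(1+N(r-u))^{d/2}$ directly from the LCLT. Multiplying by the birth rate $(N+\theta)e^{\theta u}$ per live ancestor, summing over the $M_N$ ancestors, and integrating
\begin{align*}
CM_N\int_0^r (N+\theta)e^{\theta u}\,(1+N(r-u))^{1-d/2}\,du,
\end{align*}
the substitution $v=N(r-u)$ reduces this to $CM_N\int_0^{Nr}(1+v)^{1-d/2}\,dv\leq CM_N\,I(Nr)\leq CM_N\,I((2N+2\theta)r)$, the claimed bound.

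The main technical obstacle is establishing the uniform LCLT bound above — it must remain valid as $t\downarrow 0$ (where the trivial bound $\leq 1$ suffices once $Nt\leq 1$) and uniformly in the starting point and $y$, which requires a Fourier-analytic LCLT on the fine lattice $\cL_N$ whose constants do not deteriorate with $N$, and in $d=4$ relies on the precise $\log N$ correction in \eqref{9e1.3}. Once this is in hand, the remainder is a mechanical first- and conditional-first-moment computation exploiting independence across disjoint sub-trees.
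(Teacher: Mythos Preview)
Your approach is correct and follows essentially the same strategy as the paper: both use a many-to-one reduction plus a uniform LCLT bound, then decompose part (ii) by the common ancestor $\alpha=\beta\wedge\gamma$. The paper carries this out via an explicit generation-by-generation sum (indices $k,l,m$ with Poisson/Gamma weights $\pi,\Pi$, then Lemmas \ref{9l4.2}, \ref{9l4.0}, \ref{9l4.3}(i), \ref{9l3.2a}), while you package the same computation in continuous time using Lemma \ref{9l2.0} and a continuous LCLT for $B_t^N$. Your LCLT is exactly what one gets by Poisson-mixing Lemma \ref{9l4.2} via Lemma \ref{9l4.3}(i), so the ``main technical obstacle'' you flag is already available.

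One minor inaccuracy: the displayed identity in part (i) is not an equality. The first-moment measure of $\sum_{\gamma:\gamma_0=j}1(T_{\pi\gamma}<r,\,B^\gamma\neq\Delta)\,\delta_{B^\gamma}$ contains the initial atom $\delta_{x_j}$ (from $|\gamma|=0$) and, at each birth event, \emph{two} new labels (one at $B_s^N$, one at $B_s^N+W$), so your integrand is off by a factor two and a boundary term. This does not affect the bound since all pieces obey the same LCLT estimate, but you should write ``$\leq C\cdot$'' rather than ``$=$''. The paper's discrete bookkeeping makes these contributions explicit and, more importantly, sets up the exact machinery (the sums over $\pi,\Pi$ and Lemmas \ref{9l4.0}, \ref{9l8.7}) that is reused verbatim throughout Sections \ref{9s4}--\ref{9s8}; your continuous formulation is cleaner here but would need to be redone for those later arguments.
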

Assuming the above lemma, we may finish the proof of Lemma \ref{9l2.5}.
\begin{proof}[Proof of Lemma \ref{9l2.5} assuming Lemma \ref{9l4.1}]
Apply Lemma \ref{9l4.1} in \eqref{9e2.31} to get
\begin{align*}
\E(J(t)) \leq & \frac{C}{\psi(N)}  (NX_0^0(1))^2 \int_0^t \frac{1}{(1+(2N+2\theta)r)^{d/2-1}} dr\\
&+\frac{C}{\psi(N)}  NX_0^0(1)  \int_0^t  I((2N+2\theta)r)dr\\
\leq &  \frac{C}{\psi_0(N)}  (X_0^0(1))^2 I((2N+2\theta)t) +\frac{C}{\psi_0(N)}  X_0^0(1)  I((2N+2\theta)t) \\
\leq &  C(X_0^0(1))^2 +CX_0^0(1),
\end{align*}
where the last inequality is by \eqref{a2.31}. The proof is complete.
\end{proof}

It remains to prove Lemma \ref{9l4.1}. We first give some elementary estimates. Let $Y_0^N,Y_1^N, \cdots$ be i.i.d. random variables on $\R^d$ so that $Y_0^N=0$ or $Y_0^N$ is uniform on  $N^{1/2} \cN_N$, each with probability $1/2$. Set $V_n^N=Y_1^N+\cdots+Y_n^N$ for $n\geq 1$ so that $\{V_n^N\}$ is a random walk that with probability $1/2$ stays put, and with probability $1/2$ takes a jump uniformly on $N^{1/2} \cN_N$. The following result is from Lemma 4.2 of \cite{DP99}.
\begin{lemma}\label{9l4.2}
 There is some constant $C>0$ independent of $N$ so that
\begin{align*}
\P(V_m^N\in x+[-1,1]^d)\leq C  (1+m)^{-d/2}, \quad \forall m\geq 0, x\in \R^d.
\end{align*}
\end{lemma}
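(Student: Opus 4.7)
The plan is to establish a uniform local central limit theorem bound for the lazy lattice walk $V^N$. Write $h=N^{-1/d}$ in $d\ge 5$ (respectively $h=(N\log N)^{-1/4}$ in $d=4$), so that $V_m^N$ lives on the lattice $\Lambda_N=h\Z^d$, and the number of points of $\Lambda_N$ in any translate of $[-1,1]^d$ is bounded by $Ch^{-d}$. It therefore suffices to establish the pointwise bound $\max_{y\in\Lambda_N}\P(V_m^N=y)\le Ch^d(1+m)^{-d/2}$; for $m$ bounded this is trivial, so only large $m$ matters.

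I would carry this out via Fourier inversion on the dual torus $T_N=[-\pi/h,\pi/h]^d$. The characteristic function factors as
$$\phi_N(\xi)=\E[e^{i\xi\cdot Y_1^N}]=\tfrac12\bigl(1+\phi_U^N(\xi)\bigr),\qquad \phi_U^N(\xi)=\frac{1}{\psi(N)}\sum_{y\in N^{1/2}\cN_N}e^{i\xi\cdot y},$$
and since $Y_1^N$ has mean zero, coordinate variance $1/6$, and bounded support, one has $\phi_N(\xi)=1-|\xi|^2/12+O(|\xi|^4)$ uniformly in $N$, which yields a Gaussian bound $|\phi_N(\xi)|\le e^{-c|\xi|^2}$ on $|\xi|\le\delta$. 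The inversion formula
$$\P(V_m^N=y)=\frac{h^d}{(2\pi)^d}\int_{T_N}\phi_N(\xi)^m\,e^{-i\xi\cdot y}\,d\xi$$
then gives a low-frequency contribution of order $h^d m^{-d/2}$ after Gaussian scaling.

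For the remaining high-frequency region I would exploit the product structure of $\phi_U^N$. A discrete Dirichlet kernel computation in each coordinate gives $|\phi_U^N(\xi)|\le\prod_{j=1}^d\min(1,C/|\xi_j|)$ on $T_N$; combined with $|\phi_N(\xi)|^2\le\tfrac12(1+|\phi_U^N(\xi)|)$, this yields $|\phi_N(\xi)|\le 1-c_1$ on a fixed annulus $\{\delta\le|\xi|\le R\}$ (using pointwise convergence of $\phi_U^N$ to $\prod_j\operatorname{sinc}(\xi_j)$ and compactness), while on $\{|\xi|>R\}\cap T_N$ the Dirichlet decay makes the remaining integral much smaller than $m^{-d/2}$. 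Summing the three pieces and multiplying by the lattice count $\lesssim h^{-d}$ in $x+[-1,1]^d$ completes the proof. The main obstacle is the uniform-in-$N$ control of $|\phi_N|$ in the high-frequency region, where $\phi_U^N$ can approach the nontrivial dual lattice points of $\Lambda_N$; the laziness factor $\tfrac12(1+\phi_U^N)$ is crucial, as it removes the would-be zeros of $1-\phi_N$ coming from $\phi_U^N=-1$ and ensures $|\phi_N(\xi)|$ is bounded away from $1$ on the full punctured torus $T_N\setminus\{0\}$ with a quantitative bound independent of $N$.
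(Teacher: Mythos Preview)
The paper does not give its own proof of this lemma; it simply cites Lemma~4.2 of Durrett--Perkins \cite{DP99}. So there is no paper proof to compare against, and the question is whether your argument stands on its own.

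There is a genuine gap. Your reduction to the pointwise bound
\[
\max_{y\in\Lambda_N}\P(V_m^N=y)\le Ch^d(1+m)^{-d/2}
\]
fails because this bound is simply false: since each step is zero with probability $\tfrac12$, one has $\P(V_m^N=0)\ge 2^{-m}$, which for fixed $m$ and $N\to\infty$ (so $h^d\to 0$) violates the claimed inequality. In particular, your remark ``for $m$ bounded this is trivial'' applies to the concentration statement $\P(V_m^N\in x+[-1,1]^d)\le C$, but \emph{not} to the pointwise bound you then set out to prove.

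The same obstruction appears in your high-frequency estimate. Because the walk is lazy, $|\phi_N(\xi)|=|\tfrac12(1+\phi_U^N(\xi))|$ is bounded \emph{below} by roughly $\tfrac12$ wherever $|\phi_U^N(\xi)|$ is small. Hence on $\{|\xi|>R\}\cap T_N$ one only has $|\phi_N|^m\lesssim 2^{-m}$, and
\[
\frac{h^d}{(2\pi)^d}\int_{\{|\xi|>R\}\cap T_N}|\phi_N(\xi)|^m\,d\xi \;\asymp\; 2^{-m},
\]
which is not $\le Ch^d m^{-d/2}$ uniformly in $N$. The Dirichlet decay of $\phi_U^N$ does not help here, since it enters $\phi_N$ only through $\tfrac12(1+\phi_U^N)$.

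Two standard fixes: (i) bypass the pointwise reduction and bound the concentration function directly via an Esseen-type inequality, which only requires integrating $|\phi_N|^m$ over a \emph{bounded} frequency region $|\xi|\le O(1)$, where your low- and mid-frequency analysis already gives $Cm^{-d/2}$; or (ii) condition on the number $K\sim\mathrm{Bin}(m,\tfrac12)$ of nonzero steps and apply the Fourier argument to the non-lazy walk with characteristic function $\phi_U^N$. For the non-lazy walk, Parseval gives $\int_{T_N}|\phi_U^N|^2\,d\xi=(2\pi/h)^d/\psi(N)\le C$ uniformly in $N$, so
\[
\int_{\{|\xi|>\delta\}\cap T_N}|\phi_U^N|^k\,d\xi\le\Bigl(\sup_{|\xi|>\delta}|\phi_U^N|\Bigr)^{k-2}\int_{T_N}|\phi_U^N|^2\,d\xi\le C(1-c)^{k},
\]
and the pointwise bound $\P(U_1+\cdots+U_k=y)\le Ch^dk^{-d/2}$ follows for $k\ge 2$; averaging over $K$ then recovers the lemma.
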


We also need estimates for the probability concerning the Poisson and Gamma random variables. For any $\lambda\geq 0$ and $m\geq 0$, define
\begin{align}\label{e4.2}
\pi(\lambda, m)=e^{-\lambda} \frac{\lambda^m}{m!}, \quad \text{ and } \quad \Pi(\lambda, m)=\sum_{k=m}^\infty   \pi(\lambda, k).
\end{align}
Let $\Pi(\lambda, m)=1$ if $m<0$. Define $\pi(\lambda, m)$=0 if $\lambda< 0$.
Let $\xi_1,\xi_2,\cdots$ be i.i.d. random variables with distribution $Exp(1)$. Define $\Gamma_m=\xi_1+\cdots+\xi_m$ for $m\geq 1$. Then $\Gamma_m$ has a $\Gamma(m,1)$ distribution.  Set by convention $\Gamma_m=0$ if $m\leq 0$.
 Recall $T_\beta$ from \eqref{9ea6.95}. By the scaling property of gamma distribution, we have
 \begin{align}\label{9ea2.00}
\P(T_\beta\leq t)=\P(\Gamma_{\vert \beta\vert +1}\leq (2N+\theta)t)=\Pi((2N+\theta)t, \vert \beta\vert +1),
 \end{align}
 where the last equality comes from that $\Gamma_m \leq r$ iff at least $m$ events occur before time $r$ in a Poisson process with rate $1$. Similarly,  
 \begin{align}\label{9ea6.84}
\P(T_{\pi \beta}\leq t<T_\beta)=\P(\Gamma_{\vert \beta\vert }\leq (2N+\theta)t<\Gamma_{\vert \beta\vert +1})=\pi((2N+\theta)t, \vert \beta\vert ).
 \end{align}

\begin{lemma}\label{9l4.3}
Let $\lambda\geq 0$.\\
(i) For any $p>0$, there is some constant $C=C(p)>0$ so that
\begin{align*}
\sum_{m=0}^\infty \pi(\lambda, m) (1+m)^{-p}\leq C  (1+\lambda)^{-p}.
\end{align*}
(ii) There is some constant $A>0$ so that for any $p\geq 0$ and $0< r\leq t$,
\begin{align}\label{9ea6.78}
 \E\Big(\sum_{m>ANr} (\frac{2N+2\theta}{2N+\theta})^m 1(\Gamma_{m}<(2N+\theta)r) m^p\Big)\leq C_p 2^{-Nr},
\end{align}
for some constant $C_p>0$.\\

 \no (iii) There is some constant $C>0$ so that for any $0< r\leq t$,
\begin{align*}
\sum_{m=0}^\infty \pi((2N+\theta)r, m) I(m) \leq  CI(N).
\end{align*}
\end{lemma}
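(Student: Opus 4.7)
The three parts of Lemma~\ref{9l4.3} are standard tail and moment estimates, since $\pi(\lambda,m)=\P(\mathrm{Poi}(\lambda)=m)$ and $\Gamma_m\sim\mathrm{Gamma}(m,1)$. My overall strategy is to handle each part with the appropriate Chernoff/Cram\'er bound combined with a splitting of the sum into a central region (where the deterministic factor grows slowly compared to the concentration of the distribution) and a tail (where large-deviation estimates take over).

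For part~(i), I would split at $m=\lambda/2$. On the small-$m$ side, the Poisson lower-tail Chernoff bound yields $\sum_{m\leq\lambda/2}\pi(\lambda,m)\leq e^{-c\lambda}$ for some absolute $c>0$; since $(1+m)^{-p}\leq 1$ and $e^{-c\lambda}\leq C_p(1+\lambda)^{-p}$, this piece is controlled. On the large-$m$ side, $(1+m)^{-p}\leq(1+\lambda/2)^{-p}\leq C_p(1+\lambda)^{-p}$ pulls out of the sum, and what remains is at most $1$.

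For part~(ii), the key tool is the Cram\'er lower-tail bound for $\Gamma_m$, obtained by optimizing $\P(\Gamma_m\leq s)\leq e^{\theta s}(1+\theta)^{-m}$ at $\theta=m/s-1$ to give
\[
\P(\Gamma_m\leq s)\leq \Big(\frac{es}{m}\Big)^m e^{-s},\qquad 0<s<m.
\]
With $s=(2N+\theta)r$ and $m>ANr$, we have $s/m<(2+\theta/N)/A\leq 3/A$ for $N\geq\theta$, so $(es/m)^m\leq (3e/A)^m$. Since $(2N+2\theta)/(2N+\theta)\leq \exp(\theta/(2N+\theta))$ gives $((2N+2\theta)/(2N+\theta))^m\leq e^{m/N}$ for $N\geq 2|\theta|$, each summand is at most $(4e/A)^m m^p$ for $N$ large. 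Choosing $A$ large enough that $4e/A<1/4$, the geometric-type tail sum is at most $C_p(1/4)^{ANr}(ANr)^p$, and splitting into $Nr\leq 1$ (where both sides are of constant order and the LHS is bounded by a constant depending only on $A,p$) and $Nr\geq 1$ (where $(1/4)^{(A-1)Nr}(ANr)^p$ is uniformly bounded once $A$ is large) yields the desired bound $C_p 2^{-Nr}$. The only real obstacle here is bookkeeping the uniformity of constants over $r\in(0,t]$ and all large $N$, but this is routine.

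For part~(iii), I would exploit the slow growth of $I$: in $d\geq 5$ it is bounded, while in $d=4$, $I(m)=1+\log(1+m)$, so $I$ is nondecreasing and satisfies the doubling-type estimate $I(KN)\leq C_K I(N)$ for every fixed $K>0$. Since $r\leq t$, $(2N+\theta)r\leq C_t N$, so I would split at $m_0=2C_t N$. On $m\leq m_0$, monotonicity gives $I(m)\leq I(m_0)\leq C_t I(N)$, and the Poisson probabilities sum to at most $1$. On the tail $m>m_0$, a standard upper-tail Chernoff bound for $\mathrm{Poi}((2N+\theta)r)$ produces exponential decay, and the trivial estimate $I(m)\leq C(1+m)$ (or an application of part~(i) with $p=-1$ if one prefers) is more than enough to absorb $I(m)$ into this decay, making the tail contribution bounded by a constant. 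Adding the two pieces yields the claimed $CI(N)$ bound.
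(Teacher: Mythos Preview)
Your proposal is correct, and for parts (i)--(ii) it matches the paper's approach: the paper simply cites Lemmas~4.3 and~10.3 of \cite{DP99}, which are proved by exactly the Chernoff/Cram\'er splitting you describe.

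For part (iii) your route differs slightly from the paper's. The paper splits at $m=ANr$ (with $A$ from part~(ii), so the threshold scales with $r$) and handles the tail by recycling part~(ii) through the Poisson--Gamma duality $\pi((2N+\theta)r,m)\le \Pi((2N+\theta)r,m)=\P(\Gamma_m<(2N+\theta)r)$, together with the crude bound $I(m)\le m$; this gives the tail as $C_1 2^{-Nr}$ directly from \eqref{9ea6.78} with $p=1$. You instead split at $m_0=2C_tN$ (independent of $r$) and invoke a separate upper-tail Poisson Chernoff bound. Both arguments are valid; the paper's is a bit more economical because it avoids introducing a new large-deviation estimate, while yours makes the dependence on $t$ in the constant slightly more explicit through the doubling property of $I$. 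One small bookkeeping point: your claim $((2N+2\theta)/(2N+\theta))^m\le e^{m/N}$ requires $\theta\le 2$ as written; in general you get $e^{cm/N}$ with $c$ depending on $\theta$, which is harmless after adjusting $A$.
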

\begin{proof}
The proof of (i) is immediate from Lemma 4.3 of \cite{DP99}. The inequality in (ii) is a revised version of Lemma 10.3 of \cite{DP99}, which also follows easily from standard large deviation estimates (see, e.g., Theorem 5.4 of \cite{MU05}). For the proof of $(iii)$, we note the sum of $m<ANr$ gives at most $I(ANr)\leq CI(N)$ while for $m>ANr$, we use $I(m)\leq m$ and
\begin{align*}
\pi((2N+\theta)r, m)\leq \Pi((2N+\theta)r, m)=\P(\Gamma_{m}<(2N+\theta)r)
\end{align*}
to see that the sum of $m>ANr$ is at most $C_1 2^{-Nr}$ by (ii). So the inequality holds.
\end{proof}

\begin{proof}[Proof of Lemma \ref{9l4.1}(i)]
Since we only consider particles $\beta,\gamma$ with $1\leq \beta_0, \gamma_0\leq NX_0^0(1)$, we may let $\beta_0=i$ and $\gamma_0=j$ for some $1\leq i\neq j\leq NX_0^0(1)$. Set $0< r<t$. Recall $\text{nbr}_{\beta,\gamma}(r)$ from \eqref{9e2.24} to get
\begin{align*}
\sum_{\beta: \beta_0=i}\sum_{\gamma: \gamma_0=j} \E(\text{nbr}_{\beta,\gamma}(r))\leq &\sum_{\beta: \beta_0=i}\sum_{\gamma: \gamma_0=j} \P(T_{\pi \beta}<r< T_\beta, T_{\pi \gamma}<r, B^\beta\neq \Delta, B^\gamma \neq \Delta)\\
&\times \P(N^{1/2}(x_j-x_i)+V_{\vert \beta\vert +\vert \gamma\vert }^N \in [-1,1]^d),
\end{align*}
where the last probability follows from \eqref{ea3.22} which implies that conditioning on $\{B^\beta, B^\gamma \neq \Delta\}$,
\begin{align}\label{eb3.22}
B^\beta-B^\gamma=x_{i}-x_j+\sum_{s=0}^{\vert \beta\vert -1} W^{\beta\vert s} 1(e_{\beta\vert s}=\beta_{s+1})-\sum_{t=0}^{\vert \gamma\vert -1} W^{\gamma\vert t} 1(e_{\gamma\vert t}=\gamma_{t+1}).
\end{align}
 We note $B^\beta\neq \Delta$ is equivalent to $\delta_{\beta\vert m}=1$ for all $0\leq m\leq \vert \beta\vert -1$. Similarly for $B^\gamma \neq \Delta$. These two events are both independent of $T_{\pi \beta}, T_\beta, T_{\pi \gamma}$. By considering $\vert \beta\vert =l$ and $\vert \gamma\vert =m$ for $m\geq 0$ and $l\geq 0$, we get
\begin{align}\label{9e2.22}
\sum_{\beta: \beta_0=i}\sum_{\gamma: \gamma_0=j}& \E(\text{nbr}_{\beta,\gamma}(r)) 
\leq  \sum_{l=0}^\infty \sum_{m=0}^\infty (\frac{N+\theta}{2N+\theta})^{l+m} \cdot 2^l \cdot 2^m \cdot \pi((2N+\theta) r, l)\nn\\
&\times  \Pi((2N+\theta) r, m)  \cdot \P(N^{1/2}(x_j-x_i)+V_{l+m}^N \in [-1,1]^d), 
\end{align}
where the first factor on the right-hand side gives the probability that $B^\beta\neq \Delta$ and $B^\gamma \neq \Delta$, i.e. no death along both family lines. The $2^l$ and $2^m$ count the number of $\beta, \gamma$. The fourth and fifth terms are the probability of $T_{\pi \beta}<r< T_\beta$ and $T_{\pi \gamma}<r$ (see \eqref{9ea2.00} and \eqref{9ea6.84}). Use Lemma \ref{9l4.2} to bound the last term on the right-hand side to conclude
\begin{align}\label{9e2.21}
\sum_{\beta: \beta_0=i}\sum_{\gamma: \gamma_0=j} \E(\text{nbr}_{\beta,\gamma}(r))\leq   &\sum_{l=0}^\infty \sum_{m=0}^\infty (\frac{2N+2\theta}{2N+\theta})^{l+m}  \pi((2N+\theta) r, l)\nn\\
&\times  \Pi((2N+\theta) r, m) \cdot \frac{C}{(1+l+m)^{d/2}}.
\end{align}
The following lemma will be used frequently below.
\begin{lemma}\label{9l4.0}
There is some constant $C>0$ such that for any $0< r\leq t$ and $l\geq 0$, we have
\begin{align}\label{e4.21}
& \sum_{m=0}^\infty (\frac{2N+2\theta}{2N+\theta})^{m}   \Pi((2N+\theta) r, m) \frac{1}{(1+l+m)^{d/2}}\leq  C   \frac{1}{(1+l)^{d/2-1}}.
\end{align}
\end{lemma}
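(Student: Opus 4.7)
The plan is to split the sum at the threshold $m = ANr$ for a sufficiently large constant $A>0$, so that the exponentially growing factor $\bigl(\tfrac{2N+2\theta}{2N+\theta}\bigr)^{m}$ is controlled uniformly on the low-$m$ range, while the high-$m$ range is absorbed by the Gamma tail bound in Lemma~\ref{9l4.3}(ii). Write $S = S_1 + S_2$ for the contributions of $m \leq ANr$ and $m > ANr$, respectively.

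For $S_1$, since $\tfrac{2N+2\theta}{2N+\theta}=1+\tfrac{\theta}{2N+\theta}$, the inequality $(1+x)^{m}\leq e^{mx}$ combined with $r\leq t$ gives $\bigl(\tfrac{2N+2\theta}{2N+\theta}\bigr)^{m}\leq \exp\bigl(\tfrac{m\theta}{2N+\theta}\bigr)\leq e^{A\theta t/2}$ for $N$ large. Using the crude bound $\Pi((2N+\theta)r,m)\leq 1$, I would then dominate $S_1$ by $e^{A\theta t/2}\sum_{m\geq 0}(1+l+m)^{-d/2}$, which is controlled by $C(1+l)^{-(d/2-1)}$ via the standard integral comparison $\int_0^\infty (1+l+x)^{-d/2}\,dx \leq C(1+l)^{-(d/2-1)}$; this is where $d\geq 4 > 2$ enters.

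For $S_2$, I would use $(1+l+m)^{-d/2}\leq (1+l)^{-d/2}$ to factor out the $l$-dependence, and then invoke Lemma~\ref{9l4.3}(ii) with $p=0$, noting the identification $\Pi((2N+\theta)r,m)=\P(\Gamma_m<(2N+\theta)r)$, to obtain $\sum_{m>ANr}\bigl(\tfrac{2N+2\theta}{2N+\theta}\bigr)^{m}\Pi((2N+\theta)r,m)\leq C\cdot 2^{-Nr}\leq C$. Therefore $S_2 \leq C(1+l)^{-d/2}\leq C(1+l)^{-(d/2-1)}$, and combining with the bound on $S_1$ finishes the proof. I do not foresee a genuine obstacle: the argument is the routine trade-off between the exponential weight and the Gamma tail provided by Lemma~\ref{9l4.3}(ii). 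The constant $A$ only needs to be large enough for that lemma to apply; once it is fixed, all subsequent constants depend only on $(d,\theta,t)$, and the bound is uniform in $r\in(0,t]$ and $l\geq 0$ as required.
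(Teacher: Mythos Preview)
Your proposal is correct and follows essentially the same route as the paper: split the sum at $m=ANr$, bound the low-$m$ range via $\bigl(\tfrac{2N+2\theta}{2N+\theta}\bigr)^m\le e^{A\theta r}$ and the tail comparison $\sum_m (1+l+m)^{-d/2}\le C(1+l)^{1-d/2}$, and handle the high-$m$ range by factoring out $(1+l)^{-d/2}$ and applying Lemma~\ref{9l4.3}(ii). The only cosmetic difference is that the paper keeps the low sum over $m\le ANr$ and bounds by $e^{A\theta r}$ rather than extending to all $m\ge 0$ and using $e^{A\theta t/2}$; also, your ``for $N$ large'' is unnecessary since $N/(2N+\theta)\le 1/2$ already holds under the standing convention $\theta\ge 0$.
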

\begin{proof}
Let $A>0$ be as in Lemma \ref{9l4.3} (ii) so that
\begin{align*}
  &\sum_{m>ANr} (\frac{2N+2\theta}{2N+\theta})^{m}   \Pi((2N+\theta) r, m) \frac{1}{(1+l+m)^{d/2}}\\
  &\leq  \frac{1}{(1+l)^{d/2}} \sum_{m>ANr} (\frac{2N+2\theta}{2N+\theta})^{m}   \P(\Gamma_{m}<(2N+\theta)r)  \\ 
  &\leq   \frac{1}{(1+l)^{d/2}}\times C_0   2^{-Nr}  \leq C\frac{1}{(1+l)^{d/2-1}},
\end{align*}
where in the first inequality we write $\Pi((2N+\theta)r, m)$ as $\P(\Gamma_{m}< (2N+\theta)r)$ by \eqref{9ea2.00}. The second inequality is by Lemma \ref{9l4.3} (ii). Next, the sum for $m\leq ANr$ is at most 
\begin{align*}
&\sum_{m=0}^{ANr}   e^{A\theta r}   \frac{1}{(1+l+m)^{d/2}}\leq C \frac{1}{(1+l)^{d/2-1}}.
\end{align*}
 We conclude \eqref{e4.21} holds.
\end{proof}
The above lemma implies that \eqref{9e2.21} can be bounded above by
\begin{align*}
&  \sum_{l=0}^\infty   (\frac{2N+2\theta}{2N+\theta})^{l}  \pi((2N+\theta) r, l) \cdot C  \frac{1}{(1+l)^{d/2-1}}\nn\\
&= Ce^{\theta r} \sum_{l=0}^\infty      \pi((2N+2\theta) r, l)   \frac{1}{(1+l)^{d/2-1}}\leq C   \frac{1}{(1+(2N+2\theta) r)^{d/2-1}},
\end{align*}
where the last inequality is by Lemma \ref{9l4.3}(i). The proof of Lemma \ref{9l4.1}(i) is complete by noticing that there are at most $(NX_0^0(1))^2$ such pairs of $i$ and $j$.
\end{proof}

\begin{proof}[Proof of Lemma \ref{9l4.1}(ii)]
Now we proceed to the case when $\beta_0=\gamma_0=i$ for some $1\leq i\leq NX_0^0(1)$. By translation invariance, we get
\begin{align*}
&\E\Big(\sum_{\beta,\gamma: \gamma_0= \beta_0} \text{nbr}_{\beta,\gamma}(r)\Big)=  NX_0^0(1) \E\Big(\sum_{\beta,\gamma\geq 1} \text{nbr}_{\beta,\gamma}(r)\Big),
\end{align*}
In the above, we have replaced $\beta_0=\gamma_0=1$ by $\beta,\gamma\geq 1$ where $1\in \cI$ is the first individual in generation $0$. It suffices to prove for all $0<r<t$,
\begin{align}\label{9e9.51}
I_0:= \E\Big(\sum_{\beta,\gamma\geq 1} \text{nbr}_{\beta,\gamma}(r)\Big)\leq C I((2N+2\theta) r).
\end{align}
Recall $\text{nbr}_{\beta,\gamma}(r)$ from \eqref{9e2.24}. By letting $\alpha=\beta\wedge \gamma$, we may write the sum of $\beta,\gamma$ as   
\begin{align}\label{9e6.11}
I_0\leq \sum_{k=0}^\infty \sum_{\alpha\geq 1, \vert \alpha\vert =k}  \sum_{l=0}^\infty \sum_{m=0}^\infty\sum_{\substack{\beta\geq \alpha,\\ \vert \beta\vert =k+l}}\sum_{\substack{ \gamma\geq \alpha,\\ \vert \gamma\vert =k+m}} \E\Big(\text{nbr}_{\beta,\gamma}(r) \Big).
\end{align} 
Let $\alpha, \beta, \gamma$ be as in the summation and then condition on $\cH_{\alpha}$ to get
\begin{align}\label{9e2.62}
\E(\text{nbr}_{\beta,\gamma}(r)\vert \cH_{\alpha})\leq & 1{\{T_{\alpha}<r, B^{\alpha}\neq \Delta\}}\cdot    (\frac{N+\theta}{2N+\theta})^{l+m}\nn\\
&\times \P(T_{\pi \beta}-T_{\alpha}<r-T_{\alpha}\leq T_\beta-T_{\alpha}\vert \cH_{\alpha})\nn\\
&\times \P(T_{\pi \gamma}-T_{\alpha}<r-T_{\alpha}\vert \cH_{\alpha})\nn\\
&\times  \P(B^\beta-B^\gamma \in \cN_N \vert \cH_{\alpha}),
\end{align}
where the second term is the upper bound for the probability that no death events occur on the family line after $\beta,\gamma$ split. Apply  \eqref{9ea2.00} and \eqref{9ea6.84} in \eqref{9e2.62} to see
\begin{align}\label{e2.63}
\E(\text{nbr}_{\beta,\gamma}(r)\vert \cH_{\alpha})\leq & 1{\{T_{\alpha}<r, B^{\alpha}\neq \Delta\}} \cdot   (\frac{N+\theta}{2N+\theta})^{l+m} \pi((2N+\theta) (r-T_\alpha), l-1) \nn\\
&\times   \Pi((2N+\theta) (r-T_\alpha), m-1) \cdot \frac{C}{(1+l+m)^{d/2}}.
\end{align}
where we have also used Lemma \ref{9l4.2} to bound the last probability in \eqref{9e2.62}. It follows that
\begin{align}\label{9e2.640}
I_0\leq  C\E\Big(&\sum_{k=0}^\infty \sum_{\alpha\geq 1, \vert \alpha\vert =k} 1{\{T_{\alpha}<r, B^{\alpha}\neq \Delta\}}\nn \\
&\sum_{l=0}^\infty \sum_{m=0}^\infty 2^l \cdot 2^{m} \cdot  (\frac{N+\theta}{2N+\theta})^{l+m}  \pi((2N+\theta) (r-T_\alpha), l-1)\nn \\
&\times    \Pi((2N+\theta) (r-T_\alpha), m-1) \cdot \frac{1}{(1+l+m)^{d/2}}\Big).
\end{align}
By Lemma \ref{9l4.0}, the sum of $m$ is at most 
\begin{align*} 
  \sum_{m=0}^\infty    (\frac{2N+2\theta}{2N+\theta})^{m}   \Pi((2N+\theta) (r-T_\alpha), m-1) \cdot \frac{1}{(1+l+m)^{d/2}}\leq C\frac{1}{(1+l)^{d/2-1}}.
\end{align*}
Then the sum of $l$ gives
 \begin{align}\label{9e9.58}
&\sum_{l=0}^\infty(\frac{2N+2\theta}{2N+\theta})^{l+1}  \pi((2N+\theta) (r-T_\alpha), l-1) \frac{C}{(1+l)^{d/2-1}}\\
&\leq Ce^{\theta r} \sum_{l=0}^\infty \pi((2N+2\theta) (r-T_\alpha), l) \frac{1}{(1+l)^{d/2-1}}\leq  \frac{C }{(1+(2N+2\theta) (r-T_\alpha))^{d/2-1}},\nn
\end{align}
where the last inequality is by Lemma \ref{9l4.3} (i). Now conclude from the above to see
\begin{align}\label{e6.37}
I_0\leq  C\E\Big(\sum_{\alpha\geq 1} 1{\{T_{\alpha}<r, B^{\alpha}\neq \Delta\}}  \frac{1}{(1+(2N+2\theta) (r-T_\alpha))^{d/2-1}}\Big).
\end{align}
\begin{lemma}\label{9l3.2a}
For any Borel function $f: \R^+\to \R^+$ and $r\geq u\geq 0$, we have
\begin{align*}
 \E\Big(\sum_{\alpha\geq 1} 1{\{u<T_{\alpha}<r, B^{\alpha}\neq \Delta\}} f((2N+2\theta) (r-T_\alpha))\Big)\leq e^{\theta r}   \int_0^{(2N+2\theta)(r-u)}     f(y) dy.
 \end{align*}
\end{lemma}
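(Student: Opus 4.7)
The plan is to enumerate the sum by grouping particles $\alpha\geq 1$ according to their generation $|\alpha|=n$ and compute the expectation explicitly from three independent ingredients. First, for $\alpha_0=1$ there are exactly $2^n$ labels $\alpha$ with $|\alpha|=n$. Second, the event $\{B^\alpha\neq\Delta\}$ is equivalent to $\delta_{\alpha|m}=1$ for all $m<n$, which has probability $\bigl((N+\theta)/(2N+\theta)\bigr)^n$ and, by the construction at the start of Section \ref{9s2}, is independent of the lifetimes. Third, $T_\alpha=\sum_{m=0}^{n}t_{\alpha|m}$ is a sum of $n+1$ i.i.d.\ $\mathrm{Exp}(2N+\theta)$ random variables and hence admits density $(2N+\theta)\,\pi((2N+\theta)s,n)$ on $(0,\infty)$, with $\pi$ as in \eqref{e4.2}.

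Using Fubini (legitimate since $f\geq 0$) and combining these three facts, the left-hand side rewrites as
\begin{align*}
&\sum_{n=0}^\infty 2^n\Bigl(\frac{N+\theta}{2N+\theta}\Bigr)^{n}\int_u^r (2N+\theta)\,\pi((2N+\theta)s,n)\,f((2N+2\theta)(r-s))\,ds\\
&\quad=\int_u^r (2N+\theta)\,e^{-(2N+\theta)s}\,f((2N+2\theta)(r-s))\sum_{n=0}^\infty\frac{((2N+2\theta)s)^n}{n!}\,ds\\
&\quad=\int_u^r (2N+\theta)\,e^{\theta s}\,f((2N+2\theta)(r-s))\,ds,
\end{align*}
where the crucial simplification comes from recognizing the sum over $n$ as the exponential series for $e^{(2N+2\theta)s}$. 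Since $\theta\geq 0$ by the convention fixed in Section \ref{9s2}, we bound $e^{\theta s}\leq e^{\theta r}$, and the substitution $y=(2N+2\theta)(r-s)$ together with $(2N+\theta)/(2N+2\theta)\leq 1$ yields exactly $e^{\theta r}\int_0^{(2N+2\theta)(r-u)}f(y)\,dy$, as required.

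There is no substantial obstacle here: the argument is a direct one-line computation once one unpacks the three independent structures (generation count, survival of the family line, and gamma-distributed lifetime) and identifies the disguised exponential series. The case $\theta<0$, if one wished to treat it directly rather than by the authors' reduction, is handled identically by bounding $e^{\theta s}\leq 1$ in place of $e^{\theta s}\leq e^{\theta r}$.
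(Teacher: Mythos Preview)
Your proof is correct and arrives at the same intermediate expression $\int_u^r (2N+\theta)\,e^{\theta s}\,f((2N+2\theta)(r-s))\,ds$ as the paper, then finishes identically. The only organizational difference is that the paper reaches this integral by invoking Lemma~\ref{9l3.2} (the compensator identity) together with Lemma~\ref{9l2.0} (the mean $e^{\theta s}$ for the branching walk), whereas you obtain it by summing generations directly and recognizing the exponential series; these are two equivalent ways of computing the same quantity.
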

\begin{proof}
Apply Lemma \ref{9l3.2} to see that the expectation above is equal to
\begin{align}\label{9e2.71}
&  (2N+\theta)  \int_0^r  \E\Big(\sum_{\alpha\geq 1} 1{\{T_{\pi\alpha}<s\leq T_{\alpha}, B^{\alpha}\neq \Delta\}}\Big) 1(u<s) f((2N+2\theta) (r-s))ds \nn\\
&=  (2N+\theta)  \int_u^r  e^{\theta s}  f((2N+2\theta) (r-s))ds  \leq e^{\theta r}   \int_0^{(2N+2\theta)(r-u)}     f(y) dy,
\end{align}
where the equality is by Lemma \ref{9l2.0} with $\phi\equiv 1$.
\end{proof}
Use the above lemma with $f(y)=1/(1+y)^{d/2-1}$ and $u=0$ to see that \eqref{e6.37} becomes
\begin{align*} 
I_0\leq   C  \int_0^{(2N+2\theta)r}   \frac{1}{(1+y)^{d/2-1}}   dy\leq   C I((2N+2\theta)r),
\end{align*}
as required.
\end{proof}

\section{Refined estimates for the collision term}\label{9s4}

This section is devoted to refining the bounds given in Lemma \ref{9l4.1} and preparing for the convergence of the collision term in the following section. To start with, by recalling \eqref{9e2.23}, we define the collision from different ancestors by
\begin{align}\label{9e2.25}
J_0(t)=& \frac{1}{N\psi(N)} \sum_{\beta,\gamma: \beta_0\neq \gamma_0}  1(T_\beta\leq t,T_{\pi \gamma}<T_\beta ) 1(B^\beta-B^\gamma \in \cN_N).
\end{align}
By assuming the initial particles are sufficiently spread out, we show that $\E(J_0(t)) \to 0$ as $N\to \infty$.  
\begin{lemma}\label{9l4.1a}
If the initial condition $X_0^{0,N}$ converges to some atomless $X_0$ in $M_F(\R^d)$, then for any $t\geq 0$, we have $\E(J_0(t)) \to 0$ as $N\to \infty$.
\end{lemma}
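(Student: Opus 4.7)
The plan is to adapt the proof of Lemma \ref{9l4.1}(i), retaining the spatial separation $|x_i - x_j|$ through a Gaussian factor and then exploiting the atomless hypothesis on $X_0$ to force the resulting bound to vanish. As in the derivation of \eqref{9e2.31} and \eqref{9e2.22}, we have
\[
\E(J_0(t)) \leq \frac{C}{\psi(N)}\int_0^t \sum_{i\neq j}\sum_{\substack{\beta:\beta_0=i\\ \gamma:\gamma_0=j}} \E(\text{nbr}_{\beta,\gamma}(r))\, dr,
\]
with the inner sum controlled via \eqref{9e2.22} in terms of $\P(N^{1/2}(x_j-x_i)+V_{l+m}^N \in [-1,1]^d)$. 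The key new ingredient is a Gaussian local CLT refinement
\[
\P(V_n^N \in y+[-1,1]^d) \leq \frac{C}{(1+n)^{d/2}}\exp\bigl(-c|y|^2/(1+n)\bigr),\qquad y\in\R^d,\ n\geq 0,
\]
obtained by combining Lemma \ref{9l4.2} with a Hoeffding-type sub-Gaussian tail estimate for the bounded-increment walk $V_n^N$.

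Substituting $y = N^{1/2}(x_j-x_i)$ produces the extra factor $\exp(-cN|x_i-x_j|^2/(1+l+m))$. Since the Poisson/Gamma factors concentrate $l+m$ near $2Nr$ with exponential decay outside this range by Lemma \ref{9l4.3}(ii), the manipulations used in Lemmas \ref{9l4.0} and \ref{9l4.3}(i) allow this factor to be pulled out as $\exp(-c|x_i-x_j|^2/r)$ (with a slightly smaller $c$), giving
\[
\sum_{\substack{\beta:\beta_0=i\\ \gamma:\gamma_0=j}} \E(\text{nbr}_{\beta,\gamma}(r)) \leq \frac{C\exp(-c|x_i-x_j|^2/r)}{(1+(2N+2\theta)r)^{d/2-1}}.
\]
Setting $b_N(r) := N^{-2}\sum_{i\neq j} \exp(-c|x_i-x_j|^2/r)$, this yields
\[
\E(J_0(t)) \leq \frac{C N^2}{\psi(N)}\int_0^t (1+Nr)^{-d/2+1}\, b_N(r)\, dr.
\]

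Three properties of $b_N$ drive the conclusion: (i) $0 \leq b_N(r) \leq X_0^{0,N}(1)^2$ is uniformly bounded; (ii) $b_N(r)$ is nondecreasing in $r$; (iii) by weak convergence of $X_0^{0,N}\otimes X_0^{0,N} \to X_0\otimes X_0$, for each fixed $r > 0$ we have $b_N(r) \to b(r) := \int\int e^{-c|x-y|^2/r}X_0(dx)X_0(dy)$, and the atomless property $X_0(\{x\})=0$ combined with dominated convergence gives $b(r) \to 0$ as $r \to 0$. Combining (ii) and (iii), for any fixed $s>0$ and any $r_0>0$ one has $\limsup_N b_N(s/N) \leq \lim_N b_N(r_0) = b(r_0)$, so $b_N(s/N)\to 0$ for each $s>0$.

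For $d\geq 5$, $N^2/\psi(N) = O(1)$ and the substitution $s=Nr$ writes the bound as $C\int_0^{Nt}(1+s)^{-d/2+1}b_N(s/N)\,ds$, to which dominated convergence applies since the integrand is pointwise convergent to $0$ and dominated by the integrable $C(1+s)^{-d/2+1}$. For $d=4$, $N^2/\psi(N) \sim N/\log N$ and the bound reads $\frac{CN}{\log N}\int_0^t(1+Nr)^{-1}b_N(r)\,dr$. Splitting the integral at any $\eta>0$ and using $b_N(r)\leq b_N(\eta)$ on $(0,\eta)$ yields an upper bound of the form $\frac{C}{\log N}\bigl(b_N(\eta)\log(1+N\eta)+C_t\log(t/\eta)\bigr)$, which tends to $C\,b(\eta)$ as $N\to\infty$ (using $\log(1+N\eta)/\log N \to 1$) and then to $0$ as $\eta\to 0$. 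The main obstacle will be the Gaussian local CLT estimate above and then carefully carrying its decay through the double sum over $l,m$ without losing the Gaussian factor when summing the Poisson and Gamma tails.
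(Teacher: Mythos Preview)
Your approach is different from the paper's and rests on the Gaussian local bound
\[
\P\bigl(V_n^N \in y+[-1,1]^d\bigr) \leq \frac{C}{(1+n)^{d/2}}\exp\Bigl(-\frac{c|y|^2}{1+n}\Bigr),
\]
which you say is ``obtained by combining Lemma~\ref{9l4.2} with a Hoeffding-type sub-Gaussian tail estimate''. That combination does not produce this inequality. Take $|y|^2=Kn$ for a large constant $K$ and let $n\to\infty$: the minimum of the two separate bounds is eventually $C(1+n)^{-d/2}$, while the right-hand side is $C'(1+n)^{-d/2}e^{-cK}$, and no fixed $C'$ can absorb the factor $e^{cK}$ for all $K$. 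The Gaussian upper bound \emph{is} true for these bounded-step walks, uniformly in $N$, but it needs a genuine local CLT argument via characteristic functions (in the spirit of the proof of Lemma~\ref{9l4.2} in \cite{DP99}), not merely the two crude envelopes. That is the real work you would have to supply; once it is in hand, your $b_N(r)$ machinery and the dominated-convergence/splitting arguments go through as you describe. There is also a slip: for $d\geq 5$ one has $N^2/\psi(N)\sim 2^{-d}N$, not $O(1)$; the extra factor of $N$ is exactly what cancels $dr=ds/N$ in your substitution, so the final expression is still correct.

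The paper sidesteps the Gaussian LCLT entirely with a more elementary $\eps$-argument. After bounding $\E(J_0(t))$ by a sum like yours, truncating $l,m\leq ANt$ via large deviations, and bounding the Poisson/Gamma weights by $1$, it reduces to $\sum_{n\leq 2ANt}(n+1)\,\P(N^{1/2}(x_j-x_i)+V_n^N\in[-1,1]^d)$ and then splits pairs by $|x_i-x_j|\leq\eps$ versus $>\eps$. Close pairs are controlled by $(X_0^{0,N}\times X_0^{0,N})(\{|x-y|\leq\eps\})$, which is small by atomlessness; far pairs force $n\geq\eps N^{1/2}-1$, and then Lemma~\ref{9l4.2} alone suffices for $d\geq 5$. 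In $d=4$ an additional split of the $n$-sum at $N/(\log N)^3$ is needed, and the sub-Gaussian tail (Lemma~\ref{9l5.02}) is invoked only in the middle range with $z=n^{1/2}\log N$. This is less streamlined than your $b_N(r)$ route but uses only the estimates already available in the paper.
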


\begin{proof}
By letting $\beta_0=i$ and $\gamma_0=j$ for some $1\leq i\neq j\leq NX_0^0(1)$, we have
\begin{align}\label{9ea7.45}
\E(J_0(t))\leq& \frac{1}{N\psi(N)} \sum_{i\neq j} \sum_{\beta: \beta_0=i}\sum_{\gamma: \gamma_0=j}  \E\Big(1(T_{\pi\beta}< t)1(T_{\pi \gamma}<t ) 1(B^\beta-B^\gamma \in \cN_N)\Big),
\end{align}
where we have bounded $1(T_\beta\leq t,T_{\pi \gamma}<T_\beta)$ by $1(T_{\pi\beta}< t, T_{\pi \gamma}<t)$ to get symmetry between $\beta$ and $\gamma$.
Similar to \eqref{9e2.22}, we may bound the above sum by
\begin{align}\label{9a1.3}
\E(J_0(t))\leq&\frac{1}{N\psi(N)} \sum_{i\neq j}  \sum_{l=0}^\infty \sum_{m=0}^\infty   (\frac{2N+2\theta}{2N+\theta})^{l+m}   \cdot \Pi((2N+\theta) t, l)\\
&\times  \Pi((2N+\theta) t, m)  \cdot \P(N^{1/2}(x_j-x_i)+V_{l+m}^N \in [-1,1]^d):=\frac{1}{N\psi(N)} \sum_{i\neq j} I_0(i,j).\nn
\end{align}
It remains to show
\begin{align*}
\frac{1}{N\psi(N)} \sum_{i\neq j} I_0(i,j)\to 0 \text{ as } N\to\infty.
\end{align*}
For any $i\neq j$ we define
\begin{align}\label{9a1.4}
 I_1(i,j):=&  \sum_{l=0}^{ANt} \sum_{m=0}^{ANt}   (\frac{2N+2\theta}{2N+\theta})^{l+m}   \cdot \Pi((2N+\theta) t, l)\nn\\
&\times  \Pi((2N+\theta) t, m)  \cdot \P(N^{1/2}(x_j-x_i)+V_{l+m}^N \in [-1,1]^d),
\end{align}
where $A>0$ is as in Lemma \ref{9l4.3}(ii). By the symmetry between $m$ and $l$, we obtain
\begin{align*}
I_0(i,j)-I_1(i,j)\leq &2 \sum_{l>ANt} \sum_{m=0}^{\infty}   (\frac{2N+2\theta}{2N+\theta})^{l+m}   \cdot \Pi((2N+\theta) t, l)\nn\\
&\times  \Pi((2N+\theta) t, m)  \cdot \frac{C}{(1+l+m)^{d/2}},
\end{align*}
where we have applied Lemma \ref{9l4.2} to bound the last term in \eqref{9a1.4}.
By Lemma \ref{9l4.0}, the sum of $m$ gives at most $Ce^{\theta t}/(1+l)^{d/2-1}\leq C$. We are left with
\begin{align}\label{9a1.5}
I_0(i,j)-I_1(i,j)\leq & C \sum_{l>ANt}   (\frac{2N+2\theta}{2N+\theta})^{l}   \cdot \P(\Gamma_l<(2N+\theta)t)  \leq C\cdot C_0 2^{-Nt},
\end{align}
where the last inequality is by Lemma \ref{9l4.3}(ii). Hence
\begin{align*}
\frac{1}{N\psi(N)} \sum_{i\neq j} [I_0(i,j)-I_1(i,j)]\leq  \frac{1}{N\psi(N)}  [NX_0^0(1)]^2 C\cdot C_0 2^{-Nt}\to 0.
\end{align*}
It remains to show
\begin{align*}
\frac{1}{N\psi(N)} \sum_{i\neq j} I_1(i,j)\to 0 \text{ as } N\to\infty.
\end{align*}
To get bounds for $I_1(i,j)$, we simply bound $ \Pi((2N+\theta) t, l)$ and $ \Pi((2N+\theta) t, m)$ by $1$ to obtain
\begin{align*} 
I_1(i,j)\leq &  e^{2A\theta t}  \sum_{l=0}^{ANt} \sum_{m=0}^{ANt}    \P(N^{1/2}(x_j-x_i)+V_{l+m}^N \in [-1,1]^d).
\end{align*}
Rewrite the above sum by letting $n=l+m$ to get 
\begin{align} \label{9e2.42}
I_1(i,j)&\leq     e^{2A\theta t} \sum_{n=0}^{2ANt} \sum_{m=0}^{n}  \P(N^{1/2}(x_j-x_i)+V_{n}^N \in [-1,1]^d)\nn \\
&\leq e^{2A\theta t}  \sum_{n=0}^{2ANt} (n+1)  \P(N^{1/2}(x_j-x_i)+V_{n}^N \in [-1,1]^d).
\end{align}
Notice that if $\vert x_j-x_i\vert >\eps$, we need $n\geq \eps N^{1/2}-1$ so that $V_{n}^N$ gets to $N^{1/2}(x_j-x_i)+ [-1,1]^d$. Use this observation to see that \eqref{9e2.42} becomes
\begin{align} \label{9e2.43}
I_1(i,j)&\leq  e^{2A\theta t} \sum_{n=\eps N^{1/2}-1}^{2ANt} (n+1) \frac{C}{(n+1)^{d/2}}\nn\\
& + e^{2A\theta t} 1(\vert x_j-x_i\vert \leq \eps)  \sum_{n=0}^{2ANt} (n+1) \frac{C}{(n+1)^{d/2}},
\end{align}
where we have applied Lemma \ref{9l4.2} to bound the last term in \eqref{9e2.42}. 
Sum $i\neq j$ to get
\begin{align} 
\frac{1}{N\psi(N)}&\sum_{i\neq j} I_1(i,j)\leq \frac{e^{2A\theta t}}{N\psi(N)} (NX_0^{0,N}(1))^2 \sum_{n=\eps N^{1/2}-1}^{2ANt} (n+1) \frac{C}{(n+1)^{d/2}}\nn\\
  &+  \frac{e^{2A\theta t}}{N\psi(N)} N^2 (X_0^{0,N}\times X_0^{0,N}) (\{(x,y):\vert x-y\vert \leq \eps\})  \sum_{n=0}^{2ANt} (n+1) \frac{C}{(n+1)^{d/2}}.\nn
\end{align}
When $d\geq 5$, by recalling $\psi(N)\sim CN$ from \eqref{9e9.04}, one can check that the first term above converges to $0$ as $N\to \infty$. For the second term, we may use the fact that $X_0^{0, N}\to X_0$ in $M_F(\R^d)$ to see its limsup as $N\to \infty$ can be bounded by
\begin{align*}
C\cdot (X_0\times X_0) (\{(x,y):\vert x-y\vert \leq \eps\}),
\end{align*}
which can be arbitrarily small by picking $\eps>0$ small since we assume $X_0$ has no atoms. The proof for the case $d\geq 5$ is now complete.

Turning to $d=4$, we need a bit more care on the probability concerning $V_n^N$. The following result is from Lemma 5.2 of \cite{DP99}.
\begin{lemma}\label{9l5.02}
There are constants $0<\delta_0, c,C<\infty$ so that if $0<z/n<\delta_0$, then
\begin{align*}
\P(\vert V_n^N\vert \geq z)\leq C\exp(-cz^2/n).
\end{align*}
\end{lemma}

Similar to \eqref{9e2.43}, we apply Lemma \ref{9l4.2} to bound \eqref{9e2.42} by 
\begin{align} \label{9e2.44}
 I_1(i,j)&\leq  e^{2A\theta t}   1(\vert x_i-x_j\vert \leq \eps)  \sum_{n=0}^{2ANt} (n+1) \frac{C}{(n+1)^{2}}\nn\\
 &+  e^{2A\theta t} 1(\vert x_i-x_j\vert >\eps) \sum_{n=\eps N^{1/2}-1}^{N/(\log N)^3} (n+1) \cdot \P(N^{1/2}(x_j-x_i)+V_{n}^N \in [-1,1]^4)\nn\\
  &+  e^{2A\theta t}  \sum_{n=N/(\log N)^3}^{2ANt} (n+1) \frac{C}{(n+1)^{2}}.
\end{align}
Sum $i\neq j$ to see
\begin{align*} 
\frac{1}{N\psi(N)}  \sum_{i\neq j} I_1(i,j)&\leq  \frac{e^{2A\theta t}}{N\psi(N)} N^2 (X_0^{0}\times X_0^{0}) (\{(x,y):\vert x-y\vert \leq \eps\})  \sum_{n=0}^{2ANt} (n+1) \frac{C}{(n+1)^{2}}\nn\\
 &+ \frac{e^{2A\theta t}}{N\psi(N)} \sum_{\substack{i\neq j\\ \vert x_i-x_j\vert >\eps}} \sum_{n=\eps N^{1/2}-1}^{N/(\log N)^3} (n+1) \cdot \P(N^{1/2}(x_j-x_i)+V_{n}^N \in [-1,1]^4)\nn\\
  &+ \frac{e^{2A\theta t}}{N\psi(N)}(NX_0^{0}(1))^2 \sum_{n=N/(\log N)^3}^{2ANt} (n+1) \frac{C}{(n+1)^{2}}:=S_1+S_2+S_3.
\end{align*}
For $S_1$, again we may use  $X_0^{0,N}\to X_0$ in $M_F(\R^d)$ to see  
\begin{align}\label{e1.11}
\limsup_{N\to \infty} S_1\leq C\cdot (X_0^{0}\times X_0^{0}) (\{(x,y):\vert x-y\vert \leq \eps\}),
\end{align}
which can be arbitrarily small by picking $\eps>0$ small. 

For $S_3$, we recall that $\psi(N)\sim CN\log N$ in $d=4$ and then bound the sum of $1/(n+1)$ by the integral of $x^{-1}$ to get as $N\to \infty$,
\begin{align}\label{e1.12}
S_3\leq \frac{C}{\log N}  (X_0^{0}(1))^2 \int_{N/(\log N)^3}^{3ANt}  \frac{1}{x} dx\leq \frac{C(X_0^{0}(1))^2}{\log N}   C\log \log N \to 0.
\end{align}

It remains to show that $S_2$ converges to $0$. Let $z=n^{1/2}\log N$ and use $n\geq \eps N^{1/2}-1$ to see
\begin{align*}
z=n^{1/2}\log N=n\log N/n^{1/2}\leq n\log N/(\eps N^{1/2}-1)^{1/2}\leq \delta_0 n.
\end{align*}
Apply Lemma \ref{9l5.02} with the above $z$ with $z/n<\delta_0$ to get
\begin{align} \label{9e2.45}
\P(\vert V_n^N\vert \geq n^{1/2}\log N)\leq C\exp(-c(\log N)^2)=CN^{-c\log N}.
\end{align}
Use $n\leq N/(\log N)^3$ to see
\begin{align*}  
z=n^{1/2}\log N \leq N^{1/2}/(\log N)^{1/2}\leq \frac{\eps}{2} N^{1/2}.
\end{align*}
Hence for $\vert x_i- x_j\vert >\eps$, we have
\begin{align*}  
 \P(N^{1/2}(x_j-x_i)+V_{n}^N \in [-1,1]^4)\leq \P(\vert V_n^N\vert \geq   n^{1/2}\log N)\leq  CN^{-c\log N},
 \end{align*}
 where the last inequality is by \eqref{9e2.45}.
 Finally, we use the above to see $S_2$ is at most
 \begin{align*}  
S_2\leq  \frac{e^{2A\theta t}}{N\psi(N)}  (NX_0^{0,N}(1))^2  N^2 \cdot CN^{-c\log N} \to 0 \text{ as } N\to \infty.
 \end{align*}
 Together with \eqref{e1.11} and \eqref{e1.12}, the proof for $d=4$ is complete.
\end{proof}

The next step is to deal with collisions from the same ancestor, i.e. $\beta_0=\gamma_0$. We will calculate the contribution of collisions from distant relatives. To make it precise, following \eqref{9e2.25} we define
\begin{align}\label{9e2.26}
J(t,\tau)= \frac{1}{N\psi(N)} \sum_{\beta,\gamma: \beta_0= \gamma_0}  &1(T_\beta\leq t,T_{\pi \gamma}<T_\beta ) \nn\\
&\cdot 1(T_{\beta \wedge \gamma}\leq T_\beta-\tau) \cdot 1(B^\beta-B^\gamma \in \cN_N).
\end{align}
It is clear that $J(t,\tau)=0$ if $\tau \geq t$ since $T_{\beta \wedge \gamma}\geq T_{\beta_0}>0$. So we only need to consider $\tau <t$.
\begin{lemma}\label{9l4.3a}
For any $t>0$, there is some constant $C>0$ so that for any $\tau< t$,
\begin{align}
\E(J(t,\tau))\leq CX_0^{0}(1) \frac{1}{\psi_0(N)} \int_{(2N+\theta)\tau}^{(2N+\theta)t} \frac{1}{(1+y)^{d/2-1}}dy.
\end{align}
\end{lemma}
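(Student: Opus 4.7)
The plan is to parallel the proof of Lemma \ref{9l4.1}(ii) line-by-line, tracking the extra cutoff $T_{\beta\wedge\gamma}\le T_\beta-\tau$ through each stage. Translation invariance again factors out $NX_0^{0,N}(1)$ by reducing to $\beta_0=\gamma_0=1$, and Lemma \ref{9l3.2} applied to each $\beta$ rewrites the indicator $1(T_\beta\le t)$ as $(2N+\theta)\int_0^t 1(T_{\pi\beta}<r\le T_\beta)\,dr$. Under this integral the cutoff becomes the $\cF_r$-predictable condition $T_{\beta\wedge\gamma}\le r-\tau$, so it remains to bound
\begin{align*}
\int_0^t \E\Big(\sum_{\beta,\gamma\ge 1}1(T_{\pi\beta}<r\le T_\beta,\ T_{\pi\gamma}<r,\ T_{\beta\wedge\gamma}\le r-\tau,\ B^\beta-B^\gamma\in\cN_N)\Big)dr.
\end{align*}

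Decomposing the sum by $\alpha=\beta\wedge\gamma$ with $|\alpha|=k$, $|\beta|=k+l$, $|\gamma|=k+m$ ($l,m\ge 1$) and conditioning on $\cH_\alpha$ reproduces exactly the conditional bound \eqref{e2.63} but multiplied by the additional indicator $1(T_\alpha\le r-\tau)$. Summing $m$ by Lemma \ref{9l4.0} and $l$ via the identity in \eqref{9e9.58} yields the same estimate $C/(1+(2N+2\theta)(r-T_\alpha))^{d/2-1}$ as in \eqref{e6.37}, leaving us to control
\begin{align*}
C\int_0^t \E\Big(\sum_{\alpha\ge 1}1(T_\alpha\le r-\tau,\,B^\alpha\ne\Delta)\,(1+(2N+2\theta)(r-T_\alpha))^{1-d/2}\Big)dr.
\end{align*}
Since the event $T_\alpha\le r-\tau$ is identical to $(2N+2\theta)(r-T_\alpha)\ge(2N+2\theta)\tau$, applying Lemma \ref{9l3.2a} with $f(y)=(1+y)^{1-d/2}1(y\ge(2N+2\theta)\tau)$ produces $Ce^{\theta r}\int_{(2N+2\theta)\tau}^{(2N+2\theta)r}(1+y)^{1-d/2}\,dy$ inside the $r$-integral.

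Integrating $r\in[0,t]$ by Fubini contributes an extra factor of $t$ but no further $N$. Combining with the prefactor $(2N+\theta)X_0^{0,N}(1)/\psi(N)$ collected above and using $\psi(N)=N\psi_0(N)$ gives
\begin{align*}
\E(J(t,\tau))\le \frac{CX_0^{0,N}(1)}{\psi_0(N)}\int_{(2N+2\theta)\tau}^{(2N+2\theta)t}(1+y)^{1-d/2}\,dy;
\end{align*}
the routine change of variables $y\mapsto y(2N+\theta)/(2N+2\theta)$, using that $(1-d/2)<0$ makes $(1+\cdot)^{1-d/2}$ decreasing, then replaces the limits by $(2N+\theta)\tau$ and $(2N+\theta)t$ at the cost of a bounded constant, giving the stated bound. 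The only delicate point is carrying the $\tau$-cutoff faithfully through each sum/integral interchange — every individual step is a direct transcription of the proof of Lemma \ref{9l4.1}(ii), but the new lower limit $(2N+\theta)\tau$ records precisely the constraint that the Gamma lifetime $T_\beta-T_\alpha$ must exceed $\tau$.
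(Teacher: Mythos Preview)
Your proof is correct and follows essentially the same approach as the paper's own proof: apply Lemma~\ref{9l3.2} to convert $1(T_\beta\le t)$ into a time integral, use translation invariance to reduce to $\beta_0=\gamma_0=1$, decompose according to $\alpha=\beta\wedge\gamma$, reuse the conditional bound \eqref{e2.63} with the extra indicator $1(T_\alpha\le r-\tau)$, sum $m,l$ via Lemma~\ref{9l4.0} and \eqref{9e9.58}, and finally handle the remaining $\alpha$-sum by the device behind Lemma~\ref{9l3.2a}. The paper does the last step by applying Lemma~\ref{9l3.2} directly (this is \eqref{9e2.75}); your packaging of the cutoff into $f(y)=(1+y)^{1-d/2}1(y\ge(2N+2\theta)\tau)$ and invoking Lemma~\ref{9l3.2a} with $u=0$ is an equivalent reformulation.

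One minor slip: you write $l,m\ge 1$ in the decomposition, but the case $m=0$ (i.e.\ $\gamma=\alpha$, an ancestor of $\beta$) is admissible and nontrivial. This does not affect the argument since the bound \eqref{e2.63} you cite already covers $m=0$ via the convention $\Pi(\cdot,-1)=1$; just adjust the range to $m\ge 0$. (The restriction $l\ge 1$ is genuinely forced by the cutoff when $\tau>0$, and the $l=0$ term vanishes in \eqref{e2.63} anyway since $\pi(\cdot,-1)=0$.)
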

\begin{proof}
Recall $\text{nbr}_{\beta,\gamma}(r)$ from \eqref{9e2.24}. 
Similar to \eqref{9e2.31}, we may apply Lemma \ref{9l3.2} to see 
\begin{align*}
\E(J(t,\tau))&=\frac{1}{N\psi(N)} \sum_{\beta,\gamma: \beta_0= \gamma_0} (2N+\theta)\E\Big(\int_0^t \text{nbr}_{\beta,\gamma}(r) \cdot 1{(T_{\beta \wedge \gamma}\leq r-\tau)}dr \Big)\nn\\
&\leq \frac{C}{\psi(N)} \sum_{\beta,\gamma: \beta_0= \gamma_0} \int_\tau^t \E\Big(\text{nbr}_{\beta,\gamma}(r) \cdot 1{(T_{\beta \wedge \gamma}\leq r-\tau)} \Big)dr,
\end{align*}
where the last inequality follows since the integrand is zero when $r\leq \tau$.
Let $\beta_0=\gamma_0=i$ for some $1\leq i\leq NX_0^{0,N}(1)$. Use translation invariance to get  
\begin{align}\label{9e2.54}
\E(J(t,\tau))\leq &\frac{C}{\psi(N)} NX_0^{0}(1) \int_\tau^t \E\Big(\sum_{\beta, \gamma\geq1}\text{nbr}_{\beta,\gamma}(r) \cdot 1{(T_{\beta \wedge \gamma}\leq r-\tau)} \Big)dr.
\end{align}
The calculation for the expectation above is similar to that in \eqref{9e6.11}. One may continue all the arguments in the same way as that from \eqref{9e2.62} to \eqref{e6.37} except that $T_{\alpha}<r$ in \eqref{e6.37} is replaced by $T_{\alpha}<r-\tau$. So we may simply jump to \eqref{e6.37} to conclude
\begin{align*}
&\E\Big(\sum_{\beta, \gamma\geq 1} \text{nbr}_{\beta,\gamma}(r) \cdot 1{(T_{\beta \wedge \gamma}\leq r-\tau)} \Big) \\
&\leq  C\E\Big(\sum_{\alpha\geq 1} 1{\{T_{\alpha}<r-\tau, B^{\alpha}\neq \Delta\}}  \frac{1}{(1+(2N+2\theta) (r-T_\alpha))^{d/2-1}}\Big).
\end{align*}
Apply Lemma \ref{9l3.2} to see the right-hand side above is equal to
\begin{align}\label{9e2.75}
&  C  (2N+\theta)  \int_0^{r-\tau} \E\Big(\sum_{\alpha\geq 1} 1{\{T_{\pi\alpha}<s\leq T_{\alpha}, B^{\alpha}\neq \Delta\}}\Big) \frac{1}{(1+(2N+2\theta) (r-s))^{d/2-1}}ds \nn\\
&=C   (2N+\theta)  \int_0^{r-\tau}  e^{\theta s} \frac{1}{(1+(2N+2\theta) (r-s))^{d/2-1}}ds \leq C     \int_{(2N+2\theta)\tau}^{(2N+2\theta)r}     \frac{1}{(1+y)^{d/2-1}}dy.
\end{align}
Finally we apply \eqref{9e2.75}  in \eqref{9e2.54} to get
\begin{align*} 
\E(J(t,\tau)) &\leq \frac{C}{\psi(N)} NX_0^{0}(1) \int_\tau^t  \int_{(2N+\theta)\tau}^{(2N+\theta)r} \frac{1}{(1+y)^{d/2-1}}dy dr\nn\\
&\leq \frac{C}{\psi_0(N)} X_0^{0}(1)    \int_{(2N+\theta)\tau}^{(2N+\theta)t} \frac{1}{(1+y)^{d/2-1}} dy,
\end{align*}
as required. 
\end{proof}

\section{Convergence of the collision term}\label{9s5}

We are ready to give the convergence of the collision term in this section and prove Lemma \ref{9l2.6}. Recall from \eqref{9e2.1} that
\begin{align}\label{9e3.03}
K_{t}^n(\phi)=& \frac{N+\theta}{N} \frac{1}{2N+\theta}\sum_{\beta} a_\beta^n(t)  \phi(B^\beta+W^\beta) 1(B^\beta+W^\beta\in \overline{\cR}^{X^{n-1}}_{T_\beta^-}).
\end{align}
The proof proceeds by approximating $K_t^n(\phi)$ in several steps until we reach $b_d \int_0^t X_r^1(\phi) dr$. The first one will be obtained by adjusting the constant, replacing $ \phi(B^\beta+W^\beta)$ by $ \phi(B^\beta)$ and replacing $\overline{\cR}^{X^{n-1}}_{T_\beta^-}$ by ${\cR}^{X^{n-1}}_{T_\beta^-}$ in \eqref{9e3.03}, that is, we define
\begin{align}\label{9e3.2}
K_{t}^{n,0}(\phi):=&\frac{1}{2N+\theta}  \sum_{\beta} a_\beta^n(t)  \phi(B^\beta) 1(B^\beta+W^\beta\in \cR^{X^{n-1}}_{T_\beta^-}).
\end{align}

\begin{lemma}\label{9l5.1}
Let $n=1$ or $2$. For all $0<t<\infty$, 
\begin{align*}
\lim_{N\to \infty} \E\Big(\sup_{s\leq t} \Big\vert K_s^{n}(\phi)-K_s^{n,0}(\phi)\Big\vert \Big)=0.
\end{align*}
\end{lemma}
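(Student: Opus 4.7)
The plan is to decompose $K_s^n(\phi) - K_s^{n,0}(\phi) = A_s + B_s + C_s$ matching the three discrepancies between \eqref{9e3.03} and \eqref{9e3.2}: the prefactor changes from $\frac{N+\theta}{N(2N+\theta)}$ to $\frac{1}{2N+\theta}$; the test-function argument changes from $B^\beta+W^\beta$ to $B^\beta$; and the recovered-infected set changes from $\overline{\cR}^{X^{n-1}}_{T_\beta^-}$ to $\cR^{X^{n-1}}_{T_\beta^-}$. Each of $|A_s|,|B_s|,|C_s|$ admits a non-negative majorant that is non-decreasing in $s$ as a sum over birth events up to time $s$, so $\sup_{s\leq t}|\cdot|$ is already controlled by the value of the majorant at $s=t$, and it suffices to bound expectations there.

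Concretely, since $\frac{N+\theta}{N(2N+\theta)} - \frac{1}{2N+\theta} = \frac{\theta}{N(2N+\theta)}$, the majorant of $|A_s|$ is $\frac{|\theta|\|\phi\|_\infty}{N(2N+\theta)} \sum_\beta a_\beta^n(s) 1(B^\beta+W^\beta \in \overline{\cR}^{X^{n-1}}_{T_\beta^-})$. Since $\phi \in C_b^3$ and $|W^\beta| \leq \sqrt{d}\, N^{-1/2}$, the mean-value theorem gives $|B_s| \leq \frac{C\,N^{-1/2}}{2N+\theta}\sum_\beta a_\beta^n(s) 1(B^\beta+W^\beta \in \overline{\cR}^{X^{n-1}}_{T_\beta^-})$ with $C = \sqrt{d}\,\|\nabla\phi\|_\infty$. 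For $C_s$, the inclusion $\overline{\cR}^{X^{n-1}}_{T_\beta^-} \setminus \cR^{X^{n-1}}_{T_\beta^-} \subseteq K_0^N$ yields $|C_s| \leq \frac{\|\phi\|_\infty}{2N+\theta} \sum_\beta a_\beta^n(s) 1(B^\beta+W^\beta \in K_0^N)$.

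To finish, I would invoke the comparisons $a_\beta^n(t) \leq a_\beta^0(t)$ (from $\zeta_\beta^n \leq \zeta_\beta^0$) and $\overline{\cR}^{X^{n-1}}_{T_\beta^-} \subseteq \overline{\cR}^{X^{0}}_{T_\beta^-}$ (from $X^{n-1} \leq X^0$), so that Lemma \ref{9l2.5} delivers $\E\big(\sum_\beta a_\beta^n(t)\, 1(B^\beta+W^\beta \in \overline{\cR}^{X^{n-1}}_{T_\beta^-})\big) \leq CN$. This yields $\E(\sup_{s\leq t}|A_s|) = O((2N+\theta)^{-1}) = o(1)$ and $\E(\sup_{s\leq t}|B_s|) = O(N^{-1/2}) = o(1)$. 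For the $C_s$ contribution, bounding $a_\beta^n(t) \leq 1(T_\beta\leq t, B^\beta\neq\Delta)$ reduces the remaining expectation to hypothesis \eqref{9e0.93}, giving $\E(\sup_{s\leq t}|C_s|) \leq \frac{N\|\phi\|_\infty}{2N+\theta}\cdot o(1) \to 0$. I do not anticipate a substantive obstacle: the lemma is a routine $o(1)$ estimate once the collision count is uniformly controlled by Lemma \ref{9l2.5}, the only slight care being to pass to non-negative majorants before taking the supremum over $s$.
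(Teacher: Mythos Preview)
Your proposal is correct and follows essentially the same approach as the paper: both proofs telescope the difference into the three discrepancies (prefactor, test-function argument, $\overline{\cR}$ versus $\cR$), bound each piece by a non-negative increasing sum of birth events, and invoke Lemma~\ref{9l2.5} plus hypothesis~\eqref{9e0.93} to conclude. Your explicit remark about passing to non-negative majorants before taking the supremum is in fact slightly cleaner than the paper's phrasing, which asserts that $K_t^n(\phi)$ is increasing in $t$ (this is literally true only for $\phi\geq 0$, though the intended argument via $|\phi|\leq\|\phi\|_\infty$ is clear).
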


The second step is to replace the indicator $1(B^\beta+W^\beta\in \cR^{X^{n-1}}_{T_\beta^-})$ by its conditional expectation. For any $m\geq 0$, recall from \eqref{9e3.1} that
\begin{align}\label{9e3.4}
\cR^{X^{m}}_{T_\beta^-}=\{B^\gamma: T_{\pi \gamma}\leq T_\beta^-,\quad \zeta_\gamma^{m}>T_{\pi \gamma}\}=\{B^\gamma: T_{\pi \gamma}< T_\beta,\quad \zeta_\gamma^{m}>T_{\pi \gamma}\}.
\end{align}
Define
\begin{align} \label{9e3.33}
\nu_m(\beta)=\Big\vert \{B^\gamma: T_{\pi \gamma}<T_\beta, B^\gamma-B^\beta \in \cN_N, \zeta_{\gamma}^m>T_{\pi \gamma}\}\Big\vert 
\end{align}
to be the number of neighbors of $B^\beta$ that have been visited by $X^m$ up to time $T_\beta^-$. Then by conditioning on $\cF_{T_\beta^-}$, we get
\begin{align} \label{9e3.31}
&\E\Big(1(B^\beta+W^\beta\in \cR^{X^{m}}_{T_\beta^-})\Big\vert \cF_{T_\beta^-}\Big)=\frac{\nu_m(\beta)}{\psi(N)},
\end{align}
where we have used that $W^\beta$ is uniform on $\cN_N$ and independent of $\cF_{T_\beta^-}$. Given the above, we set
\begin{align}\label{9e3.3}
K_{t}^{n,1}(\phi):=&\frac{1}{2N+\theta}   \sum_{\beta} a_\beta^n(t)  \phi(B^\beta)  \frac{\nu_{n-1}(\beta)}{\psi(N)}.
\end{align}

\begin{lemma}\label{9l5.2}
Let $n=1$ or $2$. For all $0<t<\infty$, 
\begin{align*}
\lim_{N\to \infty} \E\Big(\sup_{s\leq t} \Big\vert K_s^{n,0}(\phi)-K_s^{n,1}(\phi)\Big\vert \Big)=0.
\end{align*}
\end{lemma}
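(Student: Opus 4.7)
The plan is to observe that $K_s^{n,1}(\phi)$ is, by \eqref{9e3.31}, precisely the conditional expectation of $K_s^{n,0}(\phi)$ taken term-by-term at $\cF_{T_\beta^-}$, so that $M_t:=K_t^{n,0}(\phi)-K_t^{n,1}(\phi)$ is a pure-jump martingale. The result will then follow from a direct second-moment bound combined with Doob's inequality, using the Section~\ref{9s3} estimates as the key input. Explicitly, write
\[
M_t=\frac{1}{2N+\theta}\sum_\beta a_\beta^n(t)\,\phi(B^\beta)\,\Bigl[1(B^\beta+W^\beta\in\cR^{X^{n-1}}_{T_\beta^-})-\frac{\nu_{n-1}(\beta)}{\psi(N)}\Bigr].
\]
The factor $a_\beta^n(t)\phi(B^\beta)$ and the count $\nu_{n-1}(\beta)$ are $\cF_{T_\beta^-}$-measurable, while $W^\beta$ is uniform on $\cN_N$ and independent of $\cF_{T_\beta^-}$. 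By \eqref{9e3.31}, the bracketed term has zero $\cF_{T_\beta^-}$-conditional mean, so $(2N+\theta)M_t$ is an $\cF_t$-martingale and different $\beta$'s give orthogonal increments.

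For the second moment, the inner conditional variance equals $\tfrac{\nu_{n-1}(\beta)}{\psi(N)}(1-\tfrac{\nu_{n-1}(\beta)}{\psi(N)})\leq \nu_{n-1}(\beta)/\psi(N)$, so
\[
\E[M_t^2]\leq\frac{\|\phi\|_\infty^2}{(2N+\theta)^2\psi(N)}\,\E\Bigl[\sum_\beta a_\beta^n(t)\,\nu_{n-1}(\beta)\Bigr].
\]
Now bound $\nu_{n-1}(\beta)\leq\sum_\gamma 1(T_{\pi\gamma}<T_\beta,\,B^\gamma-B^\beta\in\cN_N)$ (dropping the set-versus-multiset distinction and the constraint $\zeta_\gamma^{n-1}>T_{\pi\gamma}$). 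The resulting double sum is exactly the quantity appearing in \eqref{9e2.23} that equals $N\psi(N)\,\E(J(t))$, which by Lemma \ref{9l2.5} is $O(N\psi(N))$ uniformly in $N$. Hence $\E[M_t^2]=O(1/N)$, and Doob's $L^2$ maximal inequality gives $\E[\sup_{s\leq t}M_s^2]\leq 4\E[M_t^2]=O(1/N)$, which via Cauchy-Schwarz yields $\E[\sup_{s\leq t}|M_s|]=O(N^{-1/2})\to 0$.

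The only mildly delicate point is the measurability bookkeeping: one must verify that $a_\beta^n(t)$, $\phi(B^\beta)$, and $\nu_{n-1}(\beta)$ are genuinely $\cF_{T_\beta^-}$-measurable (using that $T_\beta$ is a predictable stopping time and $\zeta_\beta^n$ is determined by events strictly before $T_\beta$), and that $W^\beta$ is independent of all this. Once that is in place the martingale structure is automatic. The heavy lifting has already been done in Section \ref{9s3}: Lemma \ref{9l2.5} is exactly the a~priori collision bound that powers the second-moment estimate above, so no genuinely new estimate is needed here.
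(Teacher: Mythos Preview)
Your approach is correct and essentially the same as the paper's: recognize $M_t=K_t^{n,0}(\phi)-K_t^{n,1}(\phi)$ as a martingale via \eqref{9e3.31}, bound its square function by the Bernoulli conditional variance $p(1-p)\le p$, and control the resulting sum through the Section~\ref{9s3} collision estimates. The paper packages the martingale and maximal inequality in one step via Lemma~\ref{9l6.1} and then bounds $\sum_\beta \E[a_\beta^0(t)h_\beta^{n-1}]$ directly by Lemma~\ref{9l2.5}, which is slightly cleaner than your detour through $\nu_{n-1}(\beta)$ and the double sum.

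One minor slip worth flagging: your claim that the double sum ``equals $N\psi(N)\,\E(J(t))$'' is not quite right, since \eqref{9e2.23} is an inequality $\E(J(t))\le(\text{double sum})/(N\psi(N))$, not an equality. What you actually need is a bound on the double sum itself, which is provided by the \emph{proof} of Lemma~\ref{9l2.5} (via \eqref{9e2.31} and Lemma~\ref{9l4.1}), not by its statement. Alternatively, and this is what the paper does, observe that $\E[a_\beta^0(t)\,\nu_{n-1}(\beta)/\psi(N)]=\E[a_\beta^0(t)\,h_\beta^{n-1}]$ by \eqref{9e3.31}, so you can invoke Lemma~\ref{9l2.5} verbatim without ever unpacking $\nu_{n-1}$ into a double sum.
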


The number $\nu_{n-1}(\beta)$ in \eqref{9e3.3} counts the number of sites that have been occupied in the neighborhood of $B^\beta$, which might have been visited by more than one particle. However, since birth events on the same site are rare, one would expect their difference to be small. Define
\begin{align}
\text{nbr}_{\beta,\gamma}^m=1(T_{\pi \gamma}<T_\beta,  \zeta_\gamma^m>T_{\pi \gamma}, B^\beta-B^\gamma \in \cN_N).
\end{align}
Our next step would be to replace $\nu_{m}(\beta)$ by $\sum_{\gamma}\text{nbr}_{\beta,\gamma}^{m}$ as the latter is easier to compute. Moreover, the event in $\text{nbr}_{\beta,\gamma}^m$ represents a collision between two particles while our work in Section \ref{9s4} already gives bounds on the contributions of such collisions from distant relatives. Lemma \ref{9l4.3a} motivates us to define the cutoff time $\tau_N>0$ such that
\begin{align} \label{9e3.34}
\begin{cases} 
N\tau_N \to \infty, \tau_N \to 0, &\text{ in } d\geq 5;\\
\tau_N=1/\log N, &\text{ in } d=4.
\end{cases}  
\end{align}
Throughout the rest of the section, we always assume $\tau_N<t$.
By letting $\tau=\tau_N$ in \eqref{9e3.34}, we get
\begin{align}\label{9ec3.5}
\E(J(t,\tau_N))\leq  CX_0^{0,N}(1) \frac{1}{\psi_0(N)} \int_{(2N+\theta)\tau_N}^{(2N+\theta)t} \frac{1}{(1+y)^{d/2-1}}dy \to 0 \text{ as } N\to\infty.
\end{align}
So we define our third approximation of $K_t^n(\phi)$ by
\begin{align}\label{9e3.5}
K_{t}^{n,2}(\phi)=&\frac{1}{2N+\theta}   \sum_{\beta} a_\beta^n(t)  \phi(B^\beta)  \frac{1}{\psi(N)}\sum_{\gamma}\text{nbr}_{\beta,\gamma}^{n-1} 1(T_{\gamma \wedge \beta}>T_\beta-\tau_N)\nn\\
=&\frac{1}{2N+\theta}   \frac{1}{\psi(N)} \sum_{\beta} 1(T_\beta\leq t, \zeta_\beta^n>T_{\pi\beta})  \phi(B^\beta)  \nn\\
&\quad \sum_{\gamma}1(T_{\pi \gamma}<T_\beta,  \zeta_\gamma^{n-1}>T_{\pi\gamma}, B^\beta-B^\gamma \in \cN_N) 1(T_{\gamma \wedge \beta}>T_\beta-\tau_N).
\end{align}

\begin{lemma}\label{9l5.3}
Let $n=1$ or $2$. For all $0<t<\infty$, 
\begin{align*}
\lim_{N\to \infty} \E\Big(\sup_{s\leq t} \Big\vert K_s^{n,1}(\phi)-K_s^{n,2}(\phi)\Big\vert \Big)=0.
\end{align*}
\end{lemma}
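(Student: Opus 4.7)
The plan is to decompose $K_s^{n,1}(\phi)-K_s^{n,2}(\phi)$ via the triangle inequality into a ``multiplicity'' piece and a ``distant-ancestor cutoff'' piece, and to bound each in $L^1$ by results already established. Setting $A_\beta=\nu_{n-1}(\beta)$, $B_\beta=\sum_\gamma \text{nbr}_{\beta,\gamma}^{n-1}$, and $C_\beta=\sum_\gamma \text{nbr}_{\beta,\gamma}^{n-1}\,1(T_{\gamma\wedge\beta}>T_\beta-\tau_N)$, one has $A_\beta\le B_\beta$ (a set cardinality versus a sum with multiplicity) and $C_\beta\le B_\beta$; the triangle inequality $|A_\beta-C_\beta|\le(B_\beta-A_\beta)+(B_\beta-C_\beta)$, together with the non-negativity and monotonicity in $t$ of both summands, yields
\begin{equation*}
\sup_{s\le t}|K_s^{n,1}(\phi)-K_s^{n,2}(\phi)|\le \frac{\|\phi\|_\infty}{(2N+\theta)\psi(N)}\sum_\beta a_\beta^n(t)\bigl[(B_\beta-A_\beta)+(B_\beta-C_\beta)\bigr].
\end{equation*}
I will call $(B_\beta-A_\beta)$ the multiplicity contribution and $(B_\beta-C_\beta)$ the cutoff contribution.

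For the cutoff contribution, I would first drop the survival indicators $1(\zeta_\beta^n>T_{\pi\beta})$ inside $a_\beta^n(t)$ and $1(\zeta_\gamma^{n-1}>T_{\pi\gamma})$ inside $\text{nbr}_{\beta,\gamma}^{n-1}$ (each only enlarges the expression) and then split according to $\beta_0=\gamma_0$ versus $\beta_0\ne\gamma_0$. Off the diagonal, $T_{\beta\wedge\gamma}=T_\emptyset=-\infty$ forces the cutoff indicator to equal $1$, and the resulting sum is bounded, up to a constant, by $\E[J_0(t)]$, which vanishes by Lemma~\ref{9l4.1a} under the atomless hypothesis on $X_0$. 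On the diagonal, the sum is bounded, up to a constant, by $\E[J(t,\tau_N)]$, which vanishes by Lemma~\ref{9l4.3a} and the choice of $\tau_N$ in \eqref{9e3.34}, as already recorded in \eqref{9ec3.5}.

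For the multiplicity contribution, the argument bifurcates on $n$. When $n=2$ I claim $B_\beta=A_\beta$ almost surely: if $\gamma_1\ne\gamma_2$ were both alive in $X^1$ with $B^{\gamma_1}=B^{\gamma_2}=y$, and $\tau_i^*\le T_{\pi\gamma_i}$ denotes the latest ``new-particle'' arrival time along $\gamma_i$'s family line that placed it at $y$, then with $\tau_1^*<\tau_2^*$ a.s.\ the site $y$ already lies in $\cR^{X^0}_{\tau_2^*-}$, so the definition of $\zeta^1$ in \eqref{9ea3.01} forces $\zeta_{\gamma_2}^1\le\tau_2^*\le T_{\pi\gamma_2}$, contradicting the assumed survival of $\gamma_2$ in $X^1$. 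When $n=1$, I would bound the overcount by
\begin{equation*}
B_\beta-A_\beta\le \frac{1}{2}\,\#\{(\gamma_1,\gamma_2):\gamma_1\ne\gamma_2,\ B^{\gamma_1}=B^{\gamma_2},\ B^{\gamma_1}-B^\beta\in\cN_N,\ T_{\pi\gamma_i}<T_\beta\}
\end{equation*}
and run a triple-particle conditioning argument modeled on the proof of Lemma~\ref{9l4.1}(ii), first conditioning on the coalescent $\alpha_{12}=\gamma_1\wedge\gamma_2$ (and, when applicable, on $\beta\wedge\alpha_{12}$), and invoking a single-point refinement of Lemma~\ref{9l4.2} of the form $\P(V_m^N=x)\le C/[\psi(N)(1+m)^{d/2}]$ on $\cL_N$. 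The exact coincidence $B^{\gamma_1}=B^{\gamma_2}$ supplies an extra factor of $1/\psi(N)$ relative to the nearest-neighbor case of Lemma~\ref{9l4.1}(ii), which after division by the prefactor $1/[(2N+\theta)\psi(N)]$ yields a bound of order $1/\psi(N)$; this tends to $0$ in both $d\ge 5$ and $d=4$.

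The hardest step will be this triple-coincidence estimate for $n=1$: one must track two coalescent times simultaneously, extract both the nearest-neighbor factor for the $\beta$ relation and the single-point factor for the $(\gamma_1,\gamma_2)$ coincidence, and then use the Poisson--Gamma bookkeeping of Lemma~\ref{9l4.3} to keep the final bound uniform in $N$. Once this is done, combining the cutoff and multiplicity estimates completes the proof of Lemma~\ref{9l5.3}.
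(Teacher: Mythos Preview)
Your handling of the cutoff contribution $(B_\beta-C_\beta)$ is correct and matches the paper's first reduction.

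The multiplicity contribution $(B_\beta-A_\beta)$, however, has a fatal gap. Your claim $B_\beta=A_\beta$ for $n=2$ overlooks \emph{relabeling}: whenever $\delta_{\gamma_1}=1$, the parent is relabeled as $\gamma_2:=\gamma_1\vee(1-e_{\gamma_1})$, which stays at $B^{\gamma_1}$ and inherits $\zeta_{\gamma_2}^1>T_{\pi\gamma_2}=T_{\gamma_1}$. Both labels count in $B_\beta$, they share the same arrival time $\tau_1^*=\tau_2^*$, and no contradiction arises from your argument. The same issue breaks the $n=1$ strategy: for such ancestor--descendant pairs one has $B^{\gamma_1}=B^{\gamma_2}$ deterministically, so your single-point bound $\P(V_m^N=x)\le C/[\psi(N)(1+m)^{d/2}]$ is unavailable (indeed $\P(V_m^N=0)\ge 2^{-m}$, coming from every step staying put).

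This is not a patchable technicality. Each physical particle sitting at a neighboring site of $B^\beta$ contributes a geometric (mean $\approx 2$) number of surviving labels before either dying or being cut by $T_\beta$, so in expectation $B_\beta-A_\beta$ is of the \emph{same order} as $A_\beta$. After the prefactor $1/[(2N+\theta)\psi(N)]$, your multiplicity term is then bounded below by a positive multiple of $\E[K_t^{n,1}(1)]$, which converges to $b_d\int_0^t \E X_r^1(1)\,dr>0$, not to $0$. The triangle inequality through $B_\beta$ is too crude.

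The paper avoids this by reversing the order of the two operations: it introduces the cutoff \emph{first} via the intermediate cardinality $\nu_{n-1,\tau}(\beta)$, bounds $|\nu_{n-1}(\beta)-\nu_{n-1,\tau}(\beta)|$ by $J_0(t)+J(t,\tau_N)$ exactly as you propose, and only \emph{then} compares $\nu_{n-1,\tau}(\beta)$ with $C_\beta$. The resulting multiplicity term $J_1(t)$ in \eqref{9e3.42} carries the cutoff on \emph{both} $\gamma$ and $\delta$, forcing $\beta,\gamma,\delta$ to share an ancestor within time $\tau_N$; the three-particle estimate of Lemma~\ref{9l5.0} then exploits this localization to give $\E(J_1(t))\le C X_0^0(1)\tau_N/\psi_0(N)\to 0$. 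Without that localization on the multiplicity piece, the argument cannot close.
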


Moving to the fourth step, we will use a time change with the cutoff time $\tau_N$: In the second expression of $K_{t}^{n,2}(\phi)$ in \eqref{9e3.5}, we will replace $\{\zeta_\beta^n>T_{\pi\beta}\}$ by $\{\zeta_\beta^n>T_\beta-\tau_N, B^\beta\neq \Delta\}$ and $\{\zeta_\gamma^{n-1}>T_{\pi\gamma}\}$  by $\{\zeta_\gamma^{n-1}>T_\beta-\tau_N, B^\gamma\neq \Delta\}$ and define
\begin{align}\label{9e3.6}
& K_{t}^{n,3}(\phi)=\frac{1}{2N+\theta}  \frac{1}{\psi(N)}  \sum_{\beta} 1(T_\beta\leq t, \zeta_\beta^n>T_\beta-\tau_N)  \phi(B^\beta)\nn \\
&\quad \sum_{\gamma}1(T_{\pi \gamma}<T_\beta,  \zeta_\gamma^{n-1}>T_\beta-\tau_N, B^\beta-B^\gamma \in \cN_N) 1(T_{\gamma \wedge \beta}>T_\beta-\tau_N),
\end{align}
where, as usual, $B^\beta, B^\gamma \neq \Delta$ are implicit in $\{B^\beta-B^\gamma \in \cN_N\}$.

\begin{lemma}\label{9l5.4}
Let $n=1$ or $2$. For all $0<t<\infty$, 
\begin{align*}
\lim_{N\to \infty} \E\Big(\sup_{s\leq t} \Big\vert K_s^{n,2}(\phi)-K_s^{n,3}(\phi)\Big\vert \Big)=0.
\end{align*}
\end{lemma}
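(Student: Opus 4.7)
The plan is to compare the indicators in $K^{n,2}$ and $K^{n,3}$ and bound the discrepancy by combining Lemma \ref{9l2.5} (total collision count is $O(N)$) with close-collision moment estimates from Section \ref{9s4}. Setting $A = \{\zeta_\beta^n > T_{\pi\beta}\}$, $A' = \{\zeta_\beta^n > T_\beta - \tau_N\}$, $B = \{\zeta_\gamma^{n-1} > T_{\pi\gamma}\}$ and $B' = \{\zeta_\gamma^{n-1} > T_\beta - \tau_N,\, B^\gamma \neq \Delta\}$, the observation just below \eqref{9e3.1}---namely that $\zeta_\beta^n > T_{\pi\beta}$ implies $\zeta_\beta^n = T_\beta$---immediately gives $A \subset A'$; the same observation applied to $\gamma$, together with the inequality $T_\gamma \geq T_{\gamma \wedge \beta} > T_\beta - \tau_N$ valid on the close-relative factor present in both $K^{n,2}$ and $K^{n,3}$, gives $B \subset B'$ there as well. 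Splitting $\phi = \phi^+ - \phi^-$ reduces the problem to $\phi \geq 0$, for which $K_s^{n,3}(\phi) - K_s^{n,2}(\phi) \geq 0$ is nondecreasing in $s$, so $\sup_{s\leq t}|K_s^{n,3}(\phi) - K_s^{n,2}(\phi)| = K_t^{n,3}(\phi) - K_t^{n,2}(\phi)$. The identity $1(A')1(B') - 1(A)1(B) = 1(A' \setminus A)1(B') + 1(A)1(B' \setminus B)$ then splits the expected difference as $\E[K_t^{n,3}(\phi) - K_t^{n,2}(\phi)] \leq \E[P_1] + \E[P_2]$, where $P_1$ and $P_2$ carry $1(A' \setminus A)$ and $1(B' \setminus B)$ respectively (with all other indicators of $K^{n,3}$ attached).

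To bound $\E[P_1]$, note that $B^\beta - B^\gamma \in \cN_N$ forces $B^\beta \neq \Delta$ and hence rules out a death in $\beta$'s strict ancestry; so on $A' \setminus A$ the discrepancy must be caused by a birth-collision at some ancestor $\alpha = \beta|m$ with $T_\alpha \in (T_\beta - \tau_N, T_{\pi\beta}]$, $\delta_\alpha = 1$, $e_\alpha = \beta_{m+1}$ and $B^\alpha + W^\alpha \in \overline{\cR}^{X^{n-1}}_{T_\alpha^-}$. Re-indexing by this collision ancestor (with harmless overcounting for an upper bound), setting $\alpha' = \alpha \vee e_\alpha$ and using $\overline{\cR}^{X^{n-1}} \subset \overline{\cR}^{X^0}$, I arrive at
\begin{equation*}
\E[P_1] \leq \frac{C\|\phi\|_\infty}{(2N+\theta)\psi(N)}\,\E\Big[\sum_\alpha 1\big(T_\alpha \leq t,\, \delta_\alpha = 1,\, B^\alpha + W^\alpha \in \overline{\cR}^{X^0}_{T_\alpha^-}\big)\,\Xi(\alpha)\Big],
\end{equation*}
where $\Xi(\alpha)$ counts the close-relative neighbor pairs $(\beta, \gamma)$ of the type appearing in $P_1$ with $\beta \geq \alpha'$ and $T_\beta \in (T_\alpha, T_\alpha + \tau_N]$. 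Conditioning on $\cF_{T_\alpha}$ and repeating the moment computation of Lemma \ref{9l4.1}(ii) via Lemma \ref{9l3.2} for the fresh sub-BRW started from $\alpha'$ (initial mass $1/N$) over horizon $\tau_N$ yields $\E[\Xi(\alpha) \mid \cF_{T_\alpha}] \leq C(2N+\theta)\int_0^{\tau_N} I((2N+2\theta)s)\,ds$, which evaluates to $O(N\tau_N)$ in $d \geq 5$ and $O(N)$ in $d = 4$ using \eqref{9e3.34}. Meanwhile Lemma \ref{9l2.5} bounds the outer sum in expectation by $O(N)$. Dividing by $(2N+\theta)\psi(N)$ with $\psi(N) \sim 2^d N$ in $d \geq 5$ and $\psi(N) \sim 2^4 N\log N$ in $d=4$, one obtains $\E[P_1] = O(\tau_N)$ or $O(1/\log N)$ respectively, both $o(1)$ as $N \to \infty$.

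The bound on $\E[P_2]$ is handled by the symmetric argument: the event $B' \setminus B$ is caused by a birth-collision at some ancestor of $\gamma$ in the interval $(T_\beta - \tau_N, T_{\pi\gamma}]$, and the close-relative structure $T_{\gamma \wedge \beta} > T_\beta - \tau_N$ again confines the paired $\beta$'s to a recent sub-BRW near this collision ancestor, yielding the same moment estimate. The main technical hurdle is the clean execution of the collision-ancestor swap and the verification that $\E[\Xi(\alpha) \mid \cF_{T_\alpha}] = o(\psi(N))$ in the marginal dimension $d=4$, where the specific choice $\tau_N = 1/\log N$ from \eqref{9e3.34} is essential to extract the $1/\log N$ factor from the ratio $\E[\Xi(\alpha)]/\psi(N) \sim N/(N \log N)$.
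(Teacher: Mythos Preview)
Your setup—the inclusions $A\subset A'$, $B\subset B'$ on the close-relative event, the reduction to $\phi\geq 0$ via monotonicity, and the split of the indicator difference—is correct and matches the paper's \eqref{9e8.01}--\eqref{9e8.02}. Re-indexing by the collision ancestor $\alpha$ is also the natural first move and parallels \eqref{9ec9.53}.

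The gap is in bounding $\E[\Xi(\alpha)\mid\cF_{T_\alpha}]$ via Lemma \ref{9l4.1}(ii) for the sub-BRW from $\alpha'$. The close-relative constraint $T_{\gamma\wedge\beta}>T_\beta-\tau_N$ does \emph{not} force $\gamma\geq\alpha'$: since $T_\alpha>T_\beta-\tau_N$, the common ancestor $\gamma\wedge\beta$ may sit on the ancestry of $\alpha$ itself, so $\gamma$ can descend from a branch that split off \emph{before} the collision. Lemma \ref{9l4.1}(ii) controls only pairs sharing a single root and hence only the contribution from $\gamma\geq\alpha'$; the remaining $\gamma$'s live in sub-trees rooted at siblings of $\alpha$'s ancestors, whose population near time $T_\alpha$ is random and, worse, correlated with the collision event $\{B^\alpha+W^\alpha\in\overline{\cR}^{X^0}_{T_\alpha^-}\}$ through the shared history. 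Neither the product factoring nor a uniform bound on the inner conditional expectation is available for this part. This missing case is exactly what drives the paper's lengthy Lemma \ref{9l10.1}: there the sum is organised around $\beta\wedge\gamma$ first and the collision index second, splitting into $I_3$ (collision before the $\beta$--$\gamma$ branching) and $I_4$ (collision at or after), with $I_4$ needing a further breakdown on where the colliding particle $\delta$ sits relative to $\alpha,\beta,\gamma$. Your argument covers only the analogue of $I_3$. The same structural issue undermines the ``symmetric'' treatment of $P_2$; compare Section \ref{s9.2}.
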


By recalling from \eqref{9e10.03} that $X_t^1\leq X_t^2\leq X_t^0$, one can check for $n=1$ or $n=2$,
\begin{align*} 
&\Big\{\zeta_\beta^n>T_\beta-\tau_N,  \zeta_\gamma^{n-1}>T_\beta-\tau_N, T_{\gamma \wedge \beta}>T_\beta-\tau_N \Big\}\\
=&\Big\{\zeta_{\gamma \wedge \beta}^1>T_\beta-\tau_N,  T_{\gamma \wedge \beta}>T_\beta-\tau_N \Big\}\\
=&\Big\{\zeta_{ \beta}^1>T_\beta-\tau_N,  T_{\gamma \wedge \beta}>T_\beta-\tau_N \Big\}.
\end{align*}
The above equalities follow since $\beta$ and $\gamma$ share the same ancestor at time $T_\beta-\tau_N<T_{\gamma \wedge \beta}$. Hence \begin{align}\label{9e3.7}
& K_{t}^{n,3}(\phi)=\frac{1}{2N+\theta}  \frac{1}{\psi(N)}  \sum_{\beta} 1(T_\beta\leq t, \zeta_\beta^1>T_\beta-\tau_N)  \phi(B^\beta)\nn \\
&\quad \times \sum_{\gamma}1(T_{\pi \gamma}<T_\beta,   B^\beta-B^\gamma \in \cN_N, T_{\gamma \wedge \beta}>T_\beta-\tau_N)= K_{t}^{1,3}(\phi).
\end{align}
Define
\begin{align}\label{9e3.8}
F_\beta(r)=\frac{1}{\psi_0(N)}\sum_{\gamma}1(T_{\pi \gamma}<r,   B^\beta-B^\gamma \in \cN_N, T_{\gamma \wedge \beta}>r-\tau_N)
\end{align}
so that 
\begin{align}\label{9e3.9}
& K_{t}^{n,3}(\phi)=\frac{1}{N(2N+\theta)}  \sum_{\beta} 1(T_\beta\leq t, \zeta_\beta^1>T_\beta-\tau_N)  \phi(B^\beta)F_\beta(T_\beta).
\end{align}
Next, define
\begin{align}\label{9e3.10}
& G_r(\phi)=\frac{1}{N}  \sum_{\beta} 1(T_{\pi\beta}<r\leq T_\beta, \zeta_\beta^1>r-\tau_N)  \phi(B^\beta) F_\beta(r).
\end{align}
By Lemma \ref{9l3.2}, we get
\begin{align}\label{9e3.11}
K_{t}^{n,3}(\phi)-\int_0^t G_r(\phi) dr \text{ is a martingale}.
\end{align}
Replace $\phi(B^\beta)$ in \eqref{9e3.10} by $\phi(B^\beta_{(r-\tau_N)^+})$  and define
\begin{align}\label{9e3.12}
& G_r^\tau(\phi)=\frac{1}{N}  \sum_{\beta} 1(T_{\pi\beta}<r\leq T_\beta, \zeta_\beta^1>r-\tau_N)  \phi(B^\beta_{(r-\tau_N)^+}) F_\beta(r).
\end{align}
 \begin{lemma}\label{9l5.5}
Let $n=1$ or $2$. For all $0<t<\infty$, 
\begin{align*}
\lim_{N\to \infty} \E\Big(\sup_{s\leq t} \Big\vert K_s^{n,3}(\phi)-\int_0^t G_r^\tau(\phi)ds\Big\vert \Big)=0.
\end{align*}
\end{lemma}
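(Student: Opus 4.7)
The approach is to split
\begin{align*}
K_s^{n,3}(\phi) - \int_0^s G_r^\tau(\phi)\,dr = \underbrace{\Big[K_s^{n,3}(\phi) - \int_0^s G_r(\phi)\,dr\Big]}_{=:\,M_s} + \underbrace{\int_0^s \big(G_r(\phi) - G_r^\tau(\phi)\big)\,dr}_{=:\,D_s},
\end{align*}
where $M_s$ is an $\cF_s$-martingale by \eqref{9e3.11}, and then to bound each summand separately.

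For the martingale $M_s$, I would apply Doob's $L^2$ inequality to reduce matters to $\E\langle M\rangle_t$. The particle-wise martingales from Lemma \ref{9l3.2} are pairwise orthogonal (the $\{T_\beta\}$ are a.s.\ distinct, so no two share a jump time), hence summing their predictable quadratic variations gives
\begin{align*}
\langle M\rangle_t = \frac{1}{N^2(2N+\theta)}\sum_\beta \int_0^t 1(T_{\pi\beta}<r\leq T_\beta,\,\zeta_\beta^1>r-\tau_N)\,\phi^2(B^\beta_r)\,F_\beta(r)^2\,dr.
\end{align*}
I then plan to show $\E\big(\sum_\beta 1(T_{\pi\beta}<r\leq T_\beta)F_\beta(r)^2\big)=O(N)$ by a triple-count: expand $F_\beta^2 = \psi_0(N)^{-2}\sum_{\gamma_1,\gamma_2}\cdots$, case-split on the genealogy of $(\beta,\gamma_1,\gamma_2)$, condition on $\cH_\alpha$ for the appropriate most-recent common ancestor $\alpha$, and iterate the Poisson/Gamma sum estimates of Lemmas \ref{9l4.0} and \ref{9l4.3} together with the local bound of Lemma \ref{9l4.2}, exactly as in Lemma \ref{9l4.1}(ii). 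This yields $\E\langle M\rangle_t = O(1/N^2)\to 0$.

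For the drift $D_s$, use that $\phi\in C_b^3$ is Lipschitz to obtain
\begin{align*}
|D_s|\leq \frac{\|\nabla\phi\|_\infty}{N}\int_0^t\sum_\beta 1_\beta\,|B^\beta_r - B^\beta_{(r-\tau_N)^+}|\,F_\beta(r)\,dr,
\end{align*}
where $1_\beta := 1(T_{\pi\beta}<r\leq T_\beta,\,\zeta_\beta^1>r-\tau_N)$. The lineage position $t\mapsto B^\beta_t$ is a compound Poisson process with jump rate $O(N)$ and jump size $O(N^{-1/2})$, so $\E|B^\beta_r - B^\beta_{(r-\tau_N)^+}|^2\leq C\tau_N$; combined with $\E(\sum_\beta 1_\beta)=O(N)$ this gives $\E\big(\sum_\beta 1_\beta|B^\beta_r-B^\beta_{(r-\tau_N)^+}|^2\big)\leq CN\tau_N$. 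Applying Cauchy--Schwarz in $\beta$ and using the triple-count estimate above,
\begin{align*}
\E\Big(\sum_\beta 1_\beta\,|B^\beta_r - B^\beta_{(r-\tau_N)^+}|\,F_\beta(r)\Big)\leq \sqrt{CN\tau_N}\cdot \sqrt{CN} = CN\sqrt{\tau_N},
\end{align*}
so $\E(\sup_{s\leq t}|D_s|)\leq Ct\sqrt{\tau_N}\to 0$ since $\tau_N\to 0$.

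The main obstacle is executing the triple-count $\E(\sum_\beta 1_\beta F_\beta^2)=O(N)$: the crude pointwise bound $F_\beta\leq \psi(N)/\psi_0(N)=N$ would cost an extra factor of $N$, so a genealogical argument with three marked particles is essential. In $d=4$ one must also verify that the cutoff $\tau_N=1/\log N$ does not compromise the cancellation of the $\log N$ factor coming from $\psi_0(N)\sim\log N$ when bounding the triple count by $O(N\psi_0(N)^2)$.
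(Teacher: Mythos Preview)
Your decomposition into the martingale $M_s$ and the drift $D_s$ is exactly what the paper does, and the formula you write for $\langle M\rangle_t$ is correct.

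However, the bound $\E\big(\sum_\beta 1_\beta F_\beta(r)^2\big)=O(N)$ is false. Expanding $F_\beta^2=\psi_0(N)^{-2}\sum_{\gamma,\delta}(\cdots)$ gives precisely the triple sum treated in Lemma~\ref{9l5.0} with the equality $B^\delta=B^\gamma$ replaced by $\sqrt N(B^\gamma-B^\delta)\in[-2,2]^d$; this costs an extra factor $\psi(N)$, so the raw triple sum is $\le CNX_0^0(1)\,\tau_N\psi(N)$ and hence
\[
\E\Big(\sum_\beta 1_\beta F_\beta(r)^2\Big)\ \le\ \frac{CN\,\tau_N\,\psi(N)}{\psi_0(N)^2}\ =\ \frac{CN^2\tau_N}{\psi_0(N)}.
\]
Since $N\tau_N\to\infty$, this is $\gg N$ in every dimension. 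Fortunately this corrected bound still makes the martingale part work: $\E\langle M\rangle_t\le C\tau_N/(N\psi_0(N))\to0$.

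The real gap is in the drift. Feeding the correct second-moment bound into your Cauchy--Schwarz step gives
\[
\E\Big(\sum_\beta 1_\beta\,|B^\beta_r-B^\beta_{(r-\tau_N)^+}|\,F_\beta(r)\Big)\ \le\ \sqrt{CN\tau_N}\cdot\sqrt{\frac{CN^2\tau_N}{\psi_0(N)}}\ =\ \frac{CN^{3/2}\tau_N}{\sqrt{\psi_0(N)}},
\]
so $\E(\sup_{s\le t}|D_s|)\le Ct\,N^{1/2}\tau_N/\sqrt{\psi_0(N)}$. In $d=4$ (where $\tau_N=1/\log N$ is fixed) this equals $CtN^{1/2}(\log N)^{-3/2}\to\infty$, and the argument collapses. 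Even in $d\ge5$ you would need the extra constraint $\tau_N=o(N^{-1/2})$, which the paper does not impose. The loss comes from decoupling the displacement from $F_\beta$: Cauchy--Schwarz forces you through the triple count $\sum_\beta F_\beta^2$, which is the wrong object.

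The paper avoids this by \emph{not} separating the two factors. It computes directly, for each root,
\[
\E\Big(\sum_{\beta,\gamma\ge1}\text{nbr}_{\beta,\gamma}(r)\,1(T_{\beta\wedge\gamma}>r-\tau_N)\,|B^\beta-B^\beta_{r-\tau_N}|\Big)\ \le\ C\sqrt{\tau_N}\,I(N)
\]
(Lemma~\ref{9l9.1}), by conditioning on $\cH_\alpha$ for $\alpha=\beta\wedge\gamma$ and splitting $|B^\beta-B^\beta_{r-\tau_N}|\le|B^\alpha-B^\alpha_{r-\tau_N}|+|\hat B^\beta|+CN^{-1/2}$; the point is that on $\{B^\beta-B^\gamma\in\cN_N\}$ one can trade $|\hat B^\beta|$ for $|\hat B^\gamma|$, which is independent of the remaining spatial constraint. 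After dividing by $\psi_0(N)$ this yields $\E|G_r^\tau(\phi)-G_r(\phi)|\le C\sqrt{\tau_N}$ uniformly in $d\ge4$. This pairwise computation is the missing idea; your Cauchy--Schwarz shortcut cannot recover it.
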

The last approximation term is obtained by replacing $F_\beta(r)$ by $1$ in \eqref{9e3.12}, that is,
\begin{align}\label{9e3.13}
& X_r^{1,\tau}(\phi):=\frac{1}{N}  \sum_{\beta} 1(T_{\pi\beta}<r\leq T_\beta, \zeta_\beta^1>r-\tau_N)  \phi(B^\beta_{(r-\tau_N)^+}),
\end{align}
where $\{B^\beta\neq \Delta\}$ is implicitly given by letting $\phi(B^\beta_{(r-\tau_N)^+})=0$ if $B^\beta=\Delta$.
In order to compare $X_r^{1,\tau}(\phi)$ with $G_r^\tau(\phi)$, we define 
\begin{align*}
\cA(s)=\{\alpha: T_{\pi\alpha}<s\leq T_\alpha, \zeta_\alpha^1>s\}
\end{align*}
to be the set of particles alive in $X^1$. Let 
\begin{align}\label{9e4.3}
\{\alpha\}_r=\{\beta: \beta\geq \alpha, T_{\pi\beta}<r\leq T_\beta, B^\beta\neq \Delta\}
\end{align}
be the set of descendants of $\alpha$ alive at time $r$ in the branching random walk. Now rewrite $G_r^\tau(\phi)$ from \eqref{9e3.12} as
\begin{align} \label{e1.24}
& G_r^\tau(\phi)=\frac{1}{N}  \sum_{\alpha \in \cA(r-\tau_N)}   \phi(B^\alpha_{(r-\tau_N)^+}) \sum_{\beta\in \{\alpha\}_r} F_\beta(r).
\end{align}
Similarly we may rewrite $X_r^{1,\tau}(\phi)$ as
\begin{align} \label{e1.25}
& X_r^{1,\tau}(\phi)=\frac{1}{N}  \sum_{\alpha \in \cA(r-\tau_N)}   \phi(B^\alpha_{(r-\tau_N)^+}) \vert \{\alpha\}_r\vert.
\end{align}
Define
\begin{align} \label{9e4.2}
&Z_\alpha(r)=\sum_{\beta\in \{\alpha\}_r} F_\beta(r)\quad \text{ and } \quad b_d^\tau=\frac{\E(Z_1(\tau_N))}{\E(\vert \{1\}_{\tau_N}\vert )}.
 \end{align}
Combine \eqref{e1.24} and  \eqref{e1.25} to get
\begin{align} \label{9e6.61}
& G_r^\tau(\phi)-b_d^\tau X_r^{1,\tau}(\phi)=\frac{1}{N}  \sum_{\alpha \in \cA(r-\tau_N)}   \phi(B^\alpha_{(r-\tau_N)^+}) (Z_\alpha(r)-b_d^\tau \vert \{\alpha\}_r\vert ).
\end{align}
By translation invariance, we condition on $\cF_{r-\tau_N}$ to see that on the event $\{\alpha \in \cA(r-\tau_N)\}$,  
\begin{align} \label{9e3.14}
& \E\Big(\big(Z_\alpha(r)-b_d^\tau \vert \{\alpha\}_r\vert \big) \Big\vert \cF_{r-\tau_N}\Big)= \E\Big(Z_1(\tau_N)- b_d^\tau\vert \{1\}_{\tau_N}\vert \Big)=0.
\end{align}
  
 \begin{lemma}\label{9l5.6}
Let $n=1$ or $2$. For all $0<t<\infty$, 
\begin{align*}
\lim_{N\to \infty} \E\Big(\sup_{s\leq t} \Big\vert  \int_0^s G_r^\tau(\phi)dr-b_d^\tau \int_0^s X_r^{1,\tau}(\phi)dr\Big\vert \Big)=0.
\end{align*}
\end{lemma}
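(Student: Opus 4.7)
The plan is to carry out a second-moment argument exploiting the conditional centering established in \eqref{9e3.14}. Setting $H_r(\phi) := G_r^\tau(\phi) - b_d^\tau X_r^{1,\tau}(\phi)$, the identity \eqref{9e6.61} gives
\begin{equation*}
H_r(\phi) = \frac{1}{N} \sum_{\alpha \in \cA(r-\tau_N)} \phi\bigl(B^\alpha_{(r-\tau_N)^+}\bigr)\, Y_\alpha(r), \qquad Y_\alpha(r) := Z_\alpha(r) - b_d^\tau \vert \{\alpha\}_r\vert,
\end{equation*}
and each $Y_\alpha(r)$ is conditionally mean-zero given $\cF_{r-\tau_N}$. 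The key structural observation I will establish is that the family $\{Y_\alpha(r)\}_{\alpha \in \cA(r-\tau_N)}$ is conditionally independent given $\cF_{r-\tau_N}$: the count $\vert \{\alpha\}_r \vert$ obviously depends only on the tree rooted at $\alpha$, and for $Z_\alpha(r) = \sum_{\beta \in \{\alpha\}_r} F_\beta(r)$ the constraint $T_{\gamma \wedge \beta} > r - \tau_N$ in \eqref{9e3.8} forces $\gamma \wedge \beta$ to be born after $r-\tau_N$ and hence, since $\beta \ge \alpha$ and $T_{\pi\alpha}< r-\tau_N$, to coincide with $\alpha$ or a descendant of $\alpha$, making $\gamma$ itself a descendant of $\alpha$. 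Since the subtrees rooted at distinct $\alpha \in \cA(r-\tau_N)$ evolve independently on $(r-\tau_N, r]$ given $\cF_{r-\tau_N}$, the claim follows.

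Conditional centering and conditional independence together give
\begin{equation*}
\E\bigl(H_r(\phi)^2 \, \big\vert \, \cF_{r-\tau_N}\bigr) = \frac{1}{N^2} \sum_{\alpha \in \cA(r-\tau_N)} \phi\bigl(B^\alpha_{(r-\tau_N)^+}\bigr)^2\, \E\bigl(Y_\alpha(r)^2 \, \big\vert \, \cF_{r-\tau_N}\bigr),
\end{equation*}
and by translation invariance $\E(Y_\alpha(r)^2 \vert \cF_{r-\tau_N}) = \E(Y_1(\tau_N)^2)$ on $\{\alpha \in \cA(r-\tau_N)\}$, where the right-hand side is computed for a BRW started from a single particle. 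The next step is to prove the moment bound $\E(Y_1(\tau_N)^2) \le C(1 + N\tau_N)$: for the $(b_d^\tau \vert \{1\}_{\tau_N}\vert)^2$ contribution, a direct ODE computation for the second moment of the continuous-time branching process driving the BRW yields $\E(\vert \{1\}_{\tau_N}\vert^2) = 1 + O(N\tau_N)$; for the $Z_1(\tau_N)^2$ contribution, I expand the square into a fourfold sum over $(\beta_1,\beta_2,\gamma_1,\gamma_2)$ and apply a tree decomposition parallel to the proof of Lemma \ref{9l4.1}(ii), but now tracking two simultaneous collision events within a time window of length $\tau_N$. Combining these with the first-moment bound $\E(\vert \cA(r-\tau_N) \vert) \le C N X_0^0(1) e^{\theta r}$ from Lemma \ref{9l2.0} yields
\begin{equation*}
\E\bigl(H_r(\phi)^2\bigr) \le C \| \phi\|_\infty^2 \bigl(N^{-1} + \tau_N\bigr) \le C'\tau_N \quad \text{for $N$ large.}
\end{equation*}

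To conclude, I apply Cauchy--Schwarz twice:
\begin{equation*}
\E\Big(\sup_{s\le t}\Big\vert \int_0^s H_r(\phi)\, dr\Big\vert^2\Big)
\le \E\Big(\Big(\int_0^t \vert H_r(\phi)\vert\, dr\Big)^2\Big)
\le t \int_0^t \E(H_r(\phi)^2)\, dr \le C' t^2 \tau_N \to 0
\end{equation*}
by \eqref{9e3.34}, and then deduce $L^1$ convergence via Jensen's inequality. The main obstacle is the second-moment bound on $Z_1(\tau_N)$, which requires lifting the single-collision estimates of Section \ref{9s4} to a two-collision regime; this is largely a bookkeeping extension of the tree decomposition already developed, but one must carefully handle the degenerate cases in which the two collision partners $\gamma_1, \gamma_2$ share a common ancestor beyond the cutoff.
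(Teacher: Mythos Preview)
Your approach is essentially identical to the paper's: conditional independence plus the centering \eqref{9e3.14} reduce the problem to the second-moment estimate $\E(Z_1(\tau_N)^2)\le CN\tau_N$ (this is the paper's Lemma~\ref{9l7.1}, proved exactly via the four-particle tree decomposition you outline), combined with $\E(\vert\{1\}_{\tau_N}\vert^2)\le CN\tau_N$ and Cauchy--Schwarz. One small caveat: the identity \eqref{9e3.14} and the law $\E(Y_\alpha(r)^2\mid\cF_{r-\tau_N})=\E(Y_1(\tau_N)^2)$ require $r\ge\tau_N$, so the paper disposes of $\int_0^{\tau_N}$ separately by a direct $L^1$ bound---you should insert the same split, which is trivial since that interval has length $\tau_N\to 0$.
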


Finally, by showing the last two lemmas below, we arrive at our destinated term $b_d \int_0^s X_r^{1}(\phi)dr$.

 \begin{lemma}\label{9l5.7}
$\lim_{N\to\infty} b_d^\tau=b_d$.
\end{lemma}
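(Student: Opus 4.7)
\textbf{Plan for Lemma \ref{9l5.7}.} The plan is to evaluate $\E(Z_1(\tau_N))$ and $\E(|\{1\}_{\tau_N}|)$ directly, expand both as sums over pairs of labelled descendants of particle $1$, and pass to the limit term by term, matching the two pieces of the formula \eqref{9ec10.64} defining $b_d$. For the denominator, applying Lemma \ref{9l2.0} with a single initial mass at $x_1$ and $\phi \equiv 1$ gives $\E(|\{1\}_{\tau_N}|) = e^{\theta \tau_N} \to 1$ since $\tau_N \to 0$. Hence it suffices to show $\E(Z_1(\tau_N)) \to b_d$.

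\textbf{Expansion of the numerator.} By Fubini,
\begin{align*}
\E(Z_1(\tau_N)) = \frac{1}{\psi_0(N)} \sum_{\beta,\gamma \geq 1} \P\bigl(T_{\pi\beta} < \tau_N \leq T_\beta,\ T_{\pi\gamma} < \tau_N,\ B^\beta-B^\gamma \in \cN_N\bigr),
\end{align*}
where $B^\beta, B^\gamma \neq \Delta$ is implicit. I decompose by the most recent common ancestor $\alpha = \beta \wedge \gamma$ (non-empty since $\beta_0 = \gamma_0 = 1$) and by the generation gaps $l = |\beta|-|\alpha|$, $m = |\gamma|-|\alpha|$. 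Three cases arise: (i) $m = 0$, $l \geq 1$ ($\gamma$ is a proper ancestor of $\beta$); (ii) $l = 0$, $m \geq 1$; and (iii) $l,m \geq 1$ (both branches are proper descendants of $\alpha$). Using the independence structure of $\{t^\cdot, \delta^\cdot, e^\cdot, W^\cdot\}$, I factor the probability as: a gamma/Poisson contribution for the $T$-events (via \eqref{9ea2.00}-\eqref{9ea6.84}), a factor $\bigl(\tfrac{N+\theta}{2N+\theta}\bigr)^{l+m}$ for no-deaths along both branches, a combinatorial count $2^l\cdot 2^m$ of labelled descendants, and a conditional probability for the displacement. By \eqref{eb3.22}, the rescaled displacement $N^{1/2}(B^\beta-B^\gamma)$ is a sum of i.i.d.\ uniform-or-zero increments, distributed as $V_{l+1}^N$ in cases (i)-(ii) (the ``$+1$'' counts the split step at $\alpha$ where exactly one of the two branches moves by $W^\alpha$), and as a translate of $V_{l+m}^N$ by $\pm W^\alpha$ in case (iii).

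\textbf{Passing to the limit.} Using Lemmas \ref{9l4.0}-\ref{9l4.3} to dominate the sums and Lemma \ref{9l4.2} for the local CLT bound, I pass to the limit term by term. For $d \geq 5$, $\psi_0(N) \to 2^d$ and, since $\tau_N \to 0$, the gamma mass concentrates on small ancestor depths $k = |\alpha|$, so the $k$-sum contributes a total weight one and the $2^{-d}$ prefactor of \eqref{9ec10.64} appears. Cases (i)+(ii) produce the first sum $\sum_l \P(V_{l+1}\in [-1,1]^d\setminus\{0\})$; case (iii) produces the second sum after regrouping the indices $l+m$ and summing over the split index, the binomial weight $\binom{l+j}{l}2^{-l-j-1}$ arising from the Bernoulli count of ``keep the label or not'' among the labelled descendants and $W_0$ being the extra $W^\alpha$ displacement at the branching vertex. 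For $d = 4$, $\psi_0(N) \sim 16\log N$ is balanced by a logarithmic divergence in the sum of $\P(V_n^N \in [-1,1]^4) \sim C n^{-2}$ over $n$ up to $\sim N\tau_N = N/\log N$; identifying the resulting integral against the Gaussian density yields the explicit constant $9/(2\pi^2)$.

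\textbf{Main obstacle.} The delicate point is case (iii): correctly tracking the splitting mechanism at $\alpha$ so that the $W^\alpha$ contribution lines up with $W_0$ in \eqref{9ec10.64}, and reproducing the binomial weights from the labelled descendant counts after summing over $k$, $l$, $m$. A secondary subtlety is the $d = 4$ case, where the log cancellation must be executed uniformly, requiring a local CLT refinement of Lemma \ref{9l4.2} along with a careful computation of the limiting integral to obtain the specific constant $9/(2\pi^2)$.
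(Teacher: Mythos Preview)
Your approach is essentially the paper's (Section~\ref{9s7}): show $\E(|\{1\}_{\tau_N}|)=e^{\theta\tau_N}\to1$, expand $\psi_0(N)\,\E(Z_1(\tau_N))$ over pairs $(\beta,\gamma)$ indexed by their common ancestor $\alpha$, split into the case $\gamma=\alpha$ and the case where both are proper descendants of $\alpha$, and pass to the limit term by term (dominated convergence for $d\ge5$, a local CLT plus the logarithmic cancellation for $d=4$).

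Two small corrections to your plan. First, your case~(ii) is empty: the constraints $T_{\pi\beta}<\tau_N\le T_\beta$, $T_{\pi\gamma}<\tau_N$ and $B^\beta\neq B^\gamma$ together force $\beta\wedge\gamma\neq\beta$ (if $\gamma>\beta$ then $T_{\pi\gamma}\ge T_\beta\ge\tau_N$), so only your case~(i) feeds the first sum in \eqref{9ec10.64}, and in that case the displacement has exactly $l=|\beta|-|\alpha|$ Bernoulli-weighted steps, i.e.\ law $V_l^N$ in your indexing rather than $V_{l+1}^N$. Second, the ancestor depth $k=|\alpha|$ does \emph{not} concentrate on small values---in fact $k$ is typically of order $N\tau_N\to\infty$; the ``weight one'' you are after comes instead from summing the Poisson weights over $k$, which collapses to $e^{\theta\tau_N}\P(\Gamma_{l+1}<(2N+2\theta)\tau_N)\to1$ for each fixed $l$. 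Neither point derails the argument, but both should be stated correctly in the execution.
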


 \begin{lemma}\label{9l5.8}
For all $0<t<\infty$, 
\begin{align*}
\lim_{N\to \infty} \E\Big(\sup_{s\leq t} \Big\vert   \int_0^s X_r^{1,\tau}(\phi)dr- \int_0^s X_r^{1}(\phi)dr\Big\vert \Big)=0.
\end{align*}
\end{lemma}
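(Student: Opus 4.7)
The plan is to decompose $X_r^{1,\tau}(\phi) - X_r^1(\phi)$ into a killing-time shift and a position shift, bound each in $L^1$, and integrate. Since
\[
\sup_{s\leq t}\Bigl|\int_0^s (X_r^{1,\tau}(\phi) - X_r^1(\phi))\,dr\Bigr| \leq \int_0^t \bigl|X_r^{1,\tau}(\phi)-X_r^1(\phi)\bigr|\,dr,
\]
it suffices by Fubini to prove $\int_0^t \E|X_r^{1,\tau}(\phi)-X_r^1(\phi)|\,dr\to 0$. Modulo the $\P\otimes dr$-null discrepancy between strict and non-strict inequalities in the alive-at-$r$ indicator, I write the integrand as $A_r(\phi) + B_r(\phi)$, where $A_r$ carries the change $1(\zeta_\beta^1>r-\tau_N) - 1(\zeta_\beta^1>r)$ with the position held fixed at $B^\beta_r$, and $B_r$ carries the difference $\phi(B^\beta_{(r-\tau_N)^+})-\phi(B^\beta_r)$ on the set $\{\zeta_\beta^1 > r-\tau_N\}$.

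For the killing-time piece $A_r$, only particles with $r-\tau_N<\zeta_\beta^1\leq r$ and alive in the branching random walk at time $r$ contribute. Since alive at $r$ forces $\zeta_\beta^0\geq T_\beta>r$, we must have $\zeta_\beta^1 = T_{\beta|m}$ for some strict ancestor $\alpha=\beta|m$, which is then a \emph{collision ancestor}: $T_\alpha\in(r-\tau_N,T_{\pi\beta}]$, $e_\alpha=\beta_{m+1}$, and $B^\alpha+W^\alpha\in\overline{\cR}^{X^0}_{T_\alpha^-}$ --- exactly the event counted by the collision functional $J$ from \eqref{9e9.02}. Summing over such $\alpha$, conditioning on $\cF_{T_\alpha}$, and using Lemma \ref{9l2.0} to bound the expected number of descendants of $\alpha\vee e_\alpha$ alive at time $r$ by $e^{\theta\tau_N}\leq C$, I expect to obtain $\E|A_r(\phi)|\leq C\|\phi\|_\infty(\E J(r)-\E J(r-\tau_N)) + o(1)$, where the $o(1)$ absorbs the $K_0^N$-contribution via \eqref{9e0.93}. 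Integrating in $r$ and invoking Lemma \ref{9l2.5},
\[
\int_0^t \E|A_r(\phi)|\,dr \leq C\|\phi\|_\infty \tau_N\sup_{r\leq t}\E J(r) + o(1) \to 0.
\]

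For the position piece $B_r$, the Lipschitz bound $|\phi(x)-\phi(y)|\leq \|\nabla\phi\|_\infty|x-y|$ (valid since $\phi\in C_b^3$) gives $|B_r(\phi)|\leq \|\nabla\phi\|_\infty N^{-1}\sum_\beta 1(T_{\pi\beta}<r\leq T_\beta,\,B^\beta\neq\Delta)|B^\beta_r-B^\beta_{(r-\tau_N)^+}|$. Conditioning on the branching structure (the $T$'s, $\delta$'s, $e$'s) and using that the $W^\gamma$'s are independent with $\E|W|^2=O(1/N)$, the conditional second moment of the displacement equals $O(1/N)$ times the number of ancestors $\beta|m$ with $T_{\beta|m}\in((r-\tau_N)^+,T_{\pi\beta}]$ and $e_{\beta|m}=\beta_{m+1}$. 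A direct application of Lemma \ref{9l3.2} bounds the expected total number of such family-line moves across alive $\beta$'s by $C N^2\tau_N$, so
\[
\E\Bigl[\frac{1}{N}\sum_\beta 1(T_{\pi\beta}<r\leq T_\beta)\,|B^\beta_r-B^\beta_{(r-\tau_N)^+}|^2\Bigr]\leq C\tau_N.
\]
Cauchy--Schwarz together with the first-moment bound $\E X_r^0(1)\leq C$ then yields $\E|B_r(\phi)|\leq C\sqrt{\tau_N}$ uniformly in $r\in[0,t]$, and hence $\int_0^t \E|B_r(\phi)|\,dr\to 0$ since $\tau_N\to 0$.

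The main obstacle is the bookkeeping in the killing-time step: unwinding the definition of $\zeta_\beta^1$ to exhibit the colliding ancestor $\alpha$ inside the narrow window $(r-\tau_N,r]$, so that the estimate reduces to the tail $\E J(r)-\E J(r-\tau_N)$ already controlled uniformly by Lemma \ref{9l2.5}; the integration then contributes the desired vanishing factor $\tau_N$. All remaining ingredients --- the position-shift second-moment calculation, Cauchy--Schwarz, and Fubini --- are routine.
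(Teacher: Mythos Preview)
Your proof is correct, and for the killing-time piece $A_r$ it takes a genuinely simpler route than the paper. The paper handles $A_r$ by first applying Lemma~\ref{9l3.2} to convert the $dr$-integral into a $\sum_\beta 1(T_\beta\le t)$ sum, and then invokes the ``$+1$'' part of Lemma~\ref{9l10.1}(i), whose proof occupies all of Section~\ref{9s8}.1 with several pages of case analysis over the position of the colliding particle $\delta$ relative to $\beta$. Your argument avoids this: you stay in the integral form, identify the collision ancestor $\alpha$ in the window $(r-\tau_N,r]$, condition on $\cF_{T_\alpha}$ to bound the expected number of descendants of $\alpha\vee e_\alpha$ alive at $r$ by $e^{\theta\tau_N}\le C$, and are left with exactly the increment $\E J(r)-\E J(r-\tau_N)$ (plus the $K_0^N$-piece controlled by \eqref{9e0.93}); integrating in $r$ then produces the factor $\tau_N$ against $\sup_{r\le t}\E J(r)$, which is bounded by Lemma~\ref{9l2.5}. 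This is a clean standalone argument. The paper's route is more economical only in context, since Lemma~\ref{9l10.1}(i) with the full $I(\beta)+1$ weight is needed anyway for Lemma~\ref{9l5.4}.

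For the position piece $B_r$ the two approaches are essentially equivalent: the paper uses Lemma~\ref{9l2.0} and a first-moment bound $\E|B^N_{s}-B^N_0|\le C\sqrt{s}$ directly (see \eqref{9ec1.23}), whereas you go through the conditional second moment of the displacement and Cauchy--Schwarz against $X_r^0(1)$. Both yield the $\sqrt{\tau_N}$ rate.
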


At this stage, we have finished the proof of Lemma \ref{9l2.6} and hence proved our main result Theorem \ref{9t0}. It remains to prove Lemmas \ref{9l5.1}-\ref{9l5.8}.

\section{First part of the proofs of lemmas}\label{9s6}

We first give the relatively ``simpler'' proofs of Lemmas \ref{9l5.1}, \ref{9l5.2},  \ref{9l5.3}, \ref{9l5.5} in this section.

\begin{proof}[Proof of Lemma \ref{9l5.1}]
Since $K_t^{n}(\phi)$ is increasing in $t$, we get
\begin{align*}
&\E\Big(\sup_{s\leq t} \Big\vert K_s^{n}(\phi)-\frac{N}{N+\theta}K_s^{n}(\phi)\Big\vert \Big)=\frac{\theta}{N+\theta}\E(K_t^{n}(\phi)).
\end{align*}
Recall from \eqref{9ea5.69} the inequality for $K_t^{n}(\phi)$. Together with the upper bound from Lemma \ref{9l2.5}, we conclude from the above that
\begin{align*}
\E\Big(\sup_{s\leq t} \Big\vert K_s^{n}(\phi)-\frac{N}{N+\theta}K_s^{n}(\phi)\Big\vert \Big)\leq \frac{C}{N} \|\phi\|_\infty (X_0^0(1)+X_0^0(1)^2)\to 0.
\end{align*}
Next, define for each $s\geq 0$
\begin{align*} 
\hat{K}_{s}^{n,0}(\phi)=&\frac{1}{2N+\theta}  \sum_{\beta} a_\beta^n(t)  \phi(B^\beta) 1(B^\beta+W^\beta\in \overline{\cR}^{X^{n-1}}_{T_\beta^-}).
\end{align*}
Then the difference between $\frac{N}{N+\theta}K_s^{n}(\phi)$ and $\hat{K}_s^{n,0}(\phi)$ is that $\phi(Y^\beta+W^\beta)$  is replaced by $\phi(Y^\beta)$. Since $\phi \in C_b^3(\R^d)$ and $W^\beta$ is uniform on $\cN_N\subseteq [-N^{-1/2}, N^{-1/2}]^d$, by setting
\begin{align}\label{9ec5.37}
\eta_N=\sup {\{\vert \phi(x+y)-\phi(x)\vert : y\in [-N^{-1/2}, N^{-1/2}]^d, {x\in \R^d}\}},
\end{align}
we have $\vert \phi(Y^\beta+W^\beta)-\phi(Y^\beta)\vert \leq \eta_N$ and $\eta_N\leq CN^{-1/2}$. It follows that
\begin{align*}
\E\Big(\sup_{s\leq t} \Big\vert \frac{N}{N+\theta}K_s^{n}(\phi)-\hat{K}_s^{n,0}(\phi)\Big\vert \Big)
&\leq C \eta_N \E\Big(  \frac{1}{N} \sum_{\beta} a_\beta^0(t)  1(B^\beta+W^\beta\in \overline{\cR}^{X^{0}}_{T_\beta^-}) \Big)\\
&\leq CN^{-1/2} C(X_0^0(1)+X_0^0(1)^2) \to 0,
\end{align*}
where the last inequality uses Lemma \ref{9l2.5} again. Finally, we recall the assumption on $K_0^N$ from \eqref{9e0.93} to see
\begin{align*}
&\E\Big(\sup_{s\leq t} \Big\vert \hat{K}_s^{n,0}(\phi)-K_s^{n,0}(\phi)\Big\vert \Big) \leq \|\phi\|_\infty \E\Big(  \frac{1}{N} \sum_{\beta} a_\beta^0(t)  1(B^\beta+W^\beta\in K_0^N) \Big)\to 0,
\end{align*}
thus completing the proof.
\end{proof}

\begin{lemma}\label{9l6.1}
If $G_\beta$ is measurable with respect to $\cF_{T_\beta}$ that satisfies 
\begin{align*}\E(G_\beta\vert \cF_{T_\beta^-})=0,\text{ and } \vert G_\beta\vert \leq K 1(T_\beta\leq \zeta_\beta^0)\end{align*}
for some absolute constant $K>0$,
 then $M_t=N^{-1}\sum_{\beta} 1(T_\beta\leq t) G_\beta$ is an $\cF_t$-martingale and there is some constant $C>0$ so that for any $t\geq 0$,
 \begin{align*}
\E\Big(\sup_{s\leq t} M_s^2\Big) \leq C\frac{1}{N^2} \E\Big(\sum_\beta 1(T_\beta \leq t) G_\beta^2\Big).
 \end{align*}
\end{lemma}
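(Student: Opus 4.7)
The plan is to verify three things in turn: that $M_t$ is an $\cF_t$-martingale, that its optional quadratic variation equals $[M]_t = N^{-2} \sum_\beta 1(T_\beta \leq t) G_\beta^2$, and then to apply Doob's $L^2$-maximal inequality. Conceptually, $M$ is a pure-jump c\`adl\`ag process whose jumps occur at the almost surely distinct times $T_\beta$ with sizes $G_\beta/N$, each centered by hypothesis given the pre-jump $\sigma$-field $\cF_{T_\beta^-}$; this is a standard setup for square-integrable martingale theory.

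For the martingale property, I would first check that $M_t$ is well-defined in $L^1$. The bound $|G_\beta|\leq K\,1(T_\beta \leq \zeta_\beta^0)$ restricts the effective sum to particles whose family line survives to time $T_\beta$, and applying Lemma \ref{9l3.2} with $\Psi \equiv 1$ (or Lemma \ref{9l2.0} with $\phi \equiv 1$) bounds $\E\sum_\beta 1(T_\beta \leq t,\, T_\beta \leq \zeta_\beta^0)$ by $C N X_0^0(1) e^{\theta t}$. To establish the martingale property itself, I would enumerate $\cI$ as $\{\beta^{(k)}\}_{k\geq 1}$ and work with the finite truncations $M_t^{(n)} = N^{-1}\sum_{k \leq n} 1(T_{\beta^{(k)}} \leq t) G_{\beta^{(k)}}$; for each $k$ and $s \leq t$, conditioning on $\cF_{T_{\beta^{(k)}}^-}$ and invoking $\E(G_{\beta^{(k)}} \mid \cF_{T_{\beta^{(k)}}^-}) = 0$ yields $\E\bigl(1(s<T_{\beta^{(k)}}\leq t)G_{\beta^{(k)}}\bigm| \cF_s\bigr) = 0$, and the $L^1$-bound above permits dominated convergence to pass from the finite truncations to $M$.

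For the quadratic variation, the almost-sure distinctness of the $\{T_\beta\}$ (being sums of independent continuous random variables) together with the pure-jump nature of $M$ give
\begin{align*}
[M]_t \;=\; \sum_{s\leq t}(\Delta M_s)^2 \;=\; \frac{1}{N^2}\sum_\beta 1(T_\beta \leq t)\,G_\beta^2.
\end{align*}
A parallel jump-by-jump argument, applied to $(M^{(n)})^2 - [M^{(n)}]$, shows that $M_t^2 - [M]_t$ is also a martingale, so $\E(M_t^2) = \E([M]_t)$ whenever the right-hand side is finite (and if it is infinite, the claimed bound is vacuous). Doob's $L^2$-maximal inequality then yields
\begin{align*}
\E\Bigl(\sup_{s\leq t} M_s^2\Bigr) \;\leq\; 4\,\E(M_t^2) \;=\; \frac{4}{N^2}\,\E\Big(\sum_\beta 1(T_\beta \leq t)\,G_\beta^2\Big),
\end{align*}
which is the claimed estimate with $C = 4$.

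I expect the main technical subtlety to be the rigorous handling of the infinite summation over $\cI$ and the martingale cross-term cancellations $\E\bigl(1(T_\beta \leq t)1(T_\gamma \leq t)G_\beta G_\gamma\bigr) = 0$ for $\beta \neq \gamma$; both are routine given the $L^1$-bound on the number of live particles and the conditional-centering hypothesis, so once the truncation/dominated-convergence scheme is set up, the $L^2$-isometry and Doob's inequality finish the argument immediately.
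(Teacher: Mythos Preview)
Your proposal is correct and reconstructs the argument the paper defers to (Lemma~3.5 of \cite{DP99}): conditional centering at each $T_\beta$ for the martingale property, $[M]_t$ as the sum of squared jumps at the a.s.\ distinct times $T_\beta$, and then Doob's $L^2$ inequality. One harmless slip: $\E\sum_\beta 1(T_\beta\le t,\,T_\beta\le\zeta_\beta^0)$ is of order $N^2 X_0^0(1)$ rather than $N X_0^0(1)$ (Lemma~\ref{9l3.2} contributes an extra factor $2N+\theta$; cf.\ \eqref{9ec7.42}), but only finiteness is needed for your dominated-convergence step.
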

\begin{proof}
The proof is immediate from Lemma 3.5 of \cite{DP99} and its proof therein.
\end{proof}
\begin{proof}[Proof of Lemma \ref{9l5.2}]
Recall $K_t^{n,1}(\phi)$ from \eqref{9e3.3} and define
\begin{align*}
M_t&:=K_t^{n,0}(\phi)-K_t^{n,1}(\phi) =\frac{1}{2N+\theta}\sum_{\beta} a_\beta^n(t) \phi(B^\beta)   \Big(h_{\beta}^{n-1}-\frac{\nu_{n-1}(\beta)}{\psi(N)}\Big),
\end{align*}
where 
\begin{align*}
h_{\beta}^{n-1}=1(B^\beta+W^\beta\in \cR^{X^{n-1}}_{T_\beta^-}).
\end{align*}
By \eqref{9e3.31} and Lemma \ref{9l6.1}, we get $(M_t, t\geq 0)$ is an $\cF_t$-martingale and 
\begin{align*} 
\E\Big(\sup_{s\leq t}M_s^2\Big)\leq \frac{C}{N^2} \| \phi\| _\infty^2 \sum_{\beta} \E\Big( a_\beta^0(t)    \Big(h_{\beta}^{n-1}-\frac{\nu_{n-1}(\beta)}{\psi(N)}\Big)^2\Big).
\end{align*}
Notice that $h_{\beta}^{n-1} \in \{0,1\}$. Using \eqref{9e3.31} again, we have  
\begin{align*}
\E\Big(   \Big(h_{\beta}^{n-1}-\frac{\nu_{n-1}(\beta)}{\psi(N)}\Big)^2\Big\vert \cF_{T_\beta^-}\Big)\leq \E(h_{\beta}^{n-1}\vert \cF_{T_\beta^-}),
\end{align*}
thus giving
\begin{align*}
\E\Big(\sup_{s\leq t}M_s^2\Big)\leq &\frac{C}{N^2} \| \phi\| _\infty^2 \sum_{\beta} \E\Big( a_\beta^0(t)  h_{\beta}^{n-1}\Big)\leq C\| \phi\| _\infty^2 \frac{1}{N}(X_0^0(1)+X_0^0(1)^2) \to 0,
\end{align*}
where the last inequality is by Lemma \ref{9l2.5}. The proof is complete.
\end{proof}

\begin{proof}[Proof of Lemma \ref{9l5.3}]
Recall $\nu_m(\beta)$ from \eqref{9e3.33} and define
\begin{align*}
\nu_{m,\tau}(\beta)=\Big\vert \{B^\gamma: T_{\pi \gamma}<T_\beta, B^\gamma-B^\beta \in \cN_N, \zeta_{\gamma}^m>T_{\pi \gamma}, T_{\gamma \wedge \beta}>T_\beta-\tau_N\}\Big\vert .
\end{align*}
Replace $\nu_{n-1}(\beta)$ in $K_{t}^{n,1}(\phi)$ by $\nu_{n-1,\tau}(\beta)$ and set
\begin{align*}
\hat{K}_{t}^{n,1}(\phi)=&\frac{1}{2N+\theta}   \sum_{\beta} a_\beta^n(t)  \phi(B^\beta)  \frac{\nu_{n-1,\tau}(\beta)}{\psi(N)}.
\end{align*}
It follows that
\begin{align*}
&\sup_{s\leq t}\vert \hat{K}_{s}^{n,1}(\phi)-{K}_{s}^{n,1}(\phi)\vert \leq \frac{1}{N}   \frac{1}{\psi(N)} \| \phi\| _\infty   \sum_{\beta} a_\beta^0(t) \vert \nu_{n-1}(\beta)-\nu_{n-1,\tau}(\beta)\vert \nn\\
&\leq \frac{1}{N\psi(N)} \| \phi\| _\infty \sum_{\beta} 1(T_\beta\leq t)\nn\\
&\quad\quad\quad \sum_{\gamma} 1(T_{\pi \gamma}<T_\beta) 1( B^\gamma-B^\beta \in \cN_N)1( T_{\gamma \wedge \beta}\leq T_\beta-\tau_N)\nn\\
&= \| \phi\| _\infty(J_0(t)+J(t,\tau_N)),
\end{align*}
where the last equality is from \eqref{9e2.25} and \eqref{9e2.26}. By Lemma \ref{9l4.1a}, Lemma \ref{9l4.3a} and \eqref{9ec3.5}, we conclude  
\begin{align*}
\lim_{N\to \infty} \E\Big(\sup_{s\leq t} \Big\vert \hat{K}_s^{n,1}(\phi)-K_s^{n,1}(\phi)\Big\vert \Big)=0.
\end{align*}
It remains to calculate the difference between $\hat{K}_{t}^{n,1}(\phi)$ and ${K}_{t}^{n,2}(\phi)$ given by
\begin{align}\label{9e3.41}
\sup_{s\leq t} \vert \hat{K}_{s}^{n,1}(\phi)-{K}_{s}^{n,2}(\phi)\vert &\leq \frac{1}{N}   \frac{1}{\psi(N)} \| \phi\| _\infty \sum_{\beta} 1(T_\beta\leq t, B^\beta\neq \Delta)  \nn\\
&\times   \vert \nu_{n-1,\tau}(\beta)-\sum_{\gamma}\text{nbr}_{\beta,\gamma}^{n-1} 1(T_{\gamma \wedge \beta}>T_\beta-\tau_N)\vert .
\end{align}
Define
\begin{align}\label{9e3.42}
J_1(t)=&\frac{1}{N\psi(N)} \sum_{\beta} 1(T_\beta\leq t, B^\beta\neq \Delta)\sum_{\gamma}  1(T_{\pi\gamma}<T_\beta, B^\gamma-B^\beta \in \cN_N) \nn\\
&\quad 1(T_{\gamma \wedge \beta}>T_\beta-\tau_N) \sum_{\delta}1(T_{\pi\delta} <T_\beta, B^\delta=B^\gamma)1(T_{\delta \wedge \beta}>T_\beta-\tau_N).
\end{align}
We claim  
\begin{align}\label{9ec3.42}
\sup_{s\leq t}\vert \hat{K}_{s}^{n,1}(\phi)-{K}_{s}^{n,2}(\phi)\vert \leq C\| \phi\| _\infty J_1(t).
\end{align}
To see this, we note the last term on the right-hand side of \eqref{9e3.41} is from the multiple occupancy of particles: If $k\geq 2$ particles have visited a neighboring site of $B^\beta$, then this will contribute at most $k-1$ to \eqref{9e3.41} and at least $k^2$ to the summation of $\gamma,\delta$ in \eqref{9e3.42}, thus giving \eqref{9ec3.42}. It suffices to show $\E(J_1(t)) \to 0$. \\

Apply Lemma \ref{9l3.2} to get
\begin{align} \label{9e4.31}
\E(J_1(t))=&\frac{2N+\theta}{N\psi(N)} \int_0^t \sum_{\beta,\gamma, \delta}\E\Big(1(T_{\pi\beta}<r\leq T_\beta)  1(T_{\pi\gamma}<r, B^\gamma-B^\beta \in \cN_N)\nn\\
&\quad 1(T_{\gamma \wedge \beta}>r-\tau_N)1(T_{\delta \wedge \beta}>r-\tau_N)1(T_{\pi\delta} <r) 1(B^\delta=B^\gamma)\Big)dr.
\end{align}
Denote the terms inside the expectation above by $K_{\beta,\gamma, \delta}(r)$. The indicators $1(T_{\gamma \wedge \beta}>r-\tau_N)$ and $1(T_{\delta \wedge \beta}>r-\tau_N)$ ensure that $\delta_0=\beta_0=\gamma_0$.   By translation invariance, we get
\begin{align} \label{e1.31} 
\sum_{\beta,\gamma, \delta} \E\Big(K_{\beta,\gamma, \delta}(r)\Big) =    NX_0^0(1)\E\Big(\sum_{\beta,\gamma, \delta\geq 1}K_{\beta,\gamma, \delta}(r)\Big).
\end{align}
 
\begin{lemma}\label{9l5.0}
There is some constant $C>0$ such that for any $0< r\leq t$,
 \begin{align} \label{9e4.61}
\E\Big(\sum_{\beta,\gamma, \delta\geq 1} K_{\beta,\gamma, \delta}(r)\Big) \leq   C\tau_N.
\end{align}
\end{lemma}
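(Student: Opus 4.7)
I would adapt the strategy of Lemma \ref{9l4.1}(ii) to three particles and two branching points. The essential new ingredient is that the point-collision event $\{B^\delta = B^\gamma\}$ contributes an extra lattice-volume factor of order $1/\psi(N)$ compared to the neighborhood event $\{B^\gamma - B^\beta \in \cN_N\}$. Because the constraints $T_{\gamma\wedge\beta}, T_{\delta\wedge\beta} > r-\tau_N$ force the triple most-recent common ancestor $\alpha := \beta\wedge\gamma\wedge\delta$ to satisfy $T_\alpha > r-\tau_N$, the outer $T_\alpha$-integration carries Poisson mass of order $(2N+\theta)\tau_N$. Canceling this against the $1/\psi(N)$ from the spatial bound is exactly what produces the $\tau_N$ on the right-hand side.

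The ancestral tree of $(\beta,\gamma,\delta)$ has three topologies at $\alpha$, one for each choice of ``lone'' particle; by symmetry I treat only the case where $\gamma$ is alone on one side of $\alpha$ while $\beta, \delta$ split at a later $\alpha' > \alpha$. Parametrize $|\alpha|=k$, $|\alpha'|=k+j$, $|\gamma|=k+l'$, $|\beta|=k+j+l$, $|\delta|=k+j+m$ (each of $j, l, l', m \geq 1$), and let $U_1, U_\gamma, U_\beta, U_\delta$ denote the mutually independent random-walk displacements along the four non-shared segments (with $j, l', l, m$ steps, respectively). Then
\[
B^\gamma - B^\beta = U_\gamma - U_1 - U_\beta, \qquad B^\delta - B^\gamma = U_1 + U_\delta - U_\gamma.
\]
Conditioning on $(U_1, U_\gamma, U_\beta)$ turns $\{B^\delta = B^\gamma\}$ into the single-point hit $\{U_\delta = U_\gamma - U_1\}$; the local-CLT estimate underlying Lemma \ref{9l4.2}, combined with the cell-volume $\sim 1/\psi(N)$ of $\cL_N$, gives $\P(U_\delta = v) \leq C/(\psi(N)(1+m)^{d/2})$ uniformly in $v$. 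Combined with Lemma \ref{9l4.2} applied to $B^\gamma - B^\beta$ this yields
\[
\P(B^\gamma - B^\beta \in \cN_N,\ B^\delta = B^\gamma)\ \leq\ \frac{C}{\psi(N)\,(1+m)^{d/2}\,(1+l+l'+j)^{d/2}}.
\]

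Multiplying by the particle count $2^{k+j+l+l'+m-2}$, the spine-survival factor $((N+\theta)/(2N+\theta))^{k+j+l+l'+m-2}$, and the Poisson--Gamma factors for the timing events, I sum the five generation indices in the order $k$ (producing $e^{\theta T_\alpha}$), then $m$ (by Lemma \ref{9l4.0} with $l=0$), then $l'$ (by Lemma \ref{9l4.0}, telescoping $(1+l+l'+j)^{-d/2}$ down to $(1+l+j)^{-(d/2-1)}$), and finally $(j,l)$ jointly using the Poisson convolution $\sum_{j+l=w}\pi(bu_1,j-1)\pi(bu_2,l-1) = \pi(b(r-T_\alpha), w-2)$ together with Lemma \ref{9l4.3}(i). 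After this convolution the summand is independent of $T_{\alpha'}$, so the $T_{\alpha'}$-integration over $(T_\alpha, r)$ simply contributes a factor $(r-T_\alpha)(2N+\theta)$, and the remaining $T_\alpha$-integration over $(r-\tau_N, r)$ produces
\[
\E\Big(\sum_{\beta,\gamma,\delta\geq 1} K_{\beta,\gamma,\delta}(r)\Big)\ \leq\ \frac{C\,(2N+\theta)^2}{\psi(N)} \int_0^{\tau_N}\frac{u}{(1+(2N+2\theta)u)^{d/2-1}}\,du,
\]
which a direct estimate of the integral (splitting the range at $u \asymp 1/(2N+\theta)$) bounds by $O(\tau_N/\log N)$ in $d=4$ and $O(1/N)$ in $d \geq 5$, both comfortably $o(\tau_N)$.

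The main technical challenge is the bookkeeping of this five-fold sum in such a way that Lemmas \ref{9l4.0}, \ref{9l4.3}(i) and the Poisson convolution collapse the spatial coupling $(1+l+l'+j)^{-d/2}$ into a clean function of $r-T_\alpha$; each step must be carried out in the correct order, otherwise the telescoping fails. The new factor $(r-T_\alpha)$ arising from the inner $T_{\alpha'}$-integration---absent in Lemma \ref{9l4.1}(ii)---is what tips the final integral from $O(1)$ into $o(\tau_N)$, and its appearance is the reason the two branching points force the estimate to behave as $\tau_N$ rather than as the much larger $I((2N+\theta)r)$ of Lemma \ref{9l4.1}(ii).
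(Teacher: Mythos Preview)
Your overall strategy matches the paper's, but there are two concrete errors. First, the three-fold symmetry claim is wrong: only $\gamma$ and $\delta$ are interchangeable in $K_{\beta,\gamma,\delta}(r)$ (both carry one-sided time constraints $T_{\pi\gamma},T_{\pi\delta}<r$, and the spatial pair $\{B^\gamma-B^\beta\in\cN_N,\ B^\delta=B^\gamma\}$ is invariant under $\gamma\leftrightarrow\delta$), whereas $\beta$ carries the two-sided constraint $T_{\pi\beta}<r\leq T_\beta$. So ``$\gamma$ alone'' is equivalent to ``$\delta$ alone'', but ``$\beta$ alone'' is a genuinely different topology; the paper treats it separately as Case~(ii), with its own spatial factorization, and you cannot skip it by symmetry.

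Second, the pointwise bound $\P(U_\delta=v)\leq C/(\psi(N)(1+m)^{d/2})$ is false. Your $U_\delta$ is a lazy walk---each of its $m$ steps is zero with probability $1/2$---so $\P(U_\delta=0)\geq 2^{-m}$, which for bounded $m$ is of order one, not $1/\psi(N)$. No local-CLT gives a single-point bound carrying the lattice-volume factor for this walk. The paper obtains the $1/\psi(N)$ by isolating the branch-point jump $W^\sigma$ at $\sigma=\delta\wedge\gamma$, which is always uniform on $\cN_N$ and always separates $B^\delta$ from $B^\gamma$; see \eqref{9ec1.10}--\eqref{9ec1.14} and \eqref{e2.82}. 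After extracting $W^\sigma$ the residual displacement event is handled by Lemma~\ref{9l4.2}, but the resulting factorization (e.g.\ $(1+l+m)^{-d/2}(1+j+n)^{-d/2}$ in Case~(i)) is not the one you wrote, and the downstream generation sums must be reorganized accordingly. A smaller issue: the ``$T_{\alpha'}$-integration'' producing an extra $(r-T_\alpha)(2N+\theta)$ is double-counting---once the Poisson weights $\pi(bu_1,j-1)\pi(bu_2,l-1)$ are in place you have already accounted for the position of $\alpha'$ along the $\beta$-line, and there is no further time-integral to perform; the paper works entirely in generation indices and Gamma/Poisson laws.
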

Use \eqref{e1.31} and \eqref{9e4.61} in \eqref{9e4.31} to get
\begin{align} \label{9e4.60}
\E(J_1(t))\leq &\frac{2N+\theta}{N\psi(N)} \int_0^t NX_0^0(1)   C\tau_N  dr\leq \frac{CX_0^0(1)}{\psi_0(N)} \tau_N \to 0 \text{ as } N\to \infty.
\end{align}
The proof of Lemma \ref{9l5.3} is complete as noted above. 
\end{proof}
It remains to prove Lemma \ref{9l5.0}.
\begin{proof}[Proof of Lemma \ref{9l5.0}]
By symmetry of $\delta$ and $\gamma$, we may assume   $\delta \wedge \beta \leq \gamma\wedge \beta$. There are two cases here: (i) $\delta \wedge \beta < \gamma\wedge \beta$; (ii) $\delta \wedge \beta = \gamma\wedge \beta$. Denote by $K^{(i)}(r)$ (resp. $K^{(ii)}(r)$) for the sum of $K_{\beta,\gamma, \delta}(r)$ when $\delta,\gamma,\beta$ satisfy case (i) (resp. case (ii)).  \\
\begin{figure}[ht]
  \begin{center}
    \includegraphics[width=0.75 \textwidth]{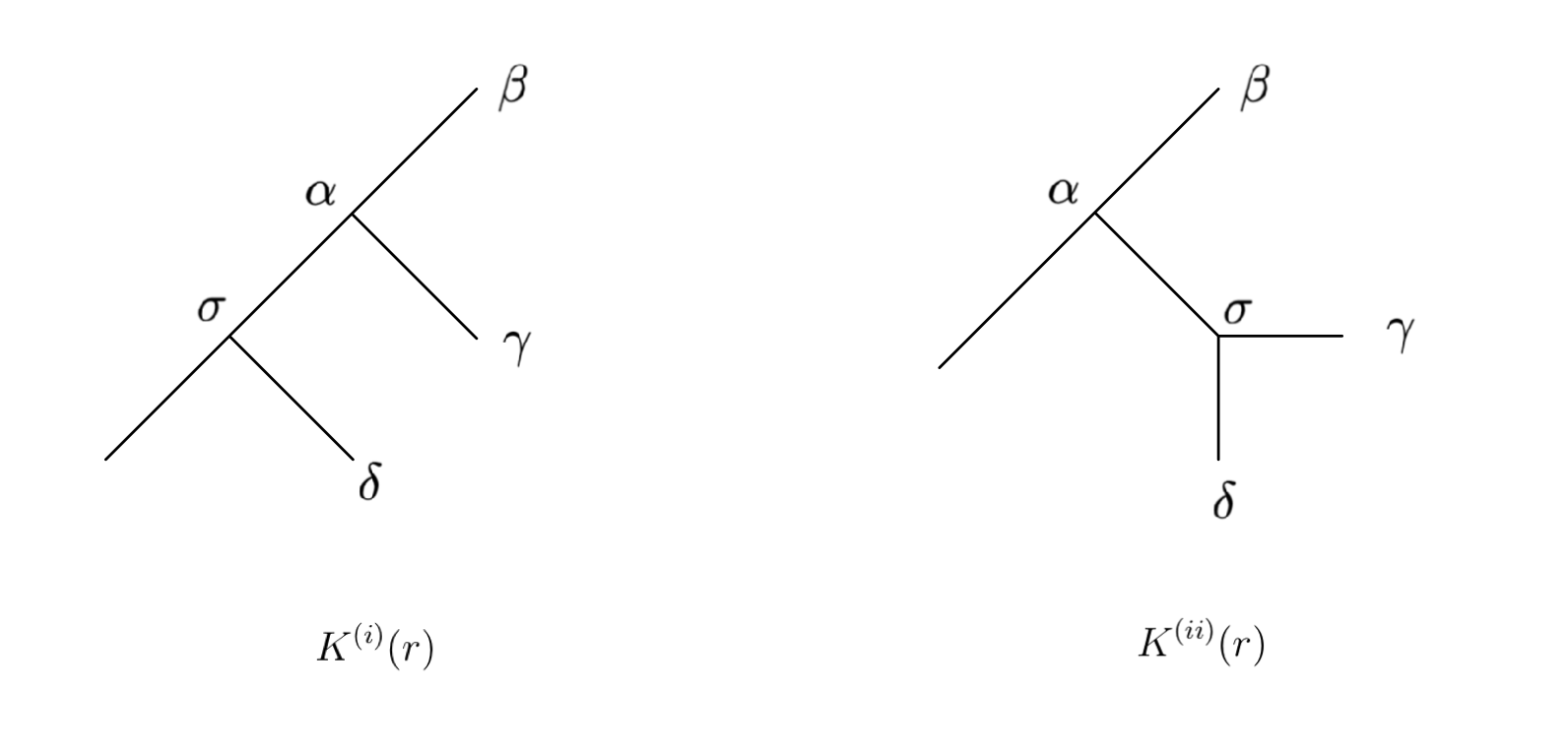}
    \caption[Branching Particle System]{\label{fig1}   Two cases for $K_{\beta,\gamma, \delta}(r)$.
      }
  \end{center}
\end{figure} 

\no {\bf Case (i).} We first deal with $K^{(i)}(r)$. Let $\sigma=\delta\wedge \beta$ and $\alpha=\gamma\wedge \beta$ so that $\sigma<\alpha$. Let $\vert \sigma\vert =k$ for some $k\geq 0$ and   $\vert \alpha\vert =k+j$ for some $j\geq 1$. Since $\delta\geq \sigma$, we let $\vert \delta\vert =k+n$ for some $n\geq 0$. Set $\vert \beta\vert =\vert \alpha\vert +l$ and $\vert \gamma\vert =\vert \alpha\vert +m$ for some $l,m\geq 0$.  See Figure \ref{fig1} for illustration. The sum of $\delta, \gamma, \beta$ for $K^{(i)}(r)$ can be written as 
\begin{align}   \label{9e4.32}
\E(K^{(i)}(r))=& \sum_{k=0}^\infty \sum_{\substack{\sigma\geq 1,\\ \vert \sigma\vert =k}}  \sum_{n=0}^\infty \sum_{\substack{\delta\geq \sigma,\\ \vert \delta\vert =k+n}}  
\sum_{j=1}^\infty\sum_{\substack{\alpha\geq \sigma,\\ \vert \alpha\vert =k+j}}   \sum_{l=0}^\infty   \sum_{m=0}^\infty\sum_{\substack{\beta\geq \alpha,\\ \vert \beta\vert =\vert \alpha\vert +l}}\sum_{\substack{ \gamma\geq\alpha,\\ \vert \gamma\vert =\vert \alpha\vert +m}} \nn\\
&\E\Big(1(T_{\pi\beta}<r\leq T_\beta)  1(T_{\pi\gamma}<r, B^\gamma-B^\beta \in \cN_N)\nn\\
&\quad  1(T_{\sigma}>r-\tau_N)1(T_{\pi\delta} <r, B^\delta=B^\gamma)\Big),
\end{align}
where we have removed $1(T_{\alpha}>r-\tau_N)$ due to $1(T_{\sigma}>r-\tau_N)$ and $\sigma<\alpha$. 
 We note on the event $\{B^\beta, B^\gamma, B^{\delta}\neq \Delta\}$,  the spatial events $\{B^\beta-B^\gamma\in \cN_N\}$ and $\{B^\delta=B^\gamma\}$ are independent of the branching events concerning $T_{\alpha}, T_\beta$, etc. Therefore the expectation above is at most
 \begin{align} \label{9ec1.00}  
&\E\Big(1(T_{\pi\beta}<r\leq T_\beta)  1(T_{\pi\gamma}<r) 1(T_{\sigma}>r-\tau_N)1(T_{\pi\delta} <r)\Big)  \nn\\
&\times (\frac{N+\theta}{2N+\theta})^{k+j+l+m+n}  \times  \E^{\beta, \gamma,{\delta}}\Big(1(B^\beta-B^\gamma\in \cN_N)  1(B^\delta=B^\gamma) \Big),
\end{align}
where the second term gives an upper bound for the probability of the event $\{B^\beta, B^\gamma, B^{\delta}\neq \Delta\}$. We also use $\E^{\beta, \gamma,{\delta}}$ to denote the  expectation conditioning on $\{B^\beta, B^\gamma, B^{\delta}\neq \Delta\}$. 

For the first expectation above concerning the branching events, we condition on $\cH_\alpha$ to get  
 \begin{align*}   
&\E\Big(1(T_{\pi\beta}<r\leq T_\beta)  1(T_{\pi\gamma}<r) 1(T_{\sigma}>r-\tau_N)1(T_{\pi\delta} <r)\Big\vert \cH_\alpha\Big)  \\
&\leq 1(T_\alpha<r) \cdot 1(T_{\sigma}>r-\tau_N) \cdot \P(T_{\pi\beta}-T_{\alpha} <r-T_{\alpha} \leq T_\beta-T_{\alpha} \vert \cH_\alpha)\nn\\
&\quad \quad\cdot  \P(T_{\pi\gamma}-T_{\alpha} <r-T_{\alpha}<\tau_N \vert \cH_\alpha)  \cdot  \P(T_{\pi\delta}-T_{\sigma} <r-T_{\sigma}<\tau_N\vert \cH_\sigma)  \\
&= 1(T_\alpha<r) \cdot 1(T_{\sigma}>r-\tau_N) \cdot\pi((2N+\theta)(r-T_\alpha), l-1) \nn\\
&\quad \quad\cdot  \Pi((2N+\theta)\tau_N,m-1) \cdot \Pi((2N+\theta)\tau_N,n-1),
\end{align*}
where in the first inequality we have used $T_{\alpha}>T_{\sigma}>r-\tau_N$.
So the first expectation in \eqref{9ec1.00} is at most
 \begin{align} \label{9ec1.12}    
&\E\Big(1(\Gamma_{k+j+1}<(2N+\theta)r) \cdot1(\Gamma_{k+1}>(2N+\theta)(r-\tau_N))  \cdot\pi((2N+\theta)r-\Gamma_{k+j+1}, l-1)\Big) \nn\\
&\quad \quad\times  \Pi((2N+\theta)\tau_N,m-1) \times \Pi((2N+\theta)\tau_N,n-1).
\end{align}

Turning to the second expectation in \eqref{9ec1.00} concerning the spatial events, we let $\hat{B}^\beta$ (resp. $\hat{B}^\gamma$) be the position of $B^\beta-B^{\alpha}$ (resp. $B^\gamma-B^{\alpha}$) with the value of $W^{\alpha}$ subtracted if it shows up in $B^\beta$ (resp. ${B}^\gamma$).  To be specific, we define (recall $B^\beta$ from \eqref{ea3.22})
\begin{align}  \label{9ea1.71}
\hat{B}^\beta=\sum_{j=\vert \alpha\vert +1}^{\vert \beta\vert -1} W^{\beta\vert j}1(e_{\beta\vert j}=\beta_{j+1})  \text{ and } \hat{B}^\gamma=\sum_{j=\vert \alpha\vert +1}^{\vert \gamma\vert -1} W^{\gamma\vert j} 1(e_{\gamma\vert j}=\gamma_{j+1}) .
\end{align} 
It follows that $B^\beta-B^\gamma-(\hat{B}^\beta- \hat{B}^\gamma)=W^\alpha$ and hence
\begin{align}   \label{9ea2.82}
1(B^\gamma-B^\beta \in \cN_N)\leq 1(\sqrt{N}(\hat{B}^\beta- \hat{B}^\gamma)\in  [-2,2]^d).
\end{align}  
Use the above to get the second expectation in \eqref{9ec1.00} is bounded by
\begin{align}\label{e1.10}
\E^{\beta, \gamma,{\delta}}\Big(1(\sqrt{N}(\hat{B}^\beta- \hat{B}^\gamma)\in  [-2,2]^d) 1(B^\delta=B^\gamma) \Big)
\end{align}  
Next, we claim that
\begin{align}\label{9ec1.10}
\P^{\beta, \gamma,{\delta}}\Big(B^\delta=B^\gamma\Big|\sigma(\hat{B}^\beta, \hat{B}^\gamma)\Big)\leq \frac{1}{\psi(N)} \frac{C}{(1+j+n)^{d/2}}.
\end{align}
To see this, we recall $\sigma=\delta\wedge \gamma$ and $W^\sigma$ is a jump that might have moved the $\delta$ or the $\gamma$ line at time $T_\sigma$ so that $B^\delta_{T_\sigma}-B^\gamma_{T_\sigma}\in \{W^\sigma, -W^\sigma\}$. In order that $B^\delta=B^\gamma$, we  require
\begin{align}\label{9ec1.14}
(B^\delta-B^\delta_{T_\sigma})-(B^\gamma-B^\gamma_{T_\sigma}) \in \cN_N,
\end{align}
and then let $W^\sigma$ take the exact value in $\cN_N$ to make $B^\delta=B^\gamma$, which occurs with probability $1/\psi(N)$.  Notice that
\begin{align*} 
&\P^{\beta, \gamma,{\delta}}\Big((B^\delta-B^\delta_{T_\sigma})-(B^\gamma-B^\gamma_{T_\sigma}) \in \cN_N\Big|\sigma(\hat{B}^\beta, \hat{B}^\gamma)\Big)\\
&=\P^{\beta, \gamma,{\delta}}\Big((B^\delta-B^\delta_{T_\sigma})-(B^\gamma-\hat{B}^\gamma-B^\gamma_{T_\sigma}) \in \hat{B}^\gamma+\cN_N\Big|\sigma(\hat{B}^\beta, \hat{B}^\gamma)\Big)\leq   \frac{C}{(1+j+n)^{d/2}},\nn
\end{align*}
where the last inequality uses  \eqref{ea3.22} and Lemma \ref{9e4.32}. Hence the desired inequality in \eqref{9ec1.10} follows.
 
Apply \eqref{9ec1.10} to see that \eqref{e1.10} is at most
\begin{align}\label{9ec1.11}
 &\frac{C}{\psi(N)} \frac{1}{(1+j+n)^{d/2}}\E^{\beta, \gamma,{\delta}}\Big(1(\sqrt{N}(\hat{B}^\beta- \hat{B}^\gamma)\in  [-2,2]^d)   \Big)\nn\\
 &\leq  \frac{C}{\psi(N)} \frac{1}{(1+j+n)^{d/2}}\frac{1}{(1+l+m)^{d/2}},
\end{align}  
where we have used Lemma \ref{9l4.2} and \eqref{9ea1.71} in the last inequality. Now conclude from  \eqref{e1.10} and \eqref{9ec1.11} that
 \begin{align} \label{9ec1.13}
& \E^{\beta, \gamma,{\delta}}\Big(1(B^\beta-B^\gamma\in \cN_N)  1(B^\delta=B^\gamma) \Big)\leq  \frac{C}{\psi(N)} \frac{1}{(1+l+m)^{d/2}}\frac{1}{(1+j+n)^{d/2}}.
\end{align}

 Combine \eqref{9ec1.00}, \eqref{9ec1.12} and  \eqref{9ec1.13} to see that \eqref{9e4.32} becomes 
 \begin{align} \label{9e9.69}
&\E( K^{(i)}(r))\leq  \frac{C}{\psi(N)} \sum_{k=0}^\infty  \sum_{j=1}^\infty\sum_{n=0}^\infty \sum_{l=0}^\infty \sum_{m=0}^\infty (\frac{2N+2\theta}{2N+\theta})^{k+j+l+m+n}  \nn\\
&\quad \times  \frac{1}{(1+j+n)^{d/2}} \frac{1}{(1+l+m)^{d/2}}\E\Big(1(\Gamma_{k+j+1}<(2N+\theta)r)  \nn\\
&\quad \times  1(\Gamma_{k+1}>(2N+\theta)(r-\tau_N))  \cdot \pi((2N+\theta)r-\Gamma_{k+j+1}, l-1)\Big) \nn\\
&\quad \times  \Pi((2N+\theta)\tau_N,m-1) \times \Pi((2N+\theta)\tau_N,n-1).  
\end{align}
By Lemma \ref{9l4.0}, the sum of $m$ gives
 \begin{align*} 
&\sum_{m=0}^\infty  (\frac{2N+2\theta}{2N+\theta})^{m}  \Pi((2N+\theta)\tau_N ,m-1)\frac{1}{(1+l+m)^{d/2}}  \leq \frac{C}{(1+l)^{d/2-1}}\leq C.
\end{align*}
The sum for $l$ is at most
 \begin{align} \label{9ec1.19}
&\sum_{l=0}^\infty (\frac{2N+2\theta}{2N+\theta})^{l}\pi((2N+\theta)r-\Gamma_{k+j+1}, l-1)\nn\\
&\leq 2e^{\theta r}\sum_{l=0}^\infty \pi\Big((2N+2\theta)r-\frac{2N+2\theta}{2N+\theta}\Gamma_{k+j+1}, l\Big) =2e^{\theta r}.
\end{align}
Next, turning to the sum of $n$,  we use Lemma \ref{9l4.0} again to see
 \begin{align*} 
&\sum_{n=0}^\infty  (\frac{2N+2\theta}{2N+\theta})^{n}  \Pi((2N+\theta)\tau_N ,n-1)\frac{1}{(n+j+1)^{d/2}}  \leq \frac{Ce^{\theta \tau_N}}{(1+j)^{d/2-1}}.
\end{align*}
 Combine the above to see that  
 \begin{align}\label{9ec2.32}  
\E( K^{(i)}(r))&\leq  \frac{C}{\psi(N)}   \sum_{k=0}^\infty  \sum_{j=1}^\infty  (\frac{2N+2\theta}{2N+\theta})^{k+j}  \frac{1}{(j+1)^{d/2-1}} \nn\\
& \E\Big(1(\Gamma_{k+j+1}<(2N+\theta)r) \cdot  1(\Gamma_{k+1}>(2N+\theta)(r-\tau_N))  \Big),
\end{align}
Bound the expectation on the right-hand side  by 
 \begin{align} \label{9ec1.02}
&\E\Big(1((2N+\theta)(r-\tau_N)<\Gamma_{k+1}<(2N+\theta)r) \cdot  1(\Gamma_{k+j+1}-\Gamma_{k+1}<(2N+\theta)\tau_N)  \Big)\nn\\
&=\P((2N+\theta)(r-\tau_N)<\Gamma_{k+1}<(2N+\theta)r)\cdot   \Pi((2N+\theta)\tau_N, j).
\end{align}
So \eqref{9ec2.32} becomes
 \begin{align} \label{e2.32}
\E( K^{(i)}(r))\leq  \frac{C}{\psi(N)}   &\sum_{k=0}^\infty (\frac{2N+2\theta}{2N+\theta})^{k}\P((2N+\theta)(r-\tau_N)<\Gamma_{k+1}<(2N+\theta)r)    \nn\\
& \times \sum_{j=1}^\infty  (\frac{2N+2\theta}{2N+\theta})^{j} \Pi((2N+\theta)\tau_N, j)  \frac{1}{(1+j)^{d/2-1}} .
\end{align}

\begin{lemma}\label{9l8.7}
For any $t>0$, there is some constant $C>0$ so that for any $0< r\leq t$,  
\begin{align*}
&\sum_{m=0}^\infty   (\frac{2N+2\theta}{2N+\theta})^{m}  \Pi((2N+\theta)r, m) \frac{1}{(1+m)^{d/2-1}}\leq C I(N).
\end{align*}
\end{lemma}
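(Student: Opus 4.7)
The plan is to split the sum at $m = ANr$, where $A > 0$ is the constant from Lemma \ref{9l4.3}(ii), mirroring the tail-splitting strategy used in the proof of Lemma \ref{9l4.0}.

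For the tail $m > ANr$, recall from \eqref{9ea2.00} that $\Pi((2N+\theta)r, m) = \P(\Gamma_m \leq (2N+\theta)r)$, so Lemma \ref{9l4.3}(ii) with $p = 0$ yields
\[
\sum_{m > ANr} \Big(\frac{2N+2\theta}{2N+\theta}\Big)^m \Pi((2N+\theta)r, m) \leq C_0\, 2^{-Nr} \leq C_0.
\]
Since $(1+m)^{1-d/2} \leq 1$ for $d\geq 4$ and $I(N) \geq 1$, this contribution is at most $CI(N)$.

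For the main part $m \leq ANr$, I would use $\Pi((2N+\theta)r, m) \leq 1$ and
\[
\Big(\frac{2N+2\theta}{2N+\theta}\Big)^m \leq \exp\Big(\frac{\theta m}{2N+\theta}\Big) \leq e^{A\theta t/2},
\]
a constant depending only on $A$ and $t$. What remains is the polynomial sum
\[
\sum_{m=0}^{\lfloor ANr\rfloor} \frac{1}{(1+m)^{d/2-1}} \leq 1 + \int_0^{ANr} (1+s)^{1-d/2}\, ds = I(ANr).
\]
Since $I$ is nondecreasing and equals $1 + \log(1+M)$ in $d=4$ (bounded in $d\geq 5$), one has $I(ANr) \leq I(ANt) \leq C I(N)$ for all $N\geq 1$; in $d=4$ this follows from $\log(1+ANt) \leq \log(1+N) + \log(1+At)$, and in $d\geq 5$ it is trivial since both quantities are bounded.

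I do not anticipate any real obstacle: the argument is the same tail-splitting technique used throughout Section \ref{9s3}. The reason we cannot directly invoke Lemma \ref{9l4.3}(i) is that $\Pi(\lambda, m)$ does not decay for $m \ll \lambda$ (it is close to $1$ there), so the small-$m$ range genuinely produces the factor $I(N)$ rather than the sharper bound $(1+\lambda)^{1-d/2}$ that Poisson weights $\pi(\lambda, m)$ would give via Lemma \ref{9l4.3}(i).
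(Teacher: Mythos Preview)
Your proposal is correct and follows essentially the same approach as the paper: split the sum at $m=ANr$, control the tail via Lemma~\ref{9l4.3}(ii), and bound the main part by $e^{CA\theta t}\sum_{m\le ANr}(1+m)^{1-d/2}\le CI(ANr)\le CI(N)$. Your justification of the step $I(ANr)\le CI(N)$ is slightly more explicit than the paper's, but the argument is otherwise identical.
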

\begin{proof}
The $m=0$ term gives $1$. For $m\geq 0$, write $\Pi((2N+\theta)r, m)$ as $\P(\Gamma_{m}< (2N+\theta)r)$. Then the sum of $m>ANr$ is at most $C2^{-Nr}$ by Lemma \ref{9l4.3}. Next, the sum of $j\leq ANr$ is at most 
\begin{align*}
&\sum_{m=0}^{ANr}   e^{A\theta r}   \frac{1}{(1+m)^{d/2-1}}\leq Ce^{A\theta t} I(ANr)\leq CI(N).
\end{align*}
The proof is complete.
\end{proof}

By the above lemma, the sum of $j$ in \eqref{e2.32} gives at most $CI(N)$.
The sum of $k$ gives
 \begin{align} \label{9ec1.04}
&\sum_{k=0}^\infty  (\frac{2N+2\theta}{2N+\theta})^{k} \P((2N+\theta)(r-\tau_N)<\Gamma_{k+1}<(2N+\theta)r)  \nn \\
&=\sum_{k=0}^\infty  \sum_{\alpha\geq 1, \vert \alpha\vert =k} \E\Big(1(r-\tau_N<T_\alpha<r, B^\alpha\neq\Delta)\Big)\leq  Ce^{\theta r}N\tau_N,
\end{align}
where the last inequality is by Lemma \ref{9l3.2a}. Therefore \eqref{e2.32} becomes 
  \begin{align*} 
\E( K^{(i)}(r))&\leq  \frac{C}{\psi(N)}    C I(N)  N\tau_N\leq C\tau_N.
\end{align*}

\no {\bf Case (ii).}  Next, we move to case (ii) where $\delta\wedge \beta=\gamma\wedge \beta$. Since we assume $\delta\wedge \beta\leq \gamma\wedge \beta$, we must have $\delta$ branches off $\gamma$ after $\gamma$ branches off $\beta$. Let $\alpha=\gamma\wedge \beta$ and $\vert \alpha\vert =k$ for some $k\geq 0$. Let $l\geq 0$ such that $\vert \beta\vert =k+l$. Set $\sigma=\delta\wedge \gamma$ and let $\vert \sigma\vert =k+j$ for some $j\geq 0$.  Let $n,m\geq 0$ so that $\vert \delta\vert =\vert \sigma\vert +n$ and $\vert \gamma\vert =\vert \sigma\vert +m$.  See Figure \ref{fig1}. Now we may write the sum of $\E(K^{(ii)}(r))$ as
\begin{align*}  
\E(K^{(ii)}(r))=& \sum_{k=0}^\infty \sum_{\substack{\alpha\geq 1,\\ \vert \alpha\vert =k}}  \sum_{l=0}^\infty \sum_{\substack{\beta\geq \alpha,\\ \vert \beta\vert =k+l}} \sum_{j=0}^\infty\sum_{\substack{\sigma\geq \alpha,\\ \vert \sigma\vert =k+j}} \sum_{n=0}^\infty \sum_{\substack{\delta\geq \sigma,\\ \vert \delta\vert =k+j+n}}  
     \sum_{m=0}^\infty\sum_{\substack{ \gamma\geq\alpha,\\ \vert \gamma\vert =k+j+m }} \nn\\
&\E\Big(1(T_{\pi\beta}<r\leq T_\beta)  1(T_{\pi\gamma}<r, B^\gamma-B^\beta \in \cN_N)\nn\\
&\quad \quad 1(T_{\alpha}>r-\tau_N)1(T_{\pi\delta} <r, B^\delta=B^\gamma)\Big).
\end{align*}
Similar to \eqref{9ec1.00}, the expectation above is at most
 \begin{align} \label{9ec2.44}
&\E\Big(1(T_{\pi\beta}<r\leq T_\beta)  1(T_{\pi\gamma}<r) 1(T_{\alpha}>r-\tau_N)1(T_{\pi\delta} <r)\Big)  \nn\\
&\times (\frac{N+\theta}{2N+\theta})^{k+j+l+m+n}  \times  \E^{\beta, \gamma,{\delta}}\Big(1(B^\beta-B^\gamma\in \cN_N)  1(B^\delta=B^\gamma) \Big).
\end{align}
For the first expectation above, we condition on $\cH_\sigma$ to get  
 \begin{align*}   
&\E\Big(1(T_{\pi\beta}<r\leq T_\beta)  1(T_{\pi\gamma}<r) 1(T_{\alpha}>r-\tau_N)1(T_{\pi\delta} <r)\Big\vert \cH_\sigma\Big)  \\
&\leq 1(T_\sigma<r) \cdot 1(T_{\alpha}>r-\tau_N) \cdot \P(T_{\pi\beta}-T_{\alpha} <r-T_{\alpha} \leq T_\beta-T_{\alpha} \vert \cH_\alpha)\nn\\
&\quad \quad\times  \P(T_{\pi\gamma}-T_{\sigma} <r-T_{\sigma}<\tau_N \vert \cH_\sigma)  \times  \P(T_{\pi\delta}-T_{\sigma} <r-T_{\sigma}<\tau_N\vert \cH_\sigma)  \\
&\leq  1(T_\sigma<r) \cdot 1(T_{\alpha}>r-\tau_N) \cdot  \pi((2N+\theta)(r-T_\alpha), l-1) \nn\\
&\quad \quad\times  \Pi((2N+\theta)\tau_N,m-1) \times \Pi((2N+\theta)\tau_N,n-1),
\end{align*}
where in the first inequality we have used $T_\sigma\geq T_\alpha>r-\tau_N$.
Hence the first expectation in \eqref{9ec2.44} is at most
  \begin{align}    \label{e2.44}
&\E\Big(1(\Gamma_{k+j+1}<(2N+\theta)r) \cdot1(\Gamma_{k+1}>(2N+\theta)(r-\tau_N))  \cdot\pi((2N+\theta)r-\Gamma_{k+1}, l-1)\Big) \nn\\
&\quad \quad\times  \Pi((2N+\theta)\tau_N,m-1) \times \Pi((2N+\theta)\tau_N,n-1).
\end{align}
For the second expectation in \eqref{9ec2.44}, we follow \eqref{9ea1.71} to define (recall $\sigma=\gamma \wedge \delta$)
\begin{align}  \label{e1.71}
\bar{B}^\delta=\sum_{j=\vert \sigma\vert +1}^{\vert \delta\vert -1} W^{\delta\vert j}1(e_{\delta\vert j}=\delta_{j+1})  \text{ and } \bar{B}^\gamma=\sum_{j=\vert \sigma\vert +1}^{\vert \gamma\vert -1} W^{\gamma\vert j} 1(e_{\gamma\vert j}=\gamma_{j+1})
\end{align} 
so that $B^\delta-B^\gamma-(\hat{B}^\delta- \hat{B}^\gamma)=W^\sigma$. Hence
\begin{align*}   
1(B^\delta=B^\beta)= 1(\bar{B}^\delta- \bar{B}^\gamma=W^\sigma).
\end{align*}  
Condition on $\sigma(\bar{B}^\delta, \bar{B}^\gamma,W^\sigma)$ to get
\begin{align*}
&\P^{\beta, \gamma,{\delta}}(B^\gamma-B^\beta\in \cN_N\vert \sigma(\bar{B}^\delta, \bar{B}^\gamma,W^\sigma))\\
&= \P^{\beta, \gamma,{\delta}}(B^\beta-B^\alpha-(B^\gamma-\bar{B}^\gamma-B^\alpha)\in \bar{B}^\gamma+\cN_N\vert \sigma(\bar{B}^\delta, \bar{B}^\gamma,W^\sigma))\leq \frac{C}{(1+j+l)^{d/2}}.
\end{align*}
It follows that
 \begin{align} \label{e2.82}
&\E^{\beta, \gamma,{\delta}}\Big(1(B^\beta-B^\gamma\in \cN_N)  1(B^\delta=B^\gamma) \Big)\nn\\
&\leq \frac{C}{(1+j+l)^{d/2}} \P^{\beta, \gamma,{\delta}}(\bar{B}^\delta- \bar{B}^\gamma=W^\sigma)\leq \frac{C}{(1+j+l)^{d/2}}\frac{1}{\psi(N)}\frac{C}{(1+m+n)^{d/2}}.
\end{align}
Combine \eqref{9ec2.44}, \eqref{e2.44} and \eqref{e2.82} to conclude
 \begin{align*}  
&\E( K^{(ii)}(r))\leq  \frac{C}{\psi(N)} \sum_{k=0}^\infty  \sum_{l=0}^\infty \sum_{j=0}^\infty\sum_{n=0}^\infty \sum_{m=0}^\infty (\frac{2N+2\theta}{2N+\theta})^{k+j+l+m+n}  \nn\\
&\frac{1}{(1+n+m)^{d/2}} \frac{1}{(1+j+l)^{d/2}}\E\Big(1(\Gamma_{k+j+1}<(2N+\theta)r)  \nn\\
& \quad  \times 1(\Gamma_{k+1}>(2N+\theta)(r-\tau_N))  \cdot\pi((2N+\theta)r-\Gamma_{k+1}, l-1)\Big) \nn\\
&\quad  \times  \Pi((2N+\theta)\tau_N,m-1) \times \Pi((2N+\theta)\tau_N,n-1).
\end{align*}
 By Lemma \ref{9l4.0}, the summation of $n$ gives
 \begin{align*} 
&\sum_{n=0}^\infty (\frac{2N+2\theta}{2N+\theta})^{n}  \Pi((2N+\theta)\tau_N,n-1) \frac{1}{(1+m+n)^{d/2}}\leq \frac{C }{(1+m)^{d/2-1}}.
\end{align*}
Use Lemma \ref{9l8.7} to get the sum for $m$ is at most
 \begin{align*} 
&\sum_{m=0}^\infty (\frac{2N+2\theta}{2N+\theta})^{m}  \Pi((2N+\theta)\tau_N,m-1) \frac{1}{(1+m)^{d/2-1}}\leq CI(N).
\end{align*}
Now we are left with
 \begin{align*}  
&\E( K^{(ii)}(r))\leq  \frac{C}{\psi(N)}  I(N)  \sum_{k=0}^\infty   \sum_{j=0}^\infty \sum_{l=0}^\infty  (\frac{2N+2\theta}{2N+\theta})^{k+j+l} \frac{1}{(1+j+l)^{d/2}} \nn\\
& \E\Big(1(\Gamma_{k+j+1}<(2N+\theta)r)  1(\Gamma_{k+1}>(2N+\theta)(r-\tau_N)) \cdot\pi((2N+\theta)r-\Gamma_{k+1}, l-1)\Big).
\end{align*}
Similar to \eqref{9ec1.02}, we may bound the expectation above by
 \begin{align*} 
&\E\Big(1((2N+\theta)(r-\tau_N)<\Gamma_{k+1}<(2N+\theta)r) \cdot\pi((2N+\theta)r-\Gamma_{k+1}, l-1)  \Big) \cdot   \Pi((2N+\theta)\tau_N, j).
\end{align*}
So the sum for $j$ gives
 \begin{align*} 
&\sum_{j=0}^\infty (\frac{2N+2\theta}{2N+\theta})^{j}  \Pi((2N+\theta)\tau_N, j) \frac{1}{(1+j+l)^{d/2}}\leq \frac{C }{(1+l)^{d/2-1}}\leq C,
\end{align*}
where the first inequality is by Lemma \ref{9l4.0}. Use \eqref{9ec1.19} to see that the sum of $l$ is at most
 \begin{align*} 
&\sum_{l=0}^\infty (\frac{2N+2\theta}{2N+\theta})^{l} \pi((2N+\theta)r-\Gamma_{k+1}, l-1) \leq C.
\end{align*}
We are left with
 \begin{align*}  
&\E( K^{(ii)}(r))\leq  \frac{C}{\psi(N)}  I(N)  \sum_{k=0}^\infty    (\frac{2N+2\theta}{2N+\theta})^{k} \P\Big((2N+\theta)(r-\tau_N)<\Gamma_{k+1}<(2N+\theta)r\Big).
\end{align*}
By \eqref{9ec1.04}, the above is at most
 \begin{align*}  
&\E( K^{(ii)}(r))\leq  \frac{C}{\psi(N)}  I(N)   \times CN\tau_N \leq C\tau_N,
\end{align*}
as required.
\end{proof}

\begin{proof}[Proof of Lemma \ref{9l5.5}]
Apply Lemma \ref{9l3.2} to see
\begin{align*}
M_t:=K_{t}^{n,3}(\phi)-\int_0^t G_r(\phi)ds\text{ is a martingale},
\end{align*}
whose predictable quadratic variation is given by
\begin{align*}
\la M\ra_t=\frac{1}{N^2(2N+\theta)}\int_0^t  \sum_{\beta} 1(T_{\pi\beta}<r\leq T_\beta, \zeta_\beta^1>r-\tau_N)  \phi(B^\beta)^2 F_\beta(r)^2 dr.
\end{align*}
Using the $L^2$ maximal inequality and   recalling $F_\beta(r)$ from \eqref{9e3.8}, we get
\begin{align}\label{9e6.1}
&\E\Big(\sup_{s\leq t} M_s^2\Big) \leq C\E(\la M\ra_t)\leq C\| \phi\| _\infty^2\frac{1}{N^3} \int_0^t \E\Big( \sum_{\beta} 1(T_{\pi\beta}<r\leq T_\beta) \nn\\
&\times \sum_{\gamma}1(T_{\pi \gamma}<r, B^\beta-B^\gamma\in \cN_N) 1(T_{\gamma\wedge \beta}>r-\tau_N)\nn\\
&\times \sum_{\delta} 1(T_{\pi \delta}<r, B^\beta-B^\delta\in \cN_N)1(T_{\delta\wedge \beta}>r-\tau_N) \Big) dr,
\end{align}
where we have used $\psi_0(N)\geq 1$.
Again the indicators $1(T_{\gamma \wedge \beta}>r-\tau_N)$ and $1(T_{\delta \wedge \beta}>r-\tau_N)$ ensure   $\delta_0=\beta_0=\gamma_0$.  We may set $\delta_0=\beta_0=\gamma_0=1$ so that for each $0<r<t$, the expectation in \eqref{9e6.1} is bounded by
\begin{align}\label{9ec1.21} 
&NX_0^0(1) \sum_{\beta,\gamma,\delta\geq 1}  \E\Big(1(T_{\pi\beta}<r\leq T_\beta) 1(T_{\pi \gamma}<r, B^\beta-B^\gamma\in \cN_N)\nn\\
&   1(T_{\gamma\wedge \beta}>r-\tau_N)  1(T_{\delta\wedge \beta}>r-\tau_N)  1(T_{\pi \delta}<r)1(\sqrt{N}(B^\gamma-B^\delta)\in [-2,2]^d) \Big),
\end{align}
where we have used $\{B^\beta-B^\delta\in \cN_N\}$ and $\{B^\beta-B^\gamma\in \cN_N\}$ to get $\sqrt{N}(B^\gamma-B^\delta)\in [-2,2]^d$. One can check the events inside the expectation in \eqref{9ec1.21} is almost identical to $K_{\beta,\gamma, \delta}(r)$  in \eqref{9e4.31} except that $B^\gamma=B^\delta$ is now replaced by $\sqrt{N}(B^\gamma-B^\delta)\in [-2,2]^d$. All the calculations from Lemma \ref{9l5.0} will continue to work except that there will be an extra $\psi(N)$ as we do not have an exact equality here (see, e.g., \eqref{9ec1.10}). Lemma \ref{9l5.0} then implies \eqref{9ec1.21} is at most $NX_0^0(1) (C\tau_N \cdot \psi(N))$ and \eqref{9e6.1} becomes
\begin{align*} 
&\E\Big(\sup_{s\leq t} M_s^2\Big)  \leq C\| \phi\| _\infty^2\frac{1}{N^3} \int_0^t  NX_0^0(1) (C\tau_N\cdot \psi(N))   dr \nn\\
&\leq C\| \phi\| _\infty^2   X_0^0(1)  \frac{\psi_0(N) \tau_N}{N} \to 0 \text{ as } N\to \infty.
\end{align*}
By applying Cauchy-Schwartz, we get
\begin{align}\label{9e6.2}
&\E\Big(\sup_{s\leq t}  \Big\vert K_{s}^{n,3}(\phi)-\int_0^s G_r(\phi)dr\Big\vert \Big)  \leq \sqrt{\E\Big(\sup_{s\leq t} M_s^2\Big) }  \to 0 \text{ as } N\to \infty.
\end{align}
 It suffices to show
\begin{align*}
\lim_{N\to \infty} \int_0^t \E(\vert G_r^\tau(\phi) -G_r(\phi)\vert )dr=0.
\end{align*}
Recall $G_r^\tau (\phi)$ from \eqref{9e3.12}. If $r\leq \tau_N$, by recalling the definition of $F_\beta(r)$, we have
\begin{align*} 
 \E(G_r^\tau(\phi))&\leq \frac{1}{N} \| \phi\| _\infty \E\Big(\sum_{\beta} 1(T_{\pi\beta}<r\leq T_\beta) \frac{1}{\psi_0(N)}  \sum_{\gamma} 1(T_{\pi\gamma}<r, B^\gamma-B^\beta\in\cN_N, T_{\beta\wedge \gamma}>r-\tau_N)\Big)\nn\\
&\leq \frac{1}{\psi(N)} \| \phi\| _\infty \E\Big(\sum_{\beta,\gamma: \beta_0=\gamma_0} \text{nbr}_{\beta,\gamma}(r)\Big),
\end{align*}
where in the first inequality we have dropped $1(\zeta_\beta^1>r-\tau_N)$ and in the second inequality we have replaced $1(T_{\beta\wedge \gamma}>r-\tau_N)$ by $1(\beta_0=\gamma_0)$. Use Lemma \ref{9l4.1} (ii) to see that
\begin{align}\label{9e9.30}
& \E(G_r^\tau(\phi)) \leq \frac{1}{\psi(N)} \| \phi\| _\infty CNX_0^0(1)   I((2N+2\theta)r).
\end{align}
Hence it follows that 
\begin{align}\label{9e9.31}
&\int_0^{\tau_N} \E(G_r^\tau(\phi)) dr\leq \int_0^{\tau_N} C \| \phi\| _\infty \frac{1}{\psi_0(N)}   X_0^0(1)   I((2N+2\theta)r) dr\nn\\
&\leq \tau_N \frac{C}{\psi_0(N)} X_0^0(1)   I((2N+2\theta)\tau_N)  \leq CX_0^0(1) \tau_N   \to 0.
\end{align}
Similarly, we may obtain the same bound for $\E(G_r(\phi))$ as in \eqref{9e9.30}. We conclude that 
\begin{align}\label{9e9.25}
&\int_0^{\tau_N} \E(\vert G_r^\tau(\phi)-G_r(\phi)\vert ) dr\leq \int_0^{\tau_N} \E(G_r^\tau(\phi)) dr+\int_0^{\tau_N} \E(G_r(\phi)) dr   \to 0.
\end{align}

It remains to bound $\E(\vert G_r^\tau(\phi) -G_r(\phi)\vert )$ for $r>\tau_N$. Use $\phi\in C_b^3$ to obtain
\begin{align}\label{9e6.3}
&\E(\vert G_r^\tau(\phi)-G_r(\phi)\vert )\leq \frac{C}{N}\frac{1}{\psi_0(N)}  \E\Big(\sum_{\beta} 1(T_{\pi\beta}<r\leq T_\beta)  \vert B^\beta- B^\beta_{r-\tau_N}\vert  \nn\\
&\quad \times \sum_{\gamma}  1(T_{\pi \gamma}<r, B^\beta-B^\gamma\in \cN_N) 1(T_{\gamma\wedge \beta}>r-\tau_N)\Big)\nn\\
&=\frac{C}{N}\frac{1}{\psi_0(N)} NX_0^0(1) \E\Big(\sum_{\beta,\gamma\geq 1} \text{nbr}_{\beta,\gamma}(r) 1(T_{\gamma\wedge \beta}>r-\tau_N) \vert B^\beta- B^\beta_{r-\tau_N}\vert   
 \Big),
\end{align}
where the last equality follows by translation invariance.

\begin{lemma}\label{9l9.1}
For any $\tau_N<r<t$, we have
\begin{align*}
\E\Big(\sum_{\beta,\gamma\geq 1} \text{nbr}_{\beta,\gamma}(r) 1(T_{\gamma\wedge \beta}>r-\tau_N) \vert B^\beta- B^\beta_{r-\tau_N}\vert   
 \Big)\leq C \sqrt{ \tau_N} I(N).
 \end{align*}
\end{lemma}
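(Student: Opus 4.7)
The plan is to adapt the proof of Lemma~\ref{9l4.1}(ii) by carefully incorporating the extra displacement factor $|B^\beta - B^\beta_{r-\tau_N}|$. As there, decompose by the most recent common ancestor $\alpha = \beta \wedge \gamma$ with $|\alpha| = k$, $|\beta| = k+l$, $|\gamma| = k+m$, and condition on $\cH_\alpha$. The window constraint $T_{\beta\wedge\gamma} > r - \tau_N$ forces $T_\alpha \in (r-\tau_N, r]$, which later pairs naturally with Lemma~\ref{9l3.2a} applied with $u = r-\tau_N$, so that the outer integration domain shrinks from length $r$ to length $\tau_N$.

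In the typical case $T_{\pi\alpha} < r - \tau_N < T_\alpha$ we have $B^\beta_{r-\tau_N} = B^\alpha$, so
\[
|B^\beta - B^\beta_{r-\tau_N}| = \bigl|W^\alpha 1(e_\alpha = \beta_{|\alpha|+1}) + V^\beta_>\bigr| \le |W^\alpha| + |V^\beta_>|,
\]
where $V^\beta_> = \sum_{j=|\alpha|+1}^{|\beta|-1} W^{\beta|j} 1(e_{\beta|j} = \beta_{j+1})$ is the analogue of $\hat{B}^\beta$ from \eqref{9ea1.71}. The $|W^\alpha| \le CN^{-1/2}$ piece combined with the spatial probability $\le C/(1+l+m)^{d/2}$ from Lemma~\ref{9l4.2} gives an immediately negligible contribution. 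The atypical case $T_{\pi\alpha} \ge r - \tau_N$ picks up extra jumps of $\alpha$'s line within the window; by the same framework this produces only a subleading correction.

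The dominant contribution from $|V^\beta_>|$ is handled by Cauchy-Schwarz conditional on the branching data (times, $e$'s):
\[
\E\bigl[1(B^\beta - B^\gamma \in \cN_N)\,|V^\beta_>| \mid \text{branching}\bigr] \le \sqrt{\P(B^\beta - B^\gamma \in \cN_N)\cdot \E\bigl[1(B^\beta - B^\gamma \in \cN_N)|V^\beta_>|^2\bigr]}.
\]
Lemma~\ref{9l4.2} bounds the first probability by $C/(1+l+m)^{d/2}$. For the second factor, condition first on $V^\beta_>$ and $W^\alpha$ and integrate out $V^\gamma_>$ (which is independent of $V^\beta_>$ and lives on an $(m-1)$-step walk): this gives $\P(B^\beta-B^\gamma \in \cN_N \mid V^\beta_>, W^\alpha) \le C/(1+m)^{d/2}$, and combined with $\E[|V^\beta_>|^2] \le C(l+1)/N$ yields the bound $C(l+1)/[N(1+m)^{d/2}]$. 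Putting these together, the per-pair estimate has the shape $C\sqrt{(l+1)/N}/\bigl[(1+l+m)(1+m)\bigr]^{d/4}$.

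Summing $m$ through the usual Poisson/gamma weights via Lemma~\ref{9l4.0}, then $l$ using the Cauchy-Schwarz inequality $\sum_l \pi(\lambda, l)\sqrt{l+1} \le \sqrt{\lambda+1}$, and finally integrating over $\alpha$ through Lemma~\ref{9l3.2a} with $u = r-\tau_N$ leads to an integral of the form $\int_0^{(2N+2\theta)\tau_N}(\cdots)\,dy$ that evaluates to $C\sqrt{\tau_N}\,I((2N+2\theta)\tau_N) \le C\sqrt{\tau_N}\,I(N)$. The main technical obstacle is the delicate bookkeeping of these summations, especially in $d=4$: the logarithmic factor $I(N)\sim \log N$ must emerge precisely from the $\sum_m 1/[(1+l+m)(1+m)]$ step, and one has to verify that the atypical case and the $|W^\alpha|$ term both remain of lower order.
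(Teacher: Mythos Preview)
Your overall scaffolding (decompose by $\alpha=\beta\wedge\gamma$, condition on $\cH_\alpha$, and finish with Lemma~\ref{9l3.2a} restricted to the window $(r-\tau_N,r)$) matches the paper. There are, however, two genuine gaps.

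First, the ``atypical case'' $T_{\pi\alpha}\ge r-\tau_N$ is not a subleading correction. It is exactly the term $|B^\alpha-B^\alpha_{r-\tau_N}|$ (the paper's $I_2$), and in $d=4$ it is the \emph{dominant} contribution, of order $\sqrt{\tau_N}\,I(N)$. It is also not handled ``by the same framework'': your Cauchy--Schwarz on $|V^\beta_>|$ says nothing about this piece. The paper treats it separately by pulling $|B^\alpha-B^\alpha_{r-\tau_N}|$ outside as $\cH_\alpha$-measurable, summing $l,m$ in the standard way to get $C/(1+(2N+2\theta)(r-T_\alpha))^{d/2-1}$, then applying Lemma~\ref{9l3.2} and the random-walk moment bound \eqref{9ec1.23}.

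Second, for the $|V^\beta_>|$ piece your post--Cauchy--Schwarz summand is $N^{-1/2}(l+1)^{1/2}/[(1+l+m)(1+m)]^{d/4}$, which does \emph{not} fit Lemma~\ref{9l4.0} (that lemma has exponent $d/2$ on $(1+l+m)$, not $d/4$). You acknowledge this as the main obstacle but leave it unresolved. It can be salvaged for $d\ge5$ via the crude bound $(1+l+m)(1+m)\ge(1+l)(1+m)$, and in $d=4$ via an ad hoc $\sum_m[(1+l+m)(1+m)]^{-1}\le C(1+\log(1+l))/(1+l)$ estimate, but you would need to supply this.

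The paper avoids Cauchy--Schwarz by a much cleaner trick: on the event $\sqrt N(\hat B^\beta-\hat B^\gamma)\in[-2,2]^d$ one has the \emph{deterministic} bound $|\hat B^\beta|\le|\hat B^\gamma|+CN^{-1/2}$, swapping the $\beta$-side displacement for the $\gamma$-side. Since $\hat B^\gamma$ is independent of $\hat B^\beta$, conditioning on $\hat B^\gamma$ factors the estimate into $C/(1+l)^{d/2}$ (from Lemma~\ref{9l4.2} on $\hat B^\beta$ alone) times $\E|\hat B^\gamma|\le CN^{-1/2}(m+1)^{1/2}$. The resulting product summand $N^{-1/2}(m+1)^{1/2}/(1+l)^{d/2}$ is immediately tractable via Lemma~\ref{9l4.3}(i) and a Poisson moment, giving $I_1\le C\sqrt{\tau_N}$ in all $d\ge4$.
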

Use the above in \eqref{9e6.3} to get
\begin{align*} 
&\E(\vert G_r^\tau(\phi)-G_r(\phi)\vert )\leq   C \frac{1}{\psi_0(N)} X_0^0(1)   \sqrt{ \tau_N} I(N)\leq CX_0^0(1)  \sqrt{ \tau_N},
\end{align*}
thus giving 
\begin{align}\label{9e9.26}
&\int_{\tau_N}^t \E(\vert G_r^\tau(\phi)-G_r(\phi)\vert ) dr \leq  \int_{\tau_N}^t CX_0^0(1)  \sqrt{ \tau_N} dr\to 0.
\end{align}
The proof of Lemma \ref{9l5.5} is now complete in view of \eqref{9e6.2}, \eqref{9e9.25}, \eqref{9e9.26}.
 \end{proof}
It remains to prove Lemma \ref{9l9.1}. 
\begin{proof}[Proof of Lemma \ref{9l9.1}]
By letting $\alpha=\beta\wedge \gamma$, we may write the sum of $\beta,\gamma$ as   
\begin{align*} 
\sum_{k=0}^\infty \sum_{\alpha\geq 1, \vert \alpha\vert =k}  \sum_{l=0}^\infty \sum_{m=0}^\infty\sum_{\substack{\beta\geq \alpha,\\ \vert \beta\vert =k+l}}\sum_{\substack{ \gamma\geq \alpha,\\ \vert \gamma\vert =k+m}} \E\Big(\text{nbr}_{\beta,\gamma}(r) 1(T_{\alpha}>r-\tau_N) \vert B^\beta- B^\alpha_{r-\tau_N}\vert \Big).
\end{align*}
For any $\alpha, \beta, \gamma$ as in the summation, by conditioning on $\cH_{\alpha}$, we have
\begin{align} \label{9e9.15}
&\E\Big(\text{nbr}_{\beta,\gamma}(r)1(T_{\alpha}>r-\tau_N) \vert B^\beta- B^\alpha_{r-\tau_N}\vert \Big\vert \cH_{\alpha}\Big)\nn\\
&\leq  1{\{r-\tau_N<T_{\alpha}<r, B^{\alpha}\neq \Delta\}}\cdot    (\frac{N+\theta}{2N+\theta})^{l+m}\nn\\
&\times \P(T_{\pi \beta}-T_{\alpha}<r-T_{\alpha}\leq T_\beta-T_{\alpha}\vert \cH_{\alpha})\nn\\
&\times \P(T_{\pi \gamma}-T_{\alpha}<r-T_{\alpha}\vert \cH_{\alpha})\nn\\
&\times \E\Big(1( B^\beta- B^\gamma \in \cN_N) \cdot \vert B^\beta- B^\alpha_{r-\tau_N}\vert \Big\vert \cH_{\alpha}\Big).
\end{align}
The first probability equals $\pi((2N+\theta)(r-T_\alpha), l-1)$ and the second $\Pi((2N+\theta)(r-T_\alpha), m-1)$. For the last expectation, we recall
$\hat{B}^\beta, \hat{B}^\gamma$ from \eqref{9ea1.71}. Apply \eqref{9ea2.82} to get
\begin{align*} 
 \E\Big(1( B^\beta- B^\gamma \in \cN_N) \cdot \vert B^\beta- B^\alpha_{r-\tau_N}\vert \Big\vert \cH_{\alpha}\Big)  \leq   \E\Big(1( \sqrt{N}(\hat{B}^\beta- \hat{B}^\gamma) \in [-2,2]^d) \cdot \vert B^\beta- B^\alpha_{r-\tau_N}\vert \Big\vert \cH_{\alpha}\Big).
\end{align*}
Use $\vert B^\beta- B^\alpha\vert \leq |\hat{B}^\beta|+CN^{-1/2}$ to get
\begin{align*} 
\vert B^\beta- B^\alpha_{r-\tau_N}\vert&\leq \vert B^\alpha- B^\alpha_{r-\tau_N}\vert+  \vert \hat{B}^\beta\vert +CN^{-1/2} \leq  \vert B^\alpha- B^\alpha_{r-\tau_N}\vert+  \vert \hat{B}^\gamma\vert +CN^{-1/2},
\end{align*}
where the second inequality is by $\sqrt{N}(\hat{B}^\beta- \hat{B}^\gamma) \in [-2,2]^d$. It follows that
\begin{align*} 
 &\E\Big(1( B^\beta- B^\gamma \in \cN_N) \cdot \vert B^\beta- B^\alpha_{r-\tau_N}\vert \Big\vert \cH_{\alpha}\Big)  \\
   &  \leq \E\Big(1( \sqrt{N}(\hat{B}^\beta- \hat{B}^\gamma) \in [-2,2]^d) \cdot (\vert \hat{B}^\gamma\vert +CN^{-1/2}) \Big)  +\vert B^\alpha- B^\alpha_{r-\tau_N}\vert \cdot \P(\sqrt{N}(\hat{B}^\beta- \hat{B}^\gamma) \in [-2,2]^d).
\end{align*}
 The second term above is at most $\vert B^\alpha- B^\alpha_{r-\tau_N}\vert \cdot {C}/{(1+l+m)^{d/2}}$ by Lemma \ref{9l4.2}. For the first term above, we condition on $\hat{B}^\gamma$ and use Lemma \ref{9l4.2} again to get
\begin{align*} 
&\E\Big(1( \sqrt{N}(\hat{B}^\beta- \hat{B}^\gamma) \in [-2,2]^d) \cdot (\vert \hat{B}^\gamma\vert +CN^{-1/2}) \Big) \nn\\
&\leq \frac{C}{(1+l)^{d/2}} \E(\vert N^{-1/2}V_{m-1}^N\vert +CN^{-1/2})\leq \frac{C}{(1+l)^{d/2}}  N^{-1/2} \cdot (m+1)^{1/2}.
\end{align*}

We conclude from the above that \eqref{9e6.11} is at most
\begin{align}\label{9e9.21}
&\E\Big(\sum_{\alpha\geq 1} 1{\{r-\tau_N<T_{\alpha}<r, B^{\alpha}\neq \Delta\}} \cdot  C  \sum_{l=0}^\infty \sum_{m=0}^\infty    (\frac{2N+2\theta}{2N+\theta})^{l+m} \nn\\
&\times \pi((2N+\theta)(r-T_\alpha), l-1) \cdot \Pi((2N+\theta)(r-T_\alpha), m-1) \nn\\
&\times \Big[\frac{N^{-1/2}}{(1+l)^{d/2}}  (m+1)^{1/2}+\vert B^\alpha- B^\alpha_{r-\tau_N}\vert \cdot \frac{1}{(1+l+m)^{d/2}}\Big] \Big):=I_1+I_2,
\end{align}
where $I_1$ (resp. $I_2$) denotes the summation of the first term (resp. second term) in the last square bracket. It suffices to prove $I_i\leq C \sqrt{ \tau_N} I(N)$ for $i=1,2$.

We first consider $I_1$. By Lemma \ref{9l4.3}(i), the sum of $l$ gives
 \begin{align} \label{9e9.16}
&\sum_{l=0}^\infty (\frac{2N+2\theta}{2N+\theta})^{l} \pi((2N+\theta)(r-T_\alpha), l-1) \frac{1}{(1+l)^{d/2}}\nn\\
&\leq  \frac{Ce^{\theta r}}{(1+(2N+2\theta)(r-T_\alpha))^{d/2}}   \leq  \frac{C}{(1+(2N+2\theta)(r-T_\alpha))^{2}}.
\end{align}
Use the definition of $\Pi(\lambda,m)$ to get the sum of $m$ equals
\begin{align*} 
&\sum_{m=0}^\infty (\frac{2N+2\theta}{2N+\theta})^{m} \Pi((2N+\theta)(r-T_\alpha), m-1) (1+m)^{1/2}\nn\\
&\leq 1+C\sum_{m=0}^\infty (\frac{2N+2\theta}{2N+\theta})^{m} (1+m)^{1/2} \sum_{k=m}^\infty \pi((2N+\theta)(r-T_\alpha), k) \nn\\
&\leq 1+C\sum_{k=0}^\infty  (\frac{2N+2\theta}{2N+\theta})^{k} \pi((2N+\theta)(r-T_\alpha), k) \sum_{m=0}^{k} (1+m)^{1/2}\nn\\
&\leq 1+Ce^{\theta r}  \sum_{k=0}^\infty  \pi((2N+2\theta)(r-T_\alpha), k)  \cdot k^{3/2}\leq 1+C \E(\xi^{3/2}),
\end{align*}
where $\xi$ is a Poisson r.v. with parameter $(2N+2\theta)(r-T_\alpha)$. Use Cauchy-Schwartz to get the above is at most
 \begin{align} \label{9e9.17}
 1+C  (\E(\xi^2) \cdot \E(\xi))^{1/2}&\leq  Ce^{\theta r} (1+(2N+2\theta)(r-T_\alpha))^{3/2}.
\end{align}
Combine \eqref{9e9.16} and \eqref{9e9.17} to get
 \begin{align} \label{9e9.22}
I_1\leq &C  N^{-1/2}\E\Big(\sum_{\alpha\geq 1} 1_{\{r-\tau_N<T_{\alpha}<r, B^{\alpha}\neq \Delta\}} \frac{1}{(1+(2N+2\theta)(r-T_\alpha))^{1/2}} \Big).
\end{align}
Apply Lemma \ref{9l3.2a} with $f(y)=1/(1+y)^{1/2}$ to see that
\begin{align*} 
I_1\leq C N^{-1/2} e^{\theta r}  \int_0^{(2N+2\theta)\tau_N}  \frac{1}{(1+y)^{1/2}} dy\leq C \sqrt{ \tau_N} \leq C \sqrt{ \tau_N} I(N).
\end{align*}

Next, we turn to $I_2$. The sum of $m$ gives
\begin{align*} 
&\sum_{m=0}^\infty (\frac{2N+2\theta}{2N+\theta})^{m} \Pi((2N+\theta)(r-T_\alpha), m) \frac{1}{(1+l+m)^{d/2}}\leq C  \frac{1}{(1+l)^{d/2-1}},
\end{align*}
where the inequality is by Lemma \ref{9l4.0}.
Apply Lemma \ref{9l4.3}(i) to see the sum of $l$ gives
 \begin{align*} 
&\sum_{l=0}^\infty (\frac{2N+2\theta}{2N+\theta})^{l} \pi((2N+\theta)(r-T_\alpha), l) \frac{1}{(1+l)^{d/2-1}}  \leq \frac{C }{(1+(2N+2\theta)(r-T_\alpha))^{d/2-1}},
\end{align*}
Combine the above to conclude
 \begin{align*}  
I_2&\leq C   \E\Big(\sum_{\alpha\geq 1} 1_{\{r-\tau_N<T_{\alpha}<r, B^{\alpha}\neq \Delta\}} \frac{\vert B^\alpha-B^{\alpha}_{r-\tau_N}\vert }{(1+(2N+2\theta)(r-T_\alpha))^{d/2-1}} \Big).
\end{align*}
Use Lemma \ref{9l3.2} to get
\begin{align*} 
I_2\leq &C  (2N+\theta) \int_{r-\tau_N}^r \E\Big(\sum_{\alpha\geq 1} 1_{\{T_{\pi\alpha}<s\leq T_{\alpha}, B^{\alpha}\neq \Delta\}} \frac{\vert B^\alpha_s-B^{\alpha}_{r-\tau_N}\vert }{(1+(2N+2\theta)(r-s))^{d/2-1}}\Big) ds.
\end{align*}
For each $r-\tau_N<s<r$, by conditioning on $\cF_{r-\tau_N}$, we may use the Markov property and Lemma \ref{9l2.0} to get
\begin{align} \label{9ec1.23}
&\E\Big(\sum_{\alpha\geq 1} 1{\{T_{\pi\alpha}<s\leq T_{\alpha}, B^{\alpha}\neq \Delta\}}  \vert B^\alpha_s-B^{\alpha}_{r-\tau_N}\vert \Big)\nn\\
&=  e^{\theta (s-(r-\tau_N))} \E(\vert B^N_{s-(r-\tau_N)}-B^{N}_0\vert) \leq   C\sqrt{s-(r-\tau_N)}\leq C \sqrt{\tau_N},
\end{align}
where $B^N(t)=B^N_t$ is the continuous time random walk in Lemma \ref{9l2.0}.
Although $\phi(x)=\vert x\vert $ is not a bounded function required to apply Lemma \ref{9l2.0}, we may work with $\phi_n(x)=\vert x\vert \wedge n$ and let $n\to\infty$.
It follows that 
\begin{align*} 
I_2\leq& C \sqrt{ \tau_N}   (2N+2\theta) \int_{r-\tau_N}^r \frac{1}{(1+(2N+2\theta)(r-s))^{d/2-1}} ds\nn\\
=&C \sqrt{ \tau_N}     \int_{0}^{(2N+2\theta)\tau_N} \frac{1}{(1+y)^{d/2-1}} dy \leq C \sqrt{ \tau_N} I(N),
\end{align*}
as required.
\end{proof}

\section{Convergence of the constant}\label{9s7}

This section is devoted to the proof of Lemma \ref{9l5.7}, from which we obtain the constant $b_d$ as in \eqref{9ec10.64}.
 Recall $b_d^\tau$ from \eqref{9e4.2} and $\tau_N$ from \eqref{9e3.34}. It suffices to find the limits of $\E(Z_1(\tau_N))$ and $\E(\vert \{1\}_{\tau_N}\vert )$ as $N\to\infty$. Use the definition of $\{1\}_{\tau_N}$ from \eqref{9e4.3} to get
\begin{align*}
\E(\vert \{1\}_{\tau_N}\vert )=\E\Big(\sum_{\beta \geq 1} 1(T_{\pi\beta}<\tau_N\leq T_\beta, B^\beta\neq \Delta)\Big)=e^{\theta \tau_N} \to 1 \text{ as } N\to \infty,
\end{align*}
where the second equality is by Lemma \ref{9l2.0}.
It remains to prove $\lim_{N\to \infty} \E(Z_1(\tau_N))=b_d$. Recall $Z_1(\tau_N)$ from \eqref{9e4.2} and $F_\beta(r)$ from \eqref{9e3.8} to see
\begin{align} \label{9ec5.59}
I_0:=&\psi_0(N) \E(Z_1(\tau_N))=\E\Big(\sum_{\beta\in \{1\}_{\tau_N}}  \sum_{\gamma}1(T_{\pi \gamma}<\tau_N,   B^\beta-B^\gamma \in \cN_N, T_{\gamma \wedge \beta}>0)\Big)\\
=& \E\Big( \sum_{\beta,\gamma \geq 1} 1(T_{\pi \beta}<\tau_N\leq T_\beta, T_{\pi \gamma}<\tau_N,   B^\beta-B^\gamma \in \cN_N)\Big)=  \E\Big(\sum_{\beta,\gamma \geq 1} \text{nbr}_{\beta,\gamma}(\tau_N)\Big),\nn
\end{align}
where in the second equality we have replaced the condition $T_{\gamma \wedge \beta}>0$ by $\gamma_0=\beta_0=1$. The last equality is by the definition of $\text{nbr}_{\beta,\gamma}(r)$ from \eqref{9e2.24}. 
We note that $B^\gamma-B^\beta\neq 0$, $T_{\pi \beta}<r\leq T_\beta$ and $T_{\pi \gamma}<r$ imply $\gamma\wedge \beta$ is not $\beta$, but it is possible that $\gamma\wedge \beta=\gamma$. Let $\vert \beta\wedge \gamma\vert =k$ for some $k\geq 0$. There are the following two cases:
\begin{align}\label{9e0.0}
\begin{cases}
&\text{ (i)  } \gamma=\beta|k \text{ and }  \vert \beta\vert =k+1+l \text{ for some $l\geq 0$};\\
& \text{(ii)   } \vert \beta\vert =k+1+l \text{ and } \vert \gamma\vert =k+1+m \text{ for some $l\geq 0$ and $m\geq 0$}.
\end{cases}
\end{align}
 Set $\alpha=\beta\wedge \gamma$ so that $|\alpha|=k$. For case (i), we get $\gamma=\alpha$. For case (ii), we have either $\beta_{k+1}=1, \gamma_{k+1}=0$ or $\gamma_{k+1}=1, \beta_{k+1}=0$ which are symmetric, hence we may proceed with $\beta_{k+1}=1, \gamma_{k+1}=0$. Now conclude from the above and \eqref{9e0.0} that
\begin{align}\label{9e2.61}
I_0&=\E\Big(\sum_{k=0}^\infty \sum_{\alpha\geq 1, \vert \alpha\vert =k} \sum_{l=0}^\infty \sum_{\substack{\beta>\alpha,\\ \vert \beta\vert =k+1+l}}  1_{\{\gamma=\alpha\}} \text{nbr}_{\beta,\gamma}(\tau_N)\Big)\nn\\
&+2\E\Big(\sum_{k=0}^\infty \sum_{\substack{\alpha\geq 1,\\ \vert \alpha\vert =k}} \sum_{l=0}^\infty \sum_{m=0}^\infty\sum_{\substack{\beta>\alpha,\\ \vert \beta\vert =k+1+l,\\ \beta_{k+1}=1}}\sum_{\substack{ \gamma> \alpha,\\ \vert \gamma\vert =k+1+m,\\ \gamma_{k+1}=0 }} \text{nbr}_{\beta,\gamma}(\tau_N)\Big):=D_1^N+D_2^N.
\end{align}
{\it Step 1.} For case (i) when $\gamma=\alpha<\beta$ with $|\alpha|=k$ and $|\beta|=k+1+l$, we get
\begin{align}\label{e3.22}
&\E(\text{nbr}_{\beta,\gamma}(\tau_N))=\E(\text{nbr}_{\beta,\alpha}(\tau_N))=\P(T_{\pi \beta}<\tau_N\leq T_\beta, B^\beta-B^\alpha \in \cN_N)\nn\\
&=\pi((2N+\theta) \tau_N), k+l+1) \cdot (\frac{N+\theta}{2N+\theta})^{k+l+1} \cdot\P(V_{l+1}^N\in  N^{1/2}\cN_N),
\end{align}
where the first term is by \eqref{9ea6.84}, and the second term gives the probability of $\{B^\beta, B^\alpha \neq \Delta\}$. The last term follows from \eqref{ea3.22}. Hence
\begin{align*}
D_1^N&= \sum_{k=0}^\infty   \sum_{l=0}^\infty  2^k \cdot 2^{l+1}  \cdot \pi((2N+\theta) \tau_N),  k+l+1) \cdot (\frac{N+\theta}{2N+\theta})^{k+l+1} \cdot\P(V_{l+1}^N\in  N^{1/2}\cN_N)\nn\\
&= e^{\theta \tau_N}  \sum_{l=0}^\infty    \P(\Gamma_{l+1}<(2N+2\theta) \tau_N)  \cdot\P(V_{l+1}^N\in  N^{1/2}\cN_N).
 \end{align*}
For $l>\sqrt{N\tau_N}$, we use Lemma \ref{9l4.2} to bound the last probability so that
\begin{align*} 
 &\sum_{l>\sqrt{N\tau_N}}    \P(\Gamma_{l+1}<(2N+2\theta) \tau_N))  \cdot\P(V_{l+1}^N\in  N^{1/2}\cN_N)\\
 &\leq  \sum_{l>\sqrt{N\tau_N}}    \frac{C}{(1+l)^{d/2}}\leq C(N\tau_N)^{1-d/2} \to 0.
 \end{align*}
 When $l<\sqrt{N\tau_N}$, by Markov's inequality we get
 \begin{align*} 
 \P(\Gamma_{l+1}>(2N+2\theta) \tau_N)\leq \frac{l+1}{(2N+2\theta) \tau_N}\leq \frac{1}{\sqrt{N\tau_N}} \to 0.
 \end{align*}
 Therefore we may replace $ \P(\Gamma_{l+1}<(2N+2\theta) \tau_N) $ by $1$.
 It follows that
 \begin{align}\label{e2.61}
\lim_{N\to \infty} D_1^N &=  \lim_{N\to \infty} e^{\theta \tau_N}  \sum_{l<\sqrt{N\tau_N}}  \P(V_{l+1}^N\in  N^{1/2}\cN_N)\nn\\
&=  \lim_{N\to \infty}  \sum_{l<\sqrt{N\tau_N}}  \P(V_{l+1}^N\in  [-1,1]^d-\{0\})=\sum_{l=0}^\infty  \P(V_{l+1}\in  [-1,1]^d-\{0\}).
 \end{align} 
 
 {\it Step 2.} Turning to $D_2^N$, we  let $\alpha, \beta, \gamma$ be as in the summation and then condition on $\cH_{\alpha}$ to get
\begin{align*}
\E(\text{nbr}_{\beta,\gamma}(r)\vert \cH_{\alpha})=& 1{\{T_{\alpha}<r, B^{\alpha}\neq \Delta\}}\cdot    (\frac{N+\theta}{2N+\theta})^{1+l+m}\nn\\
&\times \P(T_{\pi \beta}-T_{\alpha}<r-T_{\alpha}\leq T_\beta-T_{\alpha}\vert \cH_{\alpha})\nn\\
&\times \P(T_{\pi \gamma}-T_{\alpha}<r-T_{\alpha}\vert \cH_{\alpha})\nn\\
&\times \P(W^N+V_{l+m}^N\in  N^{1/2}\cN_N),
\end{align*}
where the second term is the probability that no death events occur on the family line after $\beta,\gamma$ split. In the last term, we set $W^N$ to  
be uniform on $\cN_N$, independent of $V_{n}^N$, to represent the step taken by $\alpha\vee e_\alpha$ that separates $\beta\vert (k+1)$ and $\gamma\vert (k+1)$ from their parent $\alpha$. After they split in generation $k+1$, we let $V_{l+m}^N$ be the steps taken independently on their family lines. Apply  \eqref{9ea2.00} and \eqref{9ea6.84} in \eqref{9e2.62} to see
\begin{align}\label{9e2.63}
&\E(\text{nbr}_{\beta,\gamma}(r)\vert \cH_{\alpha})= 1{\{T_{\alpha}<r, B^{\alpha}\neq \Delta\}}\cdot  (\frac{N+\theta}{2N+\theta})^{1+l+m} \pi((2N+\theta) (r-T_\alpha), l) \nn\\
&\times   \Pi((2N+\theta) (r-T_\alpha), m) \cdot \P(W^N+V_{l+m}^N\in N^{1/2}\cN_N).
\end{align}
Use \eqref{e3.22} and \eqref{9e2.63} in $D_2^N$ in \eqref{9e2.61} to get
\begin{align}\label{9e2.64}
D_2^N= 2\E\Big(&\sum_{k=0}^\infty \sum_{\alpha\geq 1, \vert \alpha\vert =k} 1{\{T_{\alpha}<r, B^{\alpha}\neq \Delta\}}\nn \\
&\sum_{l=0}^\infty \sum_{m=0}^\infty 2^l \cdot 2^{m} \cdot  (\frac{N+\theta}{2N+\theta})^{1+l+m}  \pi((2N+\theta) (r-T_\alpha), l)\nn \\
&\times    \Pi((2N+\theta) (r-T_\alpha), m) \cdot \P(W^N+V_{l+m}^N\in [-1,1]^d-\{0\})\Big).
\end{align}
 For notation ease, we let 
 \begin{align*} 
h_N(l,m):=\P(W^N+V_{l+m}^N\in [-1,1]^d-\{0\}).
\end{align*}
Use Fubini's theorem to get 
\begin{align*} 
D_2^N=&\sum_{l=0}^\infty \sum_{m=0}^\infty   (\frac{2N+2\theta}{2N+\theta})^{1+l+m}  h_N(l,m) \cdot \E\Big( \sum_{\alpha\geq 1} 1{\{T_{\alpha}<\tau_N, B^{\alpha}\neq \Delta\}} \\
&\times \pi((2N+\theta) (\tau_N-T_\alpha), l)  \cdot \Pi((2N+\theta) (\tau_N-T_\alpha), m) \Big).
\end{align*}
By Lemma \ref{9l3.2}, the expectation above is equal to 
\begin{align*} 
&(2N+\theta)\int_0^{\tau_N} \E\Big( \sum_{\alpha\geq 1} 1{\{T_{\pi \alpha}<r\leq T_{\alpha}, B^{\alpha}\neq \Delta\}} \nn\\
&\quad \quad \times  \pi((2N+\theta) (\tau_N-r), l)  \cdot \Pi((2N+\theta) (\tau_N-r), m) \Big) dr\nn\\
&=(2N+\theta)\int_0^{\tau_N} e^{\theta r}   \pi((2N+\theta) (\tau_N-r), l)  \cdot \Pi((2N+\theta) (\tau_N-r), m)  dr,
\end{align*}
thus giving
\begin{align*}
D_2^N&=\sum_{l=0}^\infty \sum_{m=0}^\infty   (\frac{2N+2\theta}{2N+\theta})^{1+l+m}  h_N(l,m) \cdot (2N+\theta)\nn\\
&\quad \int_0^{\tau_N} e^{\theta (\tau_N-r)}   \pi((2N+\theta)r, l)  \cdot \sum_{j=m}^\infty \pi((2N+\theta)r, j)  dr\nn\\
&=\sum_{l=0}^\infty \sum_{m=0}^\infty   h_N(l,m)\cdot (2N+2\theta) \int_0^{\tau_N} e^{\theta (\tau_N+r)}\nn\\ 
& \quad  \times \pi((2N+2\theta)r, l)  \cdot \sum_{j=m}^\infty (1+\eps_N)^{m-j}  \pi((2N+2\theta)r, j)  dr,
\end{align*}
where the last equality also uses $\eps_N=\frac{\theta}{2N+\theta}$.
Notice that $1\leq e^{\theta (\tau_N+r)} \leq e^{2\theta \tau_N}$ for any $0\leq r\leq \tau_N$. So we may replace $e^{\theta (\tau_N+r)}$ by $1$. Use a change of variable to simplify the above integral and get 
\begin{align} \label{9e9.35}
D_2^N& \sim\sum_{l=0}^\infty \sum_{m=0}^\infty    h_N(l,m)  \int_0^{(2N+2\theta)\tau_N}     \pi(r, l)    \sum_{j=m}^\infty  (1+\eps_N)^{m-j}  \pi(r, j)  dr\nn\\
& =\sum_{l=0}^\infty \sum_{j=0}^\infty  \int_0^{(2N+2\theta)\tau_N}     \pi(r, l)  \pi(r, j)  dr \sum_{m=0}^{j} (1+\eps_N)^{m-j}   h_N(l,m).
\end{align}
\begin{lemma}
For each $l\geq 0$ and $m\geq 0$, we have
\begin{align}\label{e1.45}
\lim_{N\to\infty} h_N(l,m)=h(l,m):=\P(W_0+V_{l+m}\in [-1,1]^d).
\end{align}
\end{lemma}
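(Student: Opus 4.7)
The plan is a straightforward weak-convergence argument closed out by the Portmanteau theorem. After the natural $N^{1/2}$-rescaling built into the definition of $V^N_n$, one should read $W^N$ in $h_N(l,m)$ as uniform on $N^{1/2}\cN_N \subset [-1,1]^d \setminus \{0\}$ (this is dictated by dimensional consistency of the event $W^N+V^N_{l+m}\in[-1,1]^d$), and each step $Y^N_i$ of $V^N_{l+m}$ is either $0$ or uniform on $N^{1/2}\cN_N$, each with probability $1/2$. Since the lattice mesh tends to zero --- $N^{-1/d}$ in $d\ge 5$ and $(N\log N)^{-1/4}$ in $d=4$, cf.\ \eqref{9e1.3} --- the uniform distribution on $N^{1/2}\cN_N$ converges weakly to the uniform distribution on $[-1,1]^d$, hence $W^N\Rightarrow W_0$ and $Y^N_i\Rightarrow Y_i$.

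Next I would pass to sums and joint laws. By independence of the $Y^N_i$'s one has $(Y^N_1,\ldots,Y^N_{l+m})\Rightarrow (Y_1,\ldots,Y_{l+m})$, and the continuous mapping theorem gives $V^N_{l+m}\Rightarrow V_{l+m}$. Because $W^N$ is independent of $V^N_{l+m}$ and $W_0$ is independent of $V_{l+m}$, the pair $(W^N,V^N_{l+m})$ converges weakly to $(W_0,V_{l+m})$, and hence $W^N+V^N_{l+m}\Rightarrow W_0+V_{l+m}$ by another application of continuous mapping.

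Finally, I would apply Portmanteau with the target set $[-1,1]^d$. Since $W_0$ has a bounded Lebesgue density, convolution yields that $W_0+V_{l+m}$ is absolutely continuous with bounded density; in particular $\P(W_0+V_{l+m}\in \partial([-1,1]^d))=0$, so $[-1,1]^d$ is a continuity set of the limit law and $\P(W^N+V^N_{l+m}\in[-1,1]^d)\to \P(W_0+V_{l+m}\in[-1,1]^d)$. Removing the singleton $\{0\}$ costs nothing asymptotically: conditioning on $V^N_{l+m}$ and using that $W^N$ is uniform on a set of size $\psi(N)$ gives $\P(W^N+V^N_{l+m}=0)\le 1/\psi(N)\to 0$, while $\P(W_0+V_{l+m}=0)=0$ by absolute continuity. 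This yields $h_N(l,m)\to h(l,m)$. I do not anticipate any substantive obstacle; the only mild subtlety is the identification of the $W^N$ inside $h_N(l,m)$ with its rescaled version on $N^{1/2}\cN_N$.
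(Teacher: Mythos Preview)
Your proposal is correct and follows essentially the same approach as the paper: the paper's proof is a two-line sketch stating that the result is ``immediate by the weak convergence of $W^N$ to $W_0$ and $V_{n}^N$ to $V_n$,'' together with the observation that $\P(W_0+V_{l+m}=0)=0$ since $W_0$ is uniform on $[-1,1]^d$. You have simply filled in the standard details (independence, continuous mapping, Portmanteau for a continuity set, and the negligibility of the excluded singleton), and your remark about the implicit $N^{1/2}$-rescaling of $W^N$ is a fair clarification of the paper's notation.
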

\begin{proof}
The proof is immediate by the weak convergence of $W^N$ to $W_0$ and $V_{n}^N$ to $V_n$. We also note that since $W_0$ is uniform on $[-1,1]^d$, we get $\P(W_0+V_{l+m}=0)=0$, so the limit is as in \eqref{e1.45}.
\end{proof}

When $d\geq 5$, for each $l,j \geq 0$, we may bound the summand with respect to $l,j$ by
 \begin{align*}
 \int_0^{\infty}     \pi(r, l)  \pi(r, j)  dr  \sum_{m=0}^{\infty}   \frac{C}{(l+m+1)^{d/2}}\leq  \int_0^{\infty}     \pi(r, l)  \pi(r, j)  dr  \frac{C}{(l+1)^{d/2-1}}.
\end{align*}
Notice that
 \begin{align*}
&\sum_{l=0}^\infty \sum_{j=0}^\infty \int_0^{\infty}     \pi(r, l)  \pi(r, j)  dr  \frac{C}{(l+1)^{d/2-1}}\nn\\
=& \int_0^{\infty}     \sum_{l=0}^\infty \pi(r, l)   \frac{C}{(l+1)^{d/2-1}} dr\leq  \int_0^{\infty}      \frac{C}{(1+r)^{d/2-1}} dr<\infty.
\end{align*}
By using Dominated Convergence Theorem, we may take the limit inside \eqref{9e9.35} to get 
 \begin{align*}
\lim_{N\to\infty} D_2^N&= \sum_{l=0}^\infty \sum_{j=0}^\infty  \int_0^{\infty}     \pi(r, l)  \pi(r, j)  dr  \sum_{m=0}^{j}    h(l,m)\nn\\
&=\sum_{l=0}^\infty \sum_{j=0}^\infty \frac{1}{2^{l+j+1}} \frac{(l+j)!}{l!j!} \sum_{m=0}^j   h(l,m).
\end{align*}
Since $\psi_0(N) \to 2^d$ as $N\to \infty$, we get (recall $I_0$ from \eqref{9ec5.59})
\begin{align*}
&\lim_{N\to \infty} \E(Z_1(\tau_N))=2^{-d}\lim_{N\to \infty} I_0=2^{-d}\lim_{N\to \infty} (D_1^N+D_2^N)\\
&=2^{-d} \sum_{l=0}^\infty  \P(V_{l+1}\in  [-1,1]^d-\{0\})+2^{-d} \sum_{l=0}^\infty \sum_{j=0}^\infty  \frac{1}{2^{l+j+1}} \frac{(l+j)!}{l!j!} \sum_{m=0}^j  h(l,m)=b_d,
\end{align*}
as required.

Turning to $d=4$, since $\psi_0(N)\sim 2^4 \log N$, we get that 
\begin{align*}  
J_0:=&\lim_{N\to \infty} \E(Z_1(\tau_N))=\lim_{N\to \infty} \frac{1}{2^4\log N} (D_1^N+D_2^N)\nn\\
 =&\lim_{N\to \infty} \frac{1}{2^4\log N} \int_0^{(2N+2\theta)\tau_N}    \sum_{l=0}^\infty   \pi(r, l) \sum_{j=0}^\infty \pi(r, j)   \sum_{m=0}^{j} (1+\eps_N)^{m-j}   h_N(l,m) dr,
\end{align*}
where the last equality is by \eqref{e2.61} and \eqref{9e9.35}. We claim that we may get rid of $(1+\eps_N)^{m-j}$ and conclude
\begin{align}  \label{9ec2.49}
J_0=\lim_{N\to \infty} \frac{1}{2^4\log N} \int_0^{(2N+2\theta)\tau_N}   \sum_{l=0}^\infty   \pi(r, l) \sum_{j=0}^{\infty} \pi(r, j)   \sum_{m=0}^{j}    h_N(l,m) dr.
\end{align}

To see this, we note if $j\leq AN\tau_N$, then $1\geq (1+\eps_N)^{m-j}\geq e^{-A\theta \tau_N} \geq 1-\eta$ for any $\eta>0$, allowing us to remove $(1+\eps_N)^{m-j}$ for the sum of $j\leq AN\tau_N$. It suffices to show the contribution from the sum of $j>AN\tau_N$ converges to $0$.
 
 Using Lemma \ref{9l4.2} to bound $h_N(l,m)$ by $C/(1+l+m)^2$, we get the sum of $m$ gives at most $C/(1+l)$. So the contribution from the sum of $j>AN\tau_N$ is at most
\begin{align}  \label{9ec2.03}
&\frac{1}{2^4\log N} \int_0^{(2N+2\theta)\tau_N}  \sum_{j>AN\tau_N} \pi(r, j)    \sum_{l=0}^\infty \pi(r, l)      \frac{C}{1+l} dr  \nn\\
&\leq \frac{C}{\log N}  \sum_{j>AN\tau_N}  \P({\Gamma_j}<(2N+2\theta)\tau_N) \int_0^{(2N+2\theta)\tau_N}   \frac{1}{1+r}  dr,
\end{align}
where in the last inequality we apply Lemma \ref{9l4.3} (i) to bound the sum for $l$ and bound $\pi(r,j)$ by $\Pi(r,j)=\P({\Gamma_j}<r)\leq  \P({\Gamma_j}<(2N+2\theta)\tau_N)$. Use Lemma \ref{9l4.3} (ii) to see the sum of $j$ is at most $C2^{-N\tau_N}$. The integral of $r$ can be bounded by $C\log N$. We  conclude \eqref{9ec2.03}  is at most $C2^{-N\tau_N}\to 0$, thus giving \eqref{9ec2.49}.\\

Next, the integral for $0<r<4\log N$ is at most
\begin{align*} 
&\frac{1}{2^4\log N}   \int_0^{4\log N}     \sum_{l=0}^\infty   \pi(r, l) \sum_{j=0}^{\infty} \pi(r, j)   \frac{C}{1+l}  dr \nn\\
&\leq\frac{1}{2^4\log N}    \int_0^{4\log N}     \frac{C}{1+r} dr\leq \frac{1}{\log N} C\log(\log N) \to 0,
\end{align*}
It follows that
\begin{align}  \label{9ec2.07}
J_0=\lim_{N\to \infty} \frac{1}{2^4\log N} \int_{4\log N}^{(2N+2\theta)\tau_N}   \sum_{l=0}^\infty   \pi(r, l) \sum_{j=0}^{\infty} \pi(r, j)   \sum_{m=0}^{j}    h_N(l,m) dr.
\end{align}
We claim that the above sum for $l\leq \log N$ and $j\leq \log N$ can be ignored. To see this, we note that the contribution  from $l\leq \log N$ is at most
\begin{align}  \label{9ec2.01}
&\frac{1}{2^4\log N} \int_{4\log N}^{(2N+2\theta)\tau_N}    \sum_{l=0}^{\log N}   \pi(r, l) \sum_{j=0}^\infty \pi(r, j)    \frac{C}{1+l} dr\nn\\
&= \frac{1}{2^4\log N}   \sum_{l=0}^{\log N}  \frac{C}{1+l}  \int_0^{\infty}  \pi(r, l)  dr\leq \frac{1}{2^4\log N} C\log (\log N) \to 0.
\end{align}
Let$\xi$ be a Poisson random variable with parameter $r>4\log N$. By Chebyshev's inequality,
\begin{align*}  
\sum_{j=0}^{ \log N} \pi(r, j)=\P(\xi\leq \log N)& \leq \P(\vert \xi-r\vert \geq r- \log N) \leq \frac{r}{(r-\log  N)^2}\leq \frac{C}{\log N}.
\end{align*}
Hence the sum for $j\leq   \log N$ is at most 
\begin{align}   \label{9ec2.10}
&\frac{1}{2^4\log N} \int_{4\log N}^{(2N+2\theta)\tau_N}  \sum_{l=0}^\infty   \pi(r, l) \sum_{j=0}^{ \log N} \pi(r, j)       \frac{C}{1+l} dr\nn\\
&\leq \frac{1}{2^4\log N} \frac{C}{\log N} \int_{4\log N}^{(2N+2\theta)\tau_N}   \frac{C}{1+r}  dr \to 0. 
\end{align}
Use the above to conclude  
\begin{align}  \label{9ec2.08}
J_0=\lim_{N\to \infty} \frac{1}{2^4\log N} \int_{4\log N}^{(2N+2\theta)\tau_N}  \sum_{l=\log N}^\infty   \pi(r, l) \sum_{j= \log N}^\infty \pi(r, j)   \sum_{m=0}^{j}    h_N(l,m) dr.
\end{align}

Now we are ready to replace $h_N(l,m)$ by its asymptotics when $l\geq \log N$. Define
\begin{align*}
g(l,m)= 2^d (2\pi/6)^{-d/2} (l+m)^{-d/2} \text{ for } l\geq \log N, m\geq 0.
\end{align*}
 
\begin{lemma}
(i) If $N\to\infty$ and $x_n/n^{1/2}\to x$ as $n\to \infty$, then for any Borel set with $\vert \partial B\vert =0$ and $\vert B\vert <\infty$, we have
\begin{align*}
n^{d/2}\P(x_n+V_{n}^N\in B) \to \vert B\vert  \cdot  (2\pi\sigma^2)^{-d/2} e^{-{\vert x\vert ^2}/{2\sigma^2}},
\end{align*}
where $\sigma^2=1/6$ is the limit of the variance of one component of $V_1^N$ as $N\to\infty$.\\
\no (ii) For any $\eps>0$ small, if $N$ is large, then 
\begin{align}  \label{9ec1.85}
h_N(l,m)/g(l,m)\in [1-\eps,1+\eps], \quad  \forall l\geq \log N, m\geq 0.
\end{align}
\end{lemma}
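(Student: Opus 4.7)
Part (i) is a local central limit theorem (LCLT) for the triangular-array random walk $V_n^N$, which lives on the lattice $h_N\Z^d$ with $h_N = N^{-1/d}$ in $d\geq 5$ and $h_N = (N\log N)^{-1/4}$ in $d=4$. I would proceed via Fourier analysis. Writing $\phi_N(\xi) = \E[e^{i\xi\cdot Y_1^N}]$, two key estimates are needed: the Taylor expansion $\phi_N(\xi) = 1 - \sigma_N^2|\xi|^2/2 + O(|\xi|^3)$ as $|\xi|\to 0$, uniformly in $N$ large (since $Y_1^N$ is symmetric with diagonal covariance $\sigma_N^2 I$ and $\sigma_N^2\to 1/6$); and a uniform spectral-gap bound $|\phi_N(\xi)|\leq 1-c$ on the Brillouin zone $[-\pi/h_N,\pi/h_N]^d$ away from the origin, from nondegeneracy of the jump law. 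The lattice Fourier-inversion formula
\begin{equation*}
\P(V_n^N=v) = \frac{h_N^d}{(2\pi)^d}\int_{[-\pi/h_N,\pi/h_N]^d}\phi_N(\xi)^n e^{-i\xi\cdot v}\,d\xi,\qquad v\in h_N\Z^d,
\end{equation*}
combined with the substitution $\xi = \eta/\sqrt n$ and the standard localization-plus-Gaussian argument then yields the pointwise LCLT $\P(V_n^N=v) = h_N^d(2\pi n\sigma^2)^{-d/2}e^{-|v|^2/(2n\sigma^2)}(1+o(1))$ uniformly in $v$ with $v/\sqrt n$ in compact sets. Summing over $v\in (B-x_n)\cap h_N\Z^d$ after multiplying by $n^{d/2}$ is then a Riemann sum for $|B|\cdot(2\pi\sigma^2)^{-d/2}e^{-|x|^2/(2\sigma^2)}$, using $|\partial B|=0$, $h_N\to 0$, and $v/\sqrt n\to -x$ across the summands.

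For part (ii), I would decompose
\begin{equation*}
h_N(l,m) = \frac{1}{\psi(N)}\sum_{w\in N^{1/2}\cN_N}\P(w + V_{l+m}^N \in [-1,1]^d) - \P(W^N+V_{l+m}^N=0).
\end{equation*}
The last (singleton) term equals $\psi(N)^{-1}\P(V_{l+m}^N\in -N^{1/2}\cN_N) \leq \psi(N)^{-1}\P(V_{l+m}^N\in [-1,1]^d)$, which by part (i) is $O(g(l,m)/\psi(N)) = o(g(l,m))$ uniformly in $l\geq \log N$, $m\geq 0$. For the main sum, $|w|\leq 1$ and $l+m\geq \log N\to\infty$ force $w/\sqrt{l+m}\to 0$ uniformly in $w\in[-1,1]^d$; applying the uniform form of part (i) with $n=l+m$, $x_n=w$, $B=[-1,1]^d$ gives
\begin{equation*}
(l+m)^{d/2}\P(w+V_{l+m}^N\in[-1,1]^d) = 2^d(2\pi\sigma^2)^{-d/2} + o(1)
\end{equation*}
uniformly in $w$, $l\geq \log N$, $m\geq 0$. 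Averaging over the $\psi(N)$ choices of $w$ and recalling $\sigma^2 = 1/6$ yields $h_N(l,m) = g(l,m)(1+o(1))$ uniformly, i.e., \eqref{9ec1.85}.

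The main obstacle is establishing the LCLT in part (i) with joint uniformity as $n,N\to\infty$ and as the endpoint $v$ varies with $v/\sqrt n$ in a compact set---precisely the form needed to carry through to part (ii). The two required inputs---the uniform Taylor expansion of $\phi_N$ at the origin and the uniform spectral-gap bound on the Brillouin zone---both follow from the explicit form of $\phi_N$ together with weak convergence of $Y_1^N$ to the mixture $(1/2)\delta_0 + (1/2)\mathrm{Unif}([-1,1]^d)$ (plus moment control from the bounded support). Once these are in place, the localization argument is standard, and the $d=4$ case proceeds identically to $d\geq 5$ aside from the different lattice spacing $h_N$.
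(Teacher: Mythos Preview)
Your sketch is correct. For (i), the paper simply cites Lemma~4.6 of Bramson--Durrett--Swindle \cite{BDS89}, which is precisely the triangular-array LCLT you outline via Fourier inversion; your two key inputs (uniform quadratic expansion of $\phi_N$ at $0$ and a uniform spectral gap on the growing Brillouin zone) are exactly what that reference establishes. For (ii), the paper gives a two-line subsequence argument: if \eqref{9ec1.85} failed along some $(N_k,l_k,m_k)$ with $l_k\geq\log N_k$, then $l_k+m_k\to\infty$, and applying (i) along that subsequence forces the ratio to $1$, a contradiction. Your direct route---conditioning on $W^N=w$, invoking the uniform LCLT over $w\in[-1,1]^d$ and $n=l+m\geq\log N$, and bounding the singleton correction $\P(W^N+V_{l+m}^N=0)=O(g(l,m)/\psi(N))$ separately---unpacks the same mechanism while making the conditioning on $W^N$ and the needed uniformity explicit. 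The paper's version is terser but leaves those steps to the reader; yours is self-contained, at the cost of requiring the uniform (rather than merely sequential) form of (i), which your Fourier-analytic proof does deliver.
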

\begin{proof}
By Lemma 4.6 of \cite{BDS89} and its proof therein, we have (i) holds.  For the proof of (ii), we note (i) ensures that \eqref{9ec1.85} at least holds for finitely many $l,m$. Assume to the contrary that \eqref{9ec1.85} fails for a sequence $l_n,m_n \to \infty$. 
Then this would contradict the results in (i), thus giving the proof.
\end{proof}
By the above lemma with $d=4$, we may replace $h_N(l,m)$ in \eqref{9ec2.08} by $g(l,m)=2^4 \cdot  (\pi/3)^{-2} (l+m)^{-2}$ to get
\begin{align}  \label{9ec3.57}
J_0=\lim_{N\to \infty} \frac{9\pi^{-2}}{\log N} \int_{4\log N}^{(2N+2\theta)\tau_N}  \sum_{l=\log N}^\infty   \pi(r, l) \sum_{j=\log N}^\infty \pi(r, j)   \sum_{m=0}^{j}    \frac{1}{(l+m)^2} dr.
\end{align}
Since both $l\geq \log N$ and $j\geq  \log N$ are large, a bit of arithmetic implies
\begin{align*}
\sum_{m=0}^{j}  \frac{1}{(l+m)^2} \sim \Big(\frac{1}{1+l}-\frac{1}{1+l+j}\Big).
\end{align*}
Therefore  \eqref{9ec3.57} becomes
\begin{align*}
J_0=\lim_{N\to \infty} \frac{9\pi^{-2}}{\log N} \int_{4\log N}^{(2N+2\theta)\tau_N}  \sum_{l=\log N}^\infty   \pi(r, l) \sum_{j= \log N}^\infty \pi(r, j)  \Big(\frac{1}{1+l}-\frac{1}{1+l+j}\Big)   dr.
\end{align*}
Use the same reasoning as above (see, e.g., \eqref{9ec2.01} and \eqref{9ec2.10}) to write back the terms for $l\leq \log N$ and $j\leq  \log N$ to get
\begin{align*}
J_0=\lim_{N\to \infty} \frac{9\pi^{-2}}{\log N} \int_{4\log N}^{(2N+2\theta)\tau_N}  \sum_{l=0}^\infty   \pi(r, l) \sum_{j=0}^\infty \pi(r, j)  \Big(\frac{1}{1+l}-\frac{1}{1+l+j}\Big)   dr.
\end{align*}
If $\xi_1, \xi_2$ are two independent Poisson random variables with parameter $r\geq 4\log N$, then
 \begin{align*}
J_0=\lim_{N\to \infty} \frac{9\pi^{-2}}{\log N} \int_{4\log N}^{(2N+2\theta)\tau_N} \Big[\E\Big(\frac{1}{1+\xi_1}\Big)-\E\Big(\frac{1}{1+\xi_1+\xi_2}\Big)\Big]   dr.
\end{align*}
Since $r>4\log N$ is large, one can check by Cauchy-Schwartz that
 \begin{align*}
 \E\Big(\frac{1}{1+\xi_1}\Big)\sim \frac{1}{1+r} \quad \text{ and }  \quad \E\Big(\frac{1}{1+\xi_1+\xi_2}\Big)\sim \frac{1}{1+2r},
\end{align*}
 thus giving
 \begin{align*} 
 J_0&=\lim_{N\to \infty}\frac{9\pi^{-2}}{\log N}   \int_{4\log N}^{(2N+2\theta)\tau_N} \frac{1}{1+r}dr \nn\\
 &\quad -\lim_{N\to \infty}\frac{9\pi^{-2}}{\log N}   \int_{4\log N}^{(2N+2\theta)\tau_N} \frac{1}{1+2r}   dr =\frac{9}{2\pi^2}=b_4,
 \end{align*}
 as required.

\section{Proof of Lemma \ref{9l5.6}}\label{9s7.2}

We turn to the proof of Lemma \ref{9l5.6} in this section. It suffices to show that
\begin{align*} 
\lim_{N\to\infty} \int_{0}^t \E\Big(\vert G_r^\tau(\phi)-b_d^\tau X_r^{1,\tau}(\phi)\vert \Big)dr=0.
\end{align*}
We already know from \eqref{9e9.31} that
\begin{align*} 
&\lim_{N\to \infty} \int_0^{\tau_N} \E(G_r^\tau(\phi))dr = 0.
\end{align*}
Recall $X_r^{1,\tau}(\phi)$ from \eqref{9e3.13} to see that 
\begin{align*}
\E( X_r^{1,\tau}(\phi))&\leq \frac{1}{N} \| \phi\| _\infty \E\Big( \sum_{\beta} 1(T_{\pi\beta}<r\leq T_\beta, B^\beta \neq \Delta)\Big)\nn\\
&= \| \phi\| _\infty \E(X_r^0(1))=e^{\theta r} \| \phi\| _\infty X_0^0(1).
\end{align*}
Lemma \ref{9l5.7} gives $b_d^\tau \leq 1+b_d$ for $N$ large.  It follows that
\begin{align*} 
&\int_0^{\tau_N} \E\Big(b_d^\tau X_r^{1,\tau}(\phi) \Big)dr\leq (1+b_d) \| \phi\| _\infty   X_0^0(1) \int_0^{\tau_N} e^{\theta r}dr  \to 0.
\end{align*}
It remains to show
\begin{align*} 
\lim_{N\to\infty} \int_{\tau_N}^t \E\Big(\vert G_r^\tau(\phi)-b_d^\tau X_r^{1,\tau}(\phi)\vert \Big)dr=0.
\end{align*}

For any $r\geq \tau_N$, recall from \eqref{9e6.61} that
\begin{align*}
& G_r^\tau(\phi)-b_d^\tau X_r^{1,\tau}(\phi)=\frac{1}{N}  \sum_{\alpha \in \cA(r-\tau_N)}   \phi(B^\beta_{(r-\tau_N)^+}) (Z_\alpha(r)-b_d^\tau \vert \{\alpha\}_r\vert ).
\end{align*}
By \eqref{9e3.14}, the mean of the last term is zero conditioning on $\cF_{r-\tau_N}$. Also, one can check that $\{(Z_\alpha(r),\vert \{\alpha\}_r\vert )\}_{\alpha\in \cA(r-\tau_N)}$ are mutually independent and equal in law with $(Z_1(\tau_N), \vert \{1\}_{\tau_N} \vert)$ if conditioning on $\cF_{r-\tau_N}$. It follows that
\begin{align}\label{9e7.1}
&\E\Big(\Big[G_r^\tau(\phi)-b_d^\tau X_r^{1,\tau}(\phi)\Big]^2\Big)\leq \frac{1}{N^2} \| \phi\| _\infty^2 \cdot \E( \vert \cA(r-\tau_N)\vert )\cdot \E \Big(\Big(Z_1(\tau_N)-b_d^\tau \vert \{1\}_{\tau_N}\vert \Big)^2\Big).
\end{align}
For the first expectation above, we recall that $\vert \cA(r-\tau_N)\vert $ counts the number of particles alive at time $r-\tau_N$ in $X^1$, so
\begin{align}\label{9e7.2}
\E( \vert \cA(r-\tau_N)\vert )=\E(NX_{r-\tau_N}^{1,N}(1)) \leq \E(NX_{r-\tau_N}^{0,N}(1))= N X_0^0(1) e^{\theta(r-\tau_N)}.
\end{align}
For the second expectation in \eqref{9e7.1}, we use $(a-b)^2\leq 2a^2+2b^2, \forall a,b\in \R$ to get
\begin{align}\label{9e7.3}
\E \Big(\Big(Z_1(\tau_N)-b_d^\tau \vert \{1\}_{\tau_N}\vert \Big)^2\Big)\leq  2\E(Z_1(\tau_N)^2)+2\E(\vert \{1\}_{\tau_N}\vert ^2) .
\end{align}
Lemma 7.2 of \cite{DP99} gives
\begin{align}\label{9e7.4}
\E(\vert \{1\}_{\tau_N}\vert ^2)\leq C(1+N\tau_N)\leq CN\tau_N.
\end{align}
 \begin{lemma}\label{9l7.1}
There is some constant $C>0$ so that $\E(Z_1(\tau_N)^2)\leq CN\tau_N$.
\end{lemma}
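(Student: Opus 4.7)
The plan is to square the defining identity
\[
\psi_0(N)\,Z_1(\tau_N)=\sum_{\beta,\gamma\geq 1}\text{nbr}_{\beta,\gamma}(\tau_N)
\]
and show that
\[
\E\Big(\sum_{\beta_1,\gamma_1,\beta_2,\gamma_2\geq 1}\text{nbr}_{\beta_1,\gamma_1}(\tau_N)\,\text{nbr}_{\beta_2,\gamma_2}(\tau_N)\Big)\leq C\,N\tau_N\,\psi_0(N)^2,
\]
so that dividing by $\psi_0(N)^2$ gives the stated bound. The key heuristic is that the two spatial constraints $B^{\beta_i}-B^{\gamma_i}\in \cN_N$ together contribute a factor of order $1/\psi(N)^2$, exactly matching the normalization, and the remaining temporal combinatorics scale like $N\tau_N$ (reflecting the lifetime cutoff).

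First I would apply Lemma \ref{9l3.2} to each of the stopping-type events $\{T_{\pi\beta_i}<\tau_N\le T_{\beta_i}\}$, converting the $\beta_i$-sums into integrals $\int_0^{\tau_N}\!\int_0^{\tau_N}\,dr_1dr_2$ times a sum over particles with the simpler conditions $T_{\pi\beta_i}<r_i$, $B^{\beta_i}\neq\Delta$. Then I would classify the four-tuples $(\beta_1,\gamma_1,\beta_2,\gamma_2)$ by the topology of their ancestral tree, letting $\rho$ be the most recent common ancestor of all four (recall $\beta_i,\gamma_i\geq 1$ so $\rho$ exists). The distinct sub-cases mirror those of Lemma \ref{9l5.0}: (i) $\rho$ separates the pair $\{\beta_1,\gamma_1\}$ from $\{\beta_2,\gamma_2\}$; (ii) $\rho$ mixes the two pairs, say $\{\beta_1,\beta_2\}$ vs.\ $\{\gamma_1,\gamma_2\}$ or $\{\beta_1,\gamma_2\}$ vs.\ $\{\gamma_1,\beta_2\}$; (iii) degenerate cases where some labels coincide (for example $\gamma_1=\gamma_2$, $\beta_1=\beta_2$, or $\gamma_i$ lies on the ancestral line of $\beta_j$).

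In case (i) the two neighbor events become independent after conditioning on $\cH_\rho$, so the expectation factorizes and each factor is handled by the argument of Lemma \ref{9l4.1}(ii) on the time window $[T_\rho,\tau_N]\subseteq[0,\tau_N]$; the product of the two first-moment bounds and the Lemma \ref{9l3.2a}-type integration over $T_\rho\in[0,\tau_N]$ gives $C\,N\tau_N\,\psi_0(N)^2$. In case (ii), after applying \eqref{9ea2.82} with the appropriate displacement decomposition \eqref{9ea1.71}, the joint neighbor constraints give two independent Lemma \ref{9l4.2} factors of the form $1/(1+l+m)^{d/2}$ on the post-split generations, and Lemmas \ref{9l4.0}, \ref{9l4.3}, \ref{9l8.7} reduce the resulting Gamma-sums to a bound of the same order. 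The coincidence cases in (iii) reduce either to $\E(\psi_0(N)Z_1(\tau_N))$ (handled in \eqref{9ec5.59}) times a factor of order $\psi_0(N)$, or to three-particle sums controlled exactly as in Lemma \ref{9l5.0}.

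The main obstacle will be the bookkeeping of the ancestry cases: four particles admit more tree topologies than the three-particle analysis of Lemma \ref{9l5.0}, and one must verify that, regardless of topology, every branching tree splitting produces the two $1/\psi(N)$ factors from the two neighborhood constraints and that the time-cutoff $\tau_N$ enters linearly. Once the dominant case (i) contribution is pinned down as $O(\psi_0(N)^2\cdot N\tau_N)$, the remaining topologies are either smaller (by a factor $\tau_N$ or $1/\psi_0(N)$) or strictly dominated by the estimates already established, so the desired bound $\E(Z_1(\tau_N)^2)\leq CN\tau_N$ follows.
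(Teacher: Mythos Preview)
Your overall strategy matches the paper's: write $\psi_0(N)^2\E(Z_1(\tau_N)^2)$ as the fourfold sum \eqref{9e7.10} and classify by the ancestral topology of $(\beta^1,\gamma^1,\beta^2,\gamma^2)$, bounding each configuration with Lemmas~\ref{9l4.0}, \ref{9l4.2}, \ref{9l8.7} and the displacement trick \eqref{9ea1.71}--\eqref{9ea2.82}. Two corrections. First, your opening use of Lemma~\ref{9l3.2} is backwards: the condition $T_{\pi\beta_i}<\tau_N\le T_{\beta_i}$ is already the \emph{integrated} form on the right of that lemma, not the stopping-time form $1(T_\beta\le t)$, so there is nothing to convert and no double integral $\int_0^{\tau_N}\!\int_0^{\tau_N}$ appears; the paper works directly with the sum at the fixed time $\tau_N$. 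Second, the paper's topological bookkeeping is organized differently from your global-root $\rho$: it fixes $\alpha^1=\beta^1\wedge\gamma^1$, assumes by symmetry $T_{\alpha^1}\le T_{\beta^2\wedge\gamma^2}$, and splits into $R_1,R_2,R_3$ according to whether $\beta^2$ branches off from $\alpha^1$, $\beta^1$, or $\gamma^1$, with further subcases for $\gamma^2$. Your case~(i) is essentially $R_1$ and goes as you say; your case~(ii), however, unpacks into the six subcases of $R_2$ and $R_3$, which carry most of the work and are not reducible to Lemma~\ref{9l5.0} (that lemma has an \emph{equality} constraint $B^\delta=B^\gamma$, not a second neighbor event). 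The ``degenerate'' coincidences of your case~(iii) are absorbed into the $l=0$ or $m=0$ boundary terms of the main cases rather than treated separately.
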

By assuming Lemma \ref{9l7.1}, we may finish the proof of  Lemma \ref{9l5.6}.

\begin{proof}[Proof of  Lemma \ref{9l5.6}]
For any $r\geq \tau_N$, we   use \eqref{9e7.1}, \eqref{9e7.2}, \eqref{9e7.3}, \eqref{9e7.4} and Lemma \ref{9l7.1} to see that
\begin{align*}
&\E\Big(\Big[G_r^\tau(\phi)-b_d^\tau X_r^{1,\tau}(\phi)\Big]^2\Big)\leq \frac{C}{N^2} \| \phi\| _\infty^2 \cdot  N X_0^0(1) e^{\theta(r-\tau_N)} \cdot N\tau_N\leq C\tau_N e^{\theta r}.
\end{align*}
By Cauchy-Schwartz, we have
\begin{align*}
\E\Big(\Big\vert G_r^\tau(\phi)-b_d^\tau X_r^{1,\tau}(\phi)\Big\vert \Big)\leq C\tau_N^{1/2} e^{\theta r/2},
\end{align*}
thus giving
\begin{align*} 
&\int_{\tau_N}^t  \E\Big(\Big\vert G_r^\tau(\phi)-b_d^\tau X_r^{1,\tau}(\phi)\Big\vert \Big)dr \leq C\tau_N^{1/2} \int_0^t e^{\theta r/2} dr \to 0.
\end{align*}
The proof is complete.
\end{proof}
It remains to prove Lemma \ref{9l7.1}.

\begin{proof}[Proof of  Lemma \ref{9l7.1}]
Recall from \eqref{9ec5.59} to see that 
\begin{align}\label{9e7.10}
\E(\psi_0(N)^2 Z_{1}(\tau_N)^2) =&\E\Big(\sum_{\beta^1, \beta^2, \gamma^1, \gamma^2 \geq 1}  \text{nbr}_{\beta^1,\gamma^1}(\tau_N)\cdot \text{nbr}_{\beta^2,\gamma^2}(\tau_N)\Big),
\end{align}
where $$\text{nbr}_{\beta,\gamma}(\tau_N)=1(T_{\pi \beta}<\tau_N\leq T_\beta, T_{\pi \gamma}<\tau_N,   B^\beta-B^\gamma \in \cN_N).$$
Since $T_{\pi \beta^1}<\tau_N\leq T_{\beta^1}$ and $T_{\pi \beta^2}<\tau_N\leq T_{\beta^2}$ both hold, we cannot have $\beta^1<\beta^2$ or $\beta^2<\beta^1$. 
 By symmetry, we may assume $T_{\beta^1\wedge \gamma^1}\leq T_{\beta^2\wedge \gamma^2}$. We first consider the sum of $\beta^1, \gamma^1$ for $ \text{nbr}_{\beta^1,\gamma^1}(\tau_N)$. Let $\alpha^1=\beta^1 \wedge \gamma^1$ with $\vert \alpha^1\vert =k$ for some $k\geq 0$. Set $\vert \beta^1\vert =k+l$ and $\vert \gamma^1\vert =k+m$ for some $l\geq 0$, $m\geq 0$. Then we have
 \begin{align} \label{9e7.21}
\E(\psi_0(N)^2 Z_{1}(\tau_N)^2)= \E\Big(\sum_{k=0}^\infty &\sum_{\substack{\alpha^1\geq 1,\\ \vert \alpha^1\vert =k}} \sum_{l=0}^\infty \sum_{m=0}^\infty\sum_{\substack{\beta^1\geq \alpha^1,\\ \vert \beta^1\vert =k+l}}\sum_{\substack{ \gamma^1\geq \alpha^1,\\ \vert \gamma^1\vert =k+m }}  \text{nbr}_{\beta^1,\gamma^1}(\tau_N)\nn\\
& \quad \times \sum_{\beta^2, \gamma^2\geq 1} \text{nbr}_{\beta^2,\gamma^2}(\tau_N)\Big).
\end{align}
To get the desired upper bound, we need to know the positions of $\beta^2,\gamma^2$ on the family tree relative to $\alpha^1, \beta^1,\gamma^1$. For any finite subset $\Lambda \subset \cI$,   define
\begin{align*}
\tau(\Lambda; \beta)=\max\{\vert \gamma\wedge \beta\vert : \gamma\in \Lambda, \gamma_0=\beta_0\}
\end{align*}
to identify the last generation when $\beta$ had an ancestor in common with some $\gamma\in \Lambda$. 
For any pair of $(\alpha^1,\beta^1, \gamma^1)$ as in \eqref{9e7.21}, we consider the three cases for the generation when $\beta^2$ branches off the family tree of $\alpha^1, \beta^1,\gamma^1$: 
\begin{align*}
&\text{(1)}\quad  \tau(\{\alpha^1,\beta^1,\gamma^1\}; \beta^2)=\vert \alpha^1 \wedge \beta^2\vert ;\nn\\
&\text{(2)} \quad \tau(\{\alpha^1,\beta^1,\gamma^1\}; \beta^2)=\vert \beta^1 \wedge \beta^2\vert > \vert \alpha^1 \wedge \beta^2\vert ;\nn\\
&\text{(3)} \quad \tau(\{\alpha^1,\beta^1,\gamma^1\}; \beta^2)=\vert \gamma^1 \wedge \beta^2\vert >\vert \alpha^1 \wedge \beta^2\vert . 
\end{align*}
Denote by $R_1$ (resp. $R_2, R_3$) for the contribution to \eqref{9e7.21} from case (1) (resp. case (2), (3)). It follows that 
 \begin{align}  \label{e1}
\E(\psi_0(N)^2 Z_{1}(\tau_N)^2)\leq C\sum_{i=1}^3 \E(R_i).
\end{align}
We claim that
 \begin{align}  \label{e2}
 \E(R_i)\leq CN\tau_N I(N)^2,\quad \forall i\in\{1,2,3\}.
\end{align}
Then since $I(N)\leq C\psi_0(N)$, we may combine \eqref{e1} and \eqref{e2} to conclude
\[
\E(Z_{1}(\tau_N)^2)\leq CN\tau_N,
\]
thus giving Lemma \ref{9l7.1}. It remains to prove \eqref{e2}.  

\subsection{Bounds for $\E(R_1)$ }
For $R_1$, we are in the case when $\beta^2$ branches off the family tree of $\alpha^1, \beta^1,\gamma^1$ from $\alpha^1$. 
 We may let $\beta^2\wedge \alpha^1=\alpha^1\vert j$ for some $0\leq j\leq k$. Since we assume $T_{\beta^1\wedge \gamma^1}\leq T_{\beta^2\wedge \gamma^2}$, the only possible case is that $\tau(\{\alpha^1,\beta^1,\gamma^1,\beta^2\}; \gamma^2)=\vert \gamma^2 \wedge \beta^2\vert $, i.e. $\gamma^2$ has the most recent ancestor with $\beta^2$.  Let $\alpha^2=\gamma^2\wedge \beta^2$ so that $\alpha^2\geq \beta^2\wedge \alpha^1=\alpha^1|j$. Set $\vert \alpha^2\vert =j+n$ for some $n\geq 0$. Let $\vert \beta^2\vert =j+n+l'$ and $\vert \gamma^2\vert =j+n+m'$ for some $l'\geq 0$, $m'\geq 0$. 
 By conditioning on $\cH_{\alpha^1}\vee \cH_{\alpha^2}$, similar to the derivation of \eqref{e2.63}, we  obtain
 \begin{align}\label{9ec7.99}
&\E(\text{nbr}_{\beta^1,\gamma^1}(\tau_N)\text{nbr}_{\beta^2,\gamma^2}(\tau_N) \vert \cH_{\alpha^1}\vee \cH_{\alpha^2})\leq C\cdot 1_{(T_{\alpha^1}<\tau_N, B^{\alpha^1}\neq \Delta)} 1_{(T_{\alpha^2}<\tau_N, B^{\alpha^2}\neq \Delta)} \nn\\
&\times (\frac{N+\theta}{2N+\theta})^{l+m+l'+m'}\frac{C}{(1+l+m)^{d/2}}  \frac{C}{(1+l'+m')^{d/2}} \nn\\
&\times \pi((2N+\theta)(\tau_N-T_{\alpha^1}), l-1)\cdot  \Pi((2N+\theta)(\tau_N-T_{\alpha^1}), m-1)   \nn\\
&\times \pi((2N+\theta)(\tau_N-T_{\alpha^2}), l'-1) \cdot \Pi((2N+\theta)(\tau_N-T_{\alpha^2}), m'-1),
\end{align}
where we have used Lemma \ref{9l4.2} to bound the probabilities for the events $\{B^{\beta^i}-B^{\gamma^i} \in \cN_N\}$, $i=1,2$.  So we get
  \begin{align*}  
\E(R_1)\leq  C\E\Big(&\sum_{k=0}^\infty  \sum_{\substack{\alpha^1\geq 1,\\ \vert \alpha^1\vert =k} } \sum_{j=0}^{k}\sum_{n=0}^\infty\sum_{\substack{ \vert \alpha^2\vert =j+n,\\ \alpha^2\geq \alpha^1\vert j} } \prod_{i=1}^2 1{(T_{\alpha^i}<\tau_N, B^{\alpha_i}\neq \Delta)}\nn\\
&\sum_{l=0}^\infty  \sum_{m=0}^\infty\sum_{l'=0}^\infty \sum_{m'=0}^\infty  (\frac{2N+2\theta}{2N+\theta})^{l+m+l'+m'}\frac{1}{(1+l+m)^{d/2}}  \frac{1}{(1+l'+m')^{d/2}} \nn\\
& \times  \pi((2N+\theta)(\tau_N-T_{\alpha^1}), l-1)\cdot  \Pi((2N+\theta)(\tau_N-T_{\alpha^1}), m-1)   \nn\\
&\times \pi((2N+\theta)(\tau_N-T_{\alpha^2}), l'-1) \cdot \Pi((2N+\theta)(\tau_N-T_{\alpha^2}), m'-1)\Big). 
\end{align*}
Apply Lemma \ref{9l4.0} to see the sum of $m$ gives at most $C /(1+l)^{d/2-1}$. Then as in \eqref{9e9.58}, we use Lemma \ref{9l4.3} (i) to get the sum of $l$ is bounded above by $C /(1+(2N+2\theta)(\tau_N-T_{\alpha^1}))^{d/2-1}$. Similarly, the sum of $l',m'$ gives at most $C /(1+(2N+2\theta)(\tau_N-T_{\alpha^2}))^{d/2-1}$. These two bounds lead to 
 \begin{align*}  
\E(R_1)\leq C  \E\Big(&\sum_{k=0}^\infty \sum_{\substack{\alpha^1\geq 1,\\ \vert \alpha^1\vert =k} } \sum_{j=0}^{k}\sum_{n=0}^\infty \sum_{\substack{ \vert \alpha^2\vert =j+n,\\ \alpha^2\geq \alpha^1\vert j} }  \nn\\
&\prod_{i=1}^2 1(T_{\alpha^i}<\tau_N, B^{\alpha_i}\neq \Delta) (1+(2N+2\theta)(\tau_N-T_{\alpha^i}))^{1-d/2}\Big).
\end{align*}
By considering $\alpha=\alpha^1\wedge \alpha^2$ with $\vert \alpha\vert =j\geq 0$, we may rewrite the above as
 \begin{align}  \label{9e9.57}
\E(R_1)\leq C\E\Big( &\sum_{j=0}^\infty \sum_{\alpha \geq 1,\vert \alpha\vert =j } \sum_{k=0}^{\infty}  \sum_{\substack{\vert \alpha^1\vert =j+k,\\ \alpha^1\geq \alpha}} \sum_{n=0}^\infty \sum_{\substack{ \vert \alpha^2\vert =j+n,\\ \alpha^2\geq \alpha}}  \nn\\
&\prod_{i=1}^2 1(T_{\alpha^i}<\tau_N, B^{\alpha_i}\neq \Delta) (1+(2N+2\theta)(\tau_N-T_{\alpha^i}))^{1-d/2}\Big).
\end{align}
Conditioning on $\cH_{\alpha}$, we may use the symmetry between $\alpha^1$ and $\alpha^2$ to see that the above becomes
 \begin{align}  \label{9e6.71}
\E(R_1)\leq C\E\Big(& \sum_{\alpha \geq 1} 1(T_{\alpha}<\tau_N, B^{\alpha}\neq \Delta)   \\
&  \times \Big[\E\Big(   \sum_{\alpha^1\geq \alpha}  1(T_{\alpha^1}<\tau_N, B^{\alpha^1}\neq \Delta) (1+(2N+2\theta)(\tau_N-T_{\alpha^1}))^{1-d/2}\Big\vert \cH_\alpha\Big)\Big]^2\Big).\nn
\end{align}
On the event $\{T_{\alpha}<\tau_N, B^{\alpha}\neq \Delta\}$, we have
 \begin{align*}  
& \E\Big(   \sum_{\alpha^1\geq \alpha}  1(T_{\alpha^1}<\tau_N, B^{\alpha^1}\neq \Delta) (1+(2N+2\theta)(\tau_N-T_{\alpha^1}))^{1-d/2}\Big\vert \cH_\alpha\Big)\nn\\
\leq &1+2  \E\Big(\sum_{\alpha^1\geq (\alpha \vee 0)}   1(T_{\alpha^1}<\tau_N, B^{\alpha^1}\neq \Delta) (1+(2N+2\theta)(\tau_N- {T}_{ \alpha^1}))^{1-d/2}\Big\vert \cH_\alpha\Big)\nn\\
\leq &1+ 2\E\Big(\sum_{\delta\geq 1}   1(T_{\delta}<\tau_N- {T}_{ \alpha}, B^{\delta}\neq \Delta) (1+(2N+2\theta)(\tau_N- {T}_{ \alpha}-T_{\delta}))^{1-d/2}\Big)\nn\\
\leq&1+ 2 e^{\theta (\tau_N- {T}_{ \alpha})} \int_0^{(2N+2\theta)(\tau_N- {T}_{ \alpha})} \frac{1}{(1+y)^{d/2-1}} dy\leq C I(N),
\end{align*}
where the first term $1$ on the second line is to bound the case when $\alpha^1=\alpha$ while the factor $2$ is due to the symmetry between the two offspring, $\alpha\vee 0$ and $\alpha\vee 1$, of $\alpha$. The third line follows by replacing $\alpha \vee 0$ with $1$ and then translating with time $T_{\alpha}$. The second last inequality uses Lemma \ref{9l3.2a} with $r=\tau_N- {T}_{  \alpha}$ and $u=0$.
Hence \eqref{9e6.71} is at most
 \begin{align*}  
\E(R_1)\leq C I(N)^2\E\Big( \sum_{\alpha \geq 1 } 1(T_{\alpha}<\tau_N, B^{\alpha}\neq \Delta) \Big).
\end{align*}
By using Lemma \ref{9l3.2a} with $f(y)\equiv 1$, $r=\tau_N$ and $u=0$, we get
\begin{align}\label{9e9.66}
 \E\Big( \sum_{\alpha\geq 1 } 1(T_{\alpha}<\tau_N, B^{\alpha}\neq \Delta) \Big)\leq e^{\theta \tau_N} (2N+2\theta)\tau_N\leq CN\tau_N,
\end{align}
thus giving
\begin{align*}
\E(R_1)\leq   CN\tau_N I(N)^2,
\end{align*}
as required. 

\subsection{Bounds for $\E(R_2)$ }
Turning to $R_2$, we have $\beta^2$ branches off the family tree of $\alpha^1, \beta^1,\gamma^1$ from $\beta^1$, so we set $\beta^2\wedge \beta^1=\beta^1\vert j$ for some $k\leq j\leq k+l$. Since we assume $T_{\beta^1\wedge \gamma^1}\leq T_{\beta^2\wedge \gamma^2}$, there are three cases for the generation when $\gamma^2$ branches off the family tree of $\alpha^1, \beta^1,\gamma^1,\beta^2$: 
\begin{align}\label{9e9.91}
\text{(i)} &\quad  \tau(\{\alpha^1,\beta^1,\gamma^1,\beta^2\}; \gamma^2)=\vert  \gamma^2 \wedge \beta^2 \vert>\vert \gamma^2\wedge \beta^1\vert ;\nn\\
\text{(ii)}& \quad \tau(\{\alpha^1,\beta^1,\gamma^1,\beta^2\}; \gamma^2)=\vert \gamma^2 \wedge \beta^1\vert ;\nn\\
\text{(iii)} &\quad \tau(\{\alpha^1,\beta^1,\gamma^1,\beta^2\}; \gamma^2)=\vert \gamma^2 \wedge \gamma^1 \vert . 
\end{align}
Denote respectively by $R_2^{(i)}, R_2^{(ii)}, R_2^{(iii)}$ the contribution to $R_2$ from these three cases, for which we refer the reader to Figure \ref{fig2}.
\begin{figure}[ht]
  \begin{center}
    \includegraphics[width=1 \textwidth]{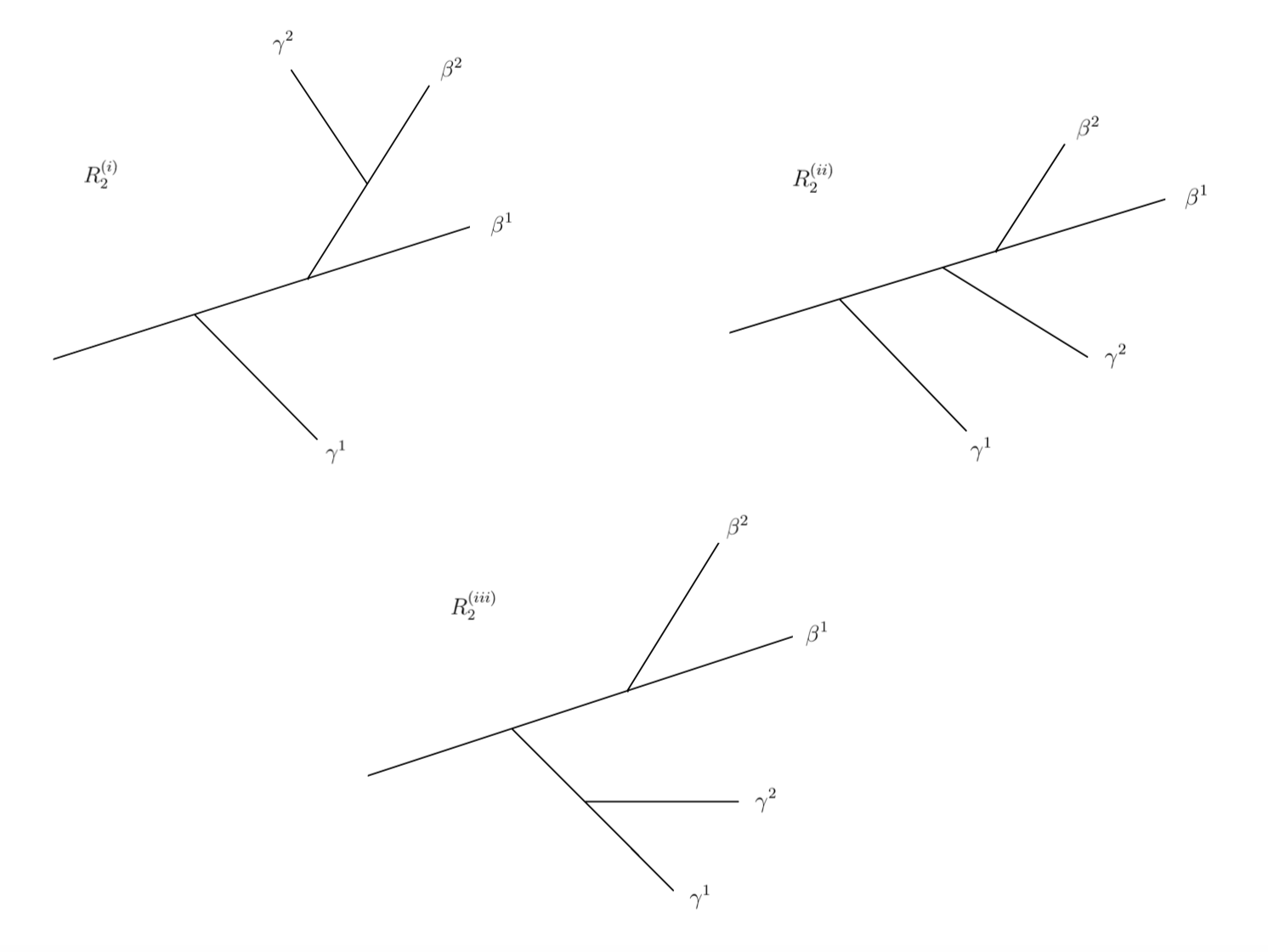}
    \caption[ ]{\label{fig2}   Three cases for $R_2$.
      }
  \end{center}
\end{figure}
It suffices to show that
\[
\E(R_2^{(i)}) \leq CN\tau_N I(N)^2, \quad \E(R_2^{(ii)}) \leq CN\tau_N I(N)^2, \quad \E(R_2^{(iii)}) \leq CN\tau_N I(N)^2.
\]

 \no {\bf Case (2.i).} Case (i) implies that $\gamma^2$ branches off $\beta^2$ only after $\beta^2$ branches off $\beta^1$, that is, $\beta^2 \wedge \gamma^2>\beta^2\wedge \beta^1=\beta^1\vert j$. Recall \eqref{9e7.21} to get
 \begin{align} \label{9ea1.00}
\E(R_2^{(i)})\leq C\E\Big(\sum_{k=0}^\infty &\sum_{\substack{\alpha^1\geq 1,\\ \vert \alpha^1\vert =k}} \sum_{l=0}^\infty \sum_{m=0}^\infty\sum_{\substack{\beta^1\geq \alpha^1,\\ \vert \beta^1\vert =k+l}}\sum_{\substack{ \gamma^1\geq \alpha^1,\\ \vert \gamma^1\vert =k+m }}  \text{nbr}_{\beta^1,\gamma^1}(\tau_N)\nn\\
& \quad \times \sum_{j=k}^{k+l}  \sum_{\beta^2\geq \beta^1\vert j}\sum_{\gamma^2\geq \beta^1\vert j} \text{nbr}_{\beta^2,\gamma^2}(\tau_N)\Big).
\end{align}

For each $k\leq j\leq k+l$, by conditioning on $\cH_{\beta^1}\vee\cH_{\gamma^1}$, we may replace $\beta^1\vert j$ by $1$ and use translation invariance to see that 
 \begin{align}   \label{9ec4.98}
&\E\Big(\sum_{\beta^2\geq \beta^1\vert j}\sum_{\gamma^2\geq \beta^1\vert j} \text{nbr}_{\beta^2,\gamma^2}(\tau_N) \Big\vert  \cH_{\beta^1}\vee\cH_{\gamma^1}\Big)
\nn\\
&=  \E\Big( \sum_{\beta^2\geq 1}\sum_{\gamma^2\geq 1}  \text{nbr}_{\beta^2,\gamma^2}(\tau_N-T_{\beta^1\vert j})\Big)\nn\\
&\leq  C  I((2N+2\theta) (\tau_N-T_{\beta^1\vert j}) )\leq CI(N),
\end{align}
where the second inequality is from \eqref{9e9.51}.
The sum of $j$ from $k$ to $k+l$ in \eqref{9ea1.00} gives an extra $1+l\leq 1+l+m$. We conclude \eqref{9ea1.00} becomes
 \begin{align}  \label{9ea1.08}
\E(R_2^{(i)})\leq CI(N) \E\Big(\sum_{k=0}^\infty &\sum_{\substack{\alpha^1\geq 1,\\ \vert \alpha^1\vert =k}} \sum_{l=0}^\infty \sum_{m=0}^\infty\sum_{\substack{\beta^1\geq \alpha^1,\\ \vert \beta^1\vert =k+l}}\sum_{\substack{ \gamma^1\geq \alpha^1,\\ \vert \gamma^1\vert =k+m }}    \text{nbr}_{\beta^1,\gamma^1}(\tau_N) \times  (1+l+m)\Big).
\end{align}
Similar to the derivation of \eqref{9e2.640}, we may bound the above by
\begin{align*} 
\E(R_2^{(i)})\leq CI(N)\E\Big(&\sum_{k=0}^\infty \sum_{\alpha^1\geq 1, \vert \alpha^1\vert =k} 1{\{T_{\alpha^1}<\tau_N, B^{\alpha^1}\neq \Delta\}}\nn \\
&\sum_{l=0}^\infty \sum_{m=0}^\infty    (\frac{2N+2\theta}{2N+\theta})^{l+m}  \pi((2N+\theta) (\tau_N-T_{\alpha^1}), l-1)\nn \\
&\times    \Pi((2N+\theta) (\tau_N-T_{\alpha^1}), m-1) \cdot \frac{1}{(1+l+m)^{d/2}} \times  (1+l+m)\Big).
\end{align*}
The sum of $m$ gives
 \begin{align*} 
& \sum_{m=0}^\infty(\frac{2N+2\theta}{2N+\theta})^{m} \Pi((2N+\theta)(\tau_N-T_{\alpha^1}), m-1)  \frac{1}{(1+l+m)^{d/2-1}} \leq  CI(N),
\end{align*}
where the inequality is by Lemma \ref{9l8.7}. The sum of $l$ gives at most $Ce^{\theta \tau_N}$.
We are left with 
 \begin{align}\label{9ea1.09}
\E(R_2^{(i)})& \leq Ce^{\theta \tau_N} I(N)^2 \E\Big( \sum_{\alpha^1\geq 1 } 1(T_{\alpha^1}<\tau_N, B^{\alpha^1}\neq \Delta) \Big)\leq CI(N)^2 N\tau_N.
\end{align}
where the last inequality is by \eqref{9e9.66}. \\

\no {\bf Case (2.ii).} Turning to Case (ii), recall $\beta^2\wedge \beta^1=\beta^1\vert j$ for some $k\leq j\leq k+l$.  Let $\gamma^2\wedge \beta^1=\beta^1\vert j'$ for some $k\leq j'\leq k+l$. Then we have
 \begin{align} \label{9ea1.01}
\E(R_2^{(ii)})\leq C\E\Big(\sum_{k=0}^\infty &\sum_{\substack{\alpha^1\geq 1,\\ \vert \alpha^1\vert =k}} \sum_{l=0}^\infty \sum_{m=0}^\infty\sum_{\substack{\beta^1\geq \alpha^1,\\ \vert \beta^1\vert =k+l}}\sum_{\substack{ \gamma^1\geq \alpha^1,\\ \vert \gamma^1\vert =k+m }}  \text{nbr}_{\beta^1,\gamma^1}(\tau_N)\nn\\
& \quad \times \sum_{j=k}^{k+l}\sum_{j'=k}^{k+l}  \sum_{\beta^2\geq \beta^1\vert j}\sum_{\gamma^2\geq\beta^1\vert j'} \text{nbr}_{\beta^2,\gamma^2}(\tau_N)\Big).
\end{align}

For each $k\leq j\leq k+l$, set $\vert \beta^2\vert =j+u$ and $\vert \gamma^2\vert =j'+n$ for some $u\geq 0$, $n\geq 0$. 
By conditioning on $\cH_{\beta^1}\vee \cH_ {\gamma^1}$, on the event $\{T_{\pi \beta^1}<\tau_N\leq T_{\beta^1}, B^{\beta^1}\neq \Delta\}$ from $\text{nbr}_{\beta^1,\gamma^1}(\tau_N)$, we have the sum of $\beta^2, \gamma^2$ equals
  \begin{align}  \label{9e9.81}
& \E\Big(\sum_{\beta^2\geq \beta^1\vert j}\sum_{\gamma^2\geq\beta^1\vert j'} \text{nbr}_{\beta^2,\gamma^2}(\tau_N) \Big\vert \cH_{\beta^1}\vee \cH_ {\gamma^1}\Big)
\nn\\
& \leq  C\sum_{u=0}^\infty \sum_{n=0}^\infty  (\frac{2N+2\theta}{2N+\theta})^{u+n}\pi((2N+\theta)(\tau_N-T_{\beta^1\vert j})^+, u-1) \nn\\
&\quad \quad \times  \Pi((2N+\theta)(\tau_N-T_{\beta^1\vert j'})^+, n-1) \frac{1}{(u+n+1)^{d/2}}\nn\\
& \leq  C \sum_{u=0}^\infty   (\frac{2N+2\theta}{2N+\theta})^{u}\pi((2N+\theta)(\tau_N-T_{\beta^1\vert j})^+, u)    \frac{1}{(u +1)^{d/2-1}}\nn\\
& \leq  \frac{C}{(1+(2N+2\theta)(\tau_N-T_{\beta^1\vert j})^+)^{d/2-1}},
\end{align}
where the second inequality follows by Lemma \ref{9l4.0} and the third inequality uses Lemma \ref{9l4.3}(i).  Apply \eqref{9e9.81} in \eqref{9ea1.01} to conclude that
 \begin{align}  \label{9e9.67}
\E(R_2^{(ii)})\leq C   &\sum_{k=0}^\infty  \sum_{\substack{\alpha^1\geq 1,\\ \vert \alpha^1\vert =k}}  \sum_{l=0}^\infty \sum_{m=0}^\infty\sum_{\substack{\beta^1\geq \alpha^1,\\ \vert \beta^1\vert =k+l}}\sum_{\substack{ \gamma^1\geq\alpha^1,\\ \vert \gamma^1\vert =k+m}}     (1+l+m)\\
&\quad   \times  \E\Big( \text{nbr}_{\beta^1,\gamma^1}(\tau_N) \sum_{j=k}^{k+l} \frac{1}{(1+(2N+\theta)(T_{\pi\beta^1}-T_{\beta^1\vert j})^+)^{d/2-1}}\Big),\nn
\end{align}
where the term $(1+l+m)$ is from the sum of $k\leq j'\leq k+l$. We also use $2N+2\theta\geq 2N+\theta$ and $T_{\pi\beta^1}<\tau_N$ to bound \eqref{9e9.81}. 
By conditioning on $\cH_{\alpha^1}$,  the expectation above is bounded by 
\begin{align*} 
&\E\Bigg\{1{\{T_{\alpha^1}<\tau_N, B^{\alpha^1}\neq \Delta\}}        (\frac{N+\theta}{2N+\theta})^{l+m}  \nn\\
&\times \E\Big[1(T_{\pi \beta^1}<\tau_N\leq T_{\beta^1}) \sum_{j=k}^{k+l} \frac{1}{(1+(2N+\theta)(T_{\pi\beta^1}-T_{\beta^1\vert j})^+)^{d/2-1}}\Big\vert \cH_{\alpha^1}\Big]\nn \\
&\times  \P\Big(T_{\pi \gamma^1}<\tau_N\Big\vert \cH_{\alpha^1}\Big)\cdot \frac{C}{(1+l+m)^{d/2}}\Bigg\},
\end{align*}
where we have used Lemma \ref{9l4.2} to bound the probability $\P(B^{\beta^1}-B^{\gamma^1}\in \cN_N\vert \cH_{\alpha^1})$. Use the Gamma random variables to see that the above is equal to
 \begin{align}  \label{9e9.82}
&\E\Bigg\{ 1{\{T_{\alpha^1}<\tau_N, B^{\alpha^1}\neq \Delta\}}        (\frac{N+\theta}{2N+\theta})^{l+m}  \nn\\
&\times \E\Big[1(\Gamma_{l-1}<(2N+\theta)(\tau_N-T_{\alpha^1})\leq \Gamma_{l}) \sum_{j=k}^{k+l} \frac{1}{(1+(\Gamma_{l-1}-\Gamma_{j-k})^+)^{d/2-1}}\Big]\nn \\
&\times  \Pi((2N+\theta)(\tau_N-T_{\alpha^1}),m-1) \cdot \frac{C}{(1+l+m)^{d/2}}\Bigg\}.
\end{align}
Apply the above in \eqref{9e9.67} to get
 \begin{align}  \label{9e9.83}
\E(R_2^{(ii)}) \leq C\E\Big(  & \sum_{\alpha^1\geq 1 } 1{\{T_{\alpha^1}<\tau_N, B^{\alpha^1}\neq \Delta\}}\sum_{l=0}^\infty  \sum_{m=0}^\infty      (\frac{2N+2\theta}{2N+\theta})^{l+m}   \nn\\
&\times\E\Big[1(\Gamma_{l-1}<(2N+\theta)(\tau_N-T_{\alpha^1})\leq \Gamma_{l}) \sum_{j=0}^{l} \frac{1}{(1+(\Gamma_{l-1}-\Gamma_{j})^+)^{d/2-1}}\Big] \nn \\
&\times \Pi((2N+\theta) (\tau_N-T_{\alpha^1}), m-1) \frac{1}{(1+l+m)^{d/2-1}}\Big),
\end{align}
The sum of $m$ above gives
 \begin{align}  \label{9ea1.03}
&\sum_{m=0}^\infty  (\frac{2N+2\theta}{2N+\theta})^{m}  \Pi((2N+\theta) (\tau_N-T_{\alpha^1}), m-1) \frac{1}{(1+l+m)^{d/2-1}} \leq CI(N), 
\end{align}
where the inequality is by Lemma \ref{9l8.7}. Turning to the sum of $l$, we have
\begin{align} \label{9e9.85}
&\sum_{l=0}^\infty  (\frac{2N+2\theta}{2N+\theta})^{l}    \E\Big[1(\Gamma_{l-1}<(2N+\theta)(\tau_N-T_{\alpha^1})\leq \Gamma_{l}) \sum_{j=0}^{l} \frac{1}{(1+(\Gamma_{l-1}-\Gamma_{j})^+)^{d/2-1}}\Big] \nn \\
&\leq 5+ \sum_{l=1}^\infty  (\frac{2N+2\theta}{2N+\theta})^{l+1}    \E\Big[1(\Gamma_{l}<(2N+\theta)(\tau_N-T_{\alpha^1})\leq \Gamma_{l+1}) \sum_{j=0}^{l+1} \frac{1}{(1+(\Gamma_{l}-\Gamma_{j})^+)^{d/2-1}}\Big] \nn \\
&\leq 5+Ce^{\theta \tau_N}+ 2\sum_{l=1}^\infty       (1+\eps_N)^{l}    \E\Big[1_{(\Gamma_l<(2N+\theta)(\tau_N-T_{\alpha^1})\leq \Gamma_{l+1}) }\sum_{j=0}^{l-1} \frac{1}{(1+\Gamma_{l}-\Gamma_{j})^{d/2-1}}\Big] \nn \\
&\leq  Ce^{\theta \tau_N}+Ce^{\theta(\tau_N-T_{\alpha^1})} I((2N+\theta)(\tau_N-T_{\alpha^1}))\leq CI(N),
\end{align}
where the term $5$ on the second line comes from $l=0,1$. On the third line, the term $Ce^{\theta \tau_N}$ is from the sum for $l\geq 1$ and  $j=l,l+1$. The second last inequality follows from Lemma 4.5 of \cite{DP99}. Combining \eqref{9ea1.03} and \eqref{9e9.85}, we conclude that \eqref{9e9.83} becomes
\begin{align} \label{9ea1.06}
\E(R_2^{(ii)}) &\leq CI(N)^2\E\Big(  \sum_{\alpha^1\geq 1 } 1{\{T_{\alpha^1}<\tau_N, B^{\alpha^1}\neq \Delta\}} \Big)  \leq CN\tau_N I(N)^2,
\end{align}
where the last inequality follows from \eqref{9e9.66}.\\

\no {\bf Case (2.iii).} Turning to Case (iii), since we assume $T_{\beta^1 \wedge \gamma^1}\leq T_{\beta^2 \wedge \gamma^2}$, we must have $\gamma^2\wedge \gamma^1\geq \alpha^1$. Set $\gamma^2\wedge \gamma^1=\gamma^1\vert j'$ for some $k\leq j'\leq k+m$.
Then 
 \begin{align} \label{9ea1.04}
\E(R_2^{(iii)})\leq C\E\Big(\sum_{k=0}^\infty &\sum_{\substack{\alpha^1\geq 1,\\ \vert \alpha^1\vert =k}} \sum_{l=0}^\infty \sum_{m=0}^\infty\sum_{\substack{\beta^1\geq \alpha^1,\\ \vert \beta^1\vert =k+l}}\sum_{\substack{ \gamma^1\geq \alpha^1,\\ \vert \gamma^1\vert =k+m }}  \text{nbr}_{\beta^1,\gamma^1}(\tau_N)\nn\\
& \quad \times \sum_{j=k}^{k+l}\sum_{j'=k}^{k+m}  \sum_{\beta^2\geq \beta^1\vert j}\sum_{\gamma^2\geq\gamma^1\vert j'}  \text{nbr}_{\beta^2,\gamma^2}(\tau_N)\Big).
\end{align}

For each $k\leq j\leq k+l$ and $k\leq j'\leq k+m$, set $\vert \beta^2\vert =j+l'$ and $\vert \gamma^2\vert =j'+m'$ for some $l'\geq 0$ and $m'\geq 0$.
By conditioning on $\cH_{\beta^1}\vee\cH_{\gamma^1}$, on the event as in $\text{nbr}_{\beta^1,\gamma^1}(\tau_N)$, we have the sum of $\beta^2, \gamma^2$ equals
  \begin{align*}  
& \E\Big(\sum_{\beta^2\geq \beta^1\vert j}\sum_{\gamma^2\geq\gamma^1\vert j'}  \text{nbr}_{\beta^2,\gamma^2}(\tau_N) \Big\vert  \cH_{\beta^1}\vee\cH_{\gamma^1}\Big)
\nn\\
&\leq C \sum_{l'=0}^\infty \sum_{m'=0}^\infty (\frac{2N+2\theta}{2N+\theta})^{l'+m'}\pi((2N+\theta)(\tau_N-T_{\beta^1\vert j})^+, l'-1) \nn\\
&\quad \quad \times  \Pi((2N+\theta)(\tau_N-T_{\gamma^1\vert j'})^+, m'-1) \frac{1}{(l'+m'+1)^{d/2}}\nn\\
&\leq \frac{C }{(1+(2N+2\theta)(\tau_N-T_{\beta^1\vert j})^+)^{d/2-1}},
\end{align*}
where the last inequality follows in a similar way as in \eqref{9e9.81}. Use $2N+2\theta\geq 2N+\theta$ and $T_{\pi\beta^1}<\tau_N$ to bound the above term and then conclude that \eqref{9ea1.04} becomes
 \begin{align}  
\E(R_2^{(iii)})\leq C&\sum_{k=0}^\infty  \sum_{\substack{\alpha^1\geq 1,\\ \vert \alpha^1\vert =k}}   \sum_{l=0}^\infty \sum_{m=0}^\infty\sum_{\substack{\beta^1\geq \alpha^1,\\ \vert \beta^1\vert =k+l}}\sum_{\substack{ \gamma^1\geq \alpha^1,\\ \vert \gamma^1\vert =k+m}}   \sum_{j=k}^{k+l}  (1+l+m)\nn\\
&\quad \E\Big( \text{nbr}_{\beta^1,\gamma^1}(\tau_N) \frac{1}{(1+(2N+\theta)(T_{\pi\beta^1}-T_{\beta^1\vert j})^+))^{d/2-1}}\Big),\nn
\end{align}
 where the term $(1+l+m)$ is from the sum of $k\leq j'\leq k+l$. The right-hand side above is exactly \eqref{9e9.67}. Use \eqref{9ea1.06} to conclude 
\begin{align*} 
\E(R_2^{(iii)}) &\leq C N\tau_N I(N)^2,
\end{align*}
as required.

\subsection{Bounds for $\E(R_3)$ }
 Finally for $R_3$, we have $\beta^2$ branches off the family tree of $\alpha^1, \beta^1,\gamma^1$ from $\gamma^1$, so we let $\beta^2\wedge \gamma^1=\gamma^1\vert j$ for some $k\leq j\leq k+m$. Again there are three cases where $\gamma^2$ branches:
\begin{align*} 
\text{(i)} &\quad  \tau(\{\alpha^1,\beta^1,\gamma^1,\beta^2\}; \gamma^2)=\vert  \gamma^2  \wedge \beta^2\vert> \vert  \gamma^2  \wedge \gamma^1\vert;\nn\\
\text{(ii)}& \quad \tau(\{\alpha^1,\beta^1,\gamma^1,\beta^2\}; \gamma^2)=\vert \gamma^2 \wedge \gamma^1   \vert ;\nn\\
\text{(iii)} &\quad \tau(\{\alpha^1,\beta^1,\gamma^1,\beta^2\}; \gamma^2)=\vert \gamma^2  \wedge\beta^1  \vert . 
\end{align*}
Denote by $R_3^{(i)}, R_3^{(ii)}, R_3^{(iii)}$ the contribution to $R_3$ from these three cases, for which we refer the reader to Figure \ref{fig3}. It suffices to show that
\[
\E(R_3^{(i)}) \leq CN\tau_N I(N)^2, \quad \E(R_3^{(ii)}) \leq CN\tau_N I(N)^2, \quad \E(R_3^{(iii)}) \leq CN\tau_N I(N)^2.
\]
 \begin{figure}[ht]
  \begin{center}
    \includegraphics[width=1 \textwidth]{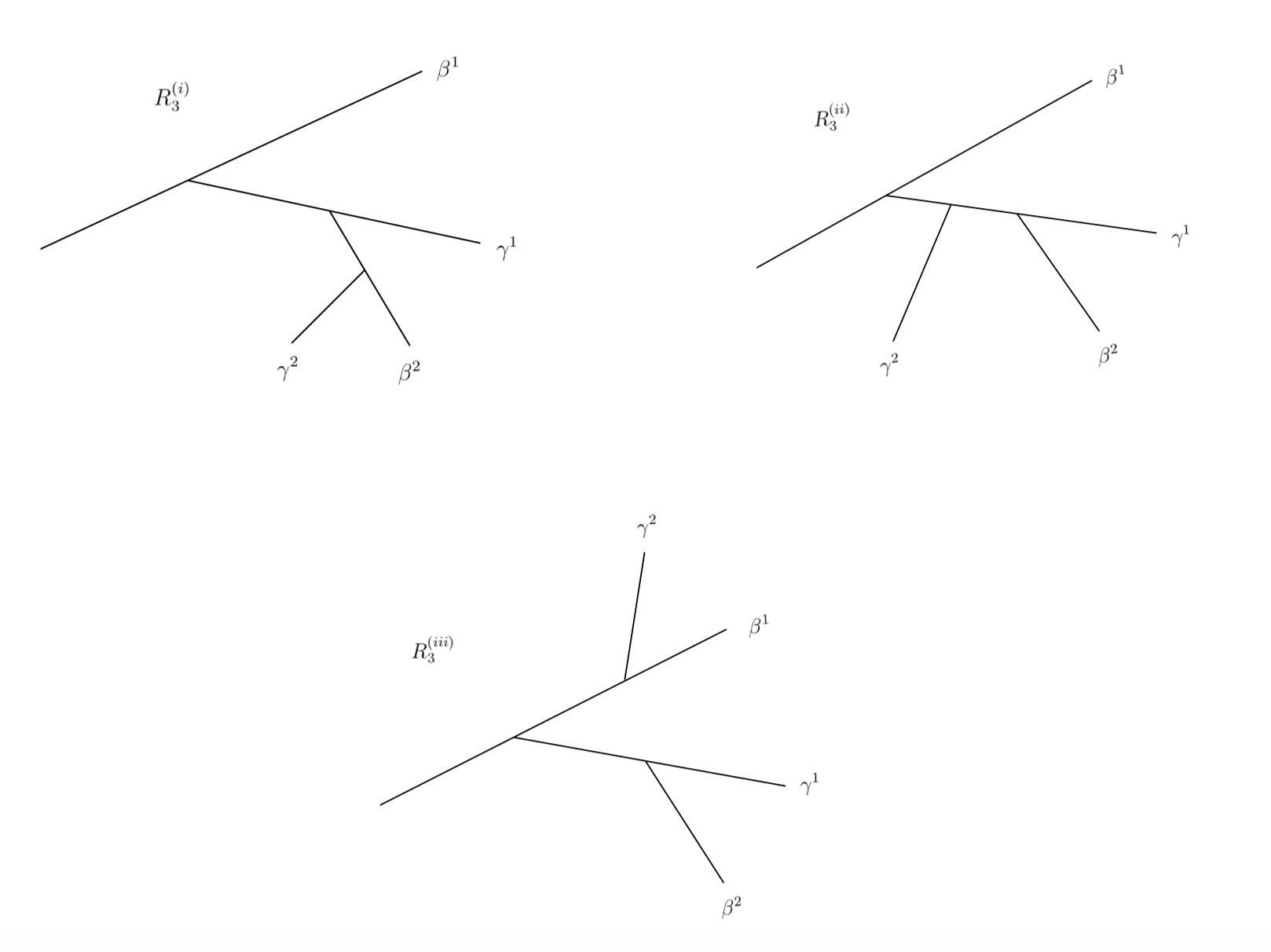}
    \caption[Branching Particle System]{\label{fig3}   Three cases for $R_3$.
      }
  \end{center}
\end{figure}

\no {\bf Case (3.i).} Case (i) implies that $\gamma^2$ branches off $\beta^2$ only after $\beta^2$ branches off $\gamma^1$. Recall \eqref{9e7.21} to get
 \begin{align} \label{9ea1.07}
\E(R_3^{(i)})\leq C\E\Big(\sum_{k=0}^\infty &\sum_{\substack{\alpha^1\geq 1,\\ \vert \alpha^1\vert =k}} \sum_{l=0}^\infty \sum_{m=0}^\infty\sum_{\substack{\beta^1\geq \alpha^1,\\ \vert \beta^1\vert =k+l}}\sum_{\substack{ \gamma^1\geq \alpha^1,\\ \vert \gamma^1\vert =k+m }}  \text{nbr}_{\beta^1,\gamma^1}(\tau_N)\nn\\
& \quad \times \sum_{j=k}^{k+m}   \sum_{\beta^2\geq  \gamma^1\vert j}\sum_{\gamma^2\geq \gamma^1\vert j} \text{nbr}_{\beta^2,\gamma^2}(\tau_N)\Big).
\end{align}
For each $k\leq j\leq k+m$, by similar reasonings used in the derivation of \eqref{9ec4.98}, we get  on the event $\text{nbr}_{\beta^1,\gamma^1}(\tau_N)$,   
 \begin{align*}  
&\E\Big(\sum_{\beta^2\geq\gamma^1\vert j}\sum_{\gamma^2\geq \gamma^1\vert j}   \text{nbr}_{\beta^2,\gamma^2}(\tau_N) \Big\vert  \cH_{\beta^1}\vee\cH_{\gamma^1}\Big)
\nn\\
&= \E\Big(   \sum_{\beta^2\geq 1}\sum_{\gamma^2\geq 1}   \text{nbr}_{\beta^2,\gamma^2}(\tau_N-T_{\gamma^1\vert j})\Big) \leq CI(N).
\end{align*}
The sum of $j$ from $k$ to $k+m$ gives at most $1+m\leq 1+m+l$. Now we may conclude that \eqref{9ea1.07} becomes
 \begin{align*}  
\E(R_3^{(i)})\leq  CI(N) \E\Big(\sum_{k=0}^\infty &\sum_{\substack{\alpha^1\geq 1,\\ \vert \alpha^1\vert =k}} \sum_{l=0}^\infty \sum_{m=0}^\infty\sum_{\substack{\beta^1\geq \alpha^1,\\ \vert \beta^1\vert =k+l}}\sum_{\substack{ \gamma^1\geq \alpha^1,\\ \vert \gamma^1\vert =k+m }}    \text{nbr}_{\beta^1,\gamma^1}(\tau_N) \times  (1+l+m)\Big),
\end{align*}
 which is exactly \eqref{9ea1.08}. Use \eqref{9ea1.09} to conclude
 \begin{align*}
 \E(R_3^{(i)})\leq CN\tau_N I(N)^2.
 \end{align*}
 
\no {\bf Case (3.ii).} For case (ii) we let $\gamma^2\wedge \gamma^1=\gamma^1\vert j'$ for some $k\leq j'\leq k+m$. 
Then
 \begin{align} \label{9ea1.10}
\E(R_3^{(ii)})\leq C\E\Big(\sum_{k=0}^\infty &\sum_{\substack{\alpha^1\geq 1,\\ \vert \alpha^1\vert =k}} \sum_{l=0}^\infty \sum_{m=0}^\infty\sum_{\substack{\beta^1\geq \alpha^1,\\ \vert \beta^1\vert =k+l}}\sum_{\substack{ \gamma^1>\alpha^1,\\ \vert \gamma^1\vert =k+1+m,\\ \gamma^1_{k+1}=0 }}  \text{nbr}_{\beta^1,\gamma^1}(\tau_N)\nn\\
& \quad \times \sum_{j=k}^{k+m}\sum_{j'=k}^{k+m}  \sum_{\beta^2\geq \gamma^1\vert j}\sum_{\gamma^2\geq\gamma^1\vert j'} \text{nbr}_{\beta^2,\gamma^2}(\tau_N)\Big).
\end{align}
For each $k\leq j\leq k+m$ and $k\leq j'\leq k+m$, set $\vert \beta^2\vert =j+u$ and $\vert \gamma^2\vert =j'+n$ for some $u\geq 0$, $n\geq 0$.
Similar to the derivation of \eqref{9e9.81}, by conditioning on $\cH_{\beta^1}\vee \cH_ {\gamma^1}$, on the event $\{T_{\pi \gamma^1}<\tau_N, B^{\gamma^1}\neq \Delta\}$ from $\text{nbr}_{\beta^1,\gamma^1}(\tau_N)$, we have the sum of $\beta^2, \gamma^2$ equals
  \begin{align}   \label{9ec5.49}
& \E\Big(\sum_{\beta^2\geq \gamma^1\vert j}\sum_{\gamma^2\geq\gamma^1\vert j'} \text{nbr}_{\beta^2,\gamma^2}(\tau_N) \Big\vert \cH_{\beta^1}\vee \cH_ {\gamma^1}\Big)
\nn\\
& \leq  C\sum_{u=0}^\infty \sum_{n=0}^\infty  (\frac{2N+2\theta}{2N+\theta})^{u+n}\pi((2N+\theta)(\tau_N-T_{\gamma^1\vert j})^+, u-1) \nn\\
&\quad \quad \times  \Pi((2N+\theta)(\tau_N-T_{\gamma^1\vert j'})^+, n-1) \frac{1}{(u+n+1)^{d/2}}\nn\\
& \leq  C \sum_{u=0}^\infty   (\frac{2N+2\theta}{2N+\theta})^{u}\pi((2N+\theta)(\tau_N-T_{\gamma^1\vert j})^+, u-1)    \frac{1}{(u +1)^{d/2-1}}\nn\\
& \leq   \frac{C}{(1+(2N+2\theta)(\tau_N-T_{\gamma^1\vert j})^+)^{d/2-1}},
\end{align}
where the second inequality follows by Lemma \ref{9l4.0} and the third inequality uses Lemma \ref{9l4.3}(i). Use $2N+2\theta\geq 2N+\theta$ and $T_{\pi\gamma^1}<\tau_N$ to bound the above term and then conclude that \eqref{9ea1.10} becomes 
   \begin{align}  \label{9e9.94}
\E(R_3^{(ii)}) \leq C &\sum_{k=0}^\infty  \sum_{\substack{\alpha^1\geq 1,\\ \vert \alpha^1\vert =k}} \sum_{l=0}^\infty \sum_{m=0}^\infty\sum_{\substack{\beta^1\geq \alpha^1,\\ \vert \beta^1\vert =k+l }}\sum_{\substack{ \gamma^1\geq \alpha^1,\\ \vert \gamma^1\vert =k+m }}   (1+l+m)\\
&\quad \E\Big( \text{nbr}_{\beta^1,\gamma^1}(\tau_N)  \sum_{j=k}^{k+m}\frac{1}{(1+(2N+\theta)(T_{\pi\gamma^1}-T_{\gamma^1\vert j})^+)^{d/2-1}}\Big),\nn
\end{align}
where the term $(1+l+m)$ is from the sum of $k\leq j'\leq k+m$.
 By conditioning on $\cH_{\alpha^1}$, we get that the expectation above is at most
\begin{align*} 
&\E\Bigg\{1{\{T_{\alpha^1}<\tau_N, B^{\alpha^1}\neq \Delta\}}        (\frac{N+\theta}{2N+\theta})^{l+m}  \nn\\
&\times \E\Big[1(T_{\pi \gamma^1}<\tau_N) \sum_{j=k}^{k+m} \frac{1}{(1+(2N+\theta)(T_{\pi\gamma^1}-T_{\gamma^1\vert j})^+)^{d/2-1}}\Big\vert \cH_{\alpha^1}\Big]\nn \\
&\times  \P\Big(T_{\pi \beta^1}<\tau_N\leq T_{\beta^1}\Big\vert \cH_{\alpha^1}\Big)\cdot \frac{C}{(1+l+m)^{d/2}}\Bigg\},
\end{align*}
where we have used Lemma \ref{9l4.2} to bound the probability $\P(B^{\beta^1}-B^{\gamma^1}\in \cN_N\vert \cH_{\alpha^1})$. Use the Gamma random variables to see that the above is at most
 \begin{align}  \label{9e9.96}
&\E\Bigg\{ 1{\{T_{\alpha^1}<\tau_N, B^{\alpha^1}\neq \Delta\}}        (\frac{N+\theta}{2N+\theta})^{l+m}  \nn\\
&\times \E\Big[1(\Gamma_{m-1}<(2N+\theta)(\tau_N-T_{\alpha^1})) \sum_{j=k}^{k+m} \frac{1}{(1+(\Gamma_{m-1}-\Gamma_{j-k})^+)^{d/2-1}}\Big]\nn \\
&\times  \pi((2N+\theta)(\tau_N-T_{\alpha^1}),l-1) \cdot \frac{C}{(1+l+m)^{d/2}}\Bigg\}.
\end{align}
Now we have \eqref{9e9.94} becomes
 \begin{align}  \label{9e9.97} 
\E(R_3^{(ii)}) &\leq C\E\Big(  \sum_{\alpha^1\geq 1 } 1{\{T_{\alpha^1}<\tau_N, B^{\alpha^1}\neq \Delta\}}\sum_{l=0}^\infty   \sum_{m=0}^\infty    (\frac{2N+2\theta}{2N+\theta})^{l+m}  \nn\\
&\times\E\Big[1(\Gamma_{m-1}<(2N+\theta)(\tau_N-T_{\alpha^1})) \sum_{j=0}^{m} \frac{1}{(1+(\Gamma_{m-1}-\Gamma_{j})^+)^{d/2-1}}\Big] \nn \\
&\times   \pi((2N+\theta) (\tau_N-T_{\alpha^1}), l-1) \frac{1}{(1+l+m)^{d/2-1}}\Big),
\end{align}
The sum of $l$ gives
\begin{align}  \label{e2.2}
&\sum_{l=0}^\infty (\frac{2N+2\theta}{2N+\theta})^{l} \pi((2N+\theta)(\tau_N-T_{\alpha^1}), l-1)  \frac{1}{(1+l+m)^{d/2-1}} \nn\\
&\leq \frac{C}{(1+(2N+2\theta)(\tau_N-T_{\alpha^1}))}\leq \frac{C}{(1+N(\tau_N-T_{\alpha^1}))},
\end{align}
where the first inequality is from $m\geq 0$, Lemma \ref{9l4.3} and $d\geq 4$.
The sum of $m$ gives
\begin{align}  \label{9ec9.99}
\sum_{m=0}^\infty    &   (\frac{2N+2\theta}{2N+\theta})^{m}   \E\Big[1(\Gamma_{m-1}<(2N+\theta)(\tau_N-T_{\alpha^1})) \sum_{j=0}^{m} \frac{1}{(1+(\Gamma_{m-1}-\Gamma_{j})^+)^{d/2-1}}\Big] \nn \\
\leq  &\sum_{m>AN(\tau_N-T_{\alpha^1})}       (\frac{2N+2\theta}{2N+\theta})^{m}   \E\Big[1(\Gamma_{m-1}<(2N+\theta)(\tau_N-T_{\alpha^1})) \times 2m\Big]\nn\\
 &\quad + \sum_{m\leq AN(\tau_N-T_{\alpha^1})}  e^{A\theta \tau_N}  \E\Big[ \sum_{j=0}^{m}  \frac{1}{(1+(\Gamma_{m-1}-\Gamma_j)^+)^{d/2-1}}\Big].
\end{align}
Lemma \ref{9l4.3} (ii) implies that the first summation above is at most $C2^{-N(\tau_N-T_{\alpha^1})}\leq C$. Turning to the second summation,
for each $m\leq AN(\tau_N-T_{\alpha^1})$, we have
\begin{align*}  
&\E\Big[ \sum_{j=0}^{m}  \frac{1}{(1+(\Gamma_{m-1}-\Gamma_j)^+)^{d/2-1}}\Big] \leq C+\E\Big[ \sum_{j=d}^{AN\tau_N}  \frac{1}{(1+\Gamma_j)^{d/2-1}}\Big]\nn\\
&\leq C+  C\sum_{j=d}^{AN\tau_N}  \frac{1}{(1+j)^{d/2-1}}\leq CI(N),
\end{align*}
where the second inequality uses Lemma 10.4 of \cite{DP99}.
Thus, we may bound \eqref{9ec9.99} by 
\begin{align}  \label{e2.1}
C+C(1+AN(\tau_N-T_{\alpha^1})) I(N)\leq   C(1+N(\tau_N-T_{\alpha^1}))I(N).
\end{align}
Combine \eqref{e2.2} and \eqref{e2.1} to see that \eqref{9e9.97} becomes
 \begin{align}\label{9e9.99}
\E(R_3^{(ii)})& \leq CI(N)  \E\Big( \sum_{\alpha^1\geq 1 } 1(T_{\alpha^1}<\tau_N, B^{\alpha^1}\neq \Delta) \Big)\nn\\
&\leq C N \tau_N I(N)\leq C N \tau_N I(N)^2,
\end{align}
where the second inequality follows by \eqref{9e9.66}.\\

\no {\bf Case (3.iii).} Turning to Case (iii), since we assumed at the beginning that $T_{\beta^1 \wedge \gamma^1}\leq T_{\beta^2 \wedge \gamma^2}$, we must have $\gamma^2\wedge \beta^1\geq \beta^1 \wedge \gamma^1=\beta^1\vert k$. Set $\gamma^2\wedge \beta^1=\beta^1\vert j'$ for some $k\leq j'\leq k+l$. 
Then
 \begin{align} \label{9ea1.11}
\E(R_3^{(iii)})\leq C\E\Big(\sum_{k=0}^\infty &\sum_{\substack{\alpha^1\geq 1,\\ \vert \alpha^1\vert =k}} \sum_{l=0}^\infty \sum_{m=0}^\infty\sum_{\substack{\beta^1\geq \alpha^1,\\ \vert \beta^1\vert =k+l}}\sum_{\substack{ \gamma^1\geq \alpha^1,\\ \vert \gamma^1\vert =k+m}}  \text{nbr}_{\beta^1,\gamma^1}(\tau_N)\nn\\
& \quad \times \sum_{j=k}^{k+m}\sum_{j'=k}^{k+l}  \sum_{\beta^2\geq \gamma^1\vert j}\sum_{\gamma^2\geq\beta^1\vert j'} \text{nbr}_{\beta^2,\gamma^2}(\tau_N)\Big).
\end{align}
 For each $k\leq j\leq k+m$ and $k\leq j'\leq k+l$, set $\vert \beta^2\vert =j+l'$ and $\vert \gamma^2\vert =j'+m'$ for some $l'\geq 0$ and $m'\geq 0$.
By conditioning on $\cH_{\beta^1}\vee\cH_{\gamma^1}$, on the event $\text{nbr}_{\beta^1,\gamma^1}(\tau_N)$, we have the sum of $\beta^2, \gamma^2$ is at most
  \begin{align*}  
& \E\Big(\sum_{\beta^2\geq \gamma^1\vert j}\sum_{\gamma^2\geq\beta^1\vert j'} \text{nbr}_{\beta^2,\gamma^2}(\tau_N) \Big\vert \cH_{\beta^1}\vee\cH_{\gamma^1}\Big)
\nn\\
& \leq C\sum_{l'=0}^\infty \sum_{m'=0}^\infty (\frac{2N+2\theta}{2N+\theta})^{l'+m'}\pi((2N+\theta)(\tau_N-T_{\gamma^1\vert j})^+, l'-1) \nn\\
&\quad \quad \times  \Pi((2N+\theta)(\tau_N-T_{\beta^1\vert j'})^+, m'-1) \frac{1}{(l'+m'+1)^{d/2}}\nn\\
&\leq \frac{C}{(1+(2N+2\theta)(\tau_N-T_{\gamma^1\vert j})^+)^{d/2-1}},
\end{align*}
where the last inequality follows in a similar reasoning in \eqref{9ec5.49}. Use $2N+2\theta\geq 2N+\theta$ and $T_{\pi\gamma^1}<\tau_N$ to bound the above term and then conclude that \eqref{9ea1.11} becomes 
   \begin{align*}  
\E(R_3^{(ii)}) \leq C &\sum_{k=0}^\infty  \sum_{\substack{\alpha^1\geq 1,\\ \vert \alpha^1\vert =k}} \sum_{l=0}^\infty \sum_{m=0}^\infty\sum_{\substack{\beta^1\geq \alpha^1,\\ \vert \beta^1\vert =k+l }}\sum_{\substack{ \gamma^1\geq \alpha^1,\\ \vert \gamma^1\vert =k+m }}   (1+l+m)\\
&\quad \E\Big( \text{nbr}_{\beta^1,\gamma^1}(\tau_N)  \sum_{j=k}^{k+m}\frac{1}{(1+(2N+\theta)(T_{\pi\gamma^1}-T_{\gamma^1\vert j})^+)^{d/2-1}}\Big),\nn
\end{align*}
where the term $(1+l+m)$ is from the sum of $k\leq j'\leq k+l$. The right-hand side above is exactly \eqref{9e9.94}. 
By \eqref{9e9.99}, we conclude 
\begin{align*} 
\E(R_3^{(iii)}) &\leq C N\tau_N I(N)^2,
\end{align*}
as required.  \\

The proof of Lemma \ref{9l7.1} is now complete as noted below \eqref{e2}.
\end{proof}

\section{Proofs of Lemmas \ref{9l5.4} and \ref{9l5.8}}\label{9s8}

The last section is devoted to proving the last two lemmas, Lemmas \ref{9l5.4} and \ref{9l5.8},  from Section \ref{9s5}. We first consider Lemma \ref{9l5.4}. Recall the definition of $\zeta_\alpha^m$ with $m\geq 0$ from \eqref{9ea3.01} to see that
\begin{align*}
1(\zeta_\alpha^m>T_{\pi\alpha})=1(\zeta_\alpha^m\geq T_{\alpha}).
\end{align*}
Use the above to rewrite  $K_{t}^{n,2}(\phi)$ from \eqref{9e3.5} as 
\begin{align*} 
K_{t}^{n,2}(\phi)=&\frac{1}{2N+\theta}  \frac{1}{\psi(N)}  \sum_{\beta} 1(T_\beta\leq t, \zeta_\beta^n\geq T_{\beta})  \phi(B^\beta)\nn\\
& \sum_{\gamma} 1(T_{\gamma \wedge \beta}>T_\beta-\tau_N) 1(T_{\pi\gamma}<T_\beta, \zeta_\gamma^{n-1}\geq T_{\gamma}) 1(B^{\gamma}-B^{\beta}\in \cN_N).
\end{align*}
By comparing to $K_{t}^{n,3}(\phi)$ as in \eqref{9e3.6}, we get
\begin{align} \label{9e8.01}
\sup_{s\leq t}\vert K_{s}^{n,2}(\phi)-K_{s}^{n,3}(\phi)\vert &\leq \frac{1}{2N+\theta}  \frac{1}{\psi(N)} \| \phi\| _\infty \sum_{\beta} 1(T_\beta\leq t)\sum_{\gamma}  1(T_{\pi\gamma}<T_\beta) \\
&\times    1(T_{\gamma \wedge \beta}>T_\beta-\tau_N)  1(B^{\gamma}-B^{\beta}\in \cN_N)\nn\\
&\times \Big[1(\zeta_\beta^n> T_{\beta}-\tau_N)1(\zeta_\gamma^{n-1}> T_{\beta}-\tau_N)-1(\zeta_\beta^n\geq T_{\beta})1(\zeta_\gamma^{n-1}\geq T_{\gamma})\Big].\nn
\end{align}
Since  $T_{\gamma \wedge \beta}>T_\beta-\tau_N$ implies $T_{\gamma}>T_\beta-\tau_N$, we may bound the square bracket above by 
\begin{align*}
1(\zeta_\beta^n \in (T_{\beta}-\tau_N, T_{\beta}))+1(\zeta_\gamma^{n-1} \in (T_{\beta}-\tau_N, T_{\gamma})).
\end{align*}
To ease the notation, we define
\begin{align*} 
\text{nbr}_{\beta}(\gamma)=1(T_{\gamma \wedge \beta}>T_\beta-\tau_N, T_{\pi \gamma}<T_{\beta}, B^\beta-B^\gamma \in \cN_N),
\end{align*}
and 
\begin{align} \label{9ea3.08}
I(\beta)=\frac{1}{\psi_0(N)} \sum_{\gamma} \text{nbr}_{\beta}(\gamma).
\end{align}
Using the above in \eqref{9e8.01} and then taking expectation,  we get
\begin{align} \label{9e8.02}
\E\Big(&\sup_{s\leq t}\vert K_{s}^{n,2}(\phi)-K_{s}^{n,3}(\phi)\vert \Big)\leq    \frac{1}{N^2} \| \phi\| _\infty  \E\Big(\sum_{\beta} 1(T_{\beta}\leq t, B^\beta\neq \Delta)   1_{\zeta_\beta^n \in (T_{\beta}-\tau_N, T_{\beta})} I(\beta)\Big)\nn\\
&+ \frac{1}{N\psi(N)} \| \phi\| _\infty  \E\Big(\sum_{\beta} 1(T_{ \beta}\leq t, B^\beta\neq \Delta)  \sum_{\gamma}\text{nbr}_{\beta}(\gamma) 1_{\zeta_\gamma^{n-1} \in (T_{\beta}-\tau_N, T_\gamma)} \Big).
\end{align}

\begin{lemma}\label{9l10.1}
For any $m\geq 0$ and $t\geq 0$, we have
\begin{align} \label{9ea1.15}
(i) \lim_{N\to \infty}  \frac{1}{N^2}   \E\Big(\sum_{\beta} 1(T_{\beta}\leq t, B^\beta\neq \Delta)  1_{\zeta_\beta^m \in (T_{\beta}-\tau_N, T_{\beta})}   (I(\beta)+1)\Big)=0.
\end{align}
and
\begin{align} \label{9ea1.16}
(ii) &\lim_{N\to \infty}  \frac{1}{N\psi(N)}   \E\Big(\sum_{\beta} 1(T_{ \beta}\leq t, B^\beta\neq \Delta)  \sum_{\gamma}\text{nbr}_{\beta}(\gamma) 1_{\zeta_\gamma^{m} \in (T_{\beta}-\tau_N, T_\gamma)} \Big)=0.
\end{align}
\end{lemma}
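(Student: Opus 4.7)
The plan is to reduce both parts of Lemma~\ref{9l10.1} to short-window collision counts, in the spirit of Lemma~\ref{9l5.3} and of the second-moment calculation in Lemma~\ref{9l7.1}. The key initial observation is that on $\{B^\beta\neq\Delta\}$ one has $\zeta_\beta^0=T_\beta$, so the event $\{\zeta_\beta^m\in(T_\beta-\tau_N,T_\beta)\}$ is empty when $m=0$; for $m\ge 1$ it forces the existence of a \emph{unique} ancestor $\alpha=\beta|k^*$ with $T_\alpha=\zeta_\beta^m\in(T_\beta-\tau_N,T_\beta)$, $e_\alpha=\beta_{|\alpha|+1}$, and $B^\alpha+W^\alpha\in\overline{\cR}^{X^{m-1}}_{T_\alpha^-}$. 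An analogous statement holds for $\gamma$ in part~(ii), with $B^\gamma=\Delta$ ruled out by $\text{nbr}_\beta(\gamma)$.

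For part~(i), I would rewrite the sum on the left-hand side as
\[
\sum_\alpha 1_{\mathrm{coll}(\alpha)} \sum_{\substack{\beta\ge\alpha\vee e_\alpha,\ T_\beta\in(T_\alpha,T_\alpha+\tau_N]\cap(0,t]\\ B^\beta\neq\Delta}} (I(\beta)+1),
\]
with $1_{\mathrm{coll}(\alpha)}=1(T_\alpha\le t,\,B^\alpha\neq\Delta,\,B^\alpha+W^\alpha\in\overline{\cR}^{X^{m-1}}_{T_\alpha^-})$. Conditioning on $\cF_{T_\alpha}$, the Markov property together with Lemma~\ref{9l3.2} (with $\Psi\equiv 1$) and Lemma~\ref{9l2.0} yield that the expected number of descendants of $\alpha\vee e_\alpha$ with $T_\beta\in(T_\alpha,T_\alpha+\tau_N]$ and $B^\beta\neq\Delta$ is of order $N\tau_N$. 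Combined with Lemma~\ref{9l2.5}, which gives $\E[\sum_\alpha 1_{\mathrm{coll}(\alpha)}]\le CN$, the ``$+1$'' contribution is bounded by $\frac{1}{N^2}\cdot CN\cdot CN\tau_N=C\tau_N\to 0$.

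For the $I(\beta)$ piece, expanding $I(\beta)=\psi_0(N)^{-1}\sum_\gamma\text{nbr}_\beta(\gamma)$ produces a triple sum over $(\alpha,\beta,\gamma)$, which I would split into cases on the position of $\gamma\wedge\beta$ relative to $\alpha$: either $\gamma\wedge\beta\ge\alpha\vee e_\alpha$ (an interior neighbour whose common ancestor with $\beta$ lies wholly inside the $\tau_N$-window), or $\gamma\wedge\beta$ is a strict ancestor of $\alpha$ with $T_{\gamma\wedge\beta}\in(T_\beta-\tau_N,T_\alpha]$ (an exterior neighbour through a common ancestor that predates $\alpha$ but is still recent). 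Each case is a four-particle near-collision event confined to a $\tau_N$-window and can be bounded using the combinatorial machinery of Section~\ref{9s3} (Lemmas~\ref{9l4.0}, \ref{9l4.2}, \ref{9l4.3}, \ref{9l8.7}, \ref{9l3.2a}) exactly as in the $R_2,R_3$ decompositions in the proof of Lemma~\ref{9l7.1}, yielding a bound that again tends to $0$ as $N\to\infty$. Part~(ii) is parallel: after identifying the unique collision ancestor $\alpha$ of $\gamma$ and splitting by the relative genealogy of $\alpha$ and $\gamma\wedge\beta$, one is left with a four-particle near-collision configuration on a $\tau_N$-window, which the estimates of Sections~\ref{9s3}--\ref{9s4} bound at order $N\psi(N)\tau_N$; dividing by $N\psi(N)$ gives the required vanishing limit.

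The main technical obstacle is the control of the triple and quadruple sums that appear when the $I(\beta)$ factor (in (i)) or the extra $\text{nbr}_\beta(\gamma)$ constraint (in (ii)) is coupled with the collision event at $\alpha$. These produce the same kind of four-particle configurations familiar from Section~\ref{9s7.2}, and the $\tau_N\to 0$ factor that drives the whole estimate to zero arises from the window restriction on the $\zeta$-variables, precisely as in Lemma~\ref{9l5.3}.
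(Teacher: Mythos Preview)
Your overall strategy matches the paper's: both identify the collision ancestor on the $\beta$-line (paper's $\beta|i$, your $\alpha$), split according to whether the branch point $\gamma\wedge\beta$ lies before or after this collision point, and reduce to three- and four-particle estimates. Your conditioning argument for the ``$+1$'' contribution is in fact cleaner than the paper's explicit $I_1^{(a)}/I_1^{(b)}$ split: once $1_{\mathrm{coll}(\alpha)}\in\cF_{T_\alpha}$ and the subtree of $\alpha\vee e_\alpha$ is fresh, the bound $\tfrac{1}{N^2}\cdot CN\cdot CN\tau_N\to 0$ follows immediately.

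Two points need attention. First, you omit the $K_0^N$ part of $\overline{\cR}^{X^{m-1}}$; the paper handles this separately (the $I_0$ terms) via assumption~\eqref{9e0.93}, and you should do the same. Second, your phrase ``four-particle near-collision event confined to a $\tau_N$-window'' is misleading: the colliding particle $\delta$ (the one with $B^\delta=B^\alpha+W^\alpha$) is \emph{not} confined to the window and may have $\delta_0\neq\beta_0$. This is exactly why your clean conditioning argument breaks down in the exterior case: once $\gamma\wedge\beta<\alpha$, the event $1_{\mathrm{coll}(\alpha)}$ is correlated with $\gamma$ through the history of $X^{m-1}$, so you must unpack the indicator into a sum over $\delta$ and run genuine four-particle estimates. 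These are \emph{not} the $R_2,R_3$ estimates of Lemma~\ref{9l7.1} (where all four particles live in one $\tau_N$-window from a single root); rather, the paper carries out a separate case analysis ($I_4^{(a)}$ for $\delta_0\neq\alpha_0$, and $I_4^{(b,1)},I_4^{(b,2)}$ for $\delta_0=\alpha_0$ according to whether $\delta$ branches off the $\beta$- or $\gamma$-line), each yielding a bound $C\tau_N$ after applying Lemmas~\ref{9l4.0}, \ref{9l4.2}, \ref{9l8.7} and~\ref{9l8.4}.

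For part~(ii) the paper does something simpler than your direct four-particle approach: it bounds $1\{\zeta_\gamma^m\in(T_\beta-\tau_N,T_\gamma)\}$ by a sum over collision indices $i<|\gamma|$ and splits at $i=|\beta\wedge\gamma|$. For $i<|\beta\wedge\gamma|$ one has $\gamma|i=\beta|i$, so the expression literally embeds into the sums already controlled in~(i). For $i\ge|\beta\wedge\gamma|$ one sets $\alpha=\beta\wedge\gamma$, symmetrizes the remaining constraints, and \emph{swaps the roles of $\beta$ and $\gamma$}; the resulting expression is exactly the $I_4$ block from~(i). This reduction avoids redoing any estimates.
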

Unlike the case in \cite{DP99}, the symmetry between $\beta,\gamma$ here is not explicit, but we will show later in Section \ref{s9.2} that \eqref{9ea1.16} is an easy consequence of \eqref{9ea1.15}. For now, the proof of Lemma \ref{9l5.4} is immediate by \eqref{9e8.02} and Lemma \ref{9l10.1}. The extra $+1$ in  \eqref{9ea1.15} is intended for the proof of Lemma \ref{9l5.8}, which we now give.

\begin{proof}[Proof of Lemma \ref{9l5.8}]
Recall $X_r^{1,\tau}(\phi)$ from \eqref{9e3.13} and $X_r^1(\phi)$ from \eqref{9e0.03}. Using $\phi\in C_b^3(\R^d)$, we get
\begin{align*} 
&\vert X_r^{1,\tau}(\phi)-X_r^1(\phi)\vert \leq \frac{1}{N} \| \phi\| _\infty \sum_{\beta} 1(T_{\pi\beta}<r< T_\beta, B^\beta\neq \Delta)1(\zeta_\beta^1\in (r-\tau_N,r))\nn\\
&\quad \quad\quad +\frac{C}{N}   \sum_{\beta} 1(T_{\pi\beta}<r< T_\beta, B^\beta\neq \Delta) \vert B_r^\beta-B_{(r-\tau_N)^+}^\beta\vert .
\end{align*}
It follows that 
\begin{align} \label{9ea3.02}
&\E\Big(\int_0^t \vert X_r^{1,\tau}(\phi)-X_r^1(\phi)\vert  dr\Big)\nn\\
&\leq \frac{1}{N} \| \phi\| _\infty \int_0^t \E\Big(\sum_{\beta} 1(T_{\pi\beta}<r< T_\beta, B^\beta\neq \Delta)1(\zeta_\beta^1\in (r-\tau_N,r))\Big)dr\nn\\
&\quad  +\frac{C}{N}   \int_0^t \E\Big(\sum_{\beta} 1(T_{\pi\beta}<r< T_\beta, B^\beta\neq \Delta) \vert B_r^\beta-B_{(r-\tau_N)^+}^\beta\vert \Big)dr:=I_1+I_2.
\end{align}
To take care of the first term, we apply Lemma \ref{9l3.2} to see that
\begin{align*} 
I_1=\frac{ \| \phi\| _\infty }{N(2N+\theta)}\E\Big(\sum_{\beta} 1(T_{\beta}\leq t, B^\beta\neq \Delta)  1{(\zeta_\beta^n \in (T_\beta-\tau_N, T_\beta))} \Big).
\end{align*}
By comparing to \eqref{9ea1.15}, it is immediate that $I_1\to 0$ as $N\to\infty$. 

Turning to $I_2$, by considering $\beta_0=i$ for some $1\leq i\leq NX_0^0(1)$, we get 
\begin{align*} 
I_2=&\frac{C}{N}  NX_0^0(1) \int_0^t \E\Big(\sum_{\beta\geq 1} 1(T_{\pi\beta}<r< T_\beta, B^\beta\neq \Delta)\vert B_r^\beta-B_{(r-\tau_N)^+}^\beta\vert  \Big)  dr\nn\\
&\leq CX_0^0(1) \int_0^t  C  \sqrt{r-(r-\tau_N)^+} dr\leq CX_0^0(1)\sqrt{\tau_N} \to 0,
\end{align*}
where the first inequality follows from \eqref{9ec1.23}. The proof is now complete.
\end{proof}

It remains to prove Lemma \ref{9l10.1}.

\subsection{Proof of Lemma \ref{9l10.1}(i)}

Since $B^\beta=B^\beta_{T_\beta^-}\neq \Delta$, we must have $\zeta_\beta^0=T_{\beta}$ by definition. If $m=0$, we cannot have both $\zeta_\beta^0=T_{\beta}$ and $\zeta_\beta^0\in (T_\beta-\tau_N, T_\beta)$ hold. Hence \eqref{9ea1.15} is trivial for $m=0$. It suffices to prove for $m\geq 1$.
 In order that $\zeta_\beta^m \in (T_{\beta}-\tau_N, T_{\beta})$, there has to be some $i<\vert \beta\vert $ so that 
\begin{align*} 
T_{\beta\vert i}>T_{\beta}-\tau_N, \quad B^{\beta\vert i}+W^{\beta\vert i} \in \overline{\cR}^{X^{m-1}}_{T_{\beta\vert i}^-}, \quad e_{\beta\vert i}=\beta_{i+1}.
\end{align*}
The above means that at time $T_{\beta\vert i}$, the particle $\beta\vert i$ gives birth to $\beta\vert (i+1)$ which is displaced from $B^{\beta\vert i}$ by a distance of $W^{\beta\vert i}$. However, there exists some particle $\delta$ in $X^{m-1}$ which had already visited that location before time $T_{\beta\vert i}$, or that location lies in $K_0^N$. Hence we may bound $1\{\zeta_\beta^m \in (T_{\beta}-\tau_N, T_{\beta})\}$ by
\begin{align}\label{9ec9.53}
&\sum_{i=0}^{\vert \beta\vert -1} 1(T_{\beta\vert i}>T_{\beta}-\tau_N) \sum_{\delta} 1(T_{\pi\delta} <T_{\beta\vert i}, B^{\beta\vert i}+W^{\beta\vert i}=B^\delta)\nn\\
&\quad +\sum_{i=0}^{\vert \beta\vert -1} 1(T_{\beta\vert i}>T_{\beta}-\tau_N)  1(B^{\beta\vert i}+W^{\beta\vert i}\in K_0^N),
\end{align}
where we have dropped the condition $e_{\beta\vert i}=\beta_{i+1}$.
Use the above to get that the left-hand side term of \eqref{9ea1.15} is at most
\begin{align}  \label{9e8.03}
&\frac{1}{N^2}  \E\Big(\sum_{\beta} 1(T_{ \beta} \leq t, B^{\beta}\neq \Delta) \sum_{i=0}^{\vert \beta\vert -1} 1(T_{\beta\vert i}>T_\beta-\tau_N)  \nn\\
&\quad 1(B^{\beta\vert i}+W^{\beta\vert i} \in K_0^N) \times (I(\beta)+1) \Big) \nn\\
&+\frac{1}{N^2} \E\Big(\sum_{\beta} 1(T_{ \beta} \leq t, B^{\beta}\neq \Delta) \sum_{i=0}^{\vert \beta\vert -1} 1(T_{\beta\vert i}>T_\beta-\tau_N)  \nn\\
&\quad\sum_{\delta} 1(T_{\pi\delta} <T_{\beta\vert i}, B^{\beta\vert i}+W^{\beta\vert i}=B^\delta)     \Big)\nn\\
&+\frac{1}{N^2} \E\Big(\sum_{\beta} 1(T_{ \beta} \leq t, B^{\beta}\neq \Delta) \sum_{i=0}^{\vert \beta\vert -1} 1(T_{\beta\vert i}>T_\beta-\tau_N)  \nn\\
&\quad\sum_{\delta} 1(T_{\pi\delta} <T_{\beta\vert i}, B^{\beta\vert i}+W^{\beta\vert i}=B^\delta) \times I(\beta)  \Big):=I_0+I_1+I_2.
\end{align}
It suffices to show $I_i\to 0$ for $i=0,1,2$.

\subsubsection{Convergence of $I_0$}

The proof of $I_0\to 0$ relies largely on the assumption of $K_0^N$ from \eqref{9e0.93}:
 \begin{align}\label{9e3.49}
\lim_{N\to \infty} \frac{1}{N} \E\Big( \sum_{\beta} 1(T_\beta\leq t, B^\beta\neq \Delta)  1(B^\beta+W^\beta\in K_0^N)\Big)=0.
\end{align}
Consider $\beta_0=i_0$ for some $1\leq i_0\leq NX_0^0(1)$ and $|\beta|=l$ for some $l\geq 0$. It follows that
 \begin{align}\label{9e3.491}
&\frac{1}{N} \E\Big( \sum_{\beta} 1(T_\beta\leq t, B^\beta\neq \Delta)  1(B^\beta+W^\beta\in K_0^N)\Big)\nn\\
&=\frac{1}{N} \sum_{i_0=1}^{NX_0^0(1)} \sum_{l=0}^\infty  \sum_{\beta\geq i_0, |\beta|=l} \E\Big(1(T_\beta\leq t, B^\beta\neq \Delta)  1(B^\beta+W^\beta\in K_0^N)\Big)\nn\\
&=\frac{1}{N} \sum_{i_0=1}^{NX_0^0(1)} \sum_{l=0}^\infty (\frac{2N+2\theta}{2N+\theta})^l \cdot \P(\Gamma_{l+1}\leq (2N+\theta)t ) \cdot \P(x_{i_0}+V_{l}^N+W^N\in K_0^N),
\end{align}
where $W^N$ is uniform on $\cN_N$, independent of $V_{l}^N$. Lemma \ref{9l4.3}(ii) implies that the above sum for $l>ANt$ is at most $CX_0^0(1) 2^{-Nt} \to 0$. Hence one may easily check that \eqref{9e3.49} is equivalent to
 \begin{align}\label{9e3.490}
\lim_{N\to \infty}\frac{1}{N} \sum_{i_0=1}^{NX_0^0(1)} \sum_{l=0}^{ANt}   \P(\Gamma_{l+1}\leq (2N+\theta)t ) \cdot \P(x_{i_0}+V_{l}^N+W^N\in K_0^N)=0.
\end{align}

To prove $I_0\to 0$, it suffices to show that
 \begin{align} \label{9ea3.444}
I_0^{(a)}:=&\frac{1}{N^2}  \E\Big(\sum_{\beta} 1(T_{ \beta} \leq t, B^{\beta}\neq \Delta) \sum_{i=0}^{\vert \beta\vert -1} 1(T_{\beta\vert i}>T_\beta-\tau_N)  \nn\\
&\quad 1(B^{\beta\vert i}+W^{\beta\vert i} \in K_0^N)   \Big)\to 0,
\end{align}
and
\begin{align}  \label{9ea3.445}
I_0^{(b)}:=&\frac{1}{N^2}  \E\Big(\sum_{\beta} 1(T_{ \beta} \leq t, B^{\beta}\neq \Delta) \sum_{i=0}^{\vert \beta\vert -1} 1(T_{\beta\vert i}>T_\beta-\tau_N)  \nn\\
&\quad 1(B^{\beta\vert i}+W^{\beta\vert i} \in K_0^N) \times I(\beta)  \Big)\to 0.
\end{align}

\no {\bf  Case (a).} Turning to $I_0^{(a)}$, we let $\beta_0=i_0$ for some $1\leq i_0\leq NX_0^0(1)$ and set $|\beta|=l$ for some $l\geq 1$.  Then
\begin{align*} 
I_0^{(a)}=&\frac{1}{N^2}  \sum_{i_0=1}^{NX_0^0(1)} \sum_{l=1}^\infty (\frac{2N+2\theta}{2N+\theta})^l    \E\Big(1(\Gamma_{l+1}\leq (2N+\theta)t)  \nn\\
& \sum_{i=0}^{l-1} 1(\Gamma_{i+1}>\Gamma_{l+1}-(2N+\theta)\tau_N) \Big) \cdot \P(x_{i_0}+V_{i}^N+W^N\in K_0^N). 
\end{align*}
Use Fubini's theorem to exchange the sum of $i,l$ to get
\begin{align*}  
I_0^{(a)}=& \frac{1}{N^2}  \sum_{i_0=1}^{NX_0^0(1)} \sum_{i=0}^{\infty} (\frac{2N+2\theta}{2N+\theta})^i  \P(x_{i_0}+V_{i}^N+W^N\in K_0^N)\nn\\
& \sum_{l=i+1}^\infty (\frac{2N+2\theta}{2N+\theta})^{l-i}    \E\Big(1(\Gamma_{l+1}\leq (2N+\theta)t) 1(\Gamma_{l+1}-\Gamma_{i+1}<(2N+\theta)\tau_N) \Big). 
\end{align*}
Bounding $1(\Gamma_{l+1}\leq (2N+\theta)t)$ by $1(\Gamma_{i+1}\leq (2N+\theta)t)$, we get the above is at most
\begin{align*}  
I_0^{(a)}\leq & \frac{1}{N^2}  \sum_{i_0=1}^{NX_0^0(1)} \sum_{i=0}^{\infty} (\frac{2N+2\theta}{2N+\theta})^i  \P(x_{i_0}+V_{i}^N+W^N\in K_0^N)\nn\\
& \P(\Gamma_{i+1}\leq (2N+\theta)t) \sum_{l=1}^\infty (\frac{2N+2\theta}{2N+\theta})^{l}     \P(\Gamma_{l}<(2N+\theta)\tau_N). 
\end{align*}
By Lemma \ref{9l4.3}(ii), the sum of $l$ is at most
\begin{align*}  
C2^{-N\tau_N}+\sum_{l=1}^{AN\tau_N} (\frac{2N+2\theta}{2N+\theta})^{l}      \leq CN\tau_N\leq CN.
\end{align*}
It follows that
\begin{align*}  
I_0^{(a)}\leq & \frac{C}{N}  \sum_{i_0=1}^{NX_0^0(1)} \sum_{i=0}^{\infty} (\frac{2N+2\theta}{2N+\theta})^i  \P(x_{i_0}+V_{i}^N+W^N\in K_0^N) 
\P(\Gamma_{i+1}\leq (2N+\theta)t).
\end{align*}
In view of \eqref{9e3.491}, we conclude $I_0^{(a)}\to 0$.  \\

\no {\bf  Case (b).} Turning to $I_0^{(b)}$ from \eqref{9ea3.445}, we recall $I(\beta)$ from \eqref{9ea3.08} to see that
\begin{align}  \label{9ea5.667}
I_0^{(b)}&=\frac{1}{N\psi(N)} \E\Big(\sum_{\beta} 1(T_{ \beta} \leq t)\sum_{\gamma}1(T_{\pi\gamma}<T_\beta) 1(B^\beta-B^\gamma\in \cN_N) \nn\\
&1(T_{\gamma\wedge \beta}>T_\beta-\tau_N)  \sum_{i=0}^{ \vert \beta\vert-1} 1(T_{\beta\vert i}>T_\beta-\tau_N)  1(B^{\beta\vert i}+W^{\beta\vert i}\in K_0^N) \Big).
\end{align}
 Since $T_{\gamma\wedge \beta}>T_\beta-\tau_N$ implies $\gamma_0=\beta_0$, we may let $\alpha=\beta\wedge \gamma$ and use $T_{\alpha}\leq T_\beta< T_{\alpha}+\tau_N$ to get  
 \begin{align} \label{9ea5.666}
I_0^{(b)}&\leq \frac{1}{N\psi(N)} \E\Big(\sum_{\alpha}\sum_{\beta\geq\alpha} \sum_{\gamma\geq \alpha}1(T_{\alpha} \leq t)   1(T_\beta<T_{\alpha}+\tau_N) 1(T_{\pi\gamma}<T_{\alpha}+\tau_N)  \nn\\
& 1(B^\beta-B^\gamma\in \cN_N) \sum_{i=0}^{\vert \beta\vert -1}1(T_{\beta\vert i}>T_\beta-\tau_N) 1(B^{\beta\vert i}+W^{\beta\vert i}\in K_0^N) \Big).
\end{align}
Denote by $I_0^{(b,1)}$ (resp. $I_0^{(b,2)}$) the sum of $i\leq \vert \alpha\vert -1$ (resp. $ \vert \alpha\vert \leq  i \leq \vert \beta\vert -1$) in the above expression. It suffices to prove $I_0^{(b,1)}\to 0$ and $I_0^{(b,2)}\to 0$.\\

\no {\bf  Case (b,1).} For $I_0^{(b,1)}$, since $i\leq \vert \alpha\vert -1$, we get  $\beta\vert i=\alpha\vert i$ and $T_{\beta\vert i}, B^{\beta\vert i}, W^{\beta\vert i}$ are all measurable with respect to $\cH_{\alpha}$. Use Fubini's theorem and $T_\beta\geq T_{\alpha}$ to see
\begin{align} \label{9ea3.513}
I_0^{(b,1)}\leq \frac{1}{N\psi(N)} \E\Big(&\sum_{\alpha}1(T_{ \alpha} \leq t) \sum_{i=0}^{\vert \alpha\vert -1}1(T_{\alpha\vert i}>T_\alpha-\tau_N)  1(B^{\alpha\vert i}+W^{\alpha\vert i}\in K_0^N) \\
& \sum_{\beta\geq\alpha} \sum_{\gamma\geq \alpha}1(T_{\beta}-T_{\alpha}<\tau_N) 1(T_{\pi\gamma}-T_{\alpha}<\tau_N) 1({B}^\beta-{B}^\gamma\in \cN_N)\Big).\nn
\end{align}
By considering $\vert \beta\vert =\vert \alpha\vert +l$ and $\vert \gamma\vert =\vert \alpha\vert +m$ for some $l,m\geq 0$, we  get the sum of $\beta,\gamma$ is at most (recall $\hat{B}^\beta, \hat{B}^\gamma$ from \eqref{9ea1.71})
 \begin{align} \label{9ea2.801}
& \sum_{l=0}^\infty \sum_{m=0}^\infty \sum_{\substack{\beta\geq \alpha,\\ \vert \beta\vert =\vert \alpha\vert +l}}\sum_{\substack{ \gamma\geq \alpha,\\ \vert \gamma\vert =\vert \alpha\vert +m }}  1(T_{\beta}-T_{\alpha}<\tau_N)\nn\\
& 1(T_{\pi\gamma}-T_{\alpha}<\tau_N) 1(\sqrt{N}(\hat{B}^\beta- \hat{B}^\gamma)\in  [-2,2]^d),
\end{align}
where we   have used \eqref{9ea2.82} to bound $1(B^\beta-B^\gamma\in \cN_N)$. 
By conditioning on $\cH_\alpha$,  the conditional expectation of \eqref{9ea2.801} is at most
\begin{align} \label{9ea3.511}
&1(B^\alpha\neq \Delta)  \sum_{l=0}^\infty \sum_{m=0}^\infty  (\frac{2N+2\theta}{2N+\theta})^{l+m}  \Pi((2N+\theta)\tau_N, l) \Pi((2N+\theta)\tau_N, m-1) \frac{C}{(1+l+m)^{d/2}}.
\end{align}
The sum of $m$ gives at most ${C }/{(1+l)^{d/2-1}}$ by Lemma \ref{9l4.0}. Then the sum of $l$ gives
\begin{align*}
 \sum_{l=0}^\infty   (\frac{2N+2\theta}{2N+\theta})^{l}  \Pi((2N+\theta)\tau_N, l) \frac{C}{(1+l)^{d/2-1}}\leq CI(N),
\end{align*}
where the inequality is by Lemma \ref{9l8.7}.  We conclude from the above that
\begin{align} \label{9ea3.512}
\E\Big(& \sum_{\beta\geq\alpha} \sum_{\gamma\geq \alpha} 1(T_{\beta}-T_{\alpha}<\tau_N) 1(T_{\pi\gamma}-T_{\alpha}<\tau_N) \nn\\
 &\quad 1({B}^\beta-{B}^\gamma\in \cN_N)\Big\vert \cH_\alpha\Big)\leq CI(N)1(B^\alpha\neq \Delta).
\end{align}
Therefore  \eqref{9ea3.513} becomes
\begin{align*} 
I_0^{(b,1)}\leq \frac{C}{N^2} \E\Big(&\sum_{\alpha}1(T_{ \alpha} \leq t, B^\alpha\neq \Delta) \sum_{i=0}^{\vert \alpha\vert -1}1(T_{\alpha\vert i}>T_\alpha-\tau_N)  1(B^{\alpha\vert i}+W^{\alpha\vert i}\in K_0^N)\Big),
\end{align*}
which gives exactly the same expression as in \eqref{9ea3.444} for $I_0^{(a)}$ up to some constant. So we conclude $I_0^{(b,1)}\to 0$.\\

\no {\bf  Case (b,2).}  Recall $I_0^{(b,2)}$ is the contribution to \eqref{9ea5.666} from the sum of  $i\geq \vert \alpha\vert$, in which case $T_{\beta\vert i}\geq T_\alpha$ and $1(T_{\beta\vert i}>T_\beta-\tau_N)$ will be absorbed into $1(T_\alpha>T_\beta-\tau_N)$.   It follows that
 \begin{align}  \label{9ed1.661}
I_0^{(b,2)}\leq  \frac{1}{N\psi(N)} \E\Big(&\sum_{\alpha}\sum_{\beta\geq\alpha} \sum_{\gamma\geq \alpha}1(T_{\alpha} \leq t)  1(T_{\pi\beta}<T_{\alpha}+\tau_N) 1(T_{\pi\gamma}<T_{\alpha}+\tau_N) \\
&  1(B^\beta-B^\gamma\in \cN_N)\sum_{i=\vert \alpha\vert }^{\vert \beta\vert -1} 1(B^{\beta\vert i}+W^{\beta\vert i} \in K_0^N) \Big),\nn
\end{align}
where we have used  $T_{\pi\beta}<T_\beta$. Consider $\alpha_0=i_0$ for $1\leq i_0\leq NX_0^0(1)$. Then
\begin{align} \label{9ed1.66}
I_0^{(b,2)}\leq &\frac{C}{N\psi(N)} \sum_{i_0=1}^{NX_0^0(1)} \sum_{k=0}^\infty \sum_{\substack{\alpha\geq i_0,\\ \vert \alpha\vert =k}} \sum_{l=0}^\infty   \sum_{m=0}^\infty\sum_{\substack{\beta\geq \alpha,\\ \vert \beta\vert =k+l}}\sum_{\substack{ \gamma\geq \alpha,\\ \vert \gamma\vert =k+m}}      \nn\\
&  \sum_{i=k}^{k+l-1}     \E\Big(1(T_{\alpha} \leq t) 1(T_{\pi\beta}<T_{\alpha}+\tau_N) 1(T_{\pi\gamma}<T_{\alpha}+\tau_N)  \nn\\
& 1(B^\beta-B^\gamma\in \cN_N)  1(B^{\beta\vert i}+W^{\beta\vert i}\in K_0^N)  \Big),
\end{align}
  Note that on the event $\{B^\beta, B^\gamma \neq \Delta\}$,  the spatial events $\{B^\beta-B^\gamma\in \cN_N\}$ and $\{B^{\beta\vert i}+W^{\beta\vert i}\in K_0^N\}$ are independent of the branching events on $T_{\alpha}, T_{\pi\beta}, T_{\pi\gamma}$. Therefore the expectation above is at most
 \begin{align}  \label{9ed1.53}
&\E\Big(1(T_{\alpha} \leq t) 1(T_{\pi\beta}<T_{\alpha}+\tau_N)    1(T_{\pi\gamma}<T_{\alpha}+\tau_N)\Big)  \nn\\
&(\frac{N+\theta}{2N+\theta})^{k+l+m}    \E^{\beta, \gamma}\Big(1(\sqrt{N}(\hat{B}^\beta- \hat{B}^\gamma)\in  [-2,2]^d) 1(B^{\beta\vert i}+W^{\beta\vert i}\in K_0^N) \Big),
\end{align}
where $\E^{\beta, \gamma}$ is the expectation conditional on $\{B^\beta, B^\gamma\neq \Delta\}$. We also have used \eqref{9ea2.82} to bound $1(B^\beta-B^\gamma\in \cN_N)$. 
The first expectation  above is equal to
 \begin{align}  \label{9ed8.53}
& \P(\Gamma_{k+1}\leq (2N+\theta)t) \cdot \P(\Gamma_{l-1} \leq (2N+\theta)\tau_N)  \cdot \P(\Gamma_{m-1} \leq (2N+\theta)\tau_N).
\end{align}
Turning to the second expectation in \eqref{9ed1.53}, recall $\hat{B}^\beta$, $\hat{B}^\gamma$ from \eqref{9ea1.71}. By setting $$\hat{W}^{\gamma\vert j}:=W^{\gamma\vert j} 1_{\{e_{\beta\vert j}=\beta_{j+1}\}},$$
we get that the term $\sum_{j=k+1}^{i} \hat{W}^{\gamma\vert j}$ appearing in $\hat{B}^\beta$ is measurable with respect to $\cH_{\beta\vert (i+1)}$. Hence we may condition on $\cH_{\beta\vert (i+1)}$ to see that
 \begin{align}  \label{e4.33}
& \P^{\beta,\gamma}\Big(\sqrt{N}(\hat{B}^\beta- \hat{B}^\gamma)\in  [-2,2]^d \Big\vert \cH_{\beta\vert (i+1)}\Big)\nn\\
=  & \P\Big(\sqrt{N} \sum_{j=i+1}^{k+l-1}   \hat{W}^{\beta\vert j} -\sqrt{N} \sum_{j=k+1}^{k+m-1}    \hat{W}^{\gamma\vert j} \in [-2,2]^d-\sqrt{N} \sum_{j=k+1}^{i}   \hat{W}^{\beta\vert j} \Big)\nn\\
\leq& \frac{C}{(k+l-i+m+1)^{d/2}},
\end{align}
where the last inequality follows from Lemma \ref{9l4.2}.
Since $B^{\beta\vert i}, W^{\beta\vert i}\in \cH_{\beta\vert (i+1)}$, we may condition on $\cH_{\beta\vert (i+1)}$ and use the above to get that
 \begin{align}  \label{9ed1.51}
&  \E^{\beta, \gamma}\Big(1(B^\beta-B^\gamma\in \cN_N)  1(B^{\beta\vert i}+W^{\beta\vert i}\in K_0^N) \Big)\nn\\
&\leq   \frac{C}{(k+l-i+m+1)^{d/2}}  \P^{\beta, \gamma}(B^{\beta\vert i}+W^{\beta\vert i}\in K_0^N)\nn\\
&=   \frac{C}{(k+l-i+m+1)^{d/2}}  \P(x_{i_0}+V_{i}^N+W^{N}\in K_0^N).
\end{align}
Combining \eqref{9ed8.53} and \eqref{9ed1.51}, we have that \eqref{9ed1.53}  is at most  
 \begin{align*}  
&(\frac{N+\theta}{2N+\theta})^{k+l+m}     \frac{C}{(k+l-i+m+1)^{d/2}}  \P(x_{i_0}+V_{i}^N+W^{N}\in K_0^N)\\
& \P(\Gamma_{k+1}\leq (2N+\theta)t) \cdot \P(\Gamma_{l-1} \leq (2N+\theta)\tau_N)  \cdot \P(\Gamma_{m-1} \leq (2N+\theta)\tau_N) .
\end{align*}
Apply the above in \eqref{9ed1.66} to get
\begin{align} \label{e2.34}
I_0^{(b,2)}\leq &\frac{1}{N\psi(N)} \sum_{i_0=1}^{NX_0^0(1)} \sum_{k=0}^\infty    \sum_{l=0}^\infty   \sum_{m=0}^\infty     \sum_{i=k}^{k+l-1} (\frac{2N+2\theta}{2N+\theta})^{k+l+m}   \nn\\
& \frac{1}{(k+l-i+m+1)^{d/2}} \P(x_{i_0}+V_{i}^N+W^{N}\in K_0^N) \nn\\
& \P(\Gamma_{k+1}\leq (2N+\theta)t) \cdot \P(\Gamma_{l-1} \leq (2N+\theta)\tau_N)  \cdot \P(\Gamma_{m-1} \leq (2N+\theta)\tau_N) .
\end{align}
 By Lemma \ref{9l4.0},  the sum of $m$ gives
\begin{align*}
 \sum_{m=0}^\infty (\frac{2N+2\theta}{2N+\theta})^{m} \Pi((2N+\theta)\tau_N, m-1)  \frac{1}{(k+l-i+m+1)^{d/2}}  \leq \frac{C }{(1+l+k-i)^{d/2-1}}.
\end{align*}
Then the sum of $l, i$ gives
\begin{align*}
\sum_{l=0}^\infty  &   \sum_{i=k}^{k+l-1} (\frac{2N+2\theta}{2N+\theta})^{l}  \frac{1}{(1+l+k-i)^{d/2-1}}   \nn\\
& \P(x_{i_0}+V_{i}^N+W^{N}\in K_0^N) \P(\Gamma_{l-1} \leq (2N+\theta)\tau_N)\nn\\
=\sum_{l=0}^\infty &    \sum_{j=0}^{l-1} (\frac{2N+2\theta}{2N+\theta})^{l}  \frac{1}{(1+l-j)^{d/2-1}}   \nn\\
& \P(x_{i_0}+V_{j+k}^N+W^{N}\in K_0^N) \P(\Gamma_{l-1} \leq (2N+\theta)\tau_N).
\end{align*}
where the equality follows by letting $j=i-k$. 
Bounding $\P(\Gamma_{l-1}\leq (2N+\theta)\tau_N)$ by $\P(\Gamma_{j}\leq (2N+\theta)\tau_N)\P(\Gamma_{l-1}-\Gamma_{j}\leq (2N+\theta)\tau_N)$ and exchanging the sum of $l,j$, we get the above is at most
\begin{align*}
&\sum_{j=0}^\infty   (\frac{2N+2\theta}{2N+\theta})^{j}   \P(x_{i_0}+V_{j+k}^N+W^{N}\in K_0^N)\P(\Gamma_{j}\leq (2N+\theta)\tau_N) \nn\\
& \sum_{l=j+1}^{\infty} (\frac{2N+2\theta}{2N+\theta})^{l-j}  \frac{1}{(l-j+1)^{d/2-1}} \P(\Gamma_{l-j-1} \leq (2N+\theta)\tau_N).
\end{align*}
By Lemma \ref{9l8.7}, the sum of $l$ is at most $CI(N)$. Returning to \eqref{e2.34}, we are left with
\begin{align} \label{9ed1.20}
I_0^{(b,2)}\leq &\frac{1}{N^2} \sum_{i_0=1}^{NX_0^0(1)} \sum_{k=0}^\infty \sum_{j=0}^\infty   (\frac{2N+2\theta}{2N+\theta})^{j+k}  \P(\Gamma_{k+1}\leq (2N+\theta)t)\nn\\
&\P(\Gamma_{j}\leq (2N+\theta)\tau_N) \P(x_{i_0}+V_{j+k}^N+W^{N}\in K_0^N).
\end{align}
Notice that
\begin{align*} 
&\P(\Gamma_{k+1}\leq (2N+\theta)t)\P(\Gamma_{j}\leq (2N+\theta)\tau_N)\nn\\
=&\P(\Gamma_{k+1}\leq (2N+\theta)t, \Gamma_{k+j+1}-\Gamma_{k+1}\leq (2N+\theta)\tau_N)\leq \P(\Gamma_{k+j+1}\leq 2(2N+\theta)t).
\end{align*}
Use the above to bound \eqref{9ed1.20} and then let $n=j+k$ to see that
\begin{align*}  
I_0^{(b,2)}\leq &\frac{1}{N^2} \sum_{i_0=1}^{NX_0^0(1)} \sum_{n=0}^\infty (n+1)  (\frac{2N+2\theta}{2N+\theta})^{n}  \nn\\
& \P(\Gamma_{n+1}\leq 2(2N+\theta)t) \P(x_{i_0}+V_{n}^N+W^{N}\in K_0^N).
\end{align*}
By Lemma \ref{9l4.3}(ii), the sum of $n>2ANt$ gives at most $C2^{-2Nt}\leq C$. For $n\leq 2ANt$, we bound $n+1$ by $3ANt$ to see
\begin{align*}  
I_0^{(b,2)}\leq &\frac{CX_0^0(1)}{N}  +\frac{C}{N} \sum_{i_0=1}^{NX_0^0(1)} \sum_{n=0}^{2ANt}    \P(\Gamma_{n+1}\leq 2(2N+\theta)t)   \P(x_{i_0}+V_{n}^N+W^{N}\in K_0^N).
\end{align*}
By apply \eqref{9e3.490} with $t=2t$, we conclude $I_0^{(b,2)}\to 0$. The proof is complete.

\subsubsection{Convergence of $I_1$}
 There are two cases for $\delta$ in the summation of $I_1$: (a) $\delta_0 \neq \beta_0$; (b) $\delta_0= \beta_0$. Denote by $I_1^{(a)}$ (resp. $I_1^{(b)}$) the contribution to $I_1$ from case (a) (resp. case (b)). It suffices to prove that $I_1^{(a)} \to 0$ and $I_1^{(b)} \to 0$.\\

\no {\bf  Case (a).} Let $\beta_0=i_0\neq j_0=\delta_0$ for some $1\leq i_0\neq j_0\leq NX_0^0(1)$. There are at most $[NX_0^0(1)]^2$ such pairs,   so the contribution to $I_1$ from case (a) is bounded by
\begin{align}  \label{9e8.05}
I_1^{(a)}\leq &\frac{1}{N^2}[NX_0^0(1)]^2 \sum_{\beta\geq i_0}\sum_{\delta\geq j_0} \E\Big( 1(T_{\beta}\leq t, B^{\beta}\neq \Delta) \sum_{i=0}^{\vert \beta\vert -1} 1(T_{\beta\vert i}>T_{\beta}-\tau_N)  \nn\\
&1(T_{\pi\delta} <T_{\beta\vert i}) 1(B^{\beta\vert i}+W^{\beta\vert i}=B^\delta) \Big)\nn\\
\leq & X_0^0(1)^2 \sum_{l=0}^\infty \sum_{\beta\geq i_0, \vert \beta\vert =l} \sum_{n=0}^\infty\sum_{\delta\geq j_0, \vert \delta\vert =n} \sum_{i=0}^{l-1} \E\Big(1(T_{\beta}\leq t, B^{\beta}\neq \Delta)   \nn\\
& 1(T_{\beta\vert i}>T_{\beta}-\tau_N)  1(T_{\pi\delta} <t)  1(B^{\beta\vert i}+W^{\beta\vert i}=B^\delta) \Big).
\end{align}
where in the last inequality we have replaced $T_{\pi\delta} <T_{\beta\vert i}$ by $T_{\pi\delta} <T_\beta\leq t$. The last expectation above is at most
\begin{align} \label{9ea1.13}
 (\frac{N+\theta}{2N+\theta})^{l+n}  \E\Big(& 1(\Gamma_{l+1}\leq (2N+\theta)t)    1(\Gamma_{i+1}>\Gamma_{l+1}-(2N+\theta)\tau_N) \Big)  \nn\\
&\times \Pi((2N+\theta)t,n) \frac{1}{\psi(N)} \frac{C}{(i+n+1)^{d/2}},
\end{align}
where we have used Lemma \ref{9l4.2} to obtain
\begin{align*}
\P(B^{\beta\vert i}+W^{\beta\vert i}=B^\delta)=\frac{1}{\psi(N)} \P(B^{\beta\vert i}-B^\delta \in \cN_N) \leq \frac{1}{\psi(N)} \frac{C}{(i+n+1)^{d/2}}.
\end{align*} 
Apply \eqref{9ea1.13} in \eqref{9e8.05} to get
\begin{align}  \label{9ea1.14}
 I_1^{(a)}\leq &\frac{C}{\psi(N)}X_0^0(1)^2\sum_{l=0}^\infty \sum_{n=0}^\infty  \sum_{i=0}^{l-1}    (\frac{2N+2\theta}{2N+\theta})^{n+l}    \Pi((2N+\theta)t,n) \frac{1}{(i+n+1)^{d/2}}     \nn\\
&\E\Big( 1(\Gamma_{l+1}\leq (2N+\theta)t)\cdot 1(\Gamma_{l+1}<\Gamma_{i+1}+(2N+\theta)\tau_N)   \Big).
\end{align}
The sum of $n$ gives
\begin{align}  \label{9ea1.30} 
 & \sum_{n=0}^\infty    (\frac{2N+2\theta}{2N+\theta})^{n} \Pi((2N+\theta)t,n) \frac{1}{(i+n+1)^{d/2}}\leq Ce^{\theta t} \frac{1}{(i+1)^{d/2-1}},
\end{align}
where the   inequality is by  Lemma \ref{9l4.0}, thus giving
 \begin{align}  \label{9ea3.05} 
I_1^{(a)}\leq &\frac{C}{\psi(N)}X_0^0(1)^2\sum_{l=0}^\infty    (\frac{2N+2\theta}{2N+\theta})^{l}  \E\Big( 1(\Gamma_{l+1}\leq (2N+\theta)t)     \nn\\
&\quad \sum_{i=0}^{l-1}  1(\Gamma_{l+1}<\Gamma_{i+1}+(2N+\theta)\tau_N)    \frac{1}{(i+1)^{d/2-1}} \Big).
\end{align}
By applying Lemma \ref{9l4.3} (ii), we get the sum of $l>ANt$ is at most
 \begin{align*}   
 \sum_{l>ANt}    (\frac{2N+2\theta}{2N+\theta})^{l}  \E\Big( 1(\Gamma_{l+1}\leq (2N+\theta)t)   \times l\Big)\leq C_1 2^{-Nt}.
\end{align*}
The remaining sum of $l\leq ANt$ is bounded by
\begin{align*}  
&\sum_{l=0}^{ANt}   e^{A\theta t}   \E\Big(  \sum_{i=0}^{l-1} 1(\Gamma_{l+1}-\Gamma_{i+1}<(2N+\theta)\tau_N)  \frac{1}{(i+1)^{d/2-1}} \Big).
\end{align*} 
Use Fubini's theorem to see that the above is at most
\begin{align}  \label{9ea3.04}
&e^{A\theta t}    \sum_{i=0}^{ANt}  \frac{1}{(i+1)^{d/2-1}} \E\Big(   \sum_{l=i+1}^{ANt}     1(\Gamma_{l-i}<(2N+\theta)\tau_N)  \Big)\nn\\
&\leq e^{A\theta t}     I(ANt) \E\Big(   \sum_{l=1}^{ANt}     1(\Gamma_{l}<(2N+\theta)\tau_N)  \Big).
\end{align}
Again we apply Lemma \ref{9l4.3} (ii) to see that
\begin{align*}  
\E\Big(   \sum_{l=1}^{ANt}     1(\Gamma_{l}<(2N+\theta)\tau_N)  \Big)& \leq  \E\Big(   \sum_{l>AN\tau_N}     1(\Gamma_{l}<(2N+\theta)\tau_N)  \Big)+\E\Big(   \sum_{l\leq AN\tau_N}    1(\Gamma_{l}<(2N+\theta)\tau_N)  \Big)\\
&\leq C2^{-N\tau_N}+2 AN\tau_N \leq CN\tau_N.
\end{align*}  
Hence \eqref{9ea3.04} is bounded by $Ce^{A\theta t}   I(ANt) N\tau_N\leq CN\tau_N I(N)$. We conclude that \eqref{9ea3.05} becomes
\begin{align*}  
 I_1^{(a)}\leq\frac{C}{\psi(N)}X_0^0(1)^2  (C_1 2^{-Nt}+CN\tau_N I(N))\leq C[X_0^0(1)]^2\tau_N \to 0.
\end{align*}

\no {\bf  Case (b).} Let $\beta_0=i_0$ for some $1\leq i_0\leq NX_0^0(1)$. By translation invariance, the contribution to $I_1$ from the case (b) is given by
\begin{align*}  
I_1^{(b)}=\frac{1}{N^2}NX_0^0(1) \sum_{\beta\geq i_0}\sum_{\delta\geq i_0}\E\Big(& 1(T_{ \beta}\leq t, B^{\beta}\neq \Delta)   \sum_{i=0}^{\vert \beta\vert -1}1(T_{\beta\vert i}>T_\beta-\tau_N)  \nn\\
&1(T_{\pi\delta} <T_{\beta\vert i}, B^{\beta\vert i}+W^{\beta\vert i}=B^\delta)   \Big).
\end{align*}
Let $\vert \beta\vert =l$ for some $l\geq 1$ (the case for $|\beta|=0$ is trivial by $i\leq |\beta|-1$). Since $T_{\pi\delta} <T_{\beta\vert i}$ implies that $\beta\wedge \delta\leq  \beta\vert i$, we may let $\beta\wedge \delta=\beta\vert j$ for some $j\leq i$ and $\vert \delta\vert =j+n$ for some $n\geq 0$. Then 
\begin{align}  \label{9ea1.20}
I_1^{(b)}&\leq \frac{X_0^0(1)}{N} \sum_{l=1}^\infty \sum_{\beta\geq i_0, \vert \beta\vert =l}\sum_{i=0}^{l-1} \sum_{j=0}^{i} \sum_{n=0}^\infty \sum_{\substack{ \delta\geq \beta\vert j,\\ \vert \delta\vert =j+n}}\E\Big(  1(T_{ \beta}\leq t, B^{\beta}\neq \Delta) \nn\\
& 1(T_{\beta\vert i}>T_\beta-\tau_N)  1(T_{\pi\delta}-T_{\beta\vert j}<t)1(B^{\beta\vert i}+W^{\beta\vert i}=B^\delta)   \Big),
\end{align}
where we have replaced $\{T_{\pi\delta} <T_{\beta\vert i}\}$ by $\{T_{\pi\delta}-T_{\beta\vert j}< T_{\beta\vert i}<T_\beta\leq t\}$. By conditioning on $\{B^\beta,B^\delta \neq \Delta\}$, we get
\begin{align*}  
\P(B^{\beta\vert i}+W^{\beta\vert i}=B^\delta)&=\frac{1}{\psi(N)} \P((B^{\beta\vert i}-B^{\beta\vert j})-(B^\delta-B^{\beta\vert j}) \in \cN_N) \nn\\
&\leq \frac{1}{\psi(N)} \frac{C}{(i-j+n+1)^{d/2}},
\end{align*}
where the inequality follows by Lemma \ref{9l4.2}.
So the expectation in \eqref{9ea1.20} can be bounded by
\begin{align}  \label{9ea1.21}
&\frac{C}{\psi(N)} \frac{1}{(i-j+n+1)^{d/2}} (\frac{N+\theta}{2N+\theta})^{l+n}\E\Big( 1(\Gamma_{l+1}\leq (2N+\theta)t) \nn\\
&   1(\Gamma_{i+1}>\Gamma_{l+1}-(2N+\theta)\tau_N)\Big) \cdot \Pi((2N+\theta)t, n-1)  ,
\end{align}
where we have used that  $T_{\pi\delta}-T_{\beta\vert j}$ is independent of $\cH_\beta$ for $n\geq 2$ (trivial for $n=0,1$).  
Apply \eqref{9ea1.21} in \eqref{9ea1.20} to get
\begin{align}  \label{9ea1.31}
I_1^{(b)}&\leq \frac{CX_0^0(1)}{N\psi(N)} \sum_{l=1}^\infty  \sum_{i=0}^{l-1} \sum_{j=0}^{i} \sum_{n=0}^\infty (\frac{2N+2\theta}{2N+\theta})^{l+n}   \Pi((2N+\theta)t, n-1) \nn\\
&\frac{1}{(i-j+n+1)^{d/2}}\E\Big( 1(\Gamma_{l+1}\leq (2N+\theta)t)   1(\Gamma_{l+1}<\Gamma_{i+1}+(2N+\theta)\tau_N) ) \Big).
\end{align}
By  Lemma \ref{9l4.0}, the sum of $n$ gives at most
\begin{align*}  
 & \sum_{n=0}^\infty    (\frac{2N+2\theta}{2N+\theta})^{n} \Pi((2N+\theta)t, n-1) \frac{1}{(i-j+n+1)^{d/2}}\leq  \frac{C }{(i-j+1)^{d/2-1}}.
\end{align*}
Then the sum of $j$ equals
\begin{align*}  
 & \sum_{j=0}^i   C  \frac{1}{(i-j+1)^{d/2-1}}\leq C  I(i)\leq C  I(l).
 \end{align*}
 We conclude from the above that \eqref{9ea1.31} becomes
\begin{align}   \label{9ea2.84}
I_1^{(b)}\leq \frac{CX_0^0(1)}{N\psi(N)} &\sum_{l=1}^\infty  (\frac{2N+2\theta}{2N+\theta})^{l} I(l) \E\Big(  1(\Gamma_{l+1}\leq (2N+\theta)t)     \nn\\
&\cdot \sum_{i=0}^{l-1}  1(\Gamma_{l+1}-\Gamma_{i+1}<(2N+\theta)\tau_N)   \Big).
\end{align}

\begin{lemma}\label{9l8.4}
For $m=0,1$ or $2$, we have
\begin{align} \label{e2.84}
\sum_{l=0}^\infty & (\frac{2N+2\theta}{2N+\theta})^{l}  \cdot  I(l)^m \E\Big(  1(\Gamma_{l}\leq (2N+\theta)t)   \nn\\
&\cdot \sum_{i=0}^{l}  1(\Gamma_{l}-\Gamma_{i+1}<(2N+\theta)\tau_N)   \Big)\leq CN^2\tau_N I(N)^m.
\end{align}
\end{lemma}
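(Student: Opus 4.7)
The plan is to split the sum over $l$ into two regimes at $l = ANt$, where $A$ is the constant from Lemma \ref{9l4.3}(ii).

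First, for $l > ANt$, I would bound the inner sum $\sum_{i=0}^{l} 1(\Gamma_l - \Gamma_{i+1} < (2N+\theta)\tau_N)$ trivially by $l+1$, and use $I(l) \leq C(1+l)$ to estimate $I(l)^m \leq C(1+l)^m$. This reduces the tail sum to
\begin{align*}
\sum_{l > ANt}(1+\eps_N)^l \cdot C(1+l)^{m+1} \P(\Gamma_l \leq (2N+\theta)t),
\end{align*}
which is $O(1)$ by Lemma \ref{9l4.3}(ii) applied with $p = m+1$ and $r = t$. This contribution is therefore trivially dominated by $CN^2\tau_N I(N)^m$.

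For the main regime $l \leq ANt$, I would bound $(1+\eps_N)^l \leq e^{A\theta t} \leq C$ and $I(l) \leq I(ANt) \leq C I(N)$ (this holds in both $d \geq 5$ and $d=4$, by the explicit form of $I$). After pulling $CI(N)^m$ outside, I need to control
\begin{align*}
\sum_{l=0}^{ANt} \sum_{i=0}^l \P\Big(\Gamma_{i+1} \leq (2N+\theta)t,\ \Gamma_l - \Gamma_{i+1} < (2N+\theta)\tau_N\Big),
\end{align*}
where I have exchanged sums by Fubini and bounded the first indicator using $\Gamma_{i+1} \leq \Gamma_l$. The decisive point is that $\Gamma_{i+1}$ and $\Gamma_l - \Gamma_{i+1}$ are independent Gamma random variables, so the joint probability factors as $\P(\Gamma_{i+1} \leq (2N+\theta)t) \cdot \P(\Gamma_{l-i-1} < (2N+\theta)\tau_N)$.

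Setting $k = l - i - 1$ decouples the sum into a product of two one-dimensional sums. Each of them is estimated by Lemma \ref{9l4.3}(ii): the contribution from indices exceeding $A$ times the relevant scale ($ANt$ or $AN\tau_N$) is exponentially small, while the contribution from the remaining $O(Nt)$, respectively $O(N\tau_N)$, indices is bounded by that many terms each of size $O(1)$. This yields
\begin{align*}
\sum_{i \geq 0} \P(\Gamma_{i+1} \leq (2N+\theta)t) \leq CN \quad \text{and} \quad \sum_{k \geq 0} \P(\Gamma_k < (2N+\theta)\tau_N) \leq CN\tau_N,
\end{align*}
whose product gives the desired $CN^2\tau_N$ factor. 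Combining with the $CI(N)^m$ prefactor completes the bound. I do not expect a serious obstacle here: the argument is essentially a routine combination of Lemma \ref{9l4.3}(ii) with the independence of increments of $\Gamma$, and the only mild subtlety is making sure that $I(l) \leq CI(N)$ for $l \leq ANt$ in both dimension regimes, which follows from the explicit asymptotics of $I$.
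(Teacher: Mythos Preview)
Your proposal is correct and follows essentially the same route as the paper: split at $l=ANt$, kill the tail with Lemma~\ref{9l4.3}(ii), and in the main regime replace $I(l)^m$ by $CI(N)^m$ before decoupling the double sum. The only cosmetic difference is that the paper simply drops the indicator $1(\Gamma_l\le(2N+\theta)t)$ (bounding the $l$-sum by $ANt\le CN$) rather than replacing it by $1(\Gamma_{i+1}\le(2N+\theta)t)$ and factoring; both lead to the same $CN\cdot CN\tau_N$ product, and your minor caveat at $i=l$ (where $\Gamma_{i+1}>\Gamma_l$) contributes only $O(N)$ and is harmless.
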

\begin{proof}
The sum of $l>ANt$ is at most $C2^{-Nt}$ by Lemma \ref{9l4.3}. Next, the sum of $l\leq ANt$ is bounded by 
\begin{align*}  
&\sum_{l=0}^{ANt}      e^{A\theta t}  I(ANt)^m    \E\Big( \sum_{i=0}^{l}  1(\Gamma_{l}<\Gamma_{i+1}+(2N+\theta)\tau_N)     \Big)\nn\\
&\leq CI(N)^m \sum_{l=0}^{ANt}      \E\Big( \sum_{k=-1}^l  1(\Gamma_{k}<(2N+\theta)\tau_N)     \Big)\nn\\
&\leq C I(N)^m   ANt  \times \E\Big(\sum_{k=-1}^{ANt}  1(\Gamma_{k}<(2N+\theta)\tau_N)      \Big).
\end{align*}
Again by Lemma \ref{9l4.3}, the sum of $k>AN\tau_N$ is at most $C2^{-N\tau_N}$. The sum of $k\leq AN\tau_N$ is at most $CN\tau_N$. So we may bound the above by $C I(N)^m   ANt  \cdot (C2^{-N\tau_N}+CN\tau_N) \leq  CN^2\tau_N I(N)^m$.
\end{proof}
 One can easily check that \eqref{e2.84} with $m=1$ implies an upper bound for the sum in \eqref{9ea2.84}, thus giving
\begin{align*}  
I_1^{(b)}&\leq \frac{CX_0^0(1)}{N\psi(N)} CN^2\tau_N I(N)\leq  {CX_0^0(1)} \tau_N \to 0,
\end{align*}
as required.
 
\subsubsection{Convergence of $I_2$}
 
Recall $I(\beta)$ from \eqref{9ea3.08} to see that
\begin{align}  \label{9e1.32}
I_2=\frac{1}{N\psi(N)} \E\Big(&\sum_{\beta} 1(T_{ \beta} \leq t)\sum_{\gamma}1(T_{\pi\gamma}<T_\beta) 1(B^\beta-B^\gamma\in \cN_N) \nn\\
&1(T_{\gamma\wedge \beta}>T_\beta-\tau_N)  \sum_{i=0}^{ \vert \beta\vert-1} 1(T_{\beta\vert i}>T_\beta-\tau_N)  \nn\\
&\sum_{\delta} 1(T_{\pi\delta} <T_{\beta\vert i}, B^{\beta\vert i}+W^{\beta\vert i}=B^\delta) \Big).
\end{align}
Since $T_{\gamma\wedge \beta}>T_\beta-\tau_N$ implies that $\gamma_0=\beta_0$, we may let $\alpha=\beta\wedge \gamma$ and use $T_{\alpha}\leq T_\beta< T_{\alpha}+\tau_N$ to get  
 \begin{align} \label{9ec6.89}
I_2\leq \frac{1}{N\psi(N)} \E\Big(&\sum_{\alpha}\sum_{\beta\geq\alpha} \sum_{\gamma\geq \alpha}1(T_{\alpha} \leq t) \sum_{i=0}^{\vert \beta\vert -1}1(T_{\beta\vert i}>T_\beta-\tau_N)    \nn\\
& 1(T_\beta<T_{\alpha}+\tau_N) 1(T_{\pi\gamma}<T_{\alpha}+\tau_N) 1(B^\beta-B^\gamma\in \cN_N)\nn\\
&\sum_{\delta} 1(T_{\pi\delta} <T_{\beta\vert i}, B^{\beta\vert i}+W^{\beta\vert i}=B^\delta) \Big).
\end{align}
Denote by $I_3$ (resp. $I_4$) the sum of $i\leq \vert \alpha\vert -1$ (resp. $ \vert \alpha\vert \leq  i \leq \vert \beta\vert -1$) in the above expression. It suffices to prove that $I_3 \to 0$ and $I_4 \to 0$.\\

\no {\bf  Case for $I_3$.} For $I_3$, since $i\leq \vert \alpha\vert -1$, we get  $\beta\vert i=\alpha\vert i$ and so $T_{\beta\vert i}, B^{\beta\vert i}, W^{\beta\vert i}$ are all measurable with respect to $\cH_{\alpha}$. 
Use Fubini's theorem and $T_\beta\geq T_{\alpha}$ to get
 \begin{align} \label{9ec6.57}
I_3\leq \frac{1}{N\psi(N)} \E\Big(&\sum_{\alpha}1(T_{ \alpha} \leq t) \sum_{i=0}^{\vert \alpha\vert -1}1(T_{\alpha\vert i}>T_\alpha-\tau_N)    \nn\\
&\sum_{\delta} 1(T_{\pi\delta} <T_{\alpha\vert i}, B^{\alpha\vert i}+W^{\alpha\vert i}=B^\delta)\nn \\
& \sum_{\beta\geq\alpha} \sum_{\gamma\geq \alpha}1(T_{\beta}-T_{\alpha}<\tau_N) 1(T_{\pi\gamma}-T_{\alpha}<\tau_N) 1({B}^\beta-{B}^\gamma\in \cN_N)\Big).
\end{align}
Since $T_{\pi\delta} <T_{\alpha\vert i}\leq T_{\pi \alpha}$, we must have $\delta$ branch off the family tree of $\alpha, \beta, \gamma$ before $\alpha$. Hence we may condition on $\cH_\alpha \vee \cH_\delta$ and use \eqref{9ea3.512} to bound the above sum of $\beta,\gamma$ so that \eqref{9ec6.57} is at most
 \begin{align*}  
I_3\leq &\frac{C}{N^2}\E\Big( \sum_{\alpha}1(T_{ \alpha} \leq t, B^\alpha\neq \Delta) \sum_{i=0}^{\vert \alpha\vert -1}1(T_{\beta\vert i}>T_\alpha-\tau_N)    \nn\\
&\quad\quad\quad\sum_{\delta} 1(T_{\pi\delta} <T_{\alpha\vert i}, B^{\alpha\vert i}+W^{\alpha\vert i}=B^\delta) \Big).
\end{align*}
The above gives the same expression as in \eqref{9e8.03} for $I_1$ up to some constant. It follows that $I_3\to 0$.\\

\no {\bf  Case for $I_4$.}   Recall $I_4$ is the contribution to \eqref{9ec6.89} from the sum of  $i\geq \vert \alpha\vert$, in which case $T_{\beta\vert i}\geq T_\alpha$ and $1(T_{\beta\vert i}>T_\beta-\tau_N)$ will be absorbed into $1(T_\alpha>T_\beta-\tau_N)$.   It follows that
 \begin{align}  \label{9ea2.81}
I_4\leq  \frac{1}{N\psi(N)} \E\Big(&\sum_{\alpha}\sum_{\beta\geq\alpha} \sum_{\gamma\geq \alpha}1(T_{\alpha} \leq t) \sum_{i=\vert \alpha\vert }^{\vert \beta\vert -1}  1(T_{\pi\beta}<T_{\alpha}+\tau_N)  \nn\\
& 1(T_{\pi\gamma}<T_{\alpha}+\tau_N) 1(B^\beta-B^\gamma\in \cN_N)\nn\\
&\sum_{\delta} 1(T_{\pi\delta} <T_{\beta\vert i}, B^{\beta\vert i}+W^{\beta\vert i}=B^\delta) \Big),
\end{align}
where we have bounded $1(T_{\beta}<T_{\alpha}+\tau_N) $ by $1(T_{\pi\beta}<T_{\alpha}+\tau_N)$.

Separate the sum of $\delta$ into two parts: (a) $\delta_0 \neq \alpha_0$; (b) $\delta_0= \alpha_0$. Denote by $I_4^{(a)}$ (resp. $I_4^{(b)}$) the contribution from case (a) (resp. case (b)). It suffices to prove $I_4^{(a)} \to 0$ and $I_4^{(b)} \to 0$.\\

\no {\bf  Case (a).} Let $\alpha_0=i_0\neq j_0=\delta_0$ for some $1\leq i_0\neq j_0\leq NX_0^0(1)$. There are at most $[NX_0^0(1)]^2$ such pairs. It follows that  
\begin{align} \label{9ea1.66}
I_4^{(a)}\leq &\frac{C}{N\psi(N)}[NX_0^0(1)]^2\sum_{k=0}^\infty \sum_{\substack{\alpha\geq i_0,\\ \vert \alpha\vert =k}} \sum_{l=0}^\infty   \sum_{m=0}^\infty\sum_{\substack{\beta\geq \alpha,\\ \vert \beta\vert =k+l}}\sum_{\substack{ \gamma\geq \alpha,\\ \vert \gamma\vert =k+m}}  \sum_{n=0}^\infty \sum_{\substack{\delta\geq j_0,\\ \vert \delta\vert =n} }   \nn\\
&  \sum_{i=k}^{k+l-1}     \E\Big(1(T_{ \alpha} \leq t) 1(T_{\pi\beta}<T_{\alpha}+\tau_N) 1(T_{\pi\gamma}<T_{\alpha}+\tau_N)  \nn\\
& 1(B^\beta-B^\gamma\in \cN_N)  1(T_{\pi\delta} <2t)1(B^{\beta\vert i}+W^{\beta\vert i}=B^\delta)  \Big),
\end{align}
  where we  also replace  $T_{\pi\delta}<T_{\beta\vert i}$ by $T_{\pi\delta}<T_{\beta}<T_\alpha+\tau_N\leq  2t$. Note that on the event $\{B^\beta, B^\gamma, B^{\delta}\neq \Delta\}$, the spatial events $\{B^\beta-B^\gamma\in \cN_N\}$ and $\{B^{\beta\vert i}+W^{\beta\vert i}=B^\delta\}$ are independent of the branching events on $T_{\alpha}, T_\beta$, etc. Therefore the expectation above is at most
 \begin{align}  \label{9ea1.53}
&\E\Big(1(T_{\alpha}\leq t)   1(T_{\pi\gamma}<T_{\alpha}+\tau_N) 1(T_{\pi\beta}<T_{\alpha}+\tau_N) 1(T_{\pi\delta} <2t)\Big)  \nn\\
&(\frac{N+\theta}{2N+\theta})^{k+l+m+n}  \sum_{i=k}^{k+l-1} \E^{\beta, \gamma,{\delta}}\Big(1(B^\beta-B^\gamma\in \cN_N)  1(B^{\beta\vert i}+W^{\beta\vert i}=B^\delta) \Big),
\end{align}
where $\E^{\beta, \gamma,{\delta}}$ is the expectation conditional on $\{B^\beta, B^\gamma, B^{\delta}\neq \Delta\}$. 
The first expectation  above is equal to
 \begin{align}  \label{9ec8.53} 
& \Pi((2N+\theta)t, k+1) \cdot \Pi((2N+\theta)\tau_N, m-1)  \cdot \Pi((2N+\theta)\tau_N, l-1) \cdot \Pi(2(2N+\theta)t, n).
\end{align}
Next, use \eqref{9ea2.82} to bound $1(B^\beta-B^\gamma\in \cN_N)$ by $1(\sqrt{N}(\hat{B}^\beta- \hat{B}^\gamma)\in  [-2,2]^d)$. Similar to the derivation of \eqref{e4.33}, we may condition on $\cH_{\beta\vert (i+1)}$ to see that
 \begin{align}  \label{9ea1.84}
& \P^{\beta, \gamma,{\delta}}\Big(\sqrt{N}(\hat{B}^\beta- \hat{B}^\gamma)\in  [-2,2]^d \Big\vert \cH_{\beta\vert (i+1)}\Big) \leq  \frac{C}{(k+l-i+m+1)^{d/2}}.
\end{align}
Since $B^{\beta\vert i}, W^{\beta\vert i}\in \cH_{\beta\vert (i+1)}$, we may condition on $\cH_{\beta\vert (i+1)}\vee \cH_\delta$ and use the above to get
 \begin{align}  \label{9ea1.51}
&  \E^{\beta, \gamma,{\delta}}\Big(1(\sqrt{N}(\hat{B}^\beta- \hat{B}^\gamma)\in  [-2,2]^d)   1(B^{\beta\vert i}+W^{\beta\vert i}=B^\delta) \Big)\nn\\
&\leq   \frac{C}{(k+l-i+m+1)^{d/2}}  \P^{\beta, \gamma,{\delta}}(B^{\beta\vert i}+W^{\beta\vert i}=B^\delta).
\end{align}
Further bound the last probability above by
 \begin{align}    \label{9ea1.52}
 \P^{\beta, \gamma,{\delta}}(B^{\beta\vert i}+W^{\beta\vert i}=B^\delta)&=\frac{1}{\psi(N)} \P^{\beta, \gamma,{\delta}}(B^{\beta\vert i}-B^\delta\in \cN_N)\nn\\
 &\leq \frac{1}{\psi(N)} \frac{C}{(1+n+i)^{d/2}}\leq \frac{C}{\psi(N)} \frac{1}{(1+n+k)^{d/2}}.
\end{align}
Combining \eqref{9ec8.53}, \eqref{9ea1.51} and \eqref{9ea1.52}, we get  \eqref{9ea1.53}  is at most  
 \begin{align*}  
&(\frac{N+\theta}{2N+\theta})^{k+l+m+n}     \frac{C}{(k+l-i+m+1)^{d/2}}\frac{1}{\psi(N)} \frac{1}{(1+n+k)^{d/2}}\\
&\Pi((2N+\theta)t,k+1) \cdot \Pi((2N+\theta)\tau_N, m-1) \cdot \Pi((2N+\theta)\tau_N, l-1) \cdot \Pi(2(2N+\theta)t, n).\nn
\end{align*}
Apply the above in \eqref{9ea1.66} to obtain
\begin{align*} 
I_4^{(a)}\leq &\frac{C[X_0^0(1)]^2}{\psi(N)\psi_0(N)}\sum_{k=0}^\infty \Pi((2N+\theta)t,k+1)   \sum_{l=0}^\infty   \sum_{m=0}^\infty     \sum_{i=k}^{k+l-1} \sum_{n=0}^\infty (\frac{2N+2\theta}{2N+\theta})^{k+l+m+n}   \nn\\
& \frac{1}{(k+l-i+m+1)^{d/2}} \frac{1}{(1+n+k)^{d/2}} \nn\\
& \cdot \Pi((2N+\theta)\tau_N, m-1) \cdot \Pi((2N+\theta)\tau_N, l-1) \cdot \Pi(2(2N+\theta)t, n).
\end{align*}
By Lemma \ref{9l4.0}, the sum of $n$ gives
\begin{align*}
 \sum_{n=0}^\infty (\frac{2N+2\theta}{2N+\theta})^{n} \Pi(2(2N+\theta)t, n)  \frac{1}{(1+n+k)^{d/2}} \leq C \frac{1}{(1+k)^{d/2-1}}.
\end{align*}
Similarly, the sum of $m$ gives
\begin{align*}
 \sum_{m=0}^\infty (\frac{2N+2\theta}{2N+\theta})^{m} \Pi((2N+\theta)\tau_N, m-1)  \frac{1}{(k+l-i+m+1)^{d/2}}  \leq \frac{C }{(1+k+l-i)^{d/2-1}}.
\end{align*}
Next, the sum of $i$ equals
\begin{align*}
\sum_{i=k}^{k+l-1} \frac{1}{(1+k+l-i)^{d/2-1}}=\sum_{i=1}^{l} \frac{1}{(1+i)^{d/2-1}}\leq I(l).
\end{align*}
To this end, we get
\begin{align}  \label{9ec7.58}
I_4^{(a)}\leq &\frac{C[X_0^0(1)]^2}{\psi_0(N) \psi(N)}\sum_{k=0}^\infty (\frac{2N+2\theta}{2N+\theta})^{k}\Pi((2N+\theta)t, k+1)  \frac{1}{(1+k)^{d/2-1}} \nn\\
&\quad \sum_{l=0}^\infty   (\frac{2N+2\theta}{2N+\theta})^{l } \Pi((2N+\theta)\tau_N, l) I(l).
\end{align}
By Lemma \ref{9l8.7}, the sum of $k$ is at most $CI(N)$. For the sum of $l$, we write $\Pi((2N+\theta)\tau_N, l)=\P(\Gamma_{l}\leq (2N+\theta)\tau_N)$ to see
\begin{align*} 
&\sum_{l=0}^\infty   (\frac{2N+2\theta}{2N+\theta})^{l } \P(\Gamma_{l }\leq (2N+\theta)\tau_N) I(l)\nn\\
&\leq C2^{-N\tau_N}+\sum_{l=0}^{AN\tau_N}   e^{A\theta \tau_N}   I(l)\leq CN\tau_N I(N),
\end{align*}
where the first inequality is by Lemma \ref{9l4.3} (ii). Now we get \eqref{9ec7.58} is at most
\begin{align*} 
I_4^{(a)}\leq &\frac{C[X_0^0(1)]^2}{\psi_0(N) \psi(N)} CN\tau_N I(N)^2 \leq C [X_0^0(1)]^2\tau_N\to 0.
\end{align*}

\no {\bf  Case (b).} Let $\alpha_0=\delta_0=i_0$ for some $1\leq i_0\leq NX_0^0(1)$. The contribution to $I_4$ in \eqref{9ea2.81} from case (b) is bounded by 
\begin{align*} 
I_4^{(b)}\leq &\frac{C}{N\psi(N)}[NX_0^0(1)]\sum_{k=0}^\infty \sum_{\substack{\alpha\geq i_0,\\ \vert \alpha\vert =k}} \sum_{l=1}^\infty   \sum_{m=0}^\infty\sum_{\substack{\beta\geq \alpha,\\ \vert \beta\vert =k+l}}\sum_{\substack{ \gamma\geq \alpha,\\ \vert \gamma\vert =k+m}}   \sum_{i=k}^{k+l-1}  \sum_{\delta\geq i_0}    \nn\\
&\E\Big(1(T_{\alpha}\leq t)   1(T_{\pi\gamma}<T_{\alpha}+\tau_N) 1(T_{\pi\beta}<T_{\alpha}+\tau_N) \nn\\
&1(B^\beta-B^\gamma\in \cN_N)   1(T_{\pi\delta} <T_{\beta\vert i}) 1(B^{\beta\vert i}+W^{\beta\vert i}=B^\delta)  \Big).
\end{align*}
There are two cases for the generation when $\delta$ branches off the family tree of $\beta,\gamma$: 
\begin{align*} 
\text{(1)} &\quad  \tau(\{\beta,\gamma\}; \delta)=\vert \beta \wedge \delta\vert ;\nn\\
\text{(2)}& \quad \tau(\{\beta,\gamma\}; \delta)=\vert \gamma \wedge \delta\vert >\vert \beta \wedge \delta\vert .
\end{align*}
 Let $I_4^{(b,1)}$ (resp. $I_4^{(b,2)}$) denote the contribution to $I_4^{(b)}$ from case (1) (resp. case (2)), for which we refer the reader to Figure \ref{fig4}. \\
  \begin{figure}[ht]
  \begin{center}
    \includegraphics[width=1 \textwidth]{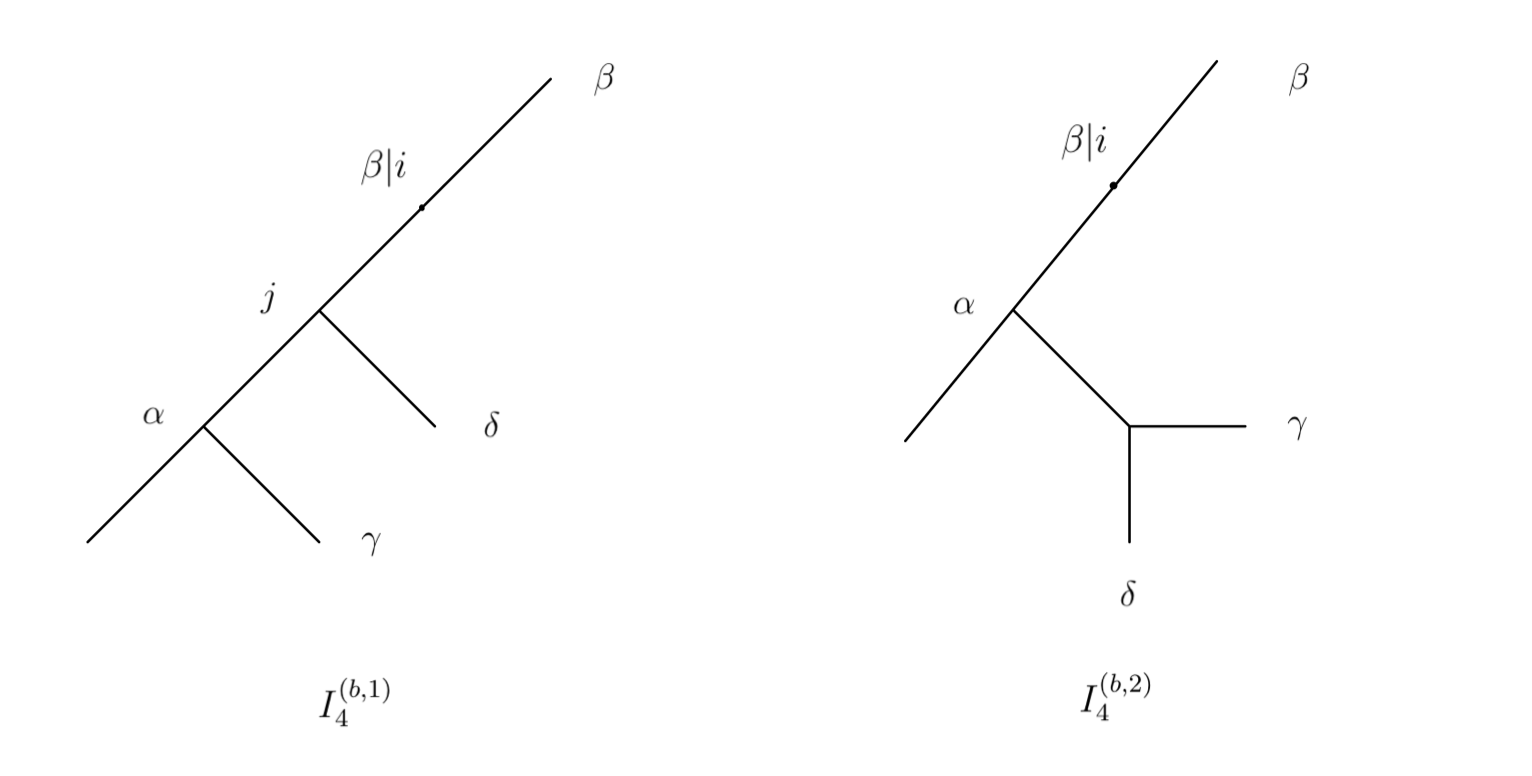}
    \caption[Branching Particle System]{\label{fig4}   Two cases for $I_4^{(b)}$.
      }
  \end{center}
\end{figure}

 \no {\bf  Case (b,1).} Since $T_{\pi\delta} <T_{\beta\vert i}$, we may set $\delta\wedge \beta=\beta\vert j$ for some $j\leq i$. Let $\vert \delta\vert =j+n$ for some $n\geq 0$. It follows that 
 \begin{align}  \label{9ea1.82}
I_4^{(b,1)}\leq &\frac{CX_0^0(1)}{\psi(N)}\sum_{k=0}^\infty \sum_{\substack{\alpha\geq i_0,\\ \vert \alpha\vert =k}} \sum_{l=1}^\infty   \sum_{m=0}^\infty\sum_{\substack{\beta\geq \alpha,\\ \vert \beta\vert =k+l}}\sum_{\substack{ \gamma\geq \alpha,\\ \vert \gamma\vert =k+m}} \sum_{i=k}^{k+l-1}   \sum_{j=0}^i \sum_{n=0}^\infty \sum_{\substack{\delta\geq \beta\vert j,\\ \vert \delta\vert =j+n}}    \nn\\
\E\Big(&1(T_{\alpha}\leq t)   1(T_{\pi\gamma}<T_{\alpha}+\tau_N) 1(T_{\pi\beta}<T_{\alpha}+\tau_N)1(T_{\pi\delta}-T_{\beta\vert j}<2t)  \nn\\
& 1(B^\beta-B^\gamma\in \cN_N) 1(B^{\beta\vert i}+W^{\beta\vert i}=B^\delta)  \Big).
\end{align}
where we have replaced $T_{\pi\delta}<T_{\beta\vert i}$ by $T_{\pi\delta}-T_{\beta\vert j}<T_{\beta}<T_\alpha+\tau_N\leq  2t$. Similar to \eqref{9ea1.53}, the expectation above is at most
 \begin{align}  \label{9ea1.80}
&\E\Big(1(T_{\alpha}\leq t)   1(T_{\pi\gamma}<T_{\alpha}+\tau_N) 1(T_{\pi\beta}<2t) 1(T_{\pi\delta}-T_{\beta\vert j} <2t)\Big)  \nn\\
&(\frac{N+\theta}{2N+\theta})^{k+n+l+m}   \E^{\beta,\gamma, \delta}\Big(1(\sqrt{N}(\hat{B}^\beta-\hat{B}^\gamma)\in [-2,2]^d)  1(B^{\beta\vert i}+W^{\beta\vert i}=B^\delta) \Big),
\end{align}
where we have used \eqref{9ea2.82} to bound $1(B^\beta-B^\gamma\in \cN_N)$.  For the first expectation, we condition on $\cH_{\beta}$ to get
\begin{align*}  
&\E\Big(1(T_{\pi\gamma}-T_{\alpha}<\tau_N)1(T_{\pi\delta}-T_{\beta\vert j} <2t)  \Big\vert \cH_\beta\Big)\nn\\
&=\Pi((2N+\theta)\tau_N, m-1) \cdot \Pi(2(2N+\theta)t), n-1).
\end{align*}
Therefore the first expectation in \eqref{9ea1.80} is equal to
\begin{align}   \label{9ea3.81} 
&\E\Big(1(\Gamma_{k+1}\leq (2N+\theta)t)    1(\Gamma_{k+l}<\Gamma_{k+1}+(2N+\theta)\tau_N)  \Big)\nn\\
& \times \Pi((2N+\theta)\tau_N, m-1) \cdot \Pi(2(2N+\theta)t, n-1).
\end{align}
Turning to the second expectation in \eqref{9ea1.80}, similar to the derivation of \eqref{9ea1.51}, we condition on $\cH_{\beta\vert (i+1)}\vee \cH_\delta$  to get
 \begin{align}   \label{9ea3.82} 
&  \E^{\beta, \gamma,{\delta}}\Big(1(\sqrt{N}(\hat{B}^\beta-\hat{B}^\gamma)\in [-2,2]^d) 1(B^{\beta\vert i}+W^{\beta\vert i}=B^\delta) \Big)\nn\\
&\leq    \frac{C}{(k+l-i+m+1)^{d/2}} \P^{\beta, \gamma,{\delta}}(B^{\beta\vert i}+W^{\beta\vert i}=B^\delta),
\end{align}
and then bound the last probability above by
 \begin{align}     \label{9ea3.83} 
 &\P(B^{\beta\vert i}+W^{\beta\vert i}=B^\delta)=\frac{1}{\psi(N)} \P(B^{\beta\vert i}-B^\delta\in \cN_N)\nn\\
 &= \frac{1}{\psi(N)} \P((B^{\beta\vert i}-B^{\beta\vert j})-(B^\delta-B^{\beta\vert j})\in \cN_N)\leq \frac{1}{\psi(N)}\frac{C}{(1+n+i-j)^{d/2}}.
\end{align}
Apply \eqref{9ea3.81}, \eqref{9ea3.82} and \eqref{9ea3.83} to bound \eqref{9ea1.80}  by 
  \begin{align*} 
&(\frac{N+\theta}{2N+\theta})^{k+n+l+m}   \frac{C}{(k+l-i+m+1)^{d/2}}\frac{1}{\psi(N)} \frac{1}{(1+n+i-j)^{d/2}}\nn\\
&\E\Big(1(\Gamma_{k+1}\leq (2N+\theta)t)    1(\Gamma_{k+l}<\Gamma_{k+1}+(2N+\theta)\tau_N) \Big)\nn\\
&\cdot \Pi((2N+\theta)\tau_N, m-1) \cdot \Pi(2(2N+\theta)t, n-1),
\end{align*}
thus giving
 \begin{align*}  
I_4^{(b,1)}\leq &\frac{C}{\psi(N)^2}X_0^0(1)\sum_{k=0}^\infty  \sum_{l=1}^\infty   \sum_{m=0}^\infty    \sum_{i=k}^{k+l-1}   \sum_{j=0}^i \sum_{n=0}^\infty (\frac{2N+2\theta}{2N+\theta})^{k+n+l+m}    \nn\\
& \cdot \frac{1}{(k+l-i+m+1)^{d/2}} \frac{1}{(1+n+i-j)^{d/2}}\nn\\
&\cdot \Pi((2N+\theta)\tau_N, m-1) \cdot \Pi(2(2N+\theta)t, n-1)\nn\\
&\cdot \E\Big(1(\Gamma_{k+l}\leq 2(2N+\theta)t)    1(\Gamma_{k+l}<\Gamma_{k+1}+(2N+\theta)\tau_N) \Big),
\end{align*}
where we also replace $1\{\Gamma_{k+1}\leq (2N+\theta)t\} $ by $1\{\Gamma_{k+l}<(2N+\theta)t+(2N+\theta)\tau_N\leq 2(2N+\theta)t\} $.
By Lemma \ref{9l4.0}, the sum of $n$ gives at most ${C }/{(1+i-j)^{d/2-1}}$,
and the sum of $m$ gives at most $ {C }/{(1+k+l-i)^{d/2-1}}$.
Next, the sum of $j$ is bounded by 
\begin{align*}  
 \sum_{j=0}^i  \frac{C}{(1+i-j)^{d/2-1}}\leq CI(i)\leq CI(l+k).
\end{align*}
The sum of $i$ gives
\begin{align*}  
 \sum_{i=k}^{k+l-1}  \frac{C}{(1+k+l-i)^{d/2-1}}\leq CI(l)\leq CI(l+k).
\end{align*}
We are left with
\begin{align*}  
I_4^{(b,1)}\leq &\frac{C}{\psi(N)^2}X_0^0(1)\sum_{k=0}^\infty  \sum_{l=1}^\infty   I(k+l)^2      (\frac{2N+2\theta}{2N+\theta})^{k+l} \nn\\
&\E\Big(1(\Gamma_{k+l}\leq 2(2N+\theta)t)    1(\Gamma_{k+l}<\Gamma_{k+1}+(2N+\theta)\tau_N) \Big).
\end{align*}
 Rewrite the sum of $k,l$ by letting $n=k+l$ to see
\begin{align*}  
I_4^{(b,1)}\leq &\frac{C}{\psi(N)^2}X_0^0(1)\sum_{n=1}^\infty   (\frac{2N+2\theta}{2N+\theta})^{n}  I(n)^2      \E\Big(1(\Gamma_{n}\leq 2(2N+\theta)t)  \nn\\
& \sum_{k=0}^{n}    1(\Gamma_{n}-\Gamma_{k+1}<(2N+\theta)\tau_N) \Big).
\end{align*}
Now apply lemma \ref{9l8.4} with $m=2$ to conclude that
\begin{align*}  
I_4^{(b,1)}\leq &\frac{C}{\psi(N)^2}X_0^0(1)\cdot CN^2\tau_N I(N)^2\leq CX_0^0(1) \tau_N\to 0.
\end{align*}

 \no {\bf  Case (b,2).} Turning to $I_4^{(b,2)}$, we set $\delta\wedge \gamma=\gamma\vert j$ for some $k+1\leq j\leq k+m$, and let $\vert \delta\vert =j+n$ for some $n\geq 0$. See Figure \ref{fig4}. It follows that 
 \begin{align}  \label{9ea2.02}
I_4^{(b,2)}\leq &\frac{C}{\psi(N)}X_0^0(1)\sum_{k=0}^\infty \sum_{\substack{\alpha\geq i_0,\\ \vert \alpha\vert =k}} \sum_{l=1}^\infty   \sum_{m=0}^\infty\sum_{\substack{\beta>\alpha,\\ \vert \beta\vert =k+l}}\sum_{\substack{ \gamma\geq\alpha,\\ \vert \gamma\vert =k+m }}  \sum_{i=k}^{k+l-1}   \sum_{j=k+1}^{k+m} \sum_{n=0}^\infty \sum_{\substack{\delta\geq \gamma\vert j,\\ \vert \delta\vert =j+n}}    \nn\\
\E\Big(&1(T_{\alpha}\leq t)  1(T_{\pi\beta}<T_{\alpha}+\tau_N)  1(T_{\pi\gamma}<T_{\alpha}+\tau_N) 1(T_{\pi\delta} <T_{\alpha}+\tau_N)  \nn\\
& 1(B^\beta-B^\gamma\in \cN_N) 1(B^{\beta\vert i}+W^{\beta\vert i}=B^\delta)  \Big),
\end{align}
where we have replaced $T_{\pi\delta}<T_{\beta\vert i}\leq T_\beta<T_\alpha+\tau_N$.
As in \eqref{9ea1.80},  the expectation above is at most 
 \begin{align}  \label{9ea1.90}
& (\frac{N+\theta}{2N+\theta})^{k+n+l+m} \E\Big(1(T_{\alpha}\leq t)   1(T_{\pi\beta}<T_{\alpha}+\tau_N) \nn\\
& 1(T_{\pi\gamma}<T_{\gamma\vert j}+\tau_N)1(T_{\pi\delta} <T_{\gamma\vert j}+\tau_N) 1(T_{\gamma\vert (j-1)}<T_\alpha+\tau_N)\Big)  \nn\\
&  \E^{\beta,\gamma, \delta}\Big(1(\sqrt{N}(\hat{B}^\beta-\hat{B}^\gamma)\in [-2,2]^d)   1(B^{\beta\vert i}+W^{\beta\vert i}=B^\delta) \Big),
\end{align}
where we also use $T_{\alpha} \leq T_{\gamma\vert j}$ and $T_{\gamma\vert (j-1)}\leq T_{\pi\gamma}<T_\alpha+\tau_N$.
For the first expectation above, we condition on $\cH_\beta \vee \cH_{\gamma\vert j}$ to get 
\begin{align*}  
&\E\Big(1(T_{\pi\gamma}-T_{\gamma\vert j}<\tau_N)1(T_{\pi\delta}-T_{\gamma\vert j} <\tau_N)  \Big\vert \cH_\beta \vee \cH_{\gamma\vert j}\Big)\nn\\
&=\Pi((2N+\theta)\tau_N, k+m-j-1) \cdot \Pi((2N+\theta)\tau_N, n-1).
\end{align*}
Next, we condition on $\cH_\beta$ to see
\begin{align*}  
& \E\Big(1(T_{\gamma\vert (j-1)}<T_\alpha+\tau_N) \Big\vert \cH_\beta \Big)=\Pi((2N+\theta)\tau_N, j-k-1).
\end{align*}
We conclude the first expectation in \eqref{9ea1.90} is equal to
 \begin{align}   \label{9ea2.01}
 \E\Big(&1(\Gamma_{k+1}\leq (2N+\theta)t)   1(\Gamma_{k+l}-\Gamma_{k+1}<(2N+\theta)\tau_N)\Big)\\
&\cdot \Pi((2N+\theta)\tau_N, k+m-j-1) \cdot\Pi((2N+\theta)\tau_N, n-1)\cdot \Pi((2N+\theta)\tau_N, j-k-1).\nn
\end{align}
Turning to the second expectation in \eqref{9ea1.90}, since $\cH_\delta$ includes the information from $\cH_{\gamma\vert j}$, similar to \eqref{e4.33} we may get that
\begin{align*} 
& \E^{\beta,\gamma, \delta}\Big(1(\sqrt{N}(\hat{B}^\beta- \hat{B}^\gamma)\in  [-2,2]^d) \Big\vert \cH_{\beta\vert (i+1)}\vee \cH_\delta \Big)\nn\\
=  & \P\Big(\sqrt{N} \sum_{r=i+1}^{k+l-1}   \hat{W}^{\beta\vert r} -\sqrt{N} \sum_{r=j}^{k+m-1}    \hat{W}^{\gamma\vert r} \in [-2,2]^d-\sqrt{N} \sum_{r=k+1}^{i}   \hat{W}^{\beta\vert r}-\sqrt{N} \sum_{r=k+1}^{j-1}   \hat{W}^{\gamma\vert r} \Big)\nn\\
\leq& \frac{C}{(1+(k+l-i)+(k+m-j))^{d/2}}, 
\end{align*}
where the last inequality follows from Lemma \ref{9l4.2}, thus giving
 \begin{align*}  
&  \E^{\beta,\gamma, \delta}\Big(1(\sqrt{N}(\hat{B}^\beta-\hat{B}^\gamma)\in [-2,2]^d)   1(B^{\beta\vert i}+W^{\beta\vert i}=B^\delta) \Big)\\
&\leq  \frac{C}{(1+(k+l-i)+(k+m-j))^{d/2}} \P(B^{\beta\vert i}+W^{\beta\vert i}=B^\delta).
\end{align*}
The last probability above can be bounded by  
 \begin{align*}     
 &\P(B^{\beta\vert i}+W^{\beta\vert i}=B^\delta)=\frac{1}{\psi(N)} \P(B^{\beta\vert i}-B^\delta\in \cN_N)\\
 &= \frac{1}{\psi(N)}   \P\Big((B^{\beta\vert i}-B^\alpha)-(B^\delta-B^\alpha)\in \cN_N\Big)\nn\\
 &\leq \frac{1}{\psi(N)}\frac{C}{(1+(i-k)+(n+j-k))^{d/2}}\leq  \frac{1}{\psi(N)}\frac{C}{(1+n+j-k)^{d/2}},\nn
\end{align*}
where the last inequality uses $i\geq k$. Conclude from the above that the second expectation in \eqref{9ea1.90} is bounded by
 \begin{align}   \label{9ea1.91}
& \frac{C}{\psi(N)} \frac{1}{(1+(k+l-i)+(k+m-j))^{d/2}} \frac{1}{(1+n+j-k)^{d/2}}.
\end{align}
Combine \eqref{9ea1.90},  \eqref{9ea2.01} and \eqref{9ea1.91}  to see  \eqref{9ea2.02} becomes
 \begin{align}  \label{9ec6.81}
I_4^{(b,2)}\leq &\frac{C}{\psi(N)^2}X_0^0(1)\sum_{k=0}^\infty  \sum_{l=1}^\infty   \sum_{m=0}^\infty    \sum_{i=k}^{k+l-1}   \sum_{j=k+1}^{k+m} \sum_{n=0}^\infty (\frac{2N+2\theta}{2N+\theta})^{k+n+l+m}    \nn\\
&  \frac{1}{(1+(k+l-i)+(k+m-j))^{d/2}}  \frac{1}{(1+n+j-k)^{d/2}}\cdot \Pi((2N+\theta)\tau_N, n-1)\nn\\
& \cdot \Pi((2N+\theta)\tau_N, j-k-1)\cdot  \Pi((2N+\theta)\tau_N, k+m-j-1) \nn\\
& \E\Big(1(\Gamma_{k+1}\leq (2N+\theta)t)   1(\Gamma_{k+l}-\Gamma_{k+1}<(2N+\theta)\tau_N)\Big).
\end{align}
Rewrite the above by letting $i=i-k$ and $j=j-k$ to get 
 \begin{align}  \label{9ea2.03}
I_4^{(b,2)}\leq &\frac{C}{\psi(N)^2}X_0^0(1)\sum_{k=0}^\infty  \sum_{l=1}^\infty   \sum_{m=0}^\infty    \sum_{i=0}^{l-1}   \sum_{j=1}^{m} \sum_{n=0}^\infty (\frac{2N+2\theta}{2N+\theta})^{k+n+l+m}    \nn\\
&  \frac{1}{(1+(l-i)+(m-j))^{d/2}}  \frac{1}{(1+n+j)^{d/2}}\cdot \Pi((2N+\theta)\tau_N, n-1)\nn\\
& \cdot \Pi((2N+\theta)\tau_N, j-1)\cdot  \Pi((2N+\theta)\tau_N, m-j-1) \nn\\
& \E\Big(1(\Gamma_{k+l}\leq 2(2N+\theta)t)   1(\Gamma_{k+l}-\Gamma_{k+1}<(2N+\theta)\tau_N)\Big),
\end{align}
where we also replace $1\{\Gamma_{k+1}\leq (2N+\theta)t\} $ by $1\{\Gamma_{k+l}<(2N+\theta)t+(2N+\theta)\tau_N\leq 2(2N+\theta)t\}$.
The sum of $n$ gives
\begin{align*}  
 \sum_{n=0}^\infty &(\frac{2N+2\theta}{2N+\theta})^{n} \Pi((2N+\theta)\tau_N, n-1)  \frac{1}{(1+n+j)^{d/2}}\leq     \frac{C}{(1+j)^{d/2-1}},
\end{align*}
where the inequality follows by Lemma \ref{9l4.0}. 
Then the sum of $m,j$ gives
\begin{align*}  
& \sum_{m=0}^\infty \sum_{j=1}^{m}   (\frac{2N+2\theta}{2N+\theta})^{m} \Pi((2N+\theta)\tau_N, j-1)   \frac{C }{(1+j)^{d/2-1}}\nn\\
&\cdot  \Pi((2N+\theta)\tau_N, m-j-1)  \frac{1}{(1+(l-i)+(m-j))^{d/2}}.
\end{align*}
By Fubini's theorem, the above sum is equal to 
\begin{align*}  
&C \sum_{j=1}^\infty (\frac{2N+2\theta}{2N+\theta})^{j} \Pi((2N+\theta)\tau_N, j-1)  \frac{1}{(1+j)^{d/2-1}}   \nn\\
& \times \sum_{m=j}^{\infty}  (\frac{2N+2\theta}{2N+\theta})^{m-j}\Pi((2N+\theta)\tau_N, m-j-1)  \frac{1}{(1+(l-i)+(m-j))^{d/2}}\nn\\
=&C \sum_{j=1}^\infty (\frac{2N+2\theta}{2N+\theta})^{j}\Pi((2N+\theta)\tau_N, j-1)  \frac{1}{(1+j)^{d/2-1}}   \nn\\
& \times \sum_{m=0}^{\infty}  (\frac{2N+2\theta}{2N+\theta})^{m} \Pi((2N+\theta)\tau_N, m-1)  \frac{1}{(1+(l-i)+m)^{d/2}}\nn\\
\leq& CI(N) \cdot\frac{C}{(1+l-i)^{d/2-1}},
\end{align*}
where we have used Lemma \ref{9l8.7} to bound the sum of $j$, and Lemma \ref{9l4.0} to bound the sum of $m$.

Now conclude from the above that \eqref{9ec6.81} becomes
 \begin{align*}  
I_4^{(b,2)}\leq &\frac{CI(N)}{\psi(N)^2}X_0^0(1)\sum_{k=0}^\infty  \sum_{l=1}^\infty     (\frac{2N+2\theta}{2N+\theta})^{k+l}    \sum_{i=0}^{l-1}   \frac{1}{(1+l-i)^{d/2-1}}  \nn\\
& \E\Big(1(\Gamma_{k+l}\leq 2(2N+\theta)t)   1(\Gamma_{k+l}-\Gamma_{k+1}<(2N+\theta)\tau_N)\Big).
\end{align*}
The sum of $i$ gives at most $I(l)\leq I(l+k)$ and we are left with 
\begin{align*}  
I_4^{(b,2)}&\leq \frac{CI(N)}{\psi(N)^2}X_0^0(1)\sum_{k=0}^\infty  \sum_{l=1}^\infty   I(k+l)        (\frac{2N+2\theta}{2N+\theta})^{k+l} \nn\\
&  \E\Big(1(\Gamma_{k+l}\leq 2(2N+\theta)t)    1(\Gamma_{k+l}<\Gamma_{k+1}+(2N+\theta)\tau_N)     \Big).
\end{align*}
Rewrite the above sum of $k,l$ by letting $n=k+l$ to see
\begin{align*}  
I_4^{(b,2)}\leq &\frac{CI(N)}{\psi(N)^2}X_0^0(1) \sum_{n=1}^\infty   (\frac{2N+2\theta}{2N+\theta})^{n}  I(n)      \E\Big(1(\Gamma_{n}\leq 2(2N+\theta)t)  \nn\\
& \sum_{k=0}^{n}    1(\Gamma_{n}-\Gamma_{k+1}<(2N+\theta)\tau_N) \Big).
\end{align*}
Now apply the lemma \ref{9l8.4} with $m=1$ to conclude
\begin{align*}  
I_4^{(b,2)}\leq &\frac{CI(N)}{\psi(N)^2}X_0^0(1)  CN^2\tau_N I(N)\leq CX_0^0(1) \tau_N\to 0.
\end{align*}
The proof is complete.

\subsection{Proof of Lemma \ref{9l10.1}(ii)}\label{s9.2}
Since $B^\gamma=B^\gamma_{T_\gamma^-}\neq \Delta$, we must have $\zeta_\gamma^0=T_{\gamma}$ by its definition. If $m=0$, we cannot have both $\zeta_\gamma^0\in (T_\beta-\tau_N, T_\gamma)$ and $\zeta_\gamma^0=T_{\gamma}$ hold. So \eqref{9ea1.16} is trivial for $m=0$. It suffices to prove for $m\geq 1$.
 In order that $\zeta_\gamma^m \in (T_{\beta}-\tau_N, T_{\gamma})$, there has to be some $i<\vert \gamma\vert $ so that 
\begin{align*} 
T_{\gamma\vert i}>T_{\beta}-\tau_N, \quad B^{\gamma\vert i}+W^{\gamma\vert i} \in \overline{\cR}^{X^{m-1}}_{T_{\gamma\vert i}^-}, \quad e_{\gamma\vert i}=\gamma_{i+1},
\end{align*}
meaning at time $T_{\gamma\vert i}$, the particle $\gamma\vert i$ gives birth to $\gamma\vert (i+1)$ which is displaced from $B^{\gamma\vert i}$ by a distance of $W^{\gamma\vert i}$. However, that location had already been visited by some particle $\delta$ in $X^{m-1}$ before time $T_{\gamma\vert i}$, or that location lies in $K_0^N$. Similar to \eqref{9ec9.53}, we may bound $1\{\zeta_\gamma^m \in (T_\beta-\tau_N, T_\gamma)\}$ by
\begin{align*}
&\sum_{i=0}^{\vert \gamma\vert -1} 1(T_{\gamma\vert i}>T_\beta-\tau_N) \sum_{\delta} 1(T_{\pi\delta} <T_{\gamma\vert i}, B^{\gamma\vert i}+W^{\gamma\vert i}=B^\delta)\nn\\
&\quad +\sum_{i=0}^{\vert \gamma\vert -1} 1(T_{\gamma\vert i}>T_{\beta}-\tau_N)  1(B^{\gamma\vert i}+W^{\gamma\vert i}\in K_0^N).
\end{align*}
Therefore the left-hand side term of \eqref{9ea1.16} is at most
\begin{align*} 
\frac{1}{N\psi(N)} &\E\Big(\sum_{\beta} 1(T_{ \beta} \leq t)  \sum_{\gamma} 1(T_{\pi\gamma}<T_\beta) 1(B^\beta-B^\gamma\in \cN_N) 1(T_{\beta\wedge \gamma}>T_\beta-\tau_N)\nn\\
& \sum_{i=0}^{\vert \gamma\vert -1} 1(T_{\gamma\vert i}>T_\beta-\tau_N) 1(B^{\gamma\vert i}+W^{\gamma\vert i}\in K_0^N) \Big)\nn\\
+\frac{1}{N\psi(N)} &\E\Big(\sum_{\beta} 1(T_{ \beta} \leq t)  \sum_{\gamma} 1(T_{\pi\gamma}<T_\beta) 1(B^\beta-B^\gamma\in \cN_N) 1(T_{\beta\wedge \gamma}>T_\beta-\tau_N)\nn\\
& \sum_{i=0}^{\vert \gamma\vert -1} 1(T_{\gamma\vert i}>T_\beta-\tau_N)  \sum_{\delta} 1(T_{\pi\delta} <T_{\gamma\vert i}, B^{\gamma\vert i}+W^{\gamma\vert i}=B^\delta)  \Big):=J_0+J_1.
\end{align*}

 \no {\bf  Case for $J_0$.}  First for $J_0$, if $i\leq \vert \beta\wedge \gamma\vert-1 $, we have $\gamma\vert i=\beta\vert i$, so the sum for $i\leq \vert \beta\wedge \gamma\vert-1$ becomes
\begin{align*} 
\frac{1}{N\psi(N)} \E\Big(&\sum_{\beta} 1(T_{ \beta} \leq t)\sum_{\gamma}1(T_{\pi\gamma}<T_\beta) 1(B^\beta-B^\gamma\in \cN_N) \nn\\
&1(T_{\gamma\wedge \beta}>T_\beta-\tau_N)  \sum_{i=0}^{ \vert \beta\wedge \gamma\vert -1} 1(T_{\beta\vert i}>T_\beta-\tau_N) 1(B^{\beta\vert i}+W^{\beta\vert i}\in K_0^N) \Big).
\end{align*}
The above can be bounded by $I_0^{(b)}$ from \eqref{9ea5.667} and so will converge to $0$. 

Turning to $i\geq \vert \beta\wedge \gamma\vert $, we have $T_{\gamma\vert i}\geq T_{\beta\wedge \gamma}$. Hence  $1(T_{\gamma\vert i}>T_\beta-\tau_N)$ is absorbed into $1(T_{\beta\wedge \gamma}>T_\beta-\tau_N)$.  Let $\alpha=\beta\wedge \gamma$  to see the sum for $i\geq \vert \beta\wedge \gamma\vert$ in $J_0$ is at most 
 \begin{align*} 
 \frac{1}{N\psi(N)} \E\Big(&\sum_{\alpha}\sum_{\beta\geq\alpha} \sum_{\gamma\geq \alpha}1(T_{\alpha} \leq t) \sum_{i=\vert \alpha\vert }^{\vert \gamma\vert -1}    1(T_{\pi\gamma}<T_{\alpha}+\tau_N)  \nn\\
& 1(T_{\pi\beta}<T_{\alpha}+\tau_N) 1(B^\beta-B^\gamma\in \cN_N)  1(B^{\gamma\vert i}+W^{\gamma\vert i}\in K_0^N) \Big),
\end{align*}
where we also use $T_{\alpha}\leq T_\beta\leq t$, $T_\beta< T_{\alpha}+\tau_N$ and $T_{\pi\beta}\leq T_\beta$. Exchange $\beta,\gamma$ to see the above equals
 \begin{align*}  
 \frac{1}{N\psi(N)} \E\Big(&\sum_{\alpha}\sum_{\beta\geq\alpha} \sum_{\gamma\geq \alpha}1(T_{\alpha} \leq t) \sum_{i=\vert \alpha\vert }^{\vert \beta\vert -1}    1(T_{\pi\beta}<T_{\alpha}+\tau_N)  \nn\\
& 1(T_{\pi\gamma}<T_{\alpha}+\tau_N) 1(B^\beta-B^\gamma\in \cN_N) 1(B^{\beta\vert i}+W^{\beta\vert i}\in K_0^N) \Big),
\end{align*}
reproducing the same expression as in \eqref{9ed1.661}. Hence the above converges to $0$ and we conclude $J_0\to 0$.

 \no {\bf  Case for $J_1$.}  The case for $J_1$ is similar. If $i\leq \vert \beta\wedge \gamma\vert-1$, we get sum for $i\leq \vert \beta\wedge \gamma\vert-1$ is equal to
\begin{align*} 
\frac{1}{N\psi(N)} \E\Big(&\sum_{\beta} 1(T_{ \beta} \leq t)\sum_{\gamma}1(T_{\pi\gamma}<T_\beta) 1(B^\beta-B^\gamma\in \cN_N) \nn\\
&1(T_{\gamma\wedge \beta}>T_\beta-\tau_N)  \sum_{i=0}^{ \vert \beta\wedge \gamma\vert -1} 1(T_{\beta\vert i}>T_\beta-\tau_N)  \nn\\
&\sum_{\delta} 1(T_{\pi\delta} <T_{\beta\vert i}, B^{\beta\vert i}+W^{\beta\vert i}=B^\delta) \Big).
\end{align*}
We may bound the above by $I_2$ from\eqref{9e1.32}. Hence the above converges to $0$.  

For $i\geq \vert \beta\wedge \gamma\vert $, again $1(T_{\gamma\vert i}>T_\beta-\tau_N)$ is absorbed into $1(T_{\beta\wedge \gamma}>T_\beta-\tau_N)$ and we may let $\alpha=\beta\wedge \gamma$  to see the sum for $i\geq \vert \beta\wedge \gamma\vert$ in $J_1$ is at most 
 \begin{align*} 
 \frac{1}{N\psi(N)} \E\Big(&\sum_{\alpha}\sum_{\beta\geq\alpha} \sum_{\gamma\geq \alpha}1(T_{\alpha} \leq t) \sum_{i=\vert \alpha\vert }^{\vert \gamma\vert -1}    1(T_{\pi\gamma}<T_{\alpha}+\tau_N) 1(T_{\pi\beta}<T_{\alpha}+\tau_N)   \nn\\
&1(B^\beta-B^\gamma\in \cN_N) \sum_{\delta} 1(T_{\pi\delta} <T_{\gamma\vert i}, B^{\gamma\vert i}+W^{\gamma\vert i}=B^\delta) \Big).
\end{align*}
Exchange $\beta,\gamma$ to see the above equals 
 \begin{align*}  
 \frac{1}{N\psi(N)} \E\Big(&\sum_{\alpha}\sum_{\beta\geq\alpha} \sum_{\gamma\geq \alpha}1(T_{\alpha} \leq t) \sum_{i=\vert \alpha\vert }^{\vert \beta\vert -1}    1(T_{\pi\beta}<T_{\alpha}+\tau_N)  1(T_{\pi\gamma}<T_{\alpha}+\tau_N)  \nn\\
&1(B^\beta-B^\gamma\in \cN_N) \sum_{\delta} 1(T_{\pi\delta} <T_{\beta\vert i}, B^{\beta\vert i}+W^{\beta\vert i}=B^\delta) \Big).
\end{align*}
This is the same expression as in \eqref{9ea2.81}. Hence the above converges to $0$ and we conclude $J_1\to 0$. The proof is complete.

\bibliographystyle{plain}
\def\cprime{$'$}

\appendix

\section{Proofs of Lemmas \ref{9l2.1}-\ref{9l2.4}}\label{9a1}

\begin{proof}[Proof of Lemma \ref{9l2.1}]
Recall from \eqref{9e2.1} that
\begin{align}\label{9ec4.78}
M_t^n(\phi)&=\frac{1}{N} \sum_{\beta} 1(T_\beta\leq t, \zeta_\beta^n> T_{\pi \beta})  \phi(B^\beta) g_\beta\nn\\
&=\frac{1}{N} \sum_{\beta} 1(T_\beta\leq t, \zeta_\beta^n\geq T_{\beta})  \phi(B^\beta) g_\beta,
\end{align}
where $g_\beta=\delta_\beta-\frac{\theta}{2N+\theta}$. Then it follows immediately from Lemma 3.3 and 3.4(a) of \cite{DP99} that $M_t^n(\phi)$ is an $\cF_t$-martingale. One may also check
\begin{align*}
[M^n(\phi)]_t=\frac{1}{N^2} \sum_{\beta} 1(T_\beta\leq t, \zeta_\beta^n\geq T_{\beta})  \phi(B^\beta)^2 g_\beta^2.
\end{align*}
Since $\delta_\beta$ is independent of $\cF_{T_\beta^-}$ by definition, we have 
\begin{align*}
\E(g_\beta^2\vert \cF_{T_\beta^-})=\E(g_\beta^2)=1-\eps_N^2.
\end{align*}
By Lemma \ref{9l6.1}, we get
\begin{align*}
Z_t:=\frac{1}{N^2} \sum_{\beta} 1(T_\beta\leq t, \zeta_\beta^n\geq T_{\beta})  \phi(B^\beta)^2 [g_\beta^2-(1-\eps_N^2)]
\end{align*}
is a martingale. Apply Lemma \ref{9l3.2} to further get
\begin{align*}
N_t:=&(1-\eps_N^2)\frac{1}{N^2} \sum_{\beta} 1(T_\beta\leq t, \zeta_\beta^n\geq T_{\beta})  \phi(B^\beta)^2 \\
&-(1-\eps_N^2) \frac{1}{N^2} \int_0^t (2N+\theta) \sum_{\beta} 1(T_{\pi \beta}<r< T_\beta, \zeta_\beta^n>r)  \phi(B^\beta)^2
\end{align*}
is a martingale. Recalling $X_r^n(\phi^2)$ from \eqref{9e0.03}, we may conclude from the above that  
\begin{align*}
[M^n(\phi)]_t=Z_t+N_t+(1-\eps_N^2) \Big(2+\frac{\theta}{N}\Big) \int_0^t X_r^n(\phi^2) dr.
\end{align*}
Since both $Z_t$ and $N_t$ are martingales, we get
\begin{align*}
\la M^n(\phi)\ra_t=(1-\eps_N^2) \Big(2+\frac{\theta}{N}\Big) \int_0^t X_r^n(\phi^2) dr.
\end{align*}
Turning to $\la M^2(\phi)- M^1(\phi)\ra_t$, we obtain from \eqref{9ec4.78} that
\begin{align*}
M_t^2(\phi)-M_t^2(\phi)=\frac{1}{N} \sum_{\beta} 1(T_\beta\leq t) 1(\zeta_\beta^1< T_{\beta}\leq \zeta_\beta^2)  \phi(B^\beta) g_\beta.
\end{align*}
The arguments for deriving $\la M^2(\phi)- M^1(\phi)\ra_t$ follow in a similar way.

\end{proof}

\begin{proof}[Proof of Lemma \ref{9l2.2}]
Recall from \eqref{9e2.1} that
\begin{align} \label{9ec6.07}
D_t^{n,1}(\phi)=\frac{\theta}{2N+\theta} \frac{1}{N} \sum_{\beta} 1(T_\beta\leq t, \zeta_\beta^n\geq T_{\beta})  \phi(B^\beta).
\end{align}
By Lemma \ref{9l3.2}, we have
\begin{align} \label{9ec5.07}
M_t:=&D_t^{n,1}(\phi)-   \frac{\theta}{N} \int_0^t \sum_{\beta} 1(T_{\pi \beta}<r<T_\beta, \zeta_\beta^n>r)  \phi(B^\beta) dr\nn\\
=&D_t^{n,1}(\phi)-\theta\int_0^t X_r^n(\phi) dr
\end{align}
is a martingale. Notice that
\begin{align*} 
[M]_t=\Big(\frac{\theta}{2N+\theta}\Big)^2 \frac{1}{N^2} \sum_{\beta} 1(T_\beta\leq t, \zeta_\beta^n\geq T_{\beta})  \phi(B^\beta)^2.
\end{align*}
Using Lemma \ref{9l3.2} again, we get
\begin{align*} 
&[M]_t- \frac{\theta^2}{(2N+\theta)N^2} \int_0^t \sum_{\beta} 1(T_{\pi \beta}<r<T_\beta, \zeta_\beta^n>r)  \phi(B^\beta)^2 dr\nn\\
&=[M]_t-\frac{\theta^2}{(2N+\theta)N}  \int_0^t X_r^n(\phi^2) dr
\end{align*}
is also a martingale, thus giving $\la M\ra_t=\frac{\theta^2}{(2N+\theta)N}  \int_0^t X_r^n(\phi^2) dr$. By the $L^2$ maximal inequality, we get
 \begin{align*} 
\E\Big(\sup_{s\leq t} M_s^2 \Big)\leq \frac{\theta^2}{(2N+\theta)N}  \int_0^t \E(X_r^n(\phi^2)) dr\nn\\
\leq  \frac{\theta^2}{(2N+\theta)N}  \|\phi\|_\infty \int_0^t e^{\theta r} X_0^0(1) dr\to 0,
\end{align*}
where the second inequality uses Lemma \ref{9l2.0}. By applying Cauchy-Schwartz in the above, we conclude the proof is complete by \eqref{9ec5.07}.
\end{proof}

\begin{proof}[Proof of Lemma \ref{9l2.3}]
Recall from \eqref{9e2.1} that
\begin{align*} 
D_t^{n,2}(\phi)=&\frac{N+\theta}{2N+\theta} \frac{1}{N} \sum_{\beta} 1(T_\beta\leq t, \zeta_\beta^n\geq T_{\beta})  \nabla_\beta\phi,
\end{align*}
where $\nabla_\beta\phi=\phi(B^\beta+W^\beta) -\phi(B^\beta)$. Using Taylor's theorem with expansion at $B^\beta$, $\E(W_\beta^i)=0$ and $\E(W_\beta^i W_\beta^j)=0$ for $i\neq j$, one may obtain
\begin{align*} 
&\E(\nabla_\beta\phi \vert \cF_{T_\beta^-})=\frac{1}{2} \sum_{i=1}^d \phi_{ii}(B^\beta) \E[(W_\beta^i)^2]+R_N^\beta(\omega),\\
&R_N^\beta(\omega)=\frac{1}{2}\E\Big(\sum_{1\leq i, j\leq d} (\phi_{ij}(v(\omega))-\phi_{ij}(B^\beta)) W_\beta^i W_\beta^j \Big\vert \cF_{T_\beta^-}\Big),
\end{align*}
where $\phi_{ij}$ are the partial derivatives of $\phi$ and $v(\omega)$ is a point on the line segment from $B^\beta$ to $B^\beta+W^\beta$. Since $\phi \in C_b^3$ and $|W^\beta|\leq CN^{-1/2}$, we get $|\phi_{ij}(v(\omega))-\phi_{ij}(B^\beta)|\leq CN^{-1/2}$ and
\begin{align}\label{9ec7.45}
R_N^\beta(\omega)\leq CN^{-1/2} (CN^{-1/2})^2\leq CN^{-3/2}.
\end{align}
Notice $\sqrt{N} W_\beta^i$ converges weakly to a random variable uniformly distributed on $[-1,1]$ whose second moment is $1/3$. We conclude from the above that
\begin{align*} 
&\E(\nabla_\beta\phi \vert \cF_{T_\beta^-})=\Big(\frac{1}{6}+\delta_N\Big) N^{-1}\Delta \phi(B^\beta)  +R_N^\beta(\omega),
\end{align*}
where $\delta_N\to 0$ as $N\to\infty$. Define 
\begin{align*}
G_\beta=1(T_\beta\leq \zeta_\beta^n)\Big(\nabla_\beta\phi-\E(\nabla_\beta\phi \vert \cF_{T_\beta^-})\Big).
\end{align*}
Then $\E(G_\beta|\cF_{T_\beta^-})=0$ and $|G_\beta|\leq CN^{-1/2} 1(T_\beta\leq \zeta_\beta^0)$ by $\phi\in C_b^3$. Apply Lemma \ref{9l6.1} to see
\begin{align*}
M_t=\frac{N+\theta}{2N+\theta} \frac{1}{N} \sum_{\beta} 1(T_\beta\leq t) G_\beta \text{ is a martingale,}
\end{align*}
and
\begin{align}\label{9ec7.05}
\E\Big(\sup_{s\leq t} M_s^2\Big) \leq C\frac{1}{N^2} \E\Big(\sum_\beta 1(T_\beta \leq t) CN^{-1} 1(T_\beta\leq \zeta_\beta^0)\Big).
\end{align}
By Lemma 3.4 (b) of \cite{DP99}, we have 
\begin{align}\label{9ec7.42}
\E\Big(\sum_\beta 1(T_\beta \leq t) 1(T_\beta\leq \zeta_\beta^0)\Big)\leq CN^2 X_0^0(1). 
\end{align}
Apply the above in \eqref{9ec7.05} and then use Cauchy-Schwartz to get
\begin{align}\label{9ec7.50}
\E\Big(\sup_{s\leq t} |M_s|\Big) \leq CN^{-1/2} \sqrt{X_0^0(1)}\to 0.
\end{align}
Now rewrite $D_t^{n,2}(\phi)$ as
\begin{align}\label{9ec7.49}
&D_t^{n,2}(\phi)=M_t+E_t(\phi)+D_t^{n,3}(\phi),
 \end{align}
where
\begin{align*}
&D_t^{n,3}(\phi)=\frac{1}{2N+\theta} \frac{1}{N} \Big(\frac{1}{6}+\delta_N\Big) \frac{N+\theta}{N}  \sum_{\beta} 1(T_\beta\leq t, \zeta_\beta^n\geq T_{\beta}) \Delta \phi(B^\beta), \nn\\
&E_t(\phi)=\frac{N+\theta}{2N+\theta} \frac{1}{N} \sum_{\beta} 1(T_\beta\leq t, \zeta_\beta^n\geq T_{\beta})  R_N^\beta(\omega).\nn
 \end{align*}
 For $E_t(\phi)$, use \eqref{9ec7.45} and \eqref{9ec7.42} to get
\begin{align}\label{9ec7.56}
\E\Big(\sup_{s\leq t} |E_t(\phi)|\Big) \leq CN^{-1} CN^2 X_0^0(1) CN^{-3/2}\leq CN^{-1/2} X_0^0(1) \to 0.
\end{align}
For $D_t^{n,3}(\phi)$, by comparing to $D_t^{n,1}(\phi)$ as in \eqref{9ec6.07},  we may apply Lemma \ref{9l2.2} with $\phi=\Delta \phi$ to see  
\begin{align}\label{9ec7.43}
\lim_{N\to \infty} \E\Big(\sup_{s\leq t} \Big\vert D_s^{n,3}(\phi)-\frac{N+\theta}{N} \Big(\frac{1}{6}+\delta_N\Big) \int_0^s X_r^n(\Delta \phi) dr\Big\vert \Big)=0.
\end{align}
The proof is complete in view of \eqref{9ec7.50}, \eqref{9ec7.49},  \eqref{9ec7.56} and \eqref{9ec7.43}.
\end{proof}

\begin{proof}[Proof of Lemma \ref{9l2.4}]
Recall from \eqref{9e2.1} that
\begin{align*} 
E_t^{n}(\phi)=&\frac{1}{N} \sum_{\beta} a_\beta^n(t) h_\beta \Big(\phi(B^\beta+W^\beta)1(B^\beta+W^\beta\notin \cR^{X^{n-1}}_{T_\beta^-}) -\phi(B^\beta) \Big),
\end{align*}
where $h_\beta=1(\delta_\beta=1)-\frac{N+\theta}{2N+\theta}$. Since $\E(h_\beta\vert \cF_{T_\beta^-})=0$ and $h_\beta$ is independent of $W^\beta$, we conclude from Lemma \ref{9l6.1} that $E_t^{n}(\phi)$ is a martingale. By the $L^2$ maximal inequality, we get
\begin{align*} 
\E\Big(\sup_{s\leq t} E_s^{n}(\phi)\Big)&\leq  C\E([E^{n}(\phi)]_t)\nn\\
&\leq \frac{C}{N^2} \E\Big(\sum_{\beta} a_\beta^0(t)  \Big(\phi(B^\beta+W^\beta)-\phi(B^\beta)\Big)^2\Big)\nn\\
&+ \frac{C}{N^2} \E\Big(\sum_{\beta} a_\beta^0(t) \phi(B^\beta)^2 1(B^\beta+W^\beta\in \cR^{X^{n-1}}_{T_\beta^-})\Big). 
\end{align*}
Using Lemma \ref{9l2.5}, we may bound the second term by
\begin{align*} 
&\frac{C}{N} \|\phi\|_\infty^2\E\Big(\frac{1}{N}\sum_{\beta} a_\beta^0(t)  1(B^\beta+W^\beta\in \cR^{X^{0}}_{T_\beta^-})\Big)\\
&\leq  \frac{C}{N} \|\phi\|_\infty^2 C(X_0^0(1)+X_0^0(1)^2) \to 0.
\end{align*}
Turning to the first term, we use $\phi \in C_b^3$ and recall $\eta_N$ from \eqref{9ec5.37} to get 
\begin{align*}
\Big(\phi(B^\beta+W^\beta)-\phi(B^\beta)\Big)^2 \leq \eta_N^2\leq CN^{-1}.
\end{align*}
It follows that the first term is at most
\begin{align*} 
 \frac{C}{N^3} \E\Big(\sum_{\beta} a_\beta^0(t)  \Big)\leq CN^{-1} X_0^0(1) \to 0,
\end{align*}
where the inequality is by \eqref{9ec7.42}.
\end{proof}

\end{document}